\renewcommand*\env@matrix[1][*\c@MaxMatrixCols c]{%
  \hskip -\arraycolsep
  \let\@ifnextchar\new@ifnextchar
  \array{#1}}
\newtheorem{observation}{Remark}[section]
\newtheorem{lemma}[observation]{Lemma}  
\newtheorem{theorem}[observation]{Theorem}
\newtheorem{definition}[observation]{Definition}
\newtheorem{example}[observation]{Example}
\newtheorem{proposition}[observation]{Proposition} 
\newtheorem{corollary}[observation]{Corollary}
\newcommand{\wand}{\ensuremath{
  \mathrel{\vbox{\offinterlineskip\ialign{
    \hfil##\hfil\cr
    $\star$\cr
    \noalign{\kern-1ex}
    $\vert$\cr
}}}}}
\newdimen\w@dth
\def\setw@dth#1#2{\setbox\z@\hbox{\scriptsize $#1$}\w@dth=\wd\z@
\setbox\@ne\hbox{\scriptsize $#2$}\ifnum\w@dth<\wd\@ne \w@dth=\wd\@ne \fi
\advance\w@dth by 1.2em}
\def\t@^#1_#2{\allowbreak\def\n@one{#1}\def\n@two{#2}\mathrel
{\setw@dth{#1}{#2}
\mathop{\hbox to \w@dth{\rightarrowfill}}\limits
\ifx\n@one\empty\else ^{\box\z@}\fi
\ifx\n@two\empty\else _{\box\@ne}\fi}}
\def\t@@^#1{\@ifnextchar_ {\t@^{#1}}{\t@^{#1}_{}}}
\def\t@left^#1_#2{\def\n@one{#1}\def\n@two{#2}\mathrel{\setw@dth{#1}{#2}
\mathop{\hbox to \w@dth{\leftarrowfill}}\limits
\ifx\n@one\empty\else ^{\box\z@}\fi
\ifx\n@two\empty\else _{\box\@ne}\fi}}
\def\t@@left^#1{\@ifnextchar_ {\t@left^{#1}}{\t@left^{#1}_{}}}
\def\two@^#1_#2{\def\n@one{#1}\def\n@two{#2}\mathrel{\setw@dth{#1}{#2}
\mathop{\vcenter{\hbox to \w@dth{\rightarrowfill}\kern-1.7ex
                 \hbox to \w@dth{\rightarrowfill}}%
       }\limits
\ifx\n@one\empty\else ^{\box\z@}\fi
\ifx\n@two\empty\else _{\box\@ne}\fi}}
\def\tw@@^#1{\@ifnextchar_ {\two@^{#1}}{\two@^{#1}_{}}}
\def\tofr@^#1_#2{\def\n@one{#1}\def\n@two{#2}\mathrel{\setw@dth{#1}{#2}
\mathop{\vcenter{\hbox to \w@dth{\rightarrowfill}\kern-1.7ex
                 \hbox to \w@dth{\leftarrowfill}}%
       }\limits
\ifx\n@one\empty\else ^{\box\z@}\fi
\ifx\n@two\empty\else _{\box\@ne}\fi}}
\def\t@fr@^#1{\@ifnextchar_ {\tofr@^{#1}}{\tofr@^{#1}_{}}}
\newdimen\W@dth
\def\setW@dth#1#2{\setbox\z@\hbox{$#1$}\W@dth=\wd\z@
\setbox\@ne\hbox{$#2$}\ifnum\W@dth<\wd\@ne \W@dth=\wd\@ne \fi
\advance\W@dth by 1.2em}
\def\T@^#1_#2{\allowbreak\def\N@one{#1}\def\N@two{#2}\mathrel
{\setW@dth{#1}{#2}
\mathop{\hbox to \W@dth{\rightarrowfill}}\limits
\ifx\N@one\empty\else ^{\box\z@}\fi
\ifx\N@two\empty\else _{\box\@ne}\fi}}
\def\T@@^#1{\@ifnextchar_ {\T@^{#1}}{\T@^{#1}_{}}}
\def\T@left^#1_#2{\def\N@one{#1}\def\N@two{#2}\mathrel{\setW@dth{#1}{#2}
\mathop{\hbox to \W@dth{\leftarrowfill}}\limits
\ifx\N@one\empty\else ^{\box\z@}\fi
\ifx\N@two\empty\else _{\box\@ne}\fi}}
\def\T@@left^#1{\@ifnextchar_ {\T@left^{#1}}{\T@left^{#1}_{}}}
\def\Tofr@^#1_#2{\def\N@one{#1}\def\N@two{#2}\mathrel{\setW@dth{#1}{#2}
\mathop{\vcenter{\hbox to \W@dth{\rightarrowfill}\kern-1.7ex
                 \hbox to \W@dth{\leftarrowfill}}%
       }\limits
\ifx\N@one\empty\else ^{\box\z@}\fi
\ifx\N@two\empty\else _{\box\@ne}\fi}}
\def\T@fr@^#1{\@ifnextchar_ {\Tofr@^{#1}}{\Tofr@^{#1}_{}}}
\def\Two@^#1_#2{\def\N@one{#1}\def\N@two{#2}\mathrel{\setW@dth{#1}{#2}
\mathop{\vcenter{\hbox to \W@dth{\rightarrowfill}\kern-1.7ex
                 \hbox to \W@dth{\rightarrowfill}}%
       }\limits
\ifx\N@one\empty\else ^{\box\z@}\fi
\ifx\N@two\empty\else _{\box\@ne}\fi}}
\def\Tw@@^#1{\@ifnextchar_ {\Two@^{#1}}{\Two@^{#1}_{}}}
\def\to{\@ifnextchar^ {\t@@}{\t@@^{}}}
\def\from{\@ifnextchar^ {\t@@left}{\t@@left^{}}}
\def\tofro{\@ifnextchar^ {\t@fr@}{\t@fr@^{}}}
\def\To{\@ifnextchar^ {\T@@}{\T@@^{}}}
\def\From{\@ifnextchar^ {\T@@left}{\T@@left^{}}}
\def\Two{\@ifnextchar^ {\Tw@@}{\Tw@@^{}}}
\def\Tofro{\@ifnextchar^ {\T@fr@}{\T@fr@^{}}}
\title{Itegories}
\author{Robin Cockett and Jean-Simon Pacaud Lemay}
\date{Dedicated to Phil Scott (1947 -- 2023)}							
\begin{document}
\allowdisplaybreaks

\maketitle

\begin{abstract} An itegory is a restriction category with a Kleene wand.  Cockett, D{\'\i}az-Bo{\"\i}ls, Gallagher, and Hrube{\v{s}} briefly introduced Kleene wands to capture iteration in restriction categories arising from complexity theory. The purpose of this paper is to develop in more detail the theory of Kleene wands and itegories.

A Kleene wand is a binary operator which takes in two disjoint partial maps, one of type ${X \to X}$ and the other of type ${X \to A}$, and produces a partial map $X \to A$. This latter map is interpreted as iterating the endomorphism until it lands in the domain of definition of the second map. In a setting with infinite disjoint joins, there is always a canonical Kleene wand realizing this intuition. 

The standard categorical interpretation of iteration is via trace operators on coproducts.  For extensive restriction categories, we explain in detail how having a Kleene wand is equivalent to this standard interpretation of iteration. This suggests that Kleene wands can be used to replace parametrized iteration and traces in restriction categories which lack coproducts. Further evidence of this is  exhibited by providing a matrix construction which embeds an itegory into a traced extensive restriction category. We also consider Kleene wands in classical restriction categories and show how, in this case, a Kleene wand is completely determined by its endomorphism component. 
 \end{abstract}

\noindent \small \textbf{Acknowledgements.} The authors would like to thank Masahito Hasegawa for many useful discussion, and would like to thank him and RIMS at Kyoto University for helping fund research visits so that the authors could work together on this project. The authors would also like to thank Chad Nester for useful preliminary discussions and his support of this project, also thank Ben MacAdam for reminding the authors about the first named author's previous work with D{\'\i}az-Bo{\"\i}ls, Gallagher, and Hrube{\v{s}} in \cite{cockett2012timed}. The authors are also grateful to the anonymous reviewers for their useful comments and suggestions. For this research, the first named author was partially funded by NSERC, while the second named author was financially supported by a JSPS PDF (P21746). \\ 

\tableofcontents

\newpage

We would like to dedicate this paper to Phil Scott (1947--2023). 

\paragraph{From Robin:} I have so many memories of time spent with Phil that it is hard to know where to start.  In the early 1990s Phil was instrumental in getting me an introduction at the University of Calgary and whence the job I have enjoyed since then.  Even back then -- in fact, maybe, especially back then -- Phil was highly influential and respected. The book with Lambek, ``Introduction to higher order categorical logic'', had truly changed the categorical world.

Phil loved visiting Edinburgh, and in 2018 he conspire to have us spend sabbatical time there together.  My wife, Polly, and I had long wanted to hike the West Highland Way (which starts near Glasgow). Doing this before our Edinburgh sojourn seemed like a perfect moment. I was therefore to meet my wife, who would be returning from adventures further north, at the start of the hike. Somehow, I needed to leave, in Edinburgh as I passed through on trains from Oxford, a bag of stuff not needed on the hike.  Phil concocted a simple logistical plan: as he would already be in Edinburgh, he would meet my midday train at the Edinburgh railway station: I would hand-off my bag to him and hop back on the train to Glasgow.  

Well, of course, everything went wrong. The night before the line to Edinburgh from England had been subject to major tree falls due to a storm. When a train finally got through -- in the late evening -- it was packed with unhappy passengers standing like sardines in the aisles.  As Phil did not believe in cell phones, there was no way to let him know how badly delayed I was.  When the train finally arrived at Edinburgh the passengers literally poured onto the platform.  This included me with a huge suitcase (on wheels) which I had been supposed to hand-off over six hours earlier.  I had no idea what I was now going to do with the bag!! 

Imagine, however, my disbelief as clinging to the hand-rail at the bottom of the exit stairway, being rudely buffeted by the frustrated crowd pouring forth from the train, stood Phil and his wife Marcia.  Relief hardly does justice to my reaction. Unbelievably they had waited for over six hours for a train to get through!  

The hand-off went smoothly ... and the sabbatical was a special and memorable one!

Phil was hugely generous (and reliable!) ... and a friend I sorely miss.

\paragraph{From JS:} Phil was both a friend and a mentor. Phil was an important figure in the early stages of my academic studies and had quite an impact on the direction of my research interests. Phil was my first math professor in undergrad at the University of Ottawa: he was the one who taught me how to do proper proofs. I took many classes from him in undergrad, and Phil quickly became my favourite lecturer. Near the end of undergrad when I was trying to decide what area of research to study, Phil was the one who first mentioned category theory to me by introducing me to star-autonomous categories and Linear Logic. I credit Phil for why and how I became a category theorist. After my time at the University of Ottawa, Phil continued to be a mentor: he was always very supportive and happy to discuss my latest research interests or new results. But more importantly, Phil became a friend. We would often catch up at conferences or whenever I was passing through Ottawa again. We also both visited Edinburgh at the same time, and often went on hikes together. Phil's passing is a great loss for many communities, including the Canadian mathematics community and the category theory community. Phil was incredibly kind, a fantastic mentor, and a great friend to many of us. Condolences to his loved ones. May he rest in peace. 


\section{Introduction}

Traced monoidal categories were introduced by Joyal, Street, and Verity in \cite{joyal_street_verity_1996}. A traced monoidal category is a braided (though often symmetric in practice) monoidal category equipped with a \emph{trace operator}, which is an operator $\mathsf{Tr}$ that takes in a map of type $f: X \otimes A \to X \otimes B$, which has a same object $X$ in both the domain and codomain, and produces a map of type $\mathsf{Tr}(f): A \to B$ called the \emph{trace} of $f$, where we say that we \emph{traced out} the object $X$. Trace operators can often intuitively be thought of as providing feedback, where $\mathsf{Tr}(f)$ can be interpreted as looping back the output $X$ into the input $X$ of $f$ repeatedly. This is why in the graphical calculus for traced monoidal categories \cite[Sec 5]{selinger1999categorical}, one draws the trace of $f$ by connecting the output wire $X$ back into the input wire for $X$ creating a cycle. Thus, graphically speaking, having a trace operator allows for loops.   

Traced monoidal categories are now very well studied, being an important framework in both mathematics and computer science, with a wide variety of different kinds of examples. In \cite{abramsky1996retracing}, Abramsky characterized two kinds of trace monoidal categories: wave style traces and particle style traces. On the one hand, wave style trace can be understood as passing information in a global information wave throughout the system. Examples of wave style traces can be found in \cite[Ex 4.5]{haghverdi2010geometry}, which include \emph{compact closed categories} \cite[Prop 3.1]{joyal_street_verity_1996}, where the trace operator captures the (partial) trace of matrices and is fundamental in categorical quantum foundations \cite{abramsky2009abstract}, and \emph{traced Cartesian monoidal categories} \cite[Sec 6.4]{selinger2010survey}, which are traced monoidal categories on the product -- in this case trace operator captures the notion of feedback via fixed points \cite[Thm 3.1]{Hasegawa97recursionfrom}. On the other hand, particle style traces are interpreted by streams of particles or tokens flowing around a network. Examples of particle style traces can be found in \cite[Ex 4.6]{haghverdi2010geometry}. Often times, particle style traces can be expressed using partially defined infinite sum operations, and so particle style traces are also often referred to as sum style traces as well. Haghverdi and Scott were particularly interested in these sum style traces, which led to the development of \emph{traced unique decomposition categories} \cite{haghverdi2000categorical} to help better understand these infinite sum formulas for wave style traces. Haghverdi and Scott, along with Abramsky, in a series of papers \cite{haghverdi2000categorical,abramsky2002geometry,haghverdi2010geometry,haghverdi2005geometry,haghverdi2006categorical,haghverdi2010towards} used traced unique decomposition categories to give deeper analysis and recapture models of Girard's Geometry of Interaction \cite{girard1989towards}. 

Most models of particle style traces are in fact \emph{traced coCartesian monoidal categories} \cite[Sec 6.4]{selinger2010survey}, which are traced monoidal categories whose monoidal product is a coproduct, where in this case the trace operator captures the notion of feedback instead via \emph{iteration}. It turns out, thanks to the couniversal property of the coproduct, that to give a trace operator on a coproduct $+$ is equivalent to giving a \emph{parametrized iteration operator} \cite[Prop 6.8]{selinger2010survey}, which is an operator which takes in a map of type $f: X \to X +A$, and produces a map of type $\mathsf{Iter}(f): X \to A$, called the \emph{iteration} of $f$. As noted by Selinger \cite[Remark 6.10]{selinger2010survey}, parametrized iteration operators were first defined by C{\u{a}}z{\u{a}}nescu and Ungureanu in \cite{cuazuanescu1982again} as an algebraic theory with iteration. Parametrized iteration operators, and hence traced coCartesian monoidal categories, and other categorial formulations of iteration have been extensively studied by C{\u{a}}z{\u{a}}nescu and {\c{S}}tef{\u{a}}nescu \cite{cuazuanescu1994feedback,cuazuanescu1990towards}, Bloom and {\'E}sik \cite{bloom1993iteration}, Ad{\'a}mek and Milius and Velebil \cite{adamek2006iterative,adamek2011elgot, adamek2010equational} along with Goncharov and Rauch \cite{goncharov2016complete}, Jacobs \cite{jacobs2010coalgebraic}, and many others. 

One of the main examples of a traced coCartesian monoidal category is the category of sets and partial functions. Here, the coproduct is given by disjoint union, while the parametrized iteration operator and trace operator are given by the natural feedback operator. So for a partial function $f: X \to X + A$, $f(x)$ can be thought of landing in either $X$ or $A$, but not both, or is undefined. So $\mathsf{Iter}(f): X \to A$ is defined as follows. Step 1: take $x$ and check if $f(x)$ is defined, if not $\mathsf{Iter}(f)(x)$ is undefined. Step 2: if $f(x)$ is defined and if $f(x) \in A$, then $\mathsf{Iter}(f)(x) = f(x)$. Step 3: if $f(x) \in X$ instead, then go back to Step 1 and repeat this process starting from $f(x)$ instead of $x$. This process continues until it stops after a finite number of steps: if it does not stop then $\mathsf{Iter}(f)(x)$ is undefined. The trace operator is defined via a similar process, where essentially for a partial function $f: X + A \to X + B$, its trace continuously iterates the partial function if it continues to land in $X$ and only terminates if after a finite number of iterations it eventually lands in $B$. Thus the trace operator and the parametrized iteration operator are forms of \emph{guarded} iteration, that is, an iteration that continues until a certain necessary condition, called the guard, is satisfied.

The trace operator for partial functions is not only a particle style trace, but really a sum style trace, where the partial infinite sum operation is given by the join of partial functions. In fact, partial functions (more precisely partial injections) was one of the main motivating examples that led Haghverdi and Scott to studying traced unique decomposition categories. More precisely, it turns out that partial functions are in fact an example of Arbib and Manes' notion of a \emph{partially additive category} \cite{manes2012algebraic}, which is canonically a traced unique decomposition category that is also a traced coCartesian monoidal category \cite[Prop 3.1.4]{haghverdi2000categorical}. Partial functions are also the canonical example of a \emph{restriction category}. This paper is motivated by the desire to have a restriction category perspective and explanation for the trace operator on partial functions. More precisely, we are interested in studying trace operators for \emph{extensive} restriction categories. 

Restriction categories were introduced by the first named author and Lack in \cite{cockett2002restriction}, and they provide the categorial foundations for the theory of partiality. Briefly, a restriction category (Def \ref{def:restcat}) is a category equipped with a \emph{restriction operator} which associates to every map $f: A \to B$, an endomorphism $\overline{f}: A \to A$ called its \emph{restriction}, which captures the domain of definition of $f$. Thus maps in a restriction category can be thought of as partial in an appropriate sense. Restriction categories are now a well established field of research and have been used to capture various important notions related partiality. In \cite{cockett2007restriction}, the first named author and Lack studied extensive restriction categories (Def \ref{def:ext-rest-cat}) which, as the name suggests,  are the appropriate restriction category analogue of (total) extensive categories \cite{carboni1993introduction}. Briefly, an extensive restriction category is a restriction category with \emph{restriction coproducts} (Def \ref{def:restcoprod}), which are coproducts compatible with the restriction, and such that every map has a \emph{decision} (Def \ref{def:decision}). For a map $f: A \to B_1 + \hdots + B_n$, its decision $\langle f \rangle: A \to A + \hdots + A$ splits up the domain of definition of $f$ into the parts that fall into each $B_i$ respectively. Thus, in an extensive restriction category, the restriction coproduct $+$ should indeed be thought of as being a disjoint union. Importantly, this allows extensive restriction categories to have a matrix calculus \cite{cockett2007restriction}, that is, a map of type $A_1 + \hdots + A_n \to B_1 + \hdots + B_m$ can be represented as an $n \times m$ matrix, and where composition is given by matrix multiplication (Sec \ref{sec:matrix}). 

Since restriction coproducts are just coproducts in the usual sense, one can consider trace operators and parametrized iteration operators for restriction coproducts, and thus also consider \emph{traced} extensive restriction categories (Def \ref{def:traced-ext}). In particular, the matrix calculus allows us to interpret the trace operator as acting on $2 \times 2$ square matrices and the parametrized iteration operator as acting on $1 \times 2$ row matrices, as well as re-expressing their axioms using matrices (Lemma \ref{lem:ext-iter}). Thus a map of type $X \to X + A$ is actually a $1 \times 2$ row matrix consisting of an endomorphism ${X \to X}$ and a map $X \to A$. Thus, for an extensive restriction category, a parametrized iteration operator is really an operator that takes an endomorphism $X \to X$ and a map $X \to A$ which are ``disjoint'' in an appropriate sense, and produces a map of type ${X \to A}$. This observation was made by the first named author, D{\'\i}az-Bo{\"\i}ls, Gallagher, and Hrube{\v{s}} in \cite{cockett2012timed}, where they considered parametrized iteration operators for \emph{distributive} restriction categories with the objective of constructing categories of ``timed sets'' corresponding to complexity classes. This led to the definition of a \emph{Kleene wand} operator \cite[Sec 4]{cockett2012timed}.

However, Kleene wands were only briefly introduced in \cite{cockett2012timed}, and only in the context of distributive extensive restriction categories. These have \textit{restriction products} which not all extensive restriction categories have. Moreover, a full proof that a Kleene wand was equivalent to a trace operator, was not provided.  This omission is corrected here: we show that, for extensive restriction categories, Kleene wands do provide an equivalent way of describing parametrized iteration and hence a trace operator (Thm \ref{thm:ext-wand-trace}).  

While \cite{cockett2012timed} was successful in developing categories of timed sets for various complexity classes, Kleene wands themselves were not further developed and, indeed, seemed to have fallen somewhat into obscurity. With the recent interest of studying trace operators in restriction categories, it seems like a good time to dust off Kleene wands. Thus the objective of this paper is to revisit Kleene wands (Def \ref{def:wand}), and to study them as a stand-alone concept (Sec \ref{sec:itegories}).  The ``advantage'' of a Kleene wand is that it can be defined in a setting without coproducts, and therefore can be used to provide a more general notion of iteration.  
  
As explained above, a key feature that makes the trace operator for partial functions work is that the coproduct is given by disjoint union. To capture this in a setting without coproducts, we introduce the concept of an \textbf{interference relation} (Def \ref{def:interference}), which is a way of describing when maps in a restriction category are disjoint. In particular, maps that are disjoint via an interference relation do have disjoint domains of definition, so when one is defined the other is undefined and vice-versa. We call a restriction category with a chosen interference relation an \textbf{interference restriction category} (Def \ref{def:inter-rest-cat}). Then a Kleene wand $\wand$ on an interference restriction category takes in a map $f: X \to X$ and a map $g: X \to A$ which are disjoint and produces a new map $f \wand g: X \to A$. This new map $f \wand g$ should be interpreted as iterating $f$ until it falls in the domain of definition $g$, in other words, iterating $f$ until the guard $g$ is satisfies. Thus, in an interference restriction category, a Kleene wand $\wand$ is a form of guarded iteration and we call an interference restriction category with a Kleene wand an \textbf{itegory}. 

We also show how \textbf{disjoint joins} (Def \ref{def:disjoint-join}) can be defined in a general interference restriction category.  Then show that, in a setting with countable disjoint joins, there is a canonical Kleene wand given by taking the disjoint join of all iterations of $f$ composed with $g$ (Thm \ref{thm:countable-wand}). We also explain how in a \emph{classical} restriction category \cite{cockett2009boolean} -- a setting where we have complements (Def \ref{def:classical}) -- to give a Kleene wand is equivalent to giving a \textbf{Kleene upper-star} (Def \ref{def:star}), which is an operator on endomorphisms only. 

Every extensive restriction category has a canonical interference relation (Thm \ref{thm:ext-int}) given by \textbf{separating decisions} (Def \ref{def:dec-sep}). Intuitively, two maps in an extensive restriction category are disjoint if there is a decision which allows us to separate their domains of definition. This allows us to show that, for an extensive restriction category, every Kleene wand induces a parametrized iteration operator (Prop \ref{prop:wand-to-iter}) and, conversely, every iteration operator induces a Kleene wand (Prop \ref{prop:iter-to-wand}).  Furthermore, these constructions are inverses of each other. Thus, for an extensive restriction category, a Kleene wand and a parametrized iteration operator are essentially the same thing. Thus, for an extensive restriction category, being traced is equivalent to being an itegory (Thm \ref{thm:ext-wand-trace}).  We also note that there is a matrix construction on an interference restriction category which can be used to obtain an extensive restriction category (Prop \ref{prop:construction2}). Indeed, applying this matrix construction to an itegory we obtain a traced extensive restriction category (Prop \ref{prop:matrix-ext-wand}). 

It is worth mentioning that developing interference relations and studying Kleene wand independently, brought to light an important subtlety of Kleene wands which had been overlooked. In \cite{cockett2012timed}, Kleene wands were only considered for the \emph{maximal} interference relation (Lemma \ref{lemma:two-interference}), which says that two maps are disjoint if, where one is defined, the other is not, and vice-versa. However, for an extensive restriction category, the canonical interference relation may not necessarily be the maximal interference relation, and it is for the canonical one that correspondence between parametrized iteration operators and Kleene wands hold. Fortunately, the main restriction categories of interest in \cite{cockett2012timed} were distributive extensive restriction categories with joins and for these the canonical interference relation and the maximal interference relation coincide. Thus, in this case, Kleene wands for the maximal interference relation do indeed correspond to parametrized iteration operators. In this paper, however, we study trace operators for arbitrary extensive restriction categories, so the axioms of a Kleene wand had to be reworked slightly. 

\paragraph{Outline and Contributions:} In Sec \ref{sec:rest}, we review the basics of restriction category theory and introduce our two main running examples: the category of partial functions and the category of partial recursive functions. 

In Sec \ref{sec:interference}, we introduce the concept of an interference relation (Def \ref{def:interference}) and of an interference restriction category (Def \ref{def:inter-rest-cat}). We show how giving an interference relation can be defined on parallel maps or equivalently on restriction idempotents (Prop \ref{prop:equiv-inter}). We prove various useful facts about interference relations (Lemma \ref{lem:interference1}), and also show that there is always a maximal and a minimal interference relation (Lemma \ref{lemma:two-interference}). 

In Sec \ref{sec:disjoint-joins}, we adapt the notion of disjoint joins to interference restriction categories (Def \ref{def:disjoint-join}). We explain what it means for an interference restriction category to have all finite disjoint joins (Lemma \ref{lemma:binary-finite}) and give a construction which embeds an interference restriction category into a disjoint interference restriction category (Prop \ref{prop:construction1}). 

In Sec \ref{sec:itegories}, we introduce Kleene wands for interference restriction categories, generalizing the definition slightly from \cite{cockett2012timed} from the maximal interference relation to any arbitrary interference relation. As mentioned above, we call interference restriction categories with disjoint joins and a Kleene wand an itegory. We provide various interesting identities that all Kleene wands satisfy (Lemma \ref{lem:strong-wand-alt}, \ref{lem:iteration}, \& \ref{lem:wand}). We also consider the notion of uniformity for Kleene wands (Def \ref{def:uni-wand}) and define the notion of a (strongly) inductive Kleene wand (Def \ref{def:ind-wand}). We note that an interference restriction category can have at most one inductive Kleene wand (Lemma \ref{lemma:induc-less}.(\ref{lemma:induc-less.ii})). Indeed, in the presence of countable disjoint joins, there is a canonical strongly inductive Kleene wand given by the expected join of iterations formula (Thm \ref{thm:countable-wand}). 

In Sec \ref{sec:kleene-star}, we introduce classical interference restriction categories (Def \ref{def:classical}) and Kleene upper-stars (Def \ref{def:star}). We then show that for a classical interference restriction category, there is a bijective correspondence between Kleene wands and Kleene upper-stars (Thm \ref{thm:star=wand}). 

In Sec \ref{sec:extensive}, we review extensive restriction categories and decisions. We then introduce the notion of being decision disjoint (Def \ref{def:perp-extensive}) and show that this induces an interference relation on an extensive restriction category, in other words, an extensive restriction category is an interference restriction category with binary disjoint joins (Thm \ref{thm:ext-int}). 

In Sec \ref{sec:matrix}, we review the matrix representation of extensive restriction categories and how it relates to decision disjointness. We also introduce a matrix construction on a finitely disjoint interference restriction category (Def \ref{def:matrix-const}) and show that this matrix construction is an extensive restriction category (Prop \ref{prop:construction2}). 

In Sec \ref{sec:trace}, we first review the correspondence between trace operators and parametrized iterations operators for coproducts. We express the axioms of a parametrized iteration operator and a trace operator using the matrix notation of an extensive restriction category (Lemma \ref{lem:ext-iter}). This leads to the main result of the paper, which is that, for extensive restriction categories, there is a bijective correspondence between parametrized iteration operators (equivalently trace operators) and Kleene wands (Thm \ref{thm:ext-wand-trace}). In the classical setting we extend this to including Kleene upper-stars (Prop \ref{prop:trace-star}). We conclude by explaining how computing traces of matrices of any size in a traced extensive restriction category reduces to computing traces of $2 \times 2$ matrices, which allows us to show that applying the matrix construction to an itegory results in a traced extensive restriction category (Prop \ref{prop:matrix-ext-wand}). 

\paragraph{Future Directions:} In future work, it would be interesting to study how Kleene wands connect up with other categorifications of iteration, such as iteration theories \cite{bloom1993iteration}, Elgot theories \cite{adamek2011elgot}, and Elgot monads \cite{adamek2010equational, goncharov2016complete}. Moreover, it would also be interesting to consider what happens to the Kleene wand when we apply the $\mathsf{INT}$-construction \cite[Sec 4]{joyal_street_verity_1996} or $\mathsf{GOI}$-construction \cite[Sec 3]{abramsky1996retracing} (which are isomorphic constructions \cite[Prop 2.3.6]{haghverdi2000categorical}) on a traced extensive restriction category to obtain a compact closed category. This would then allow us to relate Kleene wands to the Geometry of Interaction (GOI), either via GOI situations \cite[Def 4.1]{abramsky2002geometry} or via GOI interpretations \cite[Def 15]{haghverdi2006categorical}. On the other hand, Kleene wands can naturally be interpreted by the basic programming control structure:
\[ f \wand g := {\sf until}~~\overline{g}~~{\sf do}~~f~~{\sf end};~~ g\]
where $\overline{g}$ is interpreted as the test of whether a value lies in the domain of $g$.  
This suggests that the Kleene wand could be used as a primitive to interpret imperative 
style programs and flow diagrams which use ``looping'' constructs as above.  Thus, a natural 
question is how itegories relate to other models of computability. In particular, it would be interesting to revisit Kleene wands in Turing categories \cite{turing-categories,cockett2012timed} which are based on partial combinatory algebras. 

\paragraph{Conventions:} For an arbitrary category $\mathbb{X}$, we will denote objects using capital letters $A$, $B$, $C$ etc., homsets will be denoted as $\mathbb{X}(A,B)$ and maps will be denoted by lowercase letters $f,g,h, etc. \in \mathbb{X}(A,B)$. Arbitrary maps will be denoted using an arrow $f: A \to B$, identity maps as $1_A: A \to A$, and, for composition, we will use \emph{diagrammatic} notation
    (as opposed to \emph{applicative} notation), that is, the composition of $f: A \to B$ followed by $g: B \to C$ is denoted as $fg: A \to C$ (rather than $g \circ f$). 

\section{Restriction Categories}\label{sec:rest}

In this background section, we review restriction categories and the necessary basic theory for the story of this paper such as restriction idempotents, the canonical partial order on maps, and restriction zero maps, as well as some main examples. For a more in-depth introduction to restriction categories, we invite the reader to see \cite{cockett2002restriction,cockett2003restriction, cockett2007restriction, cockett2009boolean, cockett2014restriction}. 

\begin{definition}\label{def:restcat} For a category $\mathbb{X}$, a \textbf{restriction operator} $\overline{(\phantom{f})}$ \cite[Sec 2.1.1]{cockett2002restriction} is a family of operators:
\[ 
\overline{(\phantom{f})}: \mathbb{X}(A,B) \to \mathbb{X}(A,A) ~~~~~~~~~ \begin{matrix}[c] \infer{\overline{f}: A \to A}{f: A \to B} \end{matrix}
 \]
such that the following four axioms hold: 
\begin{enumerate}[{\bf [R.1]}]
\item $\overline{f}f = f$, for all $f: A \to B$;
\item $\overline{f} \overline{g} = \overline{g} \overline{f}$, for all $f: A \to B$ and $g: A \to C$;
\item $\overline{\overline{g}f} = \overline{g} \overline{f}$, for all $f: A \to B$ and $g: A \to C$;
\item $f\overline{g} =  \overline{fg} f$, for all $f: A \to B$ and $g: B \to C$. 
\end{enumerate}
For a map $f: A \to B$, the map ${\overline{f}: A \to A}$ is called the \textbf{restriction of $f$}. A \textbf{restriction category} is a category equipped with a restriction operator. Furthermore, in a restriction category: 
\begin{enumerate}[{\em (i)}]
\item A \textbf{total} map \cite[Sec 2.1.2]{cockett2002restriction} is a map ${f: A \to B}$ such that $\overline{f} = 1_A$;
\item For parallel maps $f: A \to B$ and $g: A \to B$ we say that $f \leq g$ if $\overline{f}g= f$ \cite[Sec 2.1.4]{cockett2002restriction}; 
\item A \textbf{restriction idempotent} \cite[Sec 2.3.3]{cockett2002restriction} is a map $e: A \to A$ such that $\overline{e} = e$. For every object $A \in \mathbb{X}$, we denote by $\mathcal{O}(A)$ the set of restriction idempotents of type $A \to A$. 
 \end{enumerate}  
\end{definition}

The main intuition for a restriction category is that maps $f$ are partial and the restriction $\overline{f}$ captures the domain of definition of $f$. As we will see in Ex \ref{ex:PAR} below, the standard intuition for $\overline{f}$ is checking if $x \in A$ is in the domain of $f$ or not, that is, $\overline{f}(x)=x$ when $f(x)$ is defined and $\overline{f}(x)$ is undefined when $f(x)$ is undefined. Thus total maps are interpreted as maps which are everywhere defined. 

The intuition for the partial order $\leq$ is that $f \leq g$ means whenever $f$ is defined, $g$ is also defined and is equal to $f$. Moreover, this partial order enriches every restriction category $\mathbb{X}$ over posets.  Thus, each homset $(\mathbb{X}(A,B), \leq)$ is a poset and composition is monotone, that is if $f \leq g$ then for suitable maps $h$ and $k$, $h f k \leq h g k$. Also observe that $f \leq g$ implies that $\overline{f} \leq \overline{g}$ (though the converse is not necessarily true). 

Restriction idempotents correspond to the domain of definition of maps, and thus $\mathcal{O}(A)$ may be interpreted as certain special subsets of the object $A$.  The name restriction idempotent is justified since a restriction idempotent $e$ is indeed idempotent, $e e= e$. In particular, every restriction $\overline{f}$ is a restriction idempotent, so we have $\overline{\overline{f}} = \overline{f}$ and $\overline{f}~\overline{f}=\overline{f}$. Furthermore, $\mathcal{O}(A)$ has a natural bounded meet-semilattice structure, where the meet is given by composition, $e_0 \wedge e_1 = e_0 e_1$, and the top element is the identity $1_A$. The induced partial order on $\mathcal{O}(A)$ on this lattice structure is the same as the restriction order.  Thus, for restriction idempotents, $e_0 \leq e_1$ if and only if $e_0 e_1 = e_0$, and also $e \leq 1_A$. 

A key notion for the story of this paper is restriction zero maps: these are intuitively maps which are nowhere defined. Categorically speaking, a restriction zero map is a zero map in the usual sense whose restriction is also a zero map.  Recall a category has zero maps if for every pair of objects $A$ and $B$ there is a map $0: A \to B$ which satisfies the annihilation property, that is, $f0=0=0f$ for all maps $f$. 

\begin{definition} A restriction category is said to have \textbf{restriction zeroes} \cite[Sec 2.2]{cockett2007restriction} if it has zero maps ${0: A \to B}$ such that $\overline{0} =0$.
\end{definition}

Observe that for any map $f$, $0 \leq f$. Thus restriction zero maps are the bottom elements for each homset. Moreover, the restriction zero map of type $0: A \to A$ is a restriction idempotent, and is, therefore, the bottom element of $\mathcal{O}(A)$. 

We conclude this background section by providing two running examples of restriction categories. For a list of many other examples of restriction categories, see \cite[Sec 2.1.3]{cockett2002restriction}.

\begin{example}\label{ex:PAR} \normalfont Let $\mathsf{PAR}$ be the category whose objects are sets and whose maps are partial functions between sets. Then $\mathsf{PAR}$ is a restriction category \cite[Ex 2.1.3.1]{cockett2002restriction} where the restriction of a partial function $f: X \to Y$ is the partial function ${\overline{f}: X \to X}$ defined as follows: 
\[ \overline{f}(x) = \begin{cases} x & \text{if } f(x) \downarrow \\
\uparrow & \text{if } f(x) \uparrow \end{cases} \]
where $\downarrow$ means defined and $\uparrow$ means undefined. The total maps in $\mathsf{PAR}$ are precisely the functions that are everywhere defined -- thus the subcategory of total maps of $\mathsf{PAR}$ is simply the category of sets and functions. For partial functions $f: X \to Y$ and $g: X \to Y$,  we have that $f \leq g$ if $g(x)=f(x)$ whenever $f(x) \downarrow$. The restriction idempotents for a set $X$ are in bijective correspondence to subsets of $X$ \cite[Ex 2.2.1]{cockett2012range}. In particular, for every subset $U \subseteq X$, define the partial function $e_U : X \to X$ as follows: 
 \begin{align*}
  e_U(x) \colon = \begin{cases} 
   x & \text{ if } x \in U \\
   \uparrow & \text{ if } x \notin U 
  \end{cases}
   \end{align*}
 Then clearly $\overline{e_U} = e_U$, so $e_U$ is a restriction idempotent, and every restriction idempotent is of this form. In particular, note that $\overline{f} = e_{\mathsf{dom}(f)}$. So $\mathcal{O}(X)$ is isomorphic to the powerset $\mathcal{P}(X)$ of $X$, $\mathcal{O}(X) \cong \mathcal{P}(X)$. Lastly, $\mathsf{PAR}$ has restriction zero maps where $0: X \to Y$ is the partial function which is nowhere defined, i.e., $0(x) \uparrow$ for all $x \in X$. 
\end{example}

\begin{example}\label{ex:REC} \normalfont Let $\mathbb{N}$ be the set of natural numbers, and let $\mathsf{REC}$ be the category whose objects are finite powers $\mathbb{N}^n$ and where a map $f: \mathbb{N}^n \to \mathbb{N}^m$ is a partial function which is an $m$-tuple $f = \langle f_1, \hdots, f_n \rangle$ of partial recursive functions in $n$ variables $f_i: \mathbb{N}^n \to \mathbb{N}$. Then $\mathsf{REC}$ is a restriction category \cite[Ex 2.1.(ii)]{cockett2014restriction} with the same restriction operator as $\mathsf{PAR}$ in the previous example, where note that if $f$ is a tuple of partial recursive functions, then $\overline{f}$ will also be a tuple of partial recursive functions. In other words, $\mathsf{REC}$ is a sub-restriction category of $\mathsf{PAR}$. The total maps in $\mathsf{REC}$ correspond to tuples of total recursive functions. The partial order $\leq$ is again the same as $\mathsf{PAR}$. On the other hand, the restriction idempotents for $\mathbb{N}^n$ in $\mathsf{REC}$ correspond to the recursive enumerable subsets of $\mathbb{N}^n$ \cite[Ex 2.2.3]{cockett2012range}. As such, if $U \subseteq \mathbb{N}^n$ is recursively enumerable, then $e_U: \mathbb{N}^n \to \mathbb{N}^n$ (as defined in the previous example) will be a map in $\mathsf{REC}$, and thus a restriction idempotent in $\mathsf{REC}$. Lastly, $\mathsf{REC}$ also has restriction zero maps defined in the same way as in $\mathsf{PAR}$ since the nowhere defined partial function is recursive. It is also worth mentioning that $\mathsf{REC}$ is the canonical example of a \textbf{Turing category} \cite[Ex 3.2.1]{turing-categories}, which briefly is a restriction category with an internal partial combinatory algebra satisfying a universal property which encodes computability of maps. 
\end{example}

\section{Interference}\label{sec:interference}
In this section we introduce the concept of an interference relation, which is a relation on maps of restriction category with restriction zeroes which tells us when two maps that have the same domain should be viewed as being formally disjoint. Formal disjointness must certainly imply that where one map is defined, the other is undefined, but could possibly also ask for more. This concept of disjointness plays key a role in the definition of an iteration operator, since in particular it is used to determine the termination condition of iterations. 

\begin{definition}\label{def:interference} For a restriction category $\mathbb{X}$ with restriction zeroes, an \textbf{interference relation} $\perp$ on maps is a family of relations (indexed by triples of objects $A, B, C \in \mathbb{X}$) on homsets $\perp^A_{B,C} \subseteq \mathbb{X}(A,B) \times \mathbb{X}(A,C)$, where we write $f \perp g$ when $(f,g) \in \perp^A_{B,C}$ and say that $f$ and $g$ are $\perp$\textbf{-disjoint}, and such that the following axioms hold: 
\begin{enumerate}[{\bf [{$\boldsymbol{\perp}$}.1]}]
\setcounter{enumi}{-1}
\item Zero: $1_A \perp 0$; 
\item Symmetry: If $f \perp g$, then $g \perp f$; 
\item Anti-reflexive: If $f \perp f$, then $f = 0$; 
\item Downward closed: If $f^\prime \perp g^\prime$, $f \leq f^\prime$, and $g \leq g^\prime$, then $f \perp g$; 
\item Composition: If $f \perp g$, then for all suitable maps $h$, $k$, and $k^\prime$, we have that $hfk \perp hgk^\prime$;
\item Restriction: If $f \perp g$, then $\overline{f} \perp \overline{g}$. 
\end{enumerate}
\end{definition}

It turns out that for an interference relation $\perp$, maps are $\perp$-disjoint if and only if their restrictions are $\perp$-disjoint. As such this implies that an interference relation induces a \textbf{restrictional coherence} \cite[Def 6.1]{cockett2009boolean} on parallel maps.

\begin{lemma}\label{lem:inter-rest} For an interference relation $\perp$, $f \perp g$ if and only if $\overline{f} \perp \overline{g}$.
\end{lemma}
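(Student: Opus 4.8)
The forward implication is immediate: it is exactly axiom $[\perp.5]$, so there is nothing to prove there. All the content is in the converse, and my plan is to recover $f$ and $g$ from their restrictions using the restriction identity $[\mathbf{R.1}]$ and then transport $\perp$-disjointness along composition.

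Concretely, suppose $\overline{f} \perp \overline{g}$, where $f : A \to B$ and $g : A \to C$, so that $\overline{f}, \overline{g} : A \to A$. The key observation is that $[\mathbf{R.1}]$ gives $\overline{f}\,f = f$ and $\overline{g}\,g = g$, so $f$ and $g$ are obtained from $\overline{f}$ and $\overline{g}$ purely by post-composition. I would then apply the composition axiom $[\perp.4]$ to $\overline{f} \perp \overline{g}$, taking the pre-composition map $h$ to be $1_A$ and post-composing with $k = f$ on the left side and $k' = g$ on the right side. This yields $1_A\,\overline{f}\,f \;\perp\; 1_A\,\overline{g}\,g$, i.e. $\overline{f}\,f \perp \overline{g}\,g$, which by $[\mathbf{R.1}]$ is precisely $f \perp g$.

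The only point that needs care — and it is really a sanity check rather than an obstacle — is that the two sides of the relation are post-composed with \emph{different} maps ($f$ on the left, $g$ on the right). This is legitimate precisely because $[\perp.4]$ is stated with two independent post-composition maps $k$ and $k'$; had the axiom forced a single common post-composition map, the argument would not go through, so it is worth highlighting that this flexibility in $[\perp.4]$ is exactly what makes the equivalence hold. No appeal to $[\perp.0]$–$[\perp.3]$ is needed for this lemma.

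Thus the proof is short: one direction is the axiom $[\perp.5]$ verbatim, and the converse is a single application of $[\perp.4]$ followed by $[\mathbf{R.1}]$. I expect the write-up to be two or three lines.
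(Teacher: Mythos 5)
Your proof is correct and coincides with the paper's own argument: the forward direction is \textbf{[$\boldsymbol{\perp}$.5]}, and the converse applies \textbf{[$\boldsymbol{\perp}$.4]} to $\overline{f} \perp \overline{g}$ (post-composing by $f$ and $g$ respectively) and then uses \textbf{[R.1]} to identify $\overline{f}f \perp \overline{g}g$ with $f \perp g$. Your remark that the axiom's allowance of two \emph{independent} post-composition maps $k$ and $k^\prime$ is exactly what makes this work is a fair observation, but the substance of the proof is the same as the paper's.
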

\begin{proof} The $\Rightarrow$ direction is simply {\bf [{$\boldsymbol{\perp}$}.5]}. For the $\Leftarrow$ direction, suppose that $\overline{f} \perp \overline{g}$. Applying {\bf [{$\boldsymbol{\perp}$}.4]}, we get that $\overline{f} f \perp \overline{g} g$, which by \textbf{[R.1]} gives us that $f \perp g$, as desired. 
\end{proof}

Thus, an interference relation is completely determined by its behaviour on restriction idempotents. This means that an interference relation can equivalently be defined as a family of relations on restriction idempotents. 

\begin{definition} For a restriction category $\mathbb{X}$ with restriction zero maps, a \textbf{restrictional interference relation} $\perp$ is a family of relations (indexed by the objects of $\mathbb{X}$) between restriction idempotents $\perp_A^\mathcal{O} \subseteq \mathcal{O}(A) \times \mathcal{O}(A)$, where again we write $e \perp e^\prime$ if $(e,e^\prime) \in \perp_A^\mathcal{O}$, and such that the following axioms hold: 
\begin{enumerate}[{\bf [{$\mathcal{O}\!\boldsymbol{\perp}$}.1]}]
\setcounter{enumi}{-1}
\item Zero: $1_A \perp 0$; 
\item Symmetry: If $e \perp e^\prime$, then $e^\prime \perp e$; 
\item Anti-Reflexive: If $e \perp e$, then $e = 0$; 
\item Downward Closed: If $e^\prime_1 \perp e^\prime_2$, $e_1 \leq e^\prime_1$, and $e_2 \leq e_2^\prime$, then $e_1 \perp e_2$; 
\item Pre-Composition: If $e \perp e^\prime$ in ${\cal O}(A)$, then for any $h$ with codomain $A$, $\overline{he} \perp \overline{he^\prime}$. 
\end{enumerate}
\end{definition}

\begin{proposition}\label{prop:equiv-inter} For a restriction category with restriction zero maps, there is a bijective correspondence between interference relations on maps and restrictional interference relations. Explicitly,  
\begin{enumerate}[{\em (i)}]
\item Given an interference relation $\perp$, define the restrictional interference relation $\perp^\mathcal{O}$ as $e \perp^\mathcal{O} e^\prime$ if and only if $e \perp e^\prime$; 
\item Given a restrictional interference relation $\perp^\mathcal{O}$, define the interference relation $\perp$ as $f \perp  g$ if and only if $\overline{f} \perp^\mathcal{O} \overline{g}$. 
\end{enumerate}
\end{proposition}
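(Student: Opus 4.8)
The plan is to verify three things: that construction (i) sends an interference relation to a genuine restrictional interference relation, that construction (ii) sends a restrictional interference relation to a genuine interference relation, and that the two assignments are mutually inverse. For (i), restricting $\perp$ to restriction idempotents, the restrictional axioms are almost immediate specializations of the map axioms: Zero, Symmetry, Anti-Reflexive, and Downward Closed for $\perp^\mathcal{O}$ follow directly from the corresponding axioms for $\perp$ (noting that for idempotents the restriction order coincides with $\leq$). The only axiom requiring a small argument is Pre-Composition: given $e \perp e'$ and a map $h$ with codomain $A$, the Composition axiom (with the two right factors taken to be identities) gives $he \perp he'$, and then the Restriction axiom gives $\overline{he} \perp \overline{he'}$, which is exactly Pre-Composition.

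For (ii), setting $f \perp g$ to mean $\overline{f} \perp^\mathcal{O} \overline{g}$, most of the map axioms transfer routinely. Zero uses $\overline{1_A}=1_A$ and $\overline{0}=0$; Symmetry and Anti-Reflexive follow from the idempotent versions (for Anti-Reflexive, $\overline{f} \perp^\mathcal{O} \overline{f}$ forces $\overline{f}=0$, hence $f=\overline{f}f=0$ by \textbf{[R.1]}); Downward Closed uses that $f \leq f'$ implies $\overline{f} \leq \overline{f'}$; and Restriction is immediate since $\overline{\overline{f}}=\overline{f}$.

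The main obstacle is verifying the Composition axiom for (ii): from $\overline{f} \perp^\mathcal{O} \overline{g}$ we must deduce $\overline{hfk} \perp^\mathcal{O} \overline{hgk'}$. Here I would use two standard restriction identities. First, right-composition only shrinks the restriction, so $\overline{hfk} \leq \overline{hf}$ and $\overline{hgk'} \leq \overline{hg}$ (this follows from $\overline{ab}=\overline{a}\,\overline{ab}$, a consequence of \textbf{[R.1]} and \textbf{[R.3]}). Second, $\overline{hf}=\overline{h\,\overline{f}}$ and $\overline{hg}=\overline{h\,\overline{g}}$ (obtained from \textbf{[R.4]}, \textbf{[R.3]}, and the previous identity). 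Applying Pre-Composition to $\overline{f} \perp^\mathcal{O} \overline{g}$ with the map $h$ yields $\overline{h\,\overline{f}} \perp^\mathcal{O} \overline{h\,\overline{g}}$, that is $\overline{hf} \perp^\mathcal{O} \overline{hg}$; then Downward Closed, together with $\overline{hfk} \leq \overline{hf}$ and $\overline{hgk'} \leq \overline{hg}$, gives $\overline{hfk} \perp^\mathcal{O} \overline{hgk'}$, as required.

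Finally, for mutual inverseness: starting from an interference relation $\perp$, forming $\perp^\mathcal{O}$ and then the induced map relation $\perp'$ gives $f \perp' g$ iff $\overline{f} \perp \overline{g}$ iff $f \perp g$, where the last equivalence is Lemma \ref{lem:inter-rest}; hence $\perp'=\perp$. Starting from a restrictional interference relation $\perp^\mathcal{O}$, forming the map relation $\perp$ and restricting back to idempotents gives $e \perp e'$ iff $\overline{e} \perp^\mathcal{O} \overline{e'}$ iff $e \perp^\mathcal{O} e'$, since $\overline{e}=e$ for idempotents; hence we recover $\perp^\mathcal{O}$. This establishes the claimed bijection, with the Composition axiom being the only genuinely nontrivial verification.
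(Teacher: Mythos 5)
Your proof is correct and follows essentially the same route as the paper's: both directions are verified axiom-by-axiom, with the only nontrivial steps being Pre-Composition for (i) (via the Composition axiom followed by the Restriction axiom) and the Composition axiom for (ii) (via Pre-Composition, the identity $\overline{hf}=\overline{h\,\overline{f}}$, and Downward Closed applied to $\overline{hfk}\leq\overline{hf}$), and mutual inverseness is handled exactly as in the paper, using idempotency of restriction on idempotents in one direction and Lemma \ref{lem:inter-rest} in the other.
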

\begin{proof} Starting with an interference relation $\perp$, clearly by definition $\perp^\mathcal{O}$ satisfies {\bf [{$\mathcal{O}\!\boldsymbol{\perp}$}.0]} to {\bf [{$\mathcal{O}\!\boldsymbol{\perp}$}.3]}, since they are the same as {\bf [{$\boldsymbol{\perp}$}.0]} to {\bf [{$\boldsymbol{\perp}$}.3]}. It remains to explain why {\bf [{$\mathcal{O}\!\boldsymbol{\perp}$}.4]} holds. So suppose that $e\perp^\mathcal{O} e^\prime$, meaning that $e \perp e^\prime$. Then by {\bf [{$\boldsymbol{\perp}$}.4]}, we have that $he \perp he^\prime$, and then by {\bf [{$\boldsymbol{\perp}$}.5]} that $\overline{he} \perp \overline{he^\prime}$, thus  $\overline{he} \perp^\mathcal{O} \overline{he^\prime}$. So we conclude that $\perp^\mathcal{O}$ is indeed a restrictional interference relation. 

Conversely, starting with a restrictional interference relation $\perp^\mathcal{O}$, we need to show that $\perp$ satisfies the six axioms {\bf [{$\boldsymbol{\perp}$}.0]} to {\bf [{$\boldsymbol{\perp}$}.5]}. 

\begin{enumerate}[{\bf [{$\boldsymbol{\perp}$}.1]}]
\setcounter{enumi}{-1}
\item By {\bf [{$\mathcal{O}\!\boldsymbol{\perp}$}.0]}, $1_A \perp^\mathcal{O} 0$. However, since $1_A$ and $0$ are both restriction idempotents, we also have that $\overline{1_A} \perp^\mathcal{O} \overline{0}$, which tells us that 
$1_A \perp  0$. 
\item Suppose that $f \perp  g$, meaning $\overline{f} \perp^\mathcal{O} \overline{g}$. By {\bf [{$\mathcal{O}\!\boldsymbol{\perp}$}.1]}, this means that $\overline{g} \perp^\mathcal{O} \overline{f}$, so we have that 
$g \perp  f$. 
\item Suppose that $f \perp  f$, that is $\overline{f} \perp^\mathcal{O} \overline{f}$. By {\bf [{$\mathcal{O}\!\boldsymbol{\perp}$}.2]}, this means that $\overline{f}=0$. However, by \textbf{[R.1]}, $\overline{f}=0$ implies that $f=0$. 
\item Suppose that $f^\prime \perp  g^\prime$, meaning $\overline{f^\prime} \perp^\mathcal{O} \overline{g^\prime}$, and that $f \leq f^\prime$ and $g \leq g^\prime$. This implies that $\overline{f} \leq \overline{f^\prime}$ and $\overline{g} \leq \overline{g^\prime}$ as well. So by {\bf [{$\mathcal{O}\!\boldsymbol{\perp}$}.3]}, we have that $\overline{f} \perp^\mathcal{O} \overline{g}$, and 
so $f \perp  g$. 
\item First recall that in a restriction category, we have the identity $\overline{fg} = \overline{f \overline{g}}$ \cite[Lemma 2.1.(iii)]{cockett2002restriction}. Now suppose that $f \perp  g$, meaning $\overline{f} \perp^\mathcal{O} \overline{g}$. Then for any suitable $h$, by {\bf [{$\mathcal{O}\!\boldsymbol{\perp}$}.4]} we have that $\overline{h\overline{f}} \perp^\mathcal{O} \overline{h\overline{g}}$. However, by the previous mentioned identity, we may rewrite this as $\overline{hf} \perp^\mathcal{O} \overline{hg}$. Now observe that for any suitable $k$ and $k^\prime$, $\overline{hfk} \leq \overline{hf}$ and $\overline{hgk^\prime} \leq \overline{hg}$. Then by {\bf [{$\mathcal{O}\!\boldsymbol{\perp}$}.3]}, we have that $\overline{hfk} \perp^\mathcal{O} \overline{hgk^\prime}$. As such, we have that $hfk \perp  hgk^\prime$. 
\item  Suppose that $f \perp  g$, meaning $\overline{f} \perp^\mathcal{O} \overline{g}$. However, since $\overline{f}$ and $\overline{g}$ are restriction idempotents, we can rewrite this as $\overline{\overline{f}}\perp^\mathcal{O} \overline{\overline{g}}$, meaning that $\overline{f} \perp  \overline{g}$. 
\end{enumerate}
So $\perp$ is indeed an interference relation on maps. 

It remains to explain why these constructions are inverses of each other. Clearly, starting with a restrictional interference relation $\perp^\mathcal{O}$, building its associated interference relation on maps, 
$\perp  = \{ (f,g) \vert~ \overline{f} \perp^\mathcal{O} \overline{g} \}$ and then taking the induced restrictional interference relation, $\{ (\overline{f},\overline{g}) \vert~ f \perp  g \}$ gives us back the starting restrictional interference relation $\perp^\mathcal{O}$. Conversely, starting with an interference relation on maps $\perp$, building its associated restrictional interference relation $\perp^\mathcal{O}$, and then taking the induced interference relation on maps results back to $\perp$ by Lemma \ref{lem:inter-rest}. 

\end{proof}
 
From now on, we will make no distinction between an interference relation and its corresponding restrictional interference relation and shall denote them both by $\perp$.   Here are some other basic yet important properties of interference relations:

\begin{lemma}\label{lem:interference1} Let $\perp$ be an interference relation on a restriction category $\mathbb{X}$ with restriction zeroes. Then:  
\begin{enumerate}[{\em (i)}]
\item \label{lem:interference1.0} For all maps $f$, $f \perp 0$; 
\item \label{lem:interference1.i} If $f \perp g$ then $\overline{f}g=0$, $\overline{g}f=0$, and $\overline{f}~\overline{g} = 0$;
\item \label{lem:interference1.total} If $f$ is total, then $f \perp g$ if and only if $g=0$. 
\item \label{lem:interference1.iv} If $e$ is a restriction idempotent, then $e \perp f$ if and only if $e \perp \overline{f}$.
\item \label{lem:interference1.v} If $f \perp g$, then $f \perp \overline{g}$ and $\overline{f} \perp g$. 
\end{enumerate}
\end{lemma}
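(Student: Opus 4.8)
The plan is to reduce every item to Lemma~\ref{lem:inter-rest} together with the two ``shrinking'' axioms, anti-reflexivity {\bf [{$\boldsymbol{\perp}$}.2]} and downward closure {\bf [{$\boldsymbol{\perp}$}.3]}, and the basic restriction identities \textbf{[R.1]} and \textbf{[R.3]}. The heart of the argument is part~(\ref{lem:interference1.i}); once it is in hand the remaining items are short.

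For~(\ref{lem:interference1.0}), I would start from the Zero axiom {\bf [{$\boldsymbol{\perp}$}.0]}, which gives $1_A \perp 0$ for the zero endomorphism $0 \colon A \to A$. Since $\overline{f} \leq 1_A$ for any $f \colon A \to B$, downward closure {\bf [{$\boldsymbol{\perp}$}.3]} (with $0 \leq 0$) yields $\overline{f} \perp 0$, and then Lemma~\ref{lem:inter-rest} lifts this to $f \perp 0$, using $\overline{0} = 0$ to identify $\overline{0_{A\to B}}$ with the zero endomorphism.

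The key step is~(\ref{lem:interference1.i}). Given $f \perp g$, apply {\bf [{$\boldsymbol{\perp}$}.5]} to get $\overline{f} \perp \overline{g}$. The trick is then to observe that the meet $\overline{f}\,\overline{g}$ sits below both factors, $\overline{f}\,\overline{g} \leq \overline{f}$ and $\overline{f}\,\overline{g} \leq \overline{g}$ in $\mathcal{O}(A)$, so downward closure {\bf [{$\boldsymbol{\perp}$}.3]} forces $\overline{f}\,\overline{g} \perp \overline{f}\,\overline{g}$, and anti-reflexivity {\bf [{$\boldsymbol{\perp}$}.2]} then collapses this to $\overline{f}\,\overline{g} = 0$. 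To recover $\overline{f}g = 0$ I would use \textbf{[R.3]} in the form $\overline{\overline{f}g} = \overline{f}\,\overline{g} = 0$, whence \textbf{[R.1]} gives $\overline{f}g = \overline{\overline{f}g}\,\overline{f}g = 0$; the identity $\overline{g}f = 0$ follows symmetrically, after first invoking {\bf [{$\boldsymbol{\perp}$}.1]} to get $g \perp f$. This translation between $\overline{f}\,\overline{g}=0$ and $\overline{f}g=0$ is the one place where a little care with the restriction identities is needed, so I expect it to be the main obstacle, albeit a minor one.

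Finally, the last three items are essentially corollaries of Lemma~\ref{lem:inter-rest}. For~(\ref{lem:interference1.total}), the $\Leftarrow$ direction is exactly~(\ref{lem:interference1.0}), while for $\Rightarrow$ I would use~(\ref{lem:interference1.i}) to obtain $\overline{f}g = 0$ and then substitute $\overline{f} = 1_A$ to conclude $g = 0$. For~(\ref{lem:interference1.iv}), Lemma~\ref{lem:inter-rest} gives $e \perp f \iff \overline{e} \perp \overline{f}$, and $\overline{e} = e$ finishes it. For~(\ref{lem:interference1.v}), Lemma~\ref{lem:inter-rest} together with the idempotency $\overline{\overline{g}} = \overline{g}$ (respectively $\overline{\overline{f}} = \overline{f}$) immediately yields $f \perp \overline{g}$ and $\overline{f} \perp g$ from $\overline{f} \perp \overline{g}$.
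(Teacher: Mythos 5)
Your proof is correct and takes essentially the same route as the paper: the heart of part (\ref{lem:interference1.i}) is identical---from $\overline{f} \perp \overline{g}$ you force $\overline{f}\,\overline{g} \perp \overline{f}\,\overline{g}$ and collapse it with anti-reflexivity---and your handling of (\ref{lem:interference1.total})--(\ref{lem:interference1.v}) matches the paper's one-line reductions to parts (\ref{lem:interference1.0}), (\ref{lem:interference1.i}), and Lemma \ref{lem:inter-rest}. The only cosmetic differences are that in (\ref{lem:interference1.0}) and (\ref{lem:interference1.i}) you obtain the needed disjointness via downward closure {\bf [{$\boldsymbol{\perp}$}.3]} where the paper pre-composes using {\bf [{$\boldsymbol{\perp}$}.4]}, and that you derive the equivalence of $\overline{f}\,\overline{g}=0$ with $\overline{f}g=0$ and $\overline{g}f=0$ explicitly from \textbf{[R.1]} and \textbf{[R.3]} where the paper cites this equivalence from the literature; both are sound.
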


In particular notice that Lemma \ref{lem:interference1}.(\ref{lem:interference1.i}) says that if maps are $\perp$-disjoint, then they are disjoint in the usual restriction category sense \cite[Prop 6.2]{cockett2009boolean}. 

\begin{proof} These are mostly straightforward. 
\begin{enumerate}[{\em (i)}]
\item By {\bf [{$\boldsymbol{\perp}$}.0]}, $1_A \perp 0$. Using {\bf [{$\boldsymbol{\perp}$}.4]}, pre-composing both sides by $f$, we get that $f \perp 0$. 
\item First note that $\overline{f}g=0$, $\overline{g}f=0$, and $\overline{f}~\overline{g} = 0$ are all equivalent conditions \cite[Prop 6.2]{cockett2009boolean}. So suppose that $f \perp g$. We will show that $\overline{f}\overline{g} = 0$ from the anti-reflexivity of $\perp$. So firstly if $f \perp g$, then by {\bf [{$\boldsymbol{\perp}$}.5]}, $\overline{f} \perp \overline{g}$. Now note that $\overline{f}~\overline{g~} \overline{f} = \overline{f}~\overline{g}$ and $\overline{f}~\overline{g}~ \overline{g} = \overline{f}~\overline{g}$. So by {\bf [{$\boldsymbol{\perp}$}.4]}, pre-composing both sides by $\overline{f}~\overline{g}$ we get that $\overline{f}~\overline{g} \perp \overline{f}~\overline{g}$. Then {\bf [{$\boldsymbol{\perp}$}.2]} implies that $\overline{f}~\overline{g} = 0$. 
\item The $\Leftarrow$ direction is simply {\bf [{$\boldsymbol{\perp}$}.0]}. So for the $\Rightarrow$ direction, suppose that $f$ is total and that $f \perp g$. Then by (\ref{lem:interference1.i}), we have that $\overline{f}g = 0$. However since $f$ is total, $\overline{f}=1$. So we have that $g=0$. 
\item This follows immediately from Lemma \ref{lem:inter-rest}. 
\item This follows from (\ref{lem:interference1.iv}) and {\bf [{$\boldsymbol{\perp}$}.5]}. 
\end{enumerate}
\end{proof}

A restriction category with restriction zeroes can have multiple different interference relations. In fact, there are always at least two possible ones: a minimal one $\perp_\delta$ and a maximal one $\perp_0$ (with respect to the usual ordering of relations). The minimal interference relation $\perp_\delta$ is the relation which only relates maps to zero, that is $f \perp_\delta g$ if and only if $f=0$ or $g=0$. On the other hand, the maximal interference relation $\perp_0$ is the relation which says that two maps are disjoint if, whenever one is defined, the other is not.  Thus, $f \perp_0 g$ if and only if $\overline{f}g =0$ (equivalently $\overline{g}f = 0$) or, in other words, they are disjoint in the sense of \cite[Prop 6.2]{cockett2009boolean}. 

\begin{lemma}\label{lemma:two-interference} For a restriction category $\mathbb{X}$ with restriction zero maps,
\begin{enumerate}[{\em (i)}]
\item $\perp_0$ is an interference relation with its associated restrictional interference relation given by $e \perp_0 e^\prime$ if and only if $ee^\prime =0$; 
\item$\perp_\delta$ is an interference relation with its associated restrictional interference relation given by $e \perp_\delta e^\prime$ if and only if  $e=0$ or $e^\prime=0$. 
\end{enumerate}
Furthermore, for every interference relation $\perp$ on $\mathbb{X}$, we have that $\perp_\delta \subseteq \perp \subseteq \perp_0$.
\end{lemma}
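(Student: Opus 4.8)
The plan is to route everything through the bijective correspondence between interference relations and restrictional interference relations established just above, so that for each of $\perp_0$ and $\perp_\delta$ I only need to (a) check the five restrictional axioms \textbf{[$\mathcal{O}\perp$.0]}--\textbf{[$\mathcal{O}\perp$.4]} for the candidate relation on restriction idempotents named in the statement, and then (b) confirm that the interference relation on maps this induces is exactly the one defined in the statement. This is cleaner than verifying all six map-level axioms directly, since restriction idempotents are idempotent and commute, which trivialises most of the bookkeeping.

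For $\perp_0$ I take the candidate $e \perp_0 e'$ iff $ee' = 0$. Then \textbf{[$\mathcal{O}\perp$.0]} is just $1_A \cdot 0 = 0$; symmetry \textbf{[$\mathcal{O}\perp$.1]} follows from $ee' = e'e$; anti-reflexivity \textbf{[$\mathcal{O}\perp$.2]} follows from $ee = e$, so $ee = 0$ forces $e = 0$; and downward closure \textbf{[$\mathcal{O}\perp$.3]} follows because $e_1 \leq e_1'$ and $e_2 \leq e_2'$ give $e_1 e_2 = e_1 e_1' e_2 e_2'$, which after reordering equals $e_1 e_2 (e_1' e_2') = 0$. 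The one genuinely computational axiom, and the step I expect to be the main obstacle, is pre-composition \textbf{[$\mathcal{O}\perp$.4]}: from $ee' = 0$ I must deduce $\overline{he}~\overline{he'} = 0$ for any $h$ with codomain $A$. Here I would use \textbf{[R.4]} in the form $he = \overline{he}\,h$ (valid since $e = \overline{e}$) together with \textbf{[R.3]}, computing $\overline{he}~\overline{he'} = \overline{\overline{he}\cdot he'} = \overline{(he)e'} = \overline{h(ee')} = \overline{0} = 0$. This is the only place where the restriction identities, rather than mere semilattice facts, are needed.

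For $\perp_\delta$ I take the candidate $e \perp_\delta e'$ iff $e = 0$ or $e' = 0$, and all five restrictional axioms are immediate: \textbf{[$\mathcal{O}\perp$.0]}--\textbf{[$\mathcal{O}\perp$.2]} are trivial; for \textbf{[$\mathcal{O}\perp$.3]}, $e_1' = 0$ forces $e_1 \leq 0$ hence $e_1 = 0$ (and symmetrically), as $0$ is the bottom of $\mathcal{O}(A)$; and for \textbf{[$\mathcal{O}\perp$.4]}, if $e = 0$ then $he = 0$ so $\overline{he} = 0$ (and symmetrically). To close step (b) in both cases I pass back along the bijection, where the induced map relation is $f \perp g$ iff $\overline{f} \perp^{\mathcal{O}} \overline{g}$. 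For $\perp_0$ this reads $\overline{f}~\overline{g} = 0$, which by \cite[Prop 6.2]{cockett2009boolean} is equivalent to $\overline{f}g = 0$, recovering the stated definition; for $\perp_\delta$ it reads $\overline{f} = 0$ or $\overline{g} = 0$, i.e. $f = 0$ or $g = 0$ by \textbf{[R.1]}, again recovering the stated definition.

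Finally, the sandwich $\perp_\delta \subseteq \perp \subseteq \perp_0$ falls straight out of Lemma \ref{lem:interference1}. For the left inclusion, if $f \perp_\delta g$ then one of $f,g$ equals $0$, and Lemma \ref{lem:interference1} (that $f \perp 0$ for all $f$) together with symmetry \textbf{[$\perp$.1]} gives $f \perp g$. For the right inclusion, if $f \perp g$ then Lemma \ref{lem:interference1} (that $f \perp g$ implies $\overline{f}g = 0$) yields precisely $f \perp_0 g$. Hence every interference relation lies between these two extremes.
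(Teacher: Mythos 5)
Your proof is correct and follows essentially the same route as the paper: both verify that $\perp_0$ (and $\perp_\delta$) satisfy the restrictional interference axioms \textbf{[$\mathcal{O}\!\boldsymbol{\perp}$.0]}--\textbf{[$\mathcal{O}\!\boldsymbol{\perp}$.4]} and pass through the bijection to the map-level relations, with the key computation for \textbf{[$\mathcal{O}\!\boldsymbol{\perp}$.4]} being the identity $\overline{he}~\overline{he^\prime} = \overline{hee^\prime}$ via \textbf{[R.3]} and \textbf{[R.4]}, and both obtain the sandwich $\perp_\delta \subseteq \perp \subseteq \perp_0$ directly from Lemma \ref{lem:interference1}. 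The only differences are ones of detail: you spell out the \textbf{[$\mathcal{O}\!\boldsymbol{\perp}$.4]} calculation and the $\perp_\delta$ axioms explicitly, which the paper respectively asserts and leaves as an exercise.
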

\begin{proof} Checking that $\perp_\delta$ is an interference relation is straightforward and is left as an exercise for the reader. We now check that $\perp_0$ is also an interference relation by proving it is a restrictional interference relation. 
\begin{enumerate}[{\bf [{$\mathcal{O}\!\boldsymbol{\perp}$}.1]}]
\setcounter{enumi}{-1}
\item By the annihilation property we have that $1_A 0 = 0$, so $1_A \perp_0 0$.
\item Recall that restriction idempotents commute, that is, $e e^\prime = e^\prime e$. Therefore, if $e \perp_0 e^\prime$, which means that $e e^\prime =0$, then $e^\prime e =0$ as well, and so $e^\prime \perp_0 e$.
\item Recall that restriction idempotents are idempotents, that is, if $e \in \mathcal{O}(A)$, then $ee=e$. Then if $e \perp_0 e$, which implies that $ee=0$, then we get that $e=0$.
\item Recall that for restriction idempotents, $e_1 \leq e^\prime_1$ and $e_2 \leq e_2^\prime$ imply that $e_1 e^\prime_1= e_1$ and $e^\prime_2 e_2 = e_2$. Then if $e^\prime_1 \perp_0 e^\prime_2$, which means $e^\prime_1 e^\prime_2 =0$, then we also get that $e_1 e_2 = e_1 e^\prime_1 e^\prime_2 e_2 = e_1 0 e_2 = 0$. So $e_1 e_2=0$ and so $e_1 \perp_0 e_2$.
\item First note that using \textbf{[R.3]} and \textbf{[R.4]}, it follows that $\overline{he}~\overline{he^\prime} = \overline{h e e^\prime}$. So if $e \perp_0 e^\prime$, which means $ee^\prime =0$, then we get that $\overline{he} \overline{he^\prime} = \overline{h e e^\prime} = \overline{h 0} = \overline{0}=0$. So have that $\overline{he} \perp_0 \overline{he^\prime}$. 
\end{enumerate}
So $\perp_0$ is a restrictional interference relation, and it is straightforward to see that its induced interference relation is precisely as defined above. So we conclude that $\perp_0$ is an interference relation. 

It remains to explain why $\perp_0$ is maximal and $\perp_\delta$ is minimal amongst interference relations. However we have already done this, since for any interference relation $\perp$, that $\perp \subseteq \perp_0$ is precisely the statement of Lemma \ref{lem:interference1}.(\ref{lem:interference1.i}), while that $\perp_\delta \subseteq \perp$ is precisely Lemma \ref{lem:interference1}.(\ref{lem:interference1.0}). 
\end{proof}

Thus, having an interference relation is chosen structure rather than a property and we say that: 

\begin{definition}\label{def:inter-rest-cat} An \textbf{interference restriction category} is pair $(\mathbb{X}, \perp)$ consisting of a restriction category $\mathbb{X}$ with restriction zeroes and a chosen a interference relation $\perp$. 
\end{definition}

Nevertheless, to avoid heavy nomenclature, we will say that $\mathbb{X}$ is a $\perp$-restriction category as a short hand for saying that $(\mathbb{X}, \perp)$ is an interference restriction category. We conclude this section with our main examples of interference relations:

\begin{example} \normalfont In $\mathsf{PAR}$, the maximal restrictional interference relation corresponds to checking when subsets are disjoint. Indeed, for subsets $U,V \subseteq X$, for their associated restriction idempotents we have that $e_U \perp_0 e_V$ if and only if $U \cap V = \emptyset$. Therefore, the corresponding maximal interference relation is given as follows: for partial functions ${f: X \to Y}$ and $g: X \to Z$, we have that $f \perp_0 g$ if and only if their domains are disjoint, $\mathsf{dom}(f) \cap \mathsf{dom}(g) = \emptyset$. In other words, $f \perp_0 g$ if $f(x) \uparrow$ whenever $g(x) \downarrow$ and $g(x) \uparrow$ whenever $f(x) \downarrow$. On the other hand, the minimal restrictional interference relation corresponds to checking if either subset is empty. So for $U,V \subseteq X$, for their associated restriction idempotents we have that $e_U \perp_\delta e_V$ if and only if $U= \emptyset$ or $V=\emptyset$. As such, for partial functions ${f: X \to Y}$ and ${g: X \to Z}$, we have that $f \perp_\delta g$ if and only if either $f$ or $g$ is nowhere define, that is, if $\mathsf{dom}(f) = \emptyset$ or $\mathsf{dom}(g) = \emptyset$. 
\end{example}

\begin{example} \normalfont Here is an example of an interference relation on $\mathsf{REC}$ which is neither the maximal one $\perp_0$ nor the minimal one $\perp_\delta$. Recall that a subset $U \subseteq \mathbb{N}^n$ is recursive (or computable) if the characteristic function $\chi_U: \mathbb{N}^n \to \mathbb{N}$ defined as $\chi_U(x) = 1$ if $x \in U$ and $\chi_X(x)=0$ if $x \notin U$ is a total recursive function. Then two subsets $U,V \subseteq \mathbb{N}^n$ are said to be \textbf{recursively separable} if there exists a recursive set $W \subseteq \mathbb{N}^n$ such that $U \subseteq W$ and $V \cap W = \emptyset$. Then define the restrictional interference relation $\perp_r$ for the associated restriction idempotents of recursively enumerable subsets $U,V \subseteq \mathbb{N}^n$ as $e_U \perp_r e_V$ if and only if $U$ and $V$ are recursively separable. Then the induced interference relation $\perp_r$ is given as follows: for two maps $F: \mathbb{N}^n \to \mathbb{N}^m$ and $G: \mathbb{N}^n \to \mathbb{N}^k$ in $\mathsf{REC}$, we have that $F \perp_r G$ if their domains $\mathsf{dom}(F)$ and $\mathsf{dom}(G)$ are recursively separable. Then $\perp_r$ is an interference coherence on $\mathsf{REC}$. On the one hand, clearly $\perp_r$ is not the minimal one $\perp_\delta$. On the other hand, note that $\perp_0$ on $\mathsf{REC}$ is given the same way as in $\mathsf{PAR}$, which means that $\perp_r$ and $\perp_0$ are indeed different since there exists disjoint recursively enumerable sets which are recursively inseparable (i.e. not recursively separable). 
\end{example}

\section{Disjoint Joins}\label{sec:disjoint-joins}

In this section we discuss the notion of \emph{disjoint joins} in an interference restriction category. Since the homsets of a restriction category are posets, we can consider the \emph{join} \cite[Def 6.7]{cockett2009boolean} of suitable parallel maps to be the join in usual sense, meaning the least upper bound with respect to $\leq$. However, for the story of this paper, we do not need all joins to exists like in a join restriction category \cite[Def 10.1]{cockett2009boolean}, but instead only require the joins of $\perp$-disjoint maps. As such, we call such joins: $\perp$\emph{-disjoint} joins, or simply $\perp$-joins, and denote them using $\sqcup$ instead of $\vee$, to further emphasize that the maps we are working with are intuitively disjoint. Readers familiar with joins in a restriction category may recall that joins can only be defined for maps that are \textbf{compatible} \cite[Prop 6.3]{cockett2009boolean}. Recall that parallel maps $f$ and $g$ are compatible, written as $f \smile g$, if $\overline{f}g = \overline{g}f$. However, by Lemma \ref{lem:interference1}.(\ref{lem:interference1.i}), if $f \perp g$, then $\overline{g}f=0=\overline{f}g$, and so $f \smile g$. Therefore, we can indeed consider the join of parallel $\perp$-disjoint maps. More generally, one can define define joins with respect to a restrictional coherence \cite[Def 6.7]{cockett2009boolean}. Thus, since every interference relation is a restrictional coherence, the following definition is \cite[Def 6.7]{cockett2009boolean} for the special case of an interference relation. 

\begin{definition}\label{def:disjoint-join} In a $\perp$-restriction category $\mathbb{X}$, the $\perp$\textbf{-join} (if it exists) of a family of pairwise $\perp$-disjoint parallel maps ${\lbrace f_i: A \to B \rbrace_{i \in I}} \subseteq \mathbb{X}(A,B)$ (where $I$ is an arbitrary index set and $f_i \perp f_j$ for all $i,j \in I$ with $i \neq j$) is a (necessarily unique) map ${\bigsqcup \limits_{i\in I} f_i: A \to B}$ such that: 
\begin{enumerate}[{\bf [$\boldsymbol{\sqcup}$.1]}]
\item $f_i \leq \bigsqcup \limits_{i\in I} f_i$ for all $i \in I$;
\item If $g: A \to B$ is a map such that $f_i \leq g$ for all $i \in I$, then $\bigsqcup \limits_{i\in I} f_i \leq g$. 
\end{enumerate}
\end{definition}

Note that $\perp$-joins are necessarily unique since {\bf [$\boldsymbol{\sqcup}$.1]} and {\bf [$\boldsymbol{\sqcup}$.2]} say that they are least upper-bounds, which are always unique. For a finite family indexed by $I= \lbrace 0, \hdots, n+1 \rbrace$, we write their $\perp$-join as $\bigsqcup \limits_{i\in I} f_i = f_0 \sqcup ... \sqcup f_{n+1}$. In the special case of a singleton $\lbrace f \rbrace$, its $\perp$-join is just $f$, while for the case of the empty set, the $\perp$-join of nothing is simply the zero map. When the $\perp$-joins of all suitable families of $\perp$-disjoint maps exists, we may also ask that they be compatible with composition. In \cite[Def 6.7]{cockett2009boolean}, preservations of joins by pre-composition was called being \textbf{stable}, while preservations of joins by post-composition was called being \textbf{universal}. It turns out that being stable implies being universal \cite[Lemma 6.10.(2)]{cockett2009boolean}, so we only need to ask that pre-composition preserves $\perp$-joins. Since for most of the story of this paper, we only really need \emph{binary} $\perp$-joins, we take the time to define this case explicitly. 

\begin{definition} A $\perp$-restriction category $\mathbb{X}$ is said to have \textbf{all binary $\perp$-joins} if for every pair of $\perp$-disjoint parallel maps $f: A \to B$ and $g: A \to B$ (i.e. $f \perp g$) their $\perp$-join $f \sqcup g: A \to B$ exists and: 
\begin{enumerate}[{\bf [$\boldsymbol{\sqcup}$.1]}]
\setcounter{enumi}{2}
\item For any map $h: A^\prime \to A$, $h \left( f \sqcup g \right) = hf \sqcup hg$. 
\end{enumerate}
Similarly, a $\perp$-restriction category $\mathbb{X}$ is said to have \textbf{all (finite) $\perp$-joins} if for every (finite) family of pairwise $\perp$-disjoint parallel maps ${\lbrace f_i: A \to B \rbrace_{i \in I}}$, their $\perp$-join ${\bigsqcup \limits_{i\in I} f_i: A \to B}$ exists and: 
\begin{enumerate}[{\bf [$\boldsymbol{\sqcup}$.1]}]
\setcounter{enumi}{2}
\item For any map $h: A^\prime \to A$, $h \bigsqcup \limits_{i\in I} f_i = \bigsqcup \limits_{i\in I} h f_i$. 
\end{enumerate}
\end{definition}

Since a $\perp$-join is a join in the sense of \cite[Def 6.7]{cockett2009boolean}, it therefore satisfies the same properties, some of which can be found in \cite[Prop 2.14]{cockett2012differential} and \cite[Lemma 4.3]{cockett2023classical}. In the result below, which  highlights some useful identities, $I$ is an arbitrary indexing set: if $I$ has size two we recover the binary case, if $I$ is finite we recover the finite case.

\begin{lemma}\label{lemma:disjoint-join} Let $\mathbb{X}$ be a $\perp$-restriction category with all (binary/finite) $\perp$-joins, then: 
\begin{enumerate}[{\em (i)}]
\item \label{lemma:join-<} $\overline{f_j} \bigsqcup \limits_{i\in I} f_i = f_j$ for all $j \in I$; 
\item \label{lemma:join-rest} $\overline{\bigsqcup \limits_{i\in I} f_i} = \bigsqcup \limits_{i\in I} \overline{f_i}$; 
\item \label{lemma:join-comp} $k \left(\bigsqcup \limits_{i\in I} f_i \right)k^\prime = \bigsqcup \limits_{i\in I} k f_i k^\prime$; 
\item \label{lemma:join-inter} If $\bigsqcup \limits_{i\in I} f_i \perp h$ then $f_j \perp h$ for all $j \in I$. 
\end{enumerate}
\end{lemma}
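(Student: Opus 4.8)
The plan is to prove each of the four identities in turn, relying throughout on the defining properties {\bf [$\boldsymbol{\sqcup}$.1]} and {\bf [$\boldsymbol{\sqcup}$.2]} (the join is a least upper bound) together with the composition axiom {\bf [$\boldsymbol{\sqcup}$.3]}, and on the basic restriction-category facts that $\leq$ is a partial order enriched composition is monotone. Since a $\perp$-join is genuinely a least upper bound, the standard strategy for each equation is to prove two inequalities, or to exhibit the right-hand side as a least upper bound and invoke uniqueness.

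For (\ref{lemma:join-<}), I would first show $\overline{f_j}\bigsqcup_{i} f_i \leq f_j$ and then $f_j \leq \overline{f_j}\bigsqcup_i f_i$. For the forward direction, apply {\bf [$\boldsymbol{\sqcup}$.3]} with $h=\overline{f_j}$ to get $\overline{f_j}\bigsqcup_i f_i = \bigsqcup_i \overline{f_j} f_i$; for $i\neq j$ the pairwise disjointness $f_j\perp f_i$ gives $\overline{f_j}f_i = 0$ by Lemma~\ref{lem:interference1}.(\ref{lem:interference1.i}), while for $i=j$ we have $\overline{f_j}f_j = f_j$ by \textbf{[R.1]}, so the join collapses to $f_j$ (the remaining terms being zero contribute nothing since $0$ is the bottom element). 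The reverse inequality $f_j \leq \overline{f_j}\bigsqcup_i f_i$ follows because $f_j \leq \bigsqcup_i f_i$ by {\bf [$\boldsymbol{\sqcup}$.1]}, and precomposing the definition of $\leq$ appropriately with $\overline{f_j}$ recovers $f_j$. This identity (\ref{lemma:join-<}) is the engine for the rest.

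For (\ref{lemma:join-rest}), I would verify that $\bigsqcup_i \overline{f_i}$ satisfies the universal property of the restriction $\overline{\bigsqcup_i f_i}$, or more directly use the general restriction-category identity $\overline{hf}=\overline{h\,\overline{f}}$ together with {\bf [$\boldsymbol{\sqcup}$.3]}: taking restrictions of a join and distributing should reduce to showing each $\overline{f_i}$ sits below $\overline{\bigsqcup_i f_i}$ (immediate from $f_i\leq\bigsqcup_i f_i$ and monotonicity of $\overline{(\phantom{f})}$) and that $\bigsqcup_i\overline{f_i}$ is an upper bound forcing equality; here one must first check the $\overline{f_i}$ are pairwise $\perp$-disjoint, which is {\bf [$\boldsymbol{\perp}$.5]}, so that their join exists. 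For (\ref{lemma:join-comp}), postcomposition is the subtle half: precomposition by $k$ is {\bf [$\boldsymbol{\sqcup}$.3]} directly, but postcomposition by $k'$ must be justified by the fact noted in the text that stability implies universality (\cite[Lemma 6.10.(2)]{cockett2009boolean}), so I would either cite this or derive $\bigl(\bigsqcup_i f_i\bigr)k' = \bigsqcup_i f_i k'$ using that each $f_i k'\leq \bigl(\bigsqcup_i f_i\bigr)k'$ and a least-upper-bound argument. Finally (\ref{lemma:join-inter}) follows from {\bf [$\boldsymbol{\perp}$.4]} (downward closure): since $f_j \leq \bigsqcup_i f_i$ by {\bf [$\boldsymbol{\sqcup}$.1]} and $h\leq h$, the hypothesis $\bigsqcup_i f_i \perp h$ gives $f_j \perp h$.

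The main obstacle I anticipate is the postcomposition direction of (\ref{lemma:join-comp}), since {\bf [$\boldsymbol{\sqcup}$.3]} only guarantees preservation under precomposition; making the stable-implies-universal passage rigorous (rather than just citing it) requires care, as one must check that the $f_i k'$ remain pairwise $\perp$-disjoint so that their join even exists, which in turn uses {\bf [$\boldsymbol{\perp}$.4]} applied to $f_i k' \leq f_i$. A secondary subtlety running through all parts is handling the index set $I$ uniformly across the binary, finite, and arbitrary cases, but since every argument is phrased purely through the least-upper-bound property this should go through without separate treatment.
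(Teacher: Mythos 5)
Your proposal is correct and follows essentially the same route as the paper: part (\ref{lemma:join-inter}) is proved there exactly as you do it, from {\bf [$\boldsymbol{\sqcup}$.1]} together with downward closure, while parts (\ref{lemma:join-<})--(\ref{lemma:join-comp}) are the standard computations that the paper simply outsources to \cite[Prop 2.14.(i)-(iii)]{cockett2012differential}. Two local slips in your write-up should be repaired, however. First, your axiom labels are off by one: in this paper's numbering, downward closure is {\bf [{$\boldsymbol{\perp}$}.3]} and composition is {\bf [{$\boldsymbol{\perp}$}.4]}, so the appeal in part (\ref{lemma:join-inter}) should be to {\bf [{$\boldsymbol{\perp}$}.3]}. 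Second, in part (\ref{lemma:join-comp}) the pairwise disjointness of the family $\lbrace f_i k^\prime \rbrace$ does not come from ``{\bf [{$\boldsymbol{\perp}$}.4]} applied to $f_i k^\prime \leq f_i$'' --- that comparison is not even well typed, since $f_i k^\prime$ and $f_i$ have different codomains and so are not parallel maps; rather, it is an immediate instance of the composition axiom {\bf [{$\boldsymbol{\perp}$}.4]}: $f_i \perp f_j$ gives $f_i k^\prime \perp f_j k^\prime$ directly.

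A further caution on your fallback derivation of post-composition in (\ref{lemma:join-comp}): the least-upper-bound argument you sketch yields only the inequality $\bigsqcup_i f_i k^\prime \leq \left(\bigsqcup_i f_i\right) k^\prime$, and the converse is the genuinely nontrivial half; it cannot be obtained by order-theoretic reasoning alone, since {\bf [$\boldsymbol{\sqcup}$.3]} only distributes joins over \emph{pre}-composition. One correct way to finish is to show that both sides have the same restriction --- using [R.3], part (\ref{lemma:join-<}), part (\ref{lemma:join-rest}), {\bf [$\boldsymbol{\sqcup}$.3]}, and the fact that restriction idempotents commute --- and then conclude from $x \leq y$ and $\overline{x} = \overline{y}$ that $x = \overline{x}y = \overline{y}y = y$. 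Alternatively, simply cite \cite[Lemma 6.10.(2)]{cockett2009boolean}, as the paper itself does when it notes that stability implies universality. With either repair your overall argument is complete and matches the paper's.
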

\begin{proof} The first three are simply \cite[Prop 2.14.(i)-(iii)]{cockett2012differential} for the special case of our $\perp$-joins, and are proven using the same calculations. For (\ref{lemma:join-inter}), suppose that $\bigsqcup \limits_{i\in I} f_i \perp h$. By {\bf [$\boldsymbol{\sqcup}$.1]}, $f_j \leq \bigsqcup \limits_{i\in I} f_i$, and so by {\bf [{$\boldsymbol{\perp}$}.3]}, we get that $f_j \perp h$. 
\hfill \end{proof}

It is important to note that while having finite $\perp$-joins obviously implies having binary $\perp$-joins, the converse is not necessarily true. Indeed, having binary $\perp$-joins does not imply we can build arbitrary finite $\perp$-joins since the converse of Lemma \ref{lemma:disjoint-join}.(\ref{lemma:join-inter}) does not hold in general. To see this, suppose we have three parallel maps such that $f \perp g$, $g \perp h$, and $f \perp h$. Then we get their binary $\perp$-joins $f \sqcup g$, $g \sqcup h$, and $f \sqcup h$. However since we do not necessarily have that say $f \sqcup g$ and $h$ are $\perp$-disjoint, we wouldn't be able to construct $\left( f \sqcup g \right) \sqcup h$. Thus, we also need to ask that $\perp$-joins preserve $\perp$-disjointness. 

\begin{definition}\label{def:perpdisjrestcat} In a $\perp$-restriction category $\mathbb{X}$, if the $\perp$-join ${\bigsqcup \limits_{i\in I} f_i: A \to B}$ of a family of pairwise $\perp$-disjoint parallel maps ${\lbrace f_i: A \to B \rbrace_{i \in I}}$ exists, then we say that $\bigsqcup \limits_{i\in I} f_i$ is \textbf{strong} if:
\begin{enumerate}[{\bf [$\boldsymbol{\sqcup}$.1]}]
\setcounter{enumi}{3}
\item For every map $h: A \to C$ such that $f_i \perp h$ for all $i \in I$, then $\bigsqcup \limits_{i\in I} f_i \perp h$. 
\end{enumerate}
A \textbf{(finitely) disjoint $\perp$-restriction category} is a $\perp$-restriction category $\mathbb{X}$ that has all (finite) $\perp$-joins and every (finite) $\perp$-join is strong. 
\end{definition}

We can now show that having strong binary $\perp$-joins is equivalent to having strong finite $\perp$-joins. 

\begin{lemma}\label{lemma:binary-finite} Let $\mathbb{X}$ be a $\perp$-restriction category. Then $\mathbb{X}$ is a finitely disjoint $\perp$-restriction category if and only if $\mathbb{X}$ has all binary $\perp$-joins and every binary $\perp$-join is strong. 
\end{lemma}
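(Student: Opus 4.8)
The plan is to prove the two implications separately, with essentially all the content in the backward direction. The forward direction ($\Rightarrow$) is immediate: a finitely disjoint $\perp$-restriction category has all finite $\perp$-joins and each is strong, and since a family of size two is a special case of a finite family, in particular all binary $\perp$-joins exist and are strong. So I would dispatch this in one line.

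For the backward direction ($\Leftarrow$), assume that all binary $\perp$-joins exist and are strong, and build all finite $\perp$-joins by induction on the cardinality $n = \abs{I}$ of a pairwise $\perp$-disjoint parallel family $\{f_i : A \to B\}_{i \in I}$. At each stage I would establish that the $\perp$-join exists and satisfies {\bf [$\boldsymbol{\sqcup}$.1]}, {\bf [$\boldsymbol{\sqcup}$.2]}, {\bf [$\boldsymbol{\sqcup}$.3]}, \emph{and} is strong ({\bf [$\boldsymbol{\sqcup}$.4]}); carrying strongness through the induction is what makes the argument close up. The base cases are cheap: for $n=0$ the $\perp$-join is the zero map, which exists and is strong because $0 \perp h$ for every $h$ by Lemma \ref{lem:interference1}.(\ref{lem:interference1.0}) together with symmetry {\bf [{$\boldsymbol{\perp}$}.1]}; for $n=1$ it is $f$ itself, trivially strong; and $n=2$ is exactly the hypothesis.

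For the inductive step, enumerate the family as $f_1, \dots, f_n$, so that by induction $\bigsqcup_{i=1}^{n-1} f_i$ exists and is strong. Since $f_i \perp f_n$ for every $i < n$, the strongness of this $(n-1)$-join gives $\bigsqcup_{i=1}^{n-1} f_i \perp f_n$; this is the key move, as it repairs precisely the failure of the converse of Lemma \ref{lemma:disjoint-join}.(\ref{lemma:join-inter}) flagged before the definition. Consequently the binary join $\left(\bigsqcup_{i=1}^{n-1} f_i\right) \sqcup f_n$ exists, and I would verify it is the $\perp$-join of the whole family: {\bf [$\boldsymbol{\sqcup}$.1]} follows by transitivity of $\leq$ through the two nested joins, and {\bf [$\boldsymbol{\sqcup}$.2]} because any $g$ bounding all $f_j$ bounds $\bigsqcup_{i<n} f_i$ (by {\bf [$\boldsymbol{\sqcup}$.2]} for the subfamily) and $f_n$, hence bounds their binary join. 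For {\bf [$\boldsymbol{\sqcup}$.3]}, precompose by $h$ and distribute using {\bf [$\boldsymbol{\sqcup}$.3]} for the binary join and the $(n-1)$-join, noting the maps $h f_i$ are pairwise $\perp$-disjoint by {\bf [{$\boldsymbol{\perp}$}.4]} so that the resulting expression is the $\perp$-join $\bigsqcup_{i=1}^n h f_i$. Finally, strongness of the $n$-join: if $f_i \perp h$ for all $i \le n$, then $\bigsqcup_{i<n} f_i \perp h$ by the inductive strongness and $f_n \perp h$ by hypothesis, so strongness of the binary join yields $\bigsqcup_{i=1}^n f_i \perp h$.

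The main obstacle is exactly this reliance on strongness inside the induction: the binary join $\left(\bigsqcup_{i=1}^{n-1} f_i\right) \sqcup f_n$ is only defined once one knows $\bigsqcup_{i=1}^{n-1} f_i \perp f_n$, and pairwise $\perp$-disjointness of the original family does \emph{not} by itself deliver this. Strongness of the partial joins is what upgrades pairwise disjointness to disjointness of the accumulated join from the next element, and propagating it as an inductive invariant (rather than merely existence of joins) is the delicate point of the proof.
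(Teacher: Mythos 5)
Your proof is correct and takes essentially the same route as the paper's: the trivial direction is dispatched in one line, and the substantive direction proceeds by induction on the size of the family, using strongness of the inductively-obtained $(n-1)$-ary $\perp$-join to conclude that it is $\perp$-disjoint from the next element $f_n$, so that the binary join $\left(\bigsqcup_{i<n} f_i\right) \sqcup f_n$ is defined. Your observation that strongness ({\bf [$\boldsymbol{\sqcup}$.4]}) must be propagated as an inductive invariant --- and that pairwise $\perp$-disjointness alone would not suffice --- is precisely the crux of the paper's argument as well.
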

\begin{proof} The $\Leftarrow$ direction is automatic. For the $\Rightarrow$ direction, suppose we have all  binary $\perp$-joins and that they are strong. As mentioned above, the $\perp$-join of the empty family is the zero map, the $\perp$-join of a singleton is the map itself, and by assumption we also have binary $\perp$-joins. So we need to show that for any finite family of pairwise $\perp$-disjoint parallel maps of size greater than two, their $\perp$-join exists, is preserved by pre-composition, and is strong. We do so by induction, where the base case is a family of size $2$, which we already know holds. So suppose that we can build $n$-ary $\perp$-joins which are preserved by pre-composition {\bf [$\boldsymbol{\sqcup}$.3]} and strong {\bf [$\boldsymbol{\sqcup}$.4]}. Let $\lbrace f_0, f_1, \hdots, f_n, f_{n+1}\rbrace$ be a family of pairwise $\perp$-disjoint parallel maps. Note that $\lbrace f_0, f_1, \hdots, f_n\rbrace$ is also a family of pairwise $\perp$-disjoint parallel maps, so by the induction hypothesis, we get its $\perp$-join $f_0 \sqcup \hdots \sqcup f_n$. By {\bf [$\boldsymbol{\sqcup}$.4]}, we get that $f_0 \sqcup \hdots \sqcup f_n \perp f_{n+1}$, so we can define $f_0 \sqcup \hdots \sqcup f_n \sqcup f_{n+1}$ as the binary join $f_0 \sqcup \hdots \sqcup f_n \sqcup f_{n+1} := \left(f_0 \sqcup \hdots \sqcup f_n \right) \sqcup f_{n+1}$. It is straightforward to check that $f_0 \sqcup \hdots \sqcup f_n \sqcup f_{n+1}$ satisfies {\bf [$\boldsymbol{\sqcup}$.1]} and {\bf [$\boldsymbol{\sqcup}$.2]}, so $f_0 \sqcup \hdots \sqcup f_n \sqcup f_{n+1}$ is indeed the $\perp$-join of $\lbrace f_0, f_1, \hdots, f_n, f_{n+1}\rbrace$. Now for any suitable map $h$, by {\bf [$\boldsymbol{\sqcup}$.3]} for the binary case and the induction hypothesis, we can also easily check that $h\left( f_0 \sqcup \hdots \sqcup f_n \sqcup f_{n+1} \right) = hf_0 \sqcup \hdots \sqcup hf_n \sqcup hf_{n+1}$ as desired. Moreover, if $k$ is a map such that $k \perp f_i$, then by the induction hypothesis for {\bf [$\boldsymbol{\sqcup}$.4]}, we get that $k \perp f_0 \sqcup \hdots \sqcup f_n$. Then since $k \perp f_{n+1}$ as well, by {\bf [$\boldsymbol{\sqcup}$.4]} for the binary case, we get that $k \perp f_0 \sqcup \hdots \sqcup f_n \sqcup f_{n+1}$. So we conclude that $\mathbb{X}$ is a finitely disjoint $\perp$-restriction category. 
\end{proof}

Before finally giving examples, let us quickly take a look at disjoint joins for the maximal and minimal interference relations. We first show that for the maximal relation $\perp_0$, every $\perp_0$-join is strong.  In particular this means that if we have binary $\perp_0$-joins, then we have all finite $\perp_0$-joins as well. A finitely disjoint $\perp_0$-restriction category was called an \textbf{$\perp$-restriction category} in \cite[Def 7.5]{cockett2009boolean} (not be confused with our use of $\perp$-restriction category for an interference restriction category which may not have $\perp$-joins). 

\begin{lemma} Let $\mathbb{X}$ be a restriction category with restriction zeroes. Then any $\perp_0$-join which exists is strong. Therefore, $\mathbb{X}$ is a finitely disjoint $\perp_0$-restriction category if and only if $\mathbb{X}$ has all binary $\perp_0$-joins. 
\end{lemma}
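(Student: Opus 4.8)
The plan is to prove the two assertions in turn. For the first, let $g = \bigsqcup_{i \in I} f_i$ be a $\perp_0$-join, and let $h : A \to C$ be any map with $f_i \perp_0 h$ for every $i \in I$; the goal is to verify the strength axiom {\bf [$\boldsymbol{\sqcup}$.4]}, i.e. $g \perp_0 h$. First I would translate everything into the defining equations of the maximal relation: by Lemma \ref{lemma:two-interference} together with Lemma \ref{lem:interference1}, the hypothesis $f_i \perp_0 h$ says exactly that $\overline{h}\, f_i = 0$ for each $i$, and the desired conclusion $g \perp_0 h$ says exactly that $\overline{h}\, g = 0$. So the whole statement reduces to checking a single equation about the join.

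The heart of the argument is then one distributivity computation: pre-composing the join by the restriction idempotent $\overline h$ and using that pre-composition distributes over the $\perp_0$-join (property {\bf [$\boldsymbol{\sqcup}$.3]}, as in Lemma \ref{lemma:disjoint-join}.(\ref{lemma:join-comp})),
\[ \overline{h}\, g = \overline{h} \bigsqcup_{i \in I} f_i = \bigsqcup_{i \in I} \overline{h}\, f_i = \bigsqcup_{i \in I} 0 = 0, \]
since the $\perp_0$-join of the family that is constantly $0$ is again $0$. Hence $g \perp_0 h$, so the join is strong. I expect this distributivity step to be the main obstacle, and the one that must be handled carefully: strength is really a distributive law relating the idempotent action $\overline{h}(-)$ to the join $\bigsqcup_i$, and it genuinely relies on the join being \emph{stable} rather than merely being a least upper bound. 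Indeed, over a non-distributive semilattice of restriction idempotents one can exhibit a least upper bound that fails {\bf [$\boldsymbol{\sqcup}$.3]} and correspondingly fails {\bf [$\boldsymbol{\sqcup}$.4]}, so the computation above cannot be replaced by a bare appeal to {\bf [$\boldsymbol{\sqcup}$.1]} and {\bf [$\boldsymbol{\sqcup}$.2]}; I would therefore invoke {\bf [$\boldsymbol{\sqcup}$.3]} explicitly and treat the join as stable.

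For the second assertion, the forward implication is immediate, since a finitely disjoint $\perp_0$-restriction category in particular has all binary $\perp_0$-joins. For the converse, suppose $\mathbb{X}$ has all binary $\perp_0$-joins. By the first assertion every such binary $\perp_0$-join is strong, so $\mathbb{X}$ has all binary $\perp_0$-joins and each is strong; Lemma \ref{lemma:binary-finite} then upgrades this to all finite $\perp_0$-joins existing and being strong, which is precisely the statement that $\mathbb{X}$ is a finitely disjoint $\perp_0$-restriction category.
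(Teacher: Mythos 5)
Your proof is correct and takes essentially the same route as the paper: both arguments translate $\perp_0$-disjointness into the equation $\overline{h}f_i = 0$, use stability {\bf [$\boldsymbol{\sqcup}$.3]} to compute $\overline{h}\bigsqcup_{i\in I} f_i = \bigsqcup_{i\in I} \overline{h}f_i = 0$, and then invoke Lemma \ref{lemma:binary-finite} for the equivalence with finite disjointness. Your added remark that the argument genuinely needs the join to be stable rather than a bare least upper bound is accurate (and is left implicit in the paper, whose proof likewise appeals to {\bf [$\boldsymbol{\sqcup}$.3]}).
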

\begin{proof} Let $\lbrace f_i \rbrace$ be a family of pairwise $\perp_0$-disjoint parallel maps whose $\perp_0$-join $\bigsqcup \limits_{i\in I} f_i$ exists. Now suppose we also have another map $h$ such that $h \perp_0 f_i$ for all $i$, which recall is equivalent to saying that $\overline{h}f_i = 0$. Then by {\bf [$\boldsymbol{\sqcup}$.3]}, we get that $\overline{h}\bigsqcup \limits_{i\in I} f_i = \bigsqcup \limits_{i\in I} \overline{h}f_i = \bigsqcup \limits_{i\in I} 0 =0$. So $h \perp \bigsqcup \limits_{i\in I} f_i$. So we get that every $\perp_0$-join is strong. Therefore, by Lemma \ref{lemma:binary-finite}, having binary $\perp_0$-joins is sufficient to get all finite $\perp_0$-joins.
\end{proof}

On the other hand, for the minimal interference relation $\perp_\delta$, all $\perp_\delta$-joins exist trivially and are trivially strong.  

\begin{lemma}\label{lemma:minimal-strong-comp} Let $\mathbb{X}$ be a restriction category with restriction zeroes. Then $\mathbb{X}$ is a disjoint $\perp_\delta$-restriction category. 
\end{lemma}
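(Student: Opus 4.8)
The plan is to exploit how degenerate the minimal relation $\perp_\delta$ is. First I would record the key observation: since $f \perp_\delta g$ holds exactly when $f = 0$ or $g = 0$, any family $\lbrace f_i : A \to B \rbrace_{i \in I}$ of pairwise $\perp_\delta$-disjoint maps has \emph{at most one} nonzero member. Indeed, if $f_i \neq 0$ and $f_j \neq 0$ for distinct $i, j$, then $f_i \perp_\delta f_j$ would fail. Hence such a family consists, up to zeros, either entirely of zero maps or of a single nonzero map $f_{i_0}$ together with zeros.

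Using this, I would define the candidate $\perp_\delta$-join to be $f_{i_0}$ when a (necessarily unique) nonzero member $f_{i_0}$ exists, and the zero map $0$ otherwise. Verifying {\bf [$\boldsymbol{\sqcup}$.1]} and {\bf [$\boldsymbol{\sqcup}$.2]} is then immediate: every $f_i$ with $i \neq i_0$ equals $0$, which lies below everything since $0$ is the bottom element of each homset, so $f_{i_0}$ is an upper bound, and it is plainly the least one because any upper bound $g$ already satisfies $f_{i_0} \leq g$ by hypothesis. When all $f_i = 0$ the join is $0$, trivially least. Thus all $\perp_\delta$-joins exist.

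For preservation by precomposition {\bf [$\boldsymbol{\sqcup}$.3]}, I would note that for any $h : A^\prime \to A$ the family $\lbrace h f_i \rbrace_{i \in I}$ is again pairwise $\perp_\delta$-disjoint (by {\bf [{$\boldsymbol{\perp}$}.4]}, or directly since $h f_i = h \, 0 = 0$ for $i \neq i_0$) and has $h f_{i_0}$ as its only possibly-nonzero member. Hence its $\perp_\delta$-join is $h f_{i_0} = h \bigsqcup_{i \in I} f_i$, which is exactly the identity demanded by {\bf [$\boldsymbol{\sqcup}$.3]}.

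Finally, strongness {\bf [$\boldsymbol{\sqcup}$.4]} follows by a two-case split on a map $h : A \to C$ with $f_i \perp_\delta h$ for all $i$. If $h = 0$, then $\bigsqcup_{i \in I} f_i \perp_\delta h$ since one side is $0$; if $h \neq 0$, then $f_i \perp_\delta h$ forces $f_i = 0$ for every $i$, so $\bigsqcup_{i \in I} f_i = 0$ and again the relation holds. There is no real obstacle here: the whole argument rests on the single observation that $\perp_\delta$-disjoint families are trivial, after which each axiom reduces to the behaviour of the zero map. I expect the only point needing mild care is tracking the case $i = i_0$ against $i \neq i_0$ consistently across the four clauses.
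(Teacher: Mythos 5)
Your proof is correct and takes essentially the same approach as the paper: both hinge on the observation that a pairwise $\perp_\delta$-disjoint family contains at most one nonzero map, so its join is that map (or $0$), after which every axiom reduces to the behaviour of zero maps. The only difference is that you spell out the verifications of {\bf [$\boldsymbol{\sqcup}$.3]} and {\bf [$\boldsymbol{\sqcup}$.4]}, which the paper dismisses as straightforward.
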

\begin{proof} We will first explain why every family of pairwise $\perp_\delta$-disjoint parallel maps can have at most two elements. Let ${\lbrace f_i: A \to B \rbrace_{i \in I}}$ be a family of pairwise $\perp_\delta$-disjoint parallel maps. Since $f_i \perp_\delta f_j$ for all $i \neq j$, this implies that there is at most one non-zero map $f \in \lbrace f_i: A \to B \rbrace_{i \in I}$ and the rest must be zero. When $\vert I \vert \geq 1$, if such a non-zero $f$ exists, our family is reduced to $\lbrace f, 0 \rbrace$, and if no non-zero map exists, then our family is reduced to $\lbrace 0 \rbrace$. Thus the only possible such families is the empty set $\emptyset$, singletons of every map $\lbrace f \rbrace$, and a two element set $\lbrace f, 0 \rbrace$ where here $f \neq 0$. Clearly, the $\perp_\delta$-join of $\emptyset$ is $0$; of $\lbrace f \rbrace$ is $f$; and of $\lbrace f, 0 \rbrace$ is $f$. So we conclude that all $\perp_\delta$-joins exists. It is straightforward to check that {\bf [$\boldsymbol{\sqcup}$.3]} and {\bf [$\boldsymbol{\sqcup}$.4]} also hold. 
\end{proof}

Here are now the disjoint joins in our main examples. 

\begin{example} \label{ex:PAR-disjoint} \normalfont $\mathsf{PAR}$ has all $\perp_0$-joins. For an $i\in I$ indexed family of parallel partial functions ${f_i: X \to Y}$ which are pairwise $\perp_0$-disjoint, which recall means that when $f_i(x) \downarrow$ then $f_j(x) \uparrow$ for all $j\neq i$, their $\perp_0$-join is the partial function $\bigsqcup \limits_{i\in I} f_i: X \to Y$ defined as follows:
\[ \left(\bigsqcup \limits_{i\in I} f_i \right)(x) =  \begin{cases} f_i(x) & \text{if there exists a (necessarily unique) $i\in I$ such that } f_i(x) \downarrow \\
\uparrow & \text{otherwise}  \end{cases}
\]
In particular, this means that $\mathsf{PAR}$ is a disjoint $\perp_0$-restriction category.
\end{example}

\begin{example}  \label{ex:REC-disjoint} \normalfont $\mathsf{REC}$ has all $\perp_r$-joins, which are defined as in the previous example. So $\mathsf{REC}$ is a disjoint $\perp_r$-restriction category.
\end{example}

We conclude this section by explaining how every interference restriction category embeds into a disjoint interference restriction category. This is similar to the join construction given in \cite[Sec 10]{cockett2009boolean}, but which is simplified since we only want disjoint joins and not all joins. So let $\mathbb{X}$ be a $\perp$-restriction category. For a family of pairwise $\perp$-disjoint parallel maps $F \subseteq \mathbb{X}(A,B)$, let $\downarrow \!\! F = \lbrace g: A \to B \vert~ \exists f \in F \text{ s.t. } g \leq f \rbrace$. Note that $\downarrow \!\! F$ need not be itself a family of pairwise $\perp$-disjoint parallel maps, and also that it is never empty since we always have that $0 \in~ \downarrow \!\! F$. Then, abusing notation slightly, define the disjoint $\perp$-restriction category $\mathsf{DJ}\left[ (\mathbb{X},\perp) \right]$ as follows: 
\begin{enumerate}[{\em (i)}]
\item The objects of $\mathsf{DJ}\left[ (\mathbb{X},\perp) \right]$ are the same  as $\mathbb{X}$; 
\item A map $A \to B$ in $\mathsf{DJ}\left[ (\mathbb{X},\perp) \right]$ is a $S = \downarrow \!\! F$ where $F \subseteq \mathbb{X}(A,B)$ is a family of pairwise $\perp$-disjoint parallel maps; 
\item Identity maps are the set of restriction idempotents $\mathcal{O}(A): A \to A$;
\item Composition of $S: A \to B$ and $T: B \to C$ is defined as $ST = \lbrace fg \vert~ f \in S, g\in T \rbrace$;
\item The restriction of $S: A \to B$ is defined as the set of the restrictions, $\overline{S} = \lbrace \overline{f} \vert~ f\in S \rbrace$;
\item The restriction zero maps are the singletons $\lbrace 0 \rbrace: A \to B$;
\item The interference relation is defined as $S \perp T$ if for all $f\in S$ and $g \in T$, $f \perp g$;
\item For a family index by a set $I$ of pairwise $\perp$-disjoint parallel maps $S_i: A \to B$, their $\perp$-join is defined as their union $\bigsqcup\limits_{i \in I} S_i := \bigcup\limits_{i \in I} S_i$.
\end{enumerate}

Now since we want to give an embedding, we also need to explain what we mean by a functor between interference restriction categories. So if $\mathbb{X}$ and $\mathbb{Y}$ are restriction categories, then a \textbf{restriction functor} \cite[Sec 2.2.1]{cockett2002restriction} is a functor ${\mathcal{F}: \mathbb{X} \to \mathbb{Y}}$ which preserves the restrictions, $\overline{\mathcal{F}(f)} = \mathcal{F}\left( \overline{f} \right)$. If $(\mathbb{X},\perp)$ and $(\mathbb{Y},\perp)$ are interference restriction categories, then an \textbf{interference restriction functor} $\mathcal{F}: (\mathbb{X},\perp) \to (\mathbb{Y},\perp)$ is a restriction functor $\mathcal{F}: \mathbb{X} \to \mathbb{Y}$ which preserves the restrictions zeroes, $\mathcal{F}(0) = 0$, and also preserves the interference relation in the sense that if $f \perp g$, then $\mathcal{F}(f) \perp \mathcal{F}(g)$. If there is no confusion, we will simply write an interference restriction functor as ${\mathcal{F}: \mathbb{X} \to \mathbb{Y}}$ and say that it is a $\perp$-restriction functor. 

\begin{proposition}\label{prop:construction1} For a $\perp$-restriction category $\mathbb{X}$, $ \mathsf{DJ}\left[ (\mathbb{X},\perp) \right]$ is a disjoint $\perp$-restriction category. Moreover $\mathcal{I}: (\mathbb{X},\perp) \to \mathsf{DJ}\left[ (\mathbb{X},\perp) \right]$, defined on objects as $\mathcal{I}(A) = A$ and on maps $\mathcal{I}(f) = \downarrow \!\! \lbrace f \rbrace$, is a faithful $\perp$-restriction functor. 
\end{proposition}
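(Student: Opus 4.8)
The plan is to work throughout with explicit formulas for the structure of $\mathsf{DJ}\left[(\mathbb{X},\perp)\right]$ expressed in terms of generating families. If $S = \downarrow \!\! F$ and $T = \downarrow \!\! G$ for pairwise $\perp$-disjoint families $F = \lbrace f_i \rbrace$ and $G = \lbrace g_j \rbrace$, I would first record that the operations (which are defined directly on all elements, hence manifestly independent of the chosen generators) satisfy the generator identities $ST = \downarrow \!\! \lbrace f_i g_j \rbrace$, $\overline{S} = \downarrow \!\! \lbrace \overline{f_i} \rbrace$, and $\mathcal{O}(A) = \downarrow \!\! \lbrace 1_A \rbrace$. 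The crucial well-definedness step is that these again arise from pairwise $\perp$-disjoint families: $\lbrace f_i g_j \rbrace$ is pairwise $\perp$-disjoint by the composition axiom {\bf [{$\boldsymbol{\perp}$}.4]} (splitting into the cases $i \neq i^\prime$, which precomposes the disjoint pair $f_i \perp f_{i^\prime}$ by distinct maps, and $i = i^\prime$, $j \neq j^\prime$, which precomposes $g_j \perp g_{j^\prime}$ by the common map $f_i$), while $\lbrace \overline{f_i} \rbrace$ is disjoint by {\bf [{$\boldsymbol{\perp}$}.5]}. Each such set is downward closed essentially by construction, using that $em \leq m$ for a restriction idempotent $e$.

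With these formulas in hand, the category axioms (associativity, unit laws) and the restriction axioms are mostly inherited pointwise from $\mathbb{X}$ together with downward closure. The one genuinely subtle restriction axiom is \textbf{[R.4]}, $S\overline{T} = \overline{ST}\,S$: here I would compute $S\overline{T} = \downarrow \!\! \lbrace f_i \overline{g_j} \rbrace$ and $\overline{ST}\,S = \downarrow \!\! \lbrace \overline{f_i g_j}\,f_k \rbrace$, and show the latter collapses into the former. The inclusion $\subseteq$ is \textbf{[R.4]} in $\mathbb{X}$ when $k = i$; for the cross-terms $k \neq i$ one uses that $f_i \perp f_k$ forces $\overline{f_i g_j}\,f_k = \overline{f_i g_j}\,\overline{f_i}\,f_k = 0$ via Lemma \ref{lem:interference1}.(\ref{lem:interference1.i}). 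This vanishing of cross-terms is exactly where the pairwise disjointness of the generating family is indispensable, and is the heart of why the construction works.

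Next I would establish that the canonical order is set inclusion, $S \leq S^\prime$ iff $S \subseteq S^\prime$: the direction $S \leq S^\prime \Rightarrow S \subseteq S^\prime$ is immediate from downward closure (every $s \in \overline{S}S^\prime$ lies below an element of $S^\prime$), while $\subseteq \Rightarrow \overline{S}S^\prime = S$ again splits into a same-generator case (a direct restriction computation showing $\overline{f}f^\prime = \overline{f^\prime}f \leq f$) and a different-generator case handled by disjointness. Granting this, checking that $\perp$ (defined by $S \perp T$ iff $f \perp g$ for all $f \in S$, $g \in T$) is an interference relation is straightforward for {\bf [{$\boldsymbol{\perp}$}.0]}--{\bf [{$\boldsymbol{\perp}$}.3]} and {\bf [{$\boldsymbol{\perp}$}.5]}, but {\bf [{$\boldsymbol{\perp}$}.4]} requires the same trick: for $h, h^\prime$ lying in a single map $H$, each sits below a generator of a pairwise-disjoint family, so either they are below the same generator (use {\bf [{$\boldsymbol{\perp}$}.4]} of $\mathbb{X}$ plus monotonicity) or below different ones (use disjointness of the generators, $\overline{hfk} \leq \overline{h}$, and downward closure {\bf [{$\boldsymbol{\perp}$}.3]}). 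For disjoint joins, $\bigsqcup_{i} S_i = \bigcup_{i} S_i = \downarrow \!\! \left( \bigcup_{i} F_i \right)$ is well-defined because $S_i \perp S_j$ makes $\bigcup_{i} F_i$ pairwise disjoint; it is the least upper bound since $\leq$ is inclusion; compatibility with precomposition {\bf [$\boldsymbol{\sqcup}$.3]} follows from the composition formula distributing over unions; and strength {\bf [$\boldsymbol{\sqcup}$.4]} is immediate from the elementwise definition of $\perp$.

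Finally, the functor $\mathcal{I}$ is routine once the above is in place: $\mathcal{I}(1_A) = \downarrow \!\! \lbrace 1_A \rbrace = \mathcal{O}(A)$, $\mathcal{I}(f)\mathcal{I}(g) = \downarrow \!\! \lbrace fg \rbrace = \mathcal{I}(fg)$, and $\overline{\mathcal{I}(f)} = \downarrow \!\! \lbrace \overline{f} \rbrace = \mathcal{I}(\overline{f})$ all follow from the generator formulas; preservation of zeroes is $\mathcal{I}(0) = \lbrace 0 \rbrace$; preservation of interference is downward closure {\bf [{$\boldsymbol{\perp}$}.3]} in $\mathbb{X}$; and faithfulness holds since $\downarrow \!\! \lbrace f \rbrace = \downarrow \!\! \lbrace g \rbrace$ forces $f \leq g$ and $g \leq f$, hence $f = g$. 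The main obstacle throughout is not any single axiom but the recurring need to show that cross-terms between distinct branches of a disjoint family either vanish or collapse onto the diagonal; this is where {\bf [{$\boldsymbol{\perp}$}.4]}, {\bf [{$\boldsymbol{\perp}$}.5]}, and Lemma \ref{lem:interference1} do the real work, and it is precisely what makes \textbf{[R.4]}, {\bf [{$\boldsymbol{\perp}$}.4]}, and the identification of $\leq$ with $\subseteq$ the nontrivial steps.
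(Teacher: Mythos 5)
Your proposal is correct and takes essentially the same route as the paper's own proof: the same generator identities for composition and restriction, the identification of $\leq$ with set inclusion, the same same-generator/different-generator case analysis for {\bf [{$\boldsymbol{\perp}$}.4]}, joins as unions with strength checked elementwise, and the same verification of the functor's properties. The only difference is presentational: where you carry out the restriction-category axioms, the order computation, and the join axioms explicitly (e.g.\ your cross-term argument for \textbf{[R.4]}), the paper imports those steps from the join-completion construction of \cite{cockett2009boolean}, of which $\mathsf{DJ}\left[(\mathbb{X},\perp)\right]$ is a subcategory, and only verifies the $\perp$-specific content in detail.
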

\begin{proof} We must first explain why this construction is well-defined. This follows from the fact that this construction is a subcategory of the infinite version of the join completion given in \cite[Sec 10]{cockett2009boolean}\footnote{In \cite[Sec 10]{cockett2009boolean}, the construction is given for finite families, however it is not difficult to see that the constructions and results still hold for arbitrary families.}. First note that $e \leq 1_A$ if and only if $e$ is a restriction idempotent, and therefore $\downarrow\!\!\lbrace 1_A \rbrace = \mathcal{O}(A)$, so identity maps are well-defined. Next, observe that if $F \subseteq \mathbb{X}(A,B)$ and $G \subseteq \mathbb{X}(B,C)$ are families of parallel $\perp$-maps, then by {\bf [{$\boldsymbol{\perp}$}.4]} and {\bf [{$\boldsymbol{\perp}$}.5]} respectively, we have that $FG = \lbrace fg \vert~ f\in F, g \in G \rbrace \subseteq \mathbb{X}(A,C)$ and $\overline{F} = \lbrace \overline{f} \vert~ f \in F \rbrace \subseteq \mathbb{X}(A,A)$ are as well. Then using the same arguments as in the proof of \cite[Prop 10.3]{cockett2009boolean}, we have that if $S = \downarrow \!\! F$ and $T = \downarrow \!\! G$, then $ST = \downarrow \!\! FG = \lbrace fg \vert~ f\in F, g \in G \rbrace$ and $\overline{S} = \downarrow \! \! \overline{F}$, so composition and restrictions are well-defined. Thus by similar arguments as in the proof of \cite[Prop 10.3]{cockett2009boolean}, we have that $\mathsf{DJ}\left[ (\mathbb{X},\perp) \right]$ is a restriction category. Now since the only maps less than $0$ is $0$, we have that $\downarrow \!\! \lbrace 0 \rbrace = \lbrace 0 \rbrace = Z$. Then it immediately follows that the $Z$s are indeed the restriction zeroes for $\mathsf{DJ}\left[ (\mathbb{X},\perp) \right]$. 

Now let us explain why $\perp$ is an interference relation. 
\begin{enumerate}[{\bf [{$\boldsymbol{\perp}$}.1]}]
\setcounter{enumi}{-1}
\item By Lemma \ref{lem:interference1}.(\ref{lem:interference1.0}), we have that for all $e \in \mathcal{O}(A)$, $e \perp 0$. Thus $\mathcal{O}(A) \perp \lbrace 0 \rbrace$. 

\item Suppose that $S \perp T$, that is, all $f \in S$ and $g \in T$, we have that $f \perp g$. Then by {\bf [{$\boldsymbol{\perp}$}.1]} we also have that $g \perp f$ for all $g \in T$ and $f \in S$. So $T \perp S$. 

\item Suppose that $S \perp S$, that is, for all $f \in S$, $f \perp f$. But then by {\bf [{$\boldsymbol{\perp}$}.2]}, $f =0$ for all $f \in S$. So $S = \lbrace 0 \rbrace$.  

\item By the same arguments as in the proof of \cite[Prop 10.5]{cockett2009boolean}, we get that $S \leq S^\prime$ in $\mathsf{DJ}\left[ (\mathbb{X},\perp) \right]$ if and only if $S \subseteq S^\prime$. So now suppose that $S^\prime \perp T^\prime$, and also that $S \leq S^\prime$ and $T \leq T^\prime$, or in other words, $S \subseteq S^\prime$ and $T \subseteq T^\prime$. Then for all $f \in S$ and $g \in T$, we also have that $f \in S^\prime$ and $g \in T^\prime$, and thus $f \perp g$. So $S \perp T$.  

\item Suppose that $S \perp T$, that is, all $f \in S$ and $g \in T$, we have that $f \perp g$. Now given suitable $U = \downarrow H$ and $V = \downarrow K$, consider $USV$ and $UTV$. Then a $x \in USV$ is of the form $u f v$ for some $u \in U$, $f \in S$, and $v \in V$, and similarly a $y \in UTV$ is of the form $x = u^\prime g v^\prime$ for some $u^\prime \in U$, $g \in T$, and $v^\prime \in V$. Now since $u, u^\prime \in U$, there are $h, h^\prime \in H$ such that $u \leq h$ and $u^\prime \leq h^\prime$. If $h \neq h^\prime$, then $h \perp h^\prime$, and then by {\bf [{$\boldsymbol{\perp}$}.3]}, we have that $u \perp u^\prime$. So by {\bf [{$\boldsymbol{\perp}$}.4]}, we get that $x = u f v \perp u^\prime g v^\prime = y$. Now consider instead if $h = h^\prime$. Now since $f \perp g$, by {\bf [{$\boldsymbol{\perp}$}.4]} we get $h f v \perp h g v^\prime$. Then since $u \leq h$ and $u\prime \leq h$, we get that $u f v \leq h fv$ and $u^\prime g v^\prime \leq h^\prime g v^\prime$, so by {\bf [{$\boldsymbol{\perp}$}.3]} we get that $x = u f v \perp u^\prime g v^\prime = y$. So $USV \perp UTV$. 

\item Suppose that $S \perp T$, that is, all $f \in S$ and $g \in T$, that $f \perp g$. Then by {\bf [{$\boldsymbol{\perp}$}.5]} we also have that $\overline{f} \perp \overline{g}$ for all $\overline{f} \in \overline{S}$ and $\overline{g} \in \overline{T}$. So $\overline{S} \perp \overline{T}$. 
\end{enumerate}

Thus $\mathsf{DJ}\left[ (\mathbb{X},\perp) \right]$ is a $\perp$-restriction category. It remains to discuss the disjoint joins. Now given a family of families of parallel $\perp$-disjoint maps $F_i \subseteq \mathbb{X}(A,B)$ such that $F_i \perp F_j$ for all $i \neq j$ (that is, $f \perp g$ for all $f \in F_i$ and $g \in F_j$), then their union $\bigcup\limits_{i \in I} F_i \subseteq \mathbb{X}(A,B)$ is also a family of pairwise $\perp$-disjoint parallel maps. Then by the same arguments as in the proof of \cite[Prop 10.5]{cockett2009boolean}, we see that for a family of pairwise $\perp$-disjoint maps $S_i = \downarrow \!\! F_i$, that $\bigsqcup\limits_{i \in I} S_i = \downarrow\left(\bigcup\limits_{i \in I} F_i  \right)$. By using again the same arguments as in the proof of \cite[Prop 10.5]{cockett2009boolean}, we get that {\bf [$\boldsymbol{\sqcup}$.1]}, {\bf [$\boldsymbol{\sqcup}$.2]}, and {\bf [$\boldsymbol{\sqcup}$.3]} hold. Finally, it remains to show {\bf [$\boldsymbol{\sqcup}$.4]}. So consider family of pairwise $\perp$-disjoint maps $S_i$ and another map $T$ such that $S_i \perp T$ for all $i \in I$. Then clearly $\bigcup\limits_{i \in I} S_i \perp T$ as well, and so {\bf [$\boldsymbol{\sqcup}$.4]} holds. So we conclude that $\mathsf{DJ}\left[ (\mathbb{X},\perp) \right]$ is a disjoint $\perp$-restriction category.

Finally, it remains to show that $\mathcal{I}$ is an interference restriction functor. By the same arguments as in the proof of \cite[Thm 10.6]{cockett2009boolean}, we get that $\mathcal{I}$ is a faithful restriction functor. Moreover it preserves restriction zeroes since $\mathcal{I}(0) = \downarrow \!\! \lbrace 0 \rbrace = Z$. Now suppose that $f \perp g$ in $(\mathbb{X},\perp)$. Then by {\bf [$\boldsymbol{\sqcup}$.3]}, for all $f^\prime \leq f$ and $g \prime \leq g$, we get that $f^\prime \perp g^\prime$. Thus $\mathcal{I}(f) = \downarrow \!\! \lbrace f \rbrace \perp \downarrow \!\! \lbrace 
g \rbrace = \mathcal{I}(g)$, and so we get that $\mathcal{I}$ is indeed a $\perp$-restriction functor.
\end{proof}

\section{Kleene Wands}\label{sec:itegories}

In this section, we introduce the notion of a \emph{Kleene wand} for an interference restriction category with binary disjoint joins by adapting the definition of Kleene wand from \cite[Sec 4]{cockett2012timed}. The main result of this paper is that for an extensive restriction category, to have an iteration (or trace) operator on the coproduct is equivalent to having a Kleene wand. As such, the axioms of a Kleene wand are analogues to of that of an iteration operator. However, while iteration operators require the presence of coproducts, a Kleene wand can be defined without them. Thus Kleene wands capture iteration in settings without coproducts. An \emph{itegory} is an interference restriction category with binary disjoint joins and Kleene wands. It is also worth noting that the definition of a Kleene wand in \cite[Sec 4]{cockett2012timed} was for the maximal interference relation $\perp_0$ and in a setting with \emph{restriction products} \cite[Sec 4.1]{cockett2007restriction}. So here we adapt some of the axioms for an arbitrary interference relation but do not consider having restriction products. 

\begin{definition}\label{def:wand} For a $\perp$-restriction category $\mathbb{X}$ with all binary $\perp$-joins, a \textbf{Kleene wand} $\wand$ is a family of operators (indexed by pairs of objects $X, A \in \mathbb{X}$):
\begin{align*}
\wand: \{ (f,g) \in \mathbb{X}(X,X) \times \mathbb{X}(X,A) | f \perp g \} \to \mathbb{X}(X,A) && \begin{matrix}[c] \infer{f \wand g : X \to A}{f: X \to X ~~~ g: X \to A ~~~ f \perp g} \end{matrix}
\end{align*} 
such that the following axioms hold: 
\begin{enumerate}[{\bf [$\boldsymbol{\wand}$.1]}]
\item Iteration: For maps $f: X \to X$ and $g: X \to A$, if $f \perp g$, then $g \sqcup f(f \wand g) = f \wand g$
\item Naturality: For maps $f: X \to X$, $g: X \to A$, and $h: A \to B$, if $f \perp g$, then $\left( f \wand g \right)h = f \wand gh$
\item Dinaturality: For maps $f: X \to Y$, $g: X \to A$, and $k: Y \to X$, if $f \perp g$ then $kf \wand kg = k\left( fk \wand g \right)$
\item Diagonal: For maps $f: X \to X$, $f^\prime: X \to X$, and $g: X \to A$, if $f \perp f^\prime$ and $f \sqcup f^\prime \perp g$, then $(f \wand f^\prime) \perp (f \wand g)$ and $(f \sqcup f^\prime) \wand g = (f \wand f^\prime) \wand (f \wand g)$. 
\end{enumerate}
A \textbf{$\perp$-itegory} is a pair $(\mathbb{X}, \wand)$, consisting of a $\perp$-restriction category $\mathbb{X}$ with all binary $\perp$-joins and a Kleene wand $\wand$. 
\end{definition}

We note that {\bf [$\boldsymbol{\wand}$.4]} is well defined by Lemma \ref{lemma:disjoint-join}.(\ref{lemma:join-inter}). The axioms of a Kleene wand are analogues of the axioms of an iteration operator, but expressed in a setting without coproducts. We first observe that if we have strong disjoint joins, then the fourth axiom can be simplified. 

\begin{lemma}\label{lem:strong-wand-alt} For a finitely disjoint $\perp$-restriction category $\mathbb{X}$, $\wand$ is a Kleene wand if and only if $\wand$ satisfies {\bf [$\boldsymbol{\wand}$.1]}, {\bf [$\boldsymbol{\wand}$.2]}, {\bf [$\boldsymbol{\wand}$.3]}, and the following: 
\begin{enumerate}[{\bf [$\boldsymbol{\wand}$.1.{a}]}]
\setcounter{enumi}{3}
\item Diagonal: For maps $f: X \to X$, $f^\prime: X \to X$, and $g: X \to A$, if $f \perp f^\prime$, $f \perp g$, and $f^\prime \perp g$, then $(f \sqcup f^\prime) \wand g = (f \wand g) \wand (f^\prime \wand g)$
\end{enumerate}
\end{lemma}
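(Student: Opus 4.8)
The plan is to prove the two directions of the biconditional separately, observing that the only genuine difference between {\bf [$\boldsymbol{\wand}$.4]} and {\bf [$\boldsymbol{\wand}$.4.a]} lies in how the disjointness of the iterand from the pair $(f, f^\prime)$ is phrased: {\bf [$\boldsymbol{\wand}$.4]} asks for the single condition $f \sqcup f^\prime \perp g$ on the join, whereas {\bf [$\boldsymbol{\wand}$.4.a]} asks for the two conditions $f \perp g$ and $f^\prime \perp g$ on the summands. The heart of the argument is that, in a finitely disjoint $\perp$-restriction category, these two phrasings are equivalent, so that under {\bf [$\boldsymbol{\wand}$.1]}--{\bf [$\boldsymbol{\wand}$.3]} the two diagonal axioms carry exactly the same content. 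Since {\bf [$\boldsymbol{\wand}$.1]}, {\bf [$\boldsymbol{\wand}$.2]}, and {\bf [$\boldsymbol{\wand}$.3]} appear verbatim in both characterizations, they play no active role, and the work is entirely in matching {\bf [$\boldsymbol{\wand}$.4]} with {\bf [$\boldsymbol{\wand}$.4.a]}.

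For the forward direction, assuming $\wand$ is a Kleene wand, I would take maps $f: X \to X$, $f^\prime: X \to X$, and $g: X \to A$ satisfying the hypotheses of {\bf [$\boldsymbol{\wand}$.4.a]}, namely $f \perp f^\prime$, $f \perp g$, and $f^\prime \perp g$. Applying the strong-join axiom {\bf [$\boldsymbol{\sqcup}$.4]} to the family $\lbrace f, f^\prime \rbrace$ against $g$ yields $f \sqcup f^\prime \perp g$. The hypotheses of {\bf [$\boldsymbol{\wand}$.4]} are now met, and invoking it gives both $(f \wand f^\prime) \perp (f \wand g)$ and the equation $(f \sqcup f^\prime) \wand g = (f \wand f^\prime) \wand (f \wand g)$, which is precisely the conclusion of {\bf [$\boldsymbol{\wand}$.4.a]}, with the disjointness guaranteeing that the right-hand side is well-defined.

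For the reverse direction, assuming {\bf [$\boldsymbol{\wand}$.1]}--{\bf [$\boldsymbol{\wand}$.3]} together with {\bf [$\boldsymbol{\wand}$.4.a]}, I would take $f: X \to X$, $f^\prime: X \to X$, and $g: X \to A$ satisfying the hypotheses of {\bf [$\boldsymbol{\wand}$.4]}, namely $f \perp f^\prime$ and $f \sqcup f^\prime \perp g$. Now Lemma \ref{lemma:disjoint-join}.(\ref{lemma:join-inter}), applied to $f \sqcup f^\prime \perp g$, delivers $f \perp g$ and $f^\prime \perp g$, so the hypotheses of {\bf [$\boldsymbol{\wand}$.4.a]} hold and its equation $(f \sqcup f^\prime) \wand g = (f \wand f^\prime) \wand (f \wand g)$ follows.

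The step I expect to require the most care is recovering the remaining conclusion of {\bf [$\boldsymbol{\wand}$.4]}, namely the disjointness $(f \wand f^\prime) \perp (f \wand g)$, which is not listed explicitly among the hypotheses or conclusion of {\bf [$\boldsymbol{\wand}$.4.a]}. The resolution is that this disjointness is already implicit: since the Kleene wand of Definition \ref{def:wand} is defined only on $\perp$-disjoint pairs, the mere well-formedness of the expression $(f \wand f^\prime) \wand (f \wand g)$ occurring on the right-hand side of {\bf [$\boldsymbol{\wand}$.4.a]} presupposes $(f \wand f^\prime) \perp (f \wand g)$. Hence both conclusions of {\bf [$\boldsymbol{\wand}$.4]} are recovered, and the equivalence follows.
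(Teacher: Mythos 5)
Your forward direction coincides with the paper's and is fine. The gap is in the reverse direction, in how you handle the disjointness $(f \wand f^\prime) \perp (f \wand g)$. The conclusion of {\bf [$\boldsymbol{\wand}$.4]} has two parts: this disjointness \emph{and} the equation. Your proof recovers the equation from {\bf [$\boldsymbol{\wand}$.4.a]} but disposes of the disjointness by declaring it ``implicit in the well-formedness'' of the right-hand side of {\bf [$\boldsymbol{\wand}$.4.a]}. That move is circular: an axiom of the form ``if $H$ then $E$'', where $E$ is an equation between terms of a \emph{partial} operation, cannot guarantee that its own terms denote. Assuming {\bf [$\boldsymbol{\wand}$.4.a]} gives you no information about whether $(f \wand f^\prime) \perp (f \wand g)$ actually holds; if it failed, that instance of {\bf [$\boldsymbol{\wand}$.4.a]} would simply be vacuous (the expression $(f \wand f^\prime) \wand (f \wand g)$ would not exist), while {\bf [$\boldsymbol{\wand}$.4]}, which positively asserts the disjointness, would fail. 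So under any careful reading of an axiom on a partially defined operator, your argument does not recover the first half of the conclusion of {\bf [$\boldsymbol{\wand}$.4]}. The lemma is an honest equivalence only because this disjointness is in fact a \emph{theorem}, derivable from {\bf [$\boldsymbol{\wand}$.1]}--{\bf [$\boldsymbol{\wand}$.3]} alone in a finitely disjoint $\perp$-restriction category; producing that derivation is the real content of the reverse direction, and it is where the paper spends essentially all of its effort.

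The paper's derivation runs as follows, and is what your proof is missing. From $f \sqcup f^\prime \perp g$, Lemma \ref{lemma:disjoint-join}.(\ref{lemma:join-inter}) gives $f \perp g$, $f^\prime \perp g$ (and $f \perp f^\prime$ is assumed), as you noted. By \textbf{[R.1]} and {\bf [$\boldsymbol{\wand}$.2]}, one has $f \wand g = (f \wand \overline{g})g$ and $f \wand f^\prime = (f \wand \overline{f^\prime})f^\prime$. By Lemma \ref{lem:interference1}.(\ref{lem:interference1.v}), $f \perp \overline{f^\prime}$ and $f \perp \overline{g}$, so by {\bf [$\boldsymbol{\sqcup}$.4]} (this is where finite disjointness enters a second time) $f \perp \overline{f^\prime} \sqcup \overline{g}$, and hence $f \wand (\overline{f^\prime} \sqcup \overline{g})$ exists. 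Since $(\overline{f^\prime} \sqcup \overline{g})f^\prime = f^\prime$ and $(\overline{f^\prime} \sqcup \overline{g})g = g$ by Lemma \ref{lem:interference1}.(\ref{lem:interference1.i}) and Lemma \ref{lemma:disjoint-join}.(\ref{lemma:join-comp}), axiom {\bf [$\boldsymbol{\wand}$.2]} shows that $f \wand f^\prime = \left(f \wand (\overline{f^\prime} \sqcup \overline{g})\right)f^\prime$ and $f \wand g = \left(f \wand (\overline{f^\prime} \sqcup \overline{g})\right)g$, i.e.\ both maps are post-compositions of one common map with $f^\prime$ and $g$ respectively. Since $f^\prime \perp g$, the composition axiom {\bf [{$\boldsymbol{\perp}$}.4]} then yields $(f \wand f^\prime) \perp (f \wand g)$. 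With this established, your application of {\bf [$\boldsymbol{\wand}$.4.a]} legitimately supplies the equation, and both halves of {\bf [$\boldsymbol{\wand}$.4]} follow.
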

\begin{proof} For the $\Rightarrow$ direction, we need only to explain why {\bf [$\boldsymbol{\wand}$.4.a]} holds. So suppose that $f \perp f^\prime$, $f \perp g$, and $f^\prime \perp g$. By {\bf [$\boldsymbol{\sqcup}$.4]}, we get that $f \sqcup f^\prime \perp g$. Then we can apply {\bf [$\boldsymbol{\wand}$.4]} to get that $(f \wand f^\prime) \perp (f \wand g)$ and the rest of the statement of {\bf [$\boldsymbol{\wand}$.4.a]}. For the $\Leftarrow$ direction, we need to explain why {\bf [$\boldsymbol{\wand}$.4]} holds. So suppose that $f \sqcup f^\prime \perp g$. We first need to explain why $(f \wand f^\prime)$ and $(f \wand g)$ are $\perp$-disjoint. To do so, first note that by Lemma \ref{lemma:disjoint-join}.(\ref{lemma:join-inter}), we get that $f \perp f^\prime$, $f \perp g$, and $f^\prime \perp g$. Then observe that by using \textbf{[R.1]} and \textbf{[{$\boldsymbol{\wand}$}.2]}, we can compute that $f \wand g = f \wand (\overline{g} g) = (f \wand \overline{g})g$, so $f \wand g = (f \wand \overline{g})g$ and similarly $f \wand f^\prime = (f \wand \overline{f^\prime})f^\prime$. Now by Lemma \ref{lem:interference1}.(\ref{lem:interference1.v}), since $f \perp f^\prime$ and $f \perp g$, then $f \perp \overline{f^\prime}$ and $f \perp \overline{g}$. Moreover by {\bf [$\boldsymbol{\sqcup}$.4]}, we get that $f \perp \overline{f^\prime} \sqcup \overline{g}$, so we can consider $f \wand (\overline{f^\prime} \sqcup \overline{g})$. Now note that by Lemma \ref{lem:interference1}.(\ref{lem:interference1.i}) and Lemma \ref{lemma:disjoint-join}.(\ref{lemma:join-comp}), we get that $(\overline{f^\prime} \sqcup \overline{g})g = g$ and $(\overline{f^\prime} \sqcup \overline{g})f^\prime = f^\prime$. Therefore using these identities and \textbf{[{$\boldsymbol{\wand}$}.2]}, we also get that $\left(f \wand (\overline{f^\prime} \sqcup \overline{g})\right)f^\prime = f \wand f^\prime$ and $\left(f \wand (\overline{f^\prime} \sqcup \overline{g})\right)g = f \wand g$. Now since $f^\prime \perp g$, by {\bf [{$\boldsymbol{\perp}$}.4]}, we then get that $f \wand f^\prime = \left(f \wand (\overline{f^\prime} \sqcup \overline{g})\right)f^\prime \perp \left(f \wand (\overline{f^\prime} \sqcup \overline{g})\right)g = f \wand g$. So we do indeed get that $(f \wand f^\prime) \perp (f \wand g)$. Then we can apply {\bf [$\boldsymbol{\wand}$.4.a]} to get the rest of {\bf [$\boldsymbol{\wand}$.4]}. 
\end{proof}

Intuitively, the Kleene wand $f \wand g$ ``iterates'' the endomorphism $f$ until the ``guard'' $g$ is satisfied and then produces that output. This intuition is indeed justified by successive applications of the iteration axiom. 

\begin{lemma}\label{lem:iteration} Let $\mathbb{X}$ be a $\perp$-restriction category, and $f: X \to X$ and ${g: X \to A}$ maps such that $f \perp g$. Then for all $n,m \in \mathbb{N}$ and $n\neq m$, $f^ng \perp f^m g$ (where by convention, $f^0=1_X$). Furthermore, if $(\mathbb{X}, \wand)$ is a $\perp$-itegory, then for all $n \in \mathbb{N}$, the following equality holds:
\begin{align}\label{eq:iter-join-wand}
    f \wand g = g \sqcup fg \sqcup f^2g \sqcup \hdots \sqcup f^{n} g \sqcup f^{n+1}(f \wand g)
\end{align}
So in particular, $f^n g \leq f \wand g$ for all $n \in \mathbb{N}$. 
\end{lemma}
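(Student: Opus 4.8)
The plan is to establish the two assertions in order: first the pairwise $\perp$-disjointness of the iterates $f^n g$, which uses only the interference axioms and not the wand at all, and then the unfolding identity \eqref{eq:iter-join-wand} by induction on $n$. For the disjointness claim, given $f \perp g$ and $n \neq m$, I would take (without loss of generality) $n < m$, write $m = n + d$ with $d \geq 1$, and apply the composition axiom {\bf [{$\boldsymbol{\perp}$}.4]} to $f \perp g$ with the choices $h = f^n$, $k = f^{d-1}g$, and $k^\prime = 1_A$. Since $hfk = f^n f f^{d-1} g = f^m g$ and $hgk^\prime = f^n g$, this gives $f^m g \perp f^n g$, and symmetry {\bf [{$\boldsymbol{\perp}$}.1]} then yields $f^n g \perp f^m g$ (the boundary case $d=1$ simply uses $k = g$).

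For \eqref{eq:iter-join-wand} I would induct on $n$, carrying the slightly strengthened inductive hypothesis that the family $\{g, fg, \ldots, f^n g, f^{n+1}(f \wand g)\}$ is pairwise $\perp$-disjoint, has a $\perp$-join, and that this join equals $f \wand g$. The base case $n=0$ is exactly the iteration axiom {\bf [$\boldsymbol{\wand}$.1]}, which simultaneously guarantees $g \perp f(f \wand g)$ (so the binary join is defined) and that $g \sqcup f(f \wand g) = f \wand g$. For the inductive step, I would first unfold the last summand: applying {\bf [$\boldsymbol{\wand}$.1]} inside and then distributing pre-composition through the join via Lemma~\ref{lemma:disjoint-join}.(\ref{lemma:join-comp}) gives
\[ f^{n+1}(f \wand g) = f^{n+1}\bigl(g \sqcup f(f \wand g)\bigr) = f^{n+1} g \sqcup f^{n+2}(f \wand g), \]
where the binary join on the right exists because pre-composition preserves $\perp$-disjointness by {\bf [{$\boldsymbol{\perp}$}.4]}.

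To verify that the enlarged family $\{g, \ldots, f^{n+1} g, f^{n+2}(f \wand g)\}$ is again pairwise $\perp$-disjoint, I would combine three inputs: the disjointness of the $f^i g$ amongst themselves (the first claim); the disjointness $f^{n+1} g \perp f^{n+2}(f \wand g)$ just obtained from existence of the binary join; and, for the cross terms, the observation that $f^{n+1} g \leq f^{n+1}(f \wand g)$ and $f^{n+2}(f \wand g) \leq f^{n+1}(f \wand g)$ by {\bf [$\boldsymbol{\sqcup}$.1]}, so that disjointness of each $f^i g$ ($i \leq n$) from $f^{n+1}(f \wand g)$ (part of the inductive hypothesis) descends to the two smaller pieces by the downward-closure axiom {\bf [{$\boldsymbol{\perp}$}.3]}.

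The remaining and most delicate point is to show that substituting $f^{n+1} g \sqcup f^{n+2}(f \wand g)$ for $f^{n+1}(f \wand g)$ in the inductive hypothesis preserves \emph{both} existence and value of the $\perp$-join. This is where care is needed, since a $\perp$-itegory is only assumed to have \emph{binary} $\perp$-joins, not strong or finite ones, so associativity of a multi-way join cannot be invoked for free. I would resolve this at the level of least upper bounds: if $\bigsqcup_i a_i$ exists and one entry $a_k$ equals a binary join $b \sqcup c$ whose two parts are $\perp$-disjoint from all the other entries, then any upper bound of the enlarged family bounds $b$ and $c$, hence bounds $a_k = b \sqcup c$, hence bounds every $a_i$ and therefore dominates $\bigsqcup_i a_i$; conversely $\bigsqcup_i a_i$ is plainly an upper bound of the enlarged family. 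Thus $\bigsqcup_i a_i$ is the least upper bound of the enlarged family, so the latter join exists and agrees with the former. Applying this with $a_k = f^{n+1}(f \wand g)$ turns the inductive hypothesis into the identity for $n+1$, completing the induction. Finally, the closing remark $f^n g \leq f \wand g$ is immediate from {\bf [$\boldsymbol{\sqcup}$.1]}, since $f^n g$ is one of the summands of the join shown to equal $f \wand g$.
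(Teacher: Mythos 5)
Your proof is correct and takes essentially the same approach as the paper's: equation (\ref{eq:iter-join-wand}) is proved by induction on $n$, with base case {\bf [$\boldsymbol{\wand}$.1]} and the inductive step resting on the unfolding $f^{n+1}(f \wand g) = f^{n+1}g \sqcup f^{n+2}(f \wand g)$ (via {\bf [$\boldsymbol{\wand}$.1]} and {\bf [$\boldsymbol{\sqcup}$.3]}) followed by a least-upper-bound argument. The only minor differences are that you obtain the pairwise disjointness $f^n g \perp f^m g$ by a direct application of {\bf [{$\boldsymbol{\perp}$}.4]} (the paper instead passes through restrictions, {\bf [{$\boldsymbol{\perp}$}.3]}, and Lemma \ref{lem:inter-rest}), and that you explicitly verify the pairwise $\perp$-disjointness of the enlarged family, a point the paper leaves implicit.
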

\begin{proof} Suppose that $f \perp g$. Then by \textbf{[{$\boldsymbol{\perp}$}.5]}, $\overline{f} \perp \overline{g}$. However, since $\overline{f^{n+1}g} \leq \overline{f}$ (using \textbf{[R.1]} and \textbf{[R.3]}), by \textbf{[{$\boldsymbol{\perp}$}.3]} we then have that $\overline{f^{n+1}g} \perp \overline{g}$. So by Lemma \ref{lem:inter-rest}, $f^{n+1} g \perp g$. Then by \textbf{[{$\boldsymbol{\perp}$}.4]}, for all $k \in \mathbb{N}$, pre-composing by $f^k$ gives us that $f^{n+k+1} g \perp f^k g$. From here we can conclude that $f^ng \perp f^m g$ for  $n,m \in \mathbb{N}$ and $n\neq m$, as desired. 

Now suppose $(\mathbb{X}, \wand)$ is a $\perp$-itegory. We prove (\ref{eq:iter-join-wand}) by induction on $n$. The base case $n=0$ is precisely {\bf [$\boldsymbol{\wand}$.1]}, since $g \sqcup f(f \wand g) = f \wand g$. So suppose that we have shown that (\ref{eq:iter-join-wand}) for all $0 \leq k \leq n$. We now show it for the case $n+1$ by showing that $f \wand g$ is the $\perp$-join of the family $\lbrace g, fg, \hdots, f^{n} g, f^{n+1}g, f^{n+2}(f \wand g) \rbrace$. By the induction hypothesis and {\bf [$\boldsymbol{\sqcup}$.1]}, we already have that $f^k g \leq f \wand g$ for all $0 \leq k \leq n$ and also that $f^{n+1}(f \wand g) \leq f \wand g$. So we also need to explain why $f\wand g$ is greater than $f^{n+1}g$ and $f^{n+2}(f \wand g)$. However using {\bf [$\boldsymbol{\wand}$.1]} and {\bf [$\boldsymbol{\sqcup}$.3]}, observe that $f^{n+1}(f \wand g) = f^{n+1}g \sqcup f^{n+2}(f \wand g)$. So since $f^{n+1}(f \wand g) \leq f \wand g$, by {\bf [$\boldsymbol{\sqcup}$.1]} and transitivity, we get that $f^{n+1}g \leq f \wand g$ and $f^{n+2}(f \wand g) \leq f \wand g$. So {\bf [$\boldsymbol{\sqcup}$.1]} holds for $n+1$. Now suppose that we have a map $h$ such that $f^k g \leq h$ for all $0 \leq k \leq n+1$ and $f^{n+2}(f \wand g) \leq h$. By {\bf [$\boldsymbol{\sqcup}$.2]}, we also get that $f^{n+1}g \sqcup f^{n+2}(f \wand g) \leq h$, or in other words, $f^{n+1}(f \wand g) \leq h$. So by the induction hypothesis and {\bf [$\boldsymbol{\sqcup}$.2]}, we get that $f \wand g \leq h$. So {\bf [$\boldsymbol{\sqcup}$.2]} holds for $n+2$. So we conclude that $f \wand g$ is the $\perp$-join of $\lbrace g, fg, \hdots, f^{n} g, f^{n+1}g, f^{n+2}(f \wand g) \rbrace$. Thus (\ref{eq:iter-join-wand}) holds for all $n \in \mathbb{N}$ as desired. 
\end{proof}

The above lemma suggests that a natural example of a Kleene wand should be the disjoint join of all $f^n g$ when it exists. This formula clearly realizes the intuition that $f$ is iterated until it falls in the domain of $g$. In fact, in Thm \ref{thm:countable-wand}, we will show that for a disjoint interference restriction category, this formula always defines a Kleene wand. In the meantime, here are our main examples of Kleene wands: 

\begin{example}\label{ex:PAR-wand} \normalfont $\mathsf{PAR}$ has a Kleene wand $\wand$ with respect to $\perp_0$ defined as follows for $\perp_0$-disjoint partial functions ${f: X \to X}$ and $g: X \to A$: 
\begin{align*}
    (f \wand g)(x) := \begin{cases} g(x) & \text{if $g(x) \downarrow$} \\
    g(f^{n+1}(x)) & \text{if there exists a (necessarily unique) $n \in \mathbb{N}$ such that $g(f^{n+1}(x)) \downarrow$} \\
    \uparrow & \text{otherwise} 
    \end{cases}
\end{align*}
    Note that this is well-defined since for all $m,n \in \mathbb{N}$ with $m\neq n$, if $g(f^m(x)) \downarrow$ then $g(f^n(x)) \uparrow$. Thus $(\mathsf{PAR}, \wand)$ is a $\perp_0$-itegory. 
\end{example}

\begin{example}\label{ex:REC-wand} \normalfont $(\mathsf{REC}, \wand)$ is a $\perp_r$-itegory where the Kleene wand $\wand$ is defined as in the previous example. 
\end{example}

\begin{example}\normalfont The category of timed sets as introduced in \cite{cockett2012timed} is a $\perp_0$-itegory. This was the original example for the original definition of a Kleene wand (which recall was only for $\perp_0$). We will not review timed sets here and invite the curious reader to see \cite{cockett2012timed} for more details. 
\end{example}

Here are some other useful basic properties about Kleene wands. 

\begin{lemma}\label{lem:wand} In a $\perp$-itegory $(\mathbb{X}, \wand)$, for maps $f: X \to X$, $g: X \to A$, and $g^\prime: X \to A$, which are pairwise $\perp$-disjoint, we have that: 
\begin{enumerate}[{\em (i)}]
\item \label{lem:wand.rest} $f \wand g = (f \wand \overline{g})g$
\item \label{lem:wand.0g} $0 \wand g = g$
\item \label{lem:wand.f0} $f \wand 0 = 0$
\item \label{lem:wand.perp} If $f \perp g \sqcup g^\prime$, then $f \wand g \perp f \wand g^\prime$ and $f \wand (g \sqcup g^\prime) = f \wand g \sqcup f \wand g^\prime$
\item If $f$ is total, then $f \wand g =0$. 
\item If $g$ is total, then $f \wand g= g$.
\end{enumerate}
\end{lemma}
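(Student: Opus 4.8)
The plan is to prove the six identities in dependency order, leaning on the iteration and naturality axioms together with the facts already established about $\perp$-disjointness. I would begin with \emph{(i)}, which is the workhorse for several of the others. Since $f \perp g$, Lemma~\ref{lem:interference1}.(\ref{lem:interference1.v}) gives $f \perp \overline{g}$, so $f \wand \overline{g}$ is defined. Then \textbf{[R.1]} rewrites $g = \overline{g}g$, and naturality \textbf{[$\boldsymbol{\wand}$.2]} (with $\overline{g}$ in the guard slot and $g$ post-composed) yields $(f \wand \overline{g})g = f \wand (\overline{g}g) = f \wand g$. This is exactly the computation already used inside the proof of Lemma~\ref{lem:strong-wand-alt}, so it requires no new idea.

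With \emph{(i)} in hand, the next few parts are short. For \emph{(iii)}, set $g = 0$ in \emph{(i)}: since $\overline{0} = 0$, we get $f \wand 0 = (f \wand 0)\,0 = 0$ by annihilation of the zero map (alternatively, apply \textbf{[$\boldsymbol{\wand}$.2]} with $h = 0$). For \emph{(ii)}, note $0 \perp g$ holds by Lemma~\ref{lem:interference1}.(\ref{lem:interference1.0}) and symmetry, and then the iteration axiom \textbf{[$\boldsymbol{\wand}$.1]} gives $0 \wand g = g \sqcup 0(0 \wand g) = g \sqcup 0 = g$. Parts \emph{(v)} and \emph{(vi)} then follow from the totality dichotomy Lemma~\ref{lem:interference1}.(\ref{lem:interference1.total}): if $f$ is total and $f \perp g$ then $g = 0$, so $f \wand g = f \wand 0 = 0$ by \emph{(iii)}; dually, if $g$ is total then (using symmetry) $f = 0$, so $f \wand g = 0 \wand g = g$ by \emph{(ii)}.

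The main obstacle is \emph{(iv)}, and the key device is to introduce the single map $w := f \wand (\overline{g} \sqcup \overline{g^\prime})$. This is legitimate: from $f \perp g \sqcup g^\prime$ and Lemma~\ref{lemma:disjoint-join}.(\ref{lemma:join-inter}) we get $f \perp g$ and $f \perp g^\prime$, while $\overline{g} \sqcup \overline{g^\prime} = \overline{g \sqcup g^\prime}$ by Lemma~\ref{lemma:disjoint-join}.(\ref{lemma:join-rest}), so $f \perp (\overline{g} \sqcup \overline{g^\prime})$ by Lemma~\ref{lem:interference1}.(\ref{lem:interference1.v}). Using $\overline{g}g = g$, $\overline{g^\prime}g = 0$ (from $g \perp g^\prime$ and Lemma~\ref{lem:interference1}.(\ref{lem:interference1.i})) and the distributivity of composition over joins (Lemma~\ref{lemma:disjoint-join}.(\ref{lemma:join-comp})), one computes $(\overline{g} \sqcup \overline{g^\prime})g = g$ and similarly $(\overline{g} \sqcup \overline{g^\prime})g^\prime = g^\prime$. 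Naturality \textbf{[$\boldsymbol{\wand}$.2]} then gives $wg = f \wand g$ and $wg^\prime = f \wand g^\prime$. The disjointness $f \wand g \perp f \wand g^\prime$ now follows by pre-composing the hypothesis $g \perp g^\prime$ with $w$ via \textbf{[$\boldsymbol{\perp}$.4]}, and the join formula follows from $w(g \sqcup g^\prime) = wg \sqcup wg^\prime$ (pre-composition axiom \textbf{[$\boldsymbol{\sqcup}$.3]}) together with naturality, which rewrites the left-hand side as $f \wand \big((\overline{g} \sqcup \overline{g^\prime})(g \sqcup g^\prime)\big) = f \wand (g \sqcup g^\prime)$. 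The only subtlety to watch is that we never invoke strong joins \textbf{[$\boldsymbol{\sqcup}$.4]}, since a general itegory has only binary $\perp$-joins; every disjointness needed is obtained either from the hypotheses via Lemma~\ref{lemma:disjoint-join}.(\ref{lemma:join-inter}) or by reducing to $g \perp g^\prime$.
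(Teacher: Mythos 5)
Your proof is correct and follows essentially the same route as the paper's: parts \emph{(i)}--\emph{(iii)}, \emph{(v)}, \emph{(vi)} via \textbf{[R.1]}, \textbf{[$\boldsymbol{\wand}$.1]}, \textbf{[$\boldsymbol{\wand}$.2]} and the totality dichotomy, and part \emph{(iv)} via exactly the paper's key device, the auxiliary map $f \wand (\overline{g} \sqcup \overline{g^\prime})$ combined with naturality, post-composition over joins, and \textbf{[$\boldsymbol{\perp}$.4]}. The only differences are cosmetic (ordering of parts and citing Lemma~\ref{lem:interference1}.(\ref{lem:interference1.v}) where the paper cites Lemma~\ref{lem:interference1}.(\ref{lem:interference1.iv}), both of which suffice).
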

\begin{proof} These are mostly straightforward to prove. 
    \begin{enumerate}[{\em (i)}]
\item This follows from \textbf{[R.1]} and \textbf{[{$\boldsymbol{\wand}$}.2]}, since $f \wand g = f \wand (\overline{g} g) = (f \wand \overline{g})g$. 

\item By Lemma \ref{lem:interference1}.(\ref{lem:interference1.0}), $0 \perp g$. So using \textbf{[{$\boldsymbol{\wand}$}.1]}, we have that $0 \wand g = g \sqcup 0(0 \wand g) = g \sqcup 0 = g$. 

\item Using (\ref{lem:wand.rest}), we get that $f \wand 0 = (f \wand \overline{0}) 0 = 0$. 

\item Suppose that $f \perp g \sqcup g^\prime$. By Lemma \ref{lem:interference1}.(\ref{lem:interference1.iv}) and Lemma \ref{lemma:disjoint-join}.(\ref{lemma:join-rest}), we also get that $f \perp \overline{g} \sqcup \overline{g^\prime}$. As such, we can take their Kleene wand: $f \wand (\overline{g} \sqcup \overline{g^\prime})$. Next note that by Lemma \ref{lem:interference1}.(\ref{lem:interference1.i}) and Lemma \ref{lemma:disjoint-join}.(\ref{lemma:join-comp}), we get that $(\overline{g} \sqcup \overline{g^\prime})g= g$ and $(\overline{g} \sqcup \overline{g^\prime})g^\prime= g^\prime$. Therefore, by \textbf{[{$\boldsymbol{\wand}$}.2]}, we get that $\left( f \wand (\overline{g} \sqcup \overline{g^\prime}) \right) g = f \wand g$ and $\left( f \wand (\overline{g} \sqcup \overline{g^\prime}) \right) g^\prime = f \wand g^\prime$. Now since $g \perp g^\prime$, by \textbf{[{$\boldsymbol{\perp}$}.4]} we get that $f \wand g = \left( f \wand (\overline{g} \sqcup \overline{g^\prime}) \right) g \perp \left( f \wand (\overline{g} \sqcup \overline{g^\prime}) \right) g^\prime = f \wand g^\prime$. So we have that $f \wand g \perp f \wand g^\prime$. Finally, using (\ref{lem:wand.rest}) and Lemma \ref{lemma:disjoint-join}.(\ref{lemma:join-comp}) and (\ref{lemma:join-rest}), we compute that: 
\begin{align*}
   f \wand (g \sqcup g^\prime) = \left( f \wand \overline{g \sqcup g^\prime} \right)(g \sqcup g^\prime) = \left( f \wand \left( \overline{g} \sqcup \overline{g^\prime} \right) \right)(g \sqcup g^\prime) =  \left( f \wand \left( \overline{g} \sqcup \overline{g^\prime} \right) \right)g \sqcup  \left( f \wand \left( \overline{g} \sqcup \overline{g^\prime} \right) \right)g^\prime = f \wand g \sqcup f \wand g^\prime 
\end{align*}
So $f \wand (g \sqcup g^\prime) = f \wand g \sqcup f \wand g^\prime$ as desired.  
\item If $f$ is total, then by Lemma \ref{lem:interference1}.(\ref{lem:interference1.total}), $f \perp g$ implies that $g=0$. So by (\ref{lem:wand.f0}), $f \wand g = f \wand 0 = 0$. 
\item If $g$ is total, then by Lemma \ref{lem:interference1}.(\ref{lem:interference1.total}), $f \perp g$ implies that $f=0$. So by (\ref{lem:wand.0g}), $f \wand g = 0 \wand g = g$.
\end{enumerate}
\end{proof}

We now turn our attention to discussing uniformity and inductivity of Kleene wands. These are analogues of their namesakes for iteration/trace operators, which are often desirable properties of well-behaved iteration/trace operators \cite{hasegawa2004uniformity}. 

\begin{definition}\label{def:uni-wand} In a $\perp$-itegory $(\mathbb{X}, \wand)$, the Kleene wand $\wand$ is said to be: 
\begin{enumerate}[{\em (i)}]
\item \textbf{Uniform} if when $f \perp g$ and $f^\prime \perp g^\prime$, and $hg^\prime = g$ and $hf^\prime= fh$, then $h \left( f^\prime \wand g^\prime \right)=f \wand g$;
\item \textbf{Lax uniform} if when $f \perp g$ and $f^\prime \perp g^\prime$, and $g \leq hg^\prime$ and  $fh\leq hf^\prime$, then $f \wand g \leq h \left( f^\prime \wand g^\prime \right)$;
\item \textbf{Colax uniform} if when $f \perp g$ and $f^\prime \perp g^\prime$, and $g \geq hg^\prime$ and $fh \geq hf^\prime$, then $f \wand g \geq h \left( f^\prime \wand g^\prime \right)$. 
\end{enumerate}
\end{definition}

Basic uniformity simply asks that the iteration behaves equivariently for homomorphic iteration data. The (co)lax versions of uniformity ask that (co)lax homomorphism between iteration data produces a commensurate (co)lax behaviour between iterants. Here are some equivalent alternative descriptions of uniformity. 

\begin{lemma}\label{lem:alt-uni} In a $\perp$-itegory $(\mathbb{X}, \wand)$,
\begin{enumerate}[{\em (i)}]
\item \label{lem:alt-uni.i} $\wand$ is uniform if and only if when $f \perp g$ and $f^\prime \perp g^\prime$, and $a g^\prime = g b$ and $af^\prime= fa$, then $a \left( f^\prime \wand g^\prime \right)=\left( f \wand g \right) b$;
\item \label{lem:alt-uni.ii}  $\wand$ is lax uniform if and only if when $f \perp g$ and $f^\prime \perp g^\prime$, and $g b \leq a g^\prime$ and $fa \leq af^\prime$, then $\left( f \wand g \right) b \leq a \left( f^\prime \wand g^\prime \right)$;
\item \label{lem:alt-uni.iii}  $\wand$ is colax uniform if and only if when $f \perp g$ and $f^\prime \perp g^\prime$, and $g b \geq a g^\prime$ and $fa \geq af^\prime$, then $\left( f \wand g \right) b \geq a \left( f^\prime \wand g^\prime \right)$;
\end{enumerate}
\end{lemma}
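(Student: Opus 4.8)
All three biconditionals have the same shape: the alternative description is the ``two-sided'' version of the corresponding definition, obtained by splitting the single comparison map $h$ into a pre-composition $a$ and a post-composition $b$. Accordingly I would prove all three by the same two moves, treating the equality case (i), the lax case (ii), and the colax case (iii) uniformly.

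For each ($\Leftarrow$) direction the plan is to specialise $b = 1_A$, where $A$ is the codomain of $g$. Then the hypotheses $ag^\prime = gb$, $gb \leq ag^\prime$, $ag^\prime \leq gb$ reduce to $ag^\prime = g$, $g \leq ag^\prime$, $ag^\prime \leq g$ respectively, the dynamics hypotheses on $f, f^\prime$ are unchanged, and $(f \wand g)b = f \wand g$; setting $h := a$ each alternative then becomes \emph{verbatim} the defining property of uniformity, lax uniformity, or colax uniformity. This direction is immediate.

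For the ($\Rightarrow$) directions the workhorse is naturality {\bf [$\boldsymbol{\wand}$.2]}, which is an \emph{equality} $(f \wand g)b = f \wand (gb)$ and so can be used regardless of whether we argue with equalities or inequalities. The idea is to absorb the post-composition $b$ into the guard: I would rewrite the target via {\bf [$\boldsymbol{\wand}$.2]} as $(f \wand g)b = f \wand (gb)$ and then apply the one-sided property with comparison map $h := a$, with $gb$ in place of the guard $g$ and with $g^\prime$ unchanged. Note that $gb$ and $g^\prime$ now both have codomain $B$, so the types match the definition. For (i) the hypotheses $ag^\prime = gb$ and $af^\prime = fa$ are exactly $h g^\prime = \widetilde{g}$ and $h f^\prime = f h$ for the guard $\widetilde{g} = gb$, so uniformity gives $a(f^\prime \wand g^\prime) = f \wand (gb) = (f \wand g)b$. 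The lax and colax cases are identical except that the guard equation becomes the inequality $gb \leq ag^\prime$ (lax) or $ag^\prime \leq gb$ (colax), and the one-sided lax/colax property delivers the corresponding inequality, which {\bf [$\boldsymbol{\wand}$.2]} rewrites into the desired form.

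The only point requiring care, and the main obstacle, is to check that the intermediate iterate $f \wand (gb)$ is actually defined, i.e.\ that $f \perp gb$. This is where I would use the interference axioms: from the given $f \perp g$ we obtain $\overline{f} \perp \overline{g}$ by {\bf [{$\boldsymbol{\perp}$}.5]}, and since $\overline{gb} \leq \overline{g}$, downward-closedness {\bf [{$\boldsymbol{\perp}$}.3]} yields $\overline{f} \perp \overline{gb}$, whence $f \perp gb$ by Lemma \ref{lem:inter-rest}. With $f \perp gb$ established, all three reductions go through. I note that for the colax reduction to land in colax uniformity the guard hypothesis must be read in the direction $ag^\prime \leq gb$, matching the direction of the guard in parts (i) and (ii).
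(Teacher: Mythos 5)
Your proof is correct and takes essentially the same route as the paper's: the $\Leftarrow$ directions specialize $b = 1$ with $h = a$, and the $\Rightarrow$ directions absorb $b$ into the guard via naturality \textbf{[$\boldsymbol{\wand}$.2]} (writing $(f \wand g)b = f \wand gb$) after first checking $f \perp gb$, then apply the one-sided property with $h = a$. The only differences are cosmetic — the paper obtains $f \perp gb$ in a single step from \textbf{[{$\boldsymbol{\perp}$}.4]} rather than via \textbf{[{$\boldsymbol{\perp}$}.5]}, \textbf{[{$\boldsymbol{\perp}$}.3]} and Lemma \ref{lem:inter-rest} as you do — and your closing remark is right: for part (\ref{lem:alt-uni.iii}) the guard hypothesis must be $ag^\prime \leq gb$ to match the definition of colax uniformity, so the ``$\geq$'' in the statement is evidently a typo.
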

\begin{proof} For the $\Rightarrow$ direction of (\ref{lem:alt-uni.i}), suppose that $\wand$ is uniform. Now also suppose that $f \perp g$ and $f^\prime \perp g^\prime$, and also that $a g^\prime = g b$ and $af^\prime= fa$. Now by \textbf{[{$\boldsymbol{\perp}$}.4]}, we have that $f \perp gb$. As such, setting $h=a$, we can apply uniformity to get that $a(f^\prime \wand g^\prime) = f \wand gb$. However by \textbf{[{$\boldsymbol{\perp}$}.2]}, we have that $f \wand gb = (f \wand g)b$. So we get that $a(f^\prime \wand g^\prime) = f \wand gb$. Conversely, for the $\Leftarrow$ direction of (\ref{lem:alt-uni.i}), suppose that $f \perp g$ and $f^\prime \perp g^\prime$, and also that $hg^\prime = g$ and $hf^\prime= fh$. Setting $a=h$ and $b=1$, we get that $h \left( f^\prime \wand g^\prime \right)=f \wand g$, and so conclude that $\wand$ is uniform. Thus (\ref{lem:alt-uni.i}) holds as desired. Proving (\ref{lem:alt-uni.ii}) and (\ref{lem:alt-uni.iii}) is done in a similar fashion. 
\end{proof}

From the alternative description of (co)lax uniformity, we can show that a (co)lax uniform Kleene wand is monotone. 

\begin{corollary}\label{cor:wand.order} In a $\perp$-itegory $(\mathbb{X}, \wand)$, if $\wand$ is (co)lax uniform then if $f \perp g$ and $f^\prime \perp g^\prime$, and $g \leq g^\prime$ and $f \leq f^\prime$, then $f \wand g \leq f^\prime \wand g^\prime$.
\end{corollary}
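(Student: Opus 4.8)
The plan is to obtain monotonicity as a direct specialization of the (co)lax uniformity conditions, taking the intertwining maps to be identities. The hypotheses $f \leq f'$ and $g \leq g'$ force the pairs $f, f' \in \mathbb{X}(X,X)$ and $g, g' \in \mathbb{X}(X,A)$ to be parallel, so the two pieces of wand data $(f,g)$ and $(f',g')$ live over the same object $X$, and the intertwiner to use is $1_X$. Since the alternative descriptions of uniformity in Lemma \ref{lem:alt-uni} carry two comparison maps $a$ and $b$, I would set $a = b = 1_X$ throughout, which turns each uniformity statement into a statement purely about the order $\leq$.

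For the lax uniform case this is immediate: applying Lemma \ref{lem:alt-uni}.(\ref{lem:alt-uni.ii}) with $a = b = 1_X$, the premises $gb \leq ag'$ and $fa \leq af'$ collapse to exactly $g \leq g'$ and $f \leq f'$, which are the hypotheses at hand, while the conclusion $(f \wand g)b \leq a(f' \wand g')$ collapses to $f \wand g \leq f' \wand g'$, the desired inequality. No further manipulation of the wand axioms is needed, since the disjointness hypotheses $f \perp g$ and $f' \perp g'$ are assumed and simply carry through.

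The colax uniform case runs the same way but requires one small bookkeeping step, because the inequality in the colax conclusion is oriented from the primed wand toward the unprimed one. I would therefore apply the colax condition with intertwiner $1_X$ after interchanging the roles of the primed and unprimed data, feeding it our $(f',g')$ where it expects the ``unprimed'' pair and our $(f,g)$ where it expects the ``primed'' pair; with the identities in place its premises reduce to $g \leq g'$ and $f \leq f'$, and its reversed-orientation conclusion then reads $f \wand g \leq f' \wand g'$. This orientation tracking is the only (and modest) obstacle: one must assign the primed and unprimed maps to the correct slots so that the backward inequality of the colax definition yields $f \wand g \leq f' \wand g'$ rather than its converse. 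Beyond choosing the intertwiner to be the identity and keeping track of these roles, the argument needs no computation, so I expect it to be very short.
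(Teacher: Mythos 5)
Your proof is correct and follows essentially the same route as the paper: specialize the alternative uniformity characterizations of Lemma \ref{lem:alt-uni} to identity comparison maps $a = b = 1_X$, so the premises collapse to $f \leq f^\prime$, $g \leq g^\prime$ and the conclusion to $f \wand g \leq f^\prime \wand g^\prime$. Your extra bookkeeping in the colax case — feeding $(f^\prime, g^\prime)$ into the unprimed slots and $(f,g)$ into the primed slots so that the reversed inequality of the colax condition comes out oriented as $f \wand g \leq f^\prime \wand g^\prime$ — is precisely the relabeling the paper's one-line proof leaves implicit, so it is a clarification rather than a deviation.
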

\begin{proof} This follows immediately by setting $a=1$ and $b=1$ in Lemma \ref{lem:alt-uni}.(\ref{lem:alt-uni.ii}) (resp. (\ref{lem:alt-uni.iii})). 
\end{proof}

Note that being uniform and lax (resp. colax) uniform implies being colax (resp. lax) uniform, and also that being both lax and colax uniform implies being uniform. 

\begin{lemma} In a $\perp$-itegory $(\mathbb{X}, \wand)$, the following are equivalent: 
\begin{enumerate}[{\em (i)}]
\item $\wand$ is uniform and lax uniform;
\item $\wand$ is uniform and colax uniform;
\item $\wand$ is lax uniform and colax uniform. 
\end{enumerate}
\end{lemma}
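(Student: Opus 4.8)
The plan is to prove the three statements equivalent by establishing the cycle $(i) \Rightarrow (ii) \Rightarrow (iii) \Rightarrow (i)$, each arrow being an immediate consequence of one of the three implications recorded in the remark immediately preceding the lemma, namely: (A) uniform together with lax uniform implies colax uniform; (B) uniform together with colax uniform implies lax uniform; and (C) lax uniform together with colax uniform implies uniform. Since these implications already package all of the real content, the proof of the equivalence itself is purely a matter of chaining them, and no new computation with the wand axioms is required.

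Concretely, I would argue as follows. For $(i) \Rightarrow (ii)$: assume $\wand$ is uniform and lax uniform; by (A) it is then colax uniform, and retaining uniformity we obtain that $\wand$ is uniform and colax uniform, which is $(ii)$. For $(ii) \Rightarrow (iii)$: assume $\wand$ is uniform and colax uniform; by (B) it is then lax uniform, and retaining colax uniformity we obtain that $\wand$ is lax uniform and colax uniform, which is $(iii)$. Finally, for $(iii) \Rightarrow (i)$: assume $\wand$ is lax uniform and colax uniform; by (C) it is then uniform, and retaining lax uniformity we obtain that $\wand$ is uniform and lax uniform, which is $(i)$. This closes the cycle and yields the equivalence.

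The only genuine content therefore lies in the three implications (A), (B), (C), which I regard as the substance of the preceding remark rather than of the lemma statement. Implication (C) is immediate from the antisymmetry of the partial order $\leq$ on each homset: the equality hypotheses $hg^\prime = g$ and $hf^\prime = fh$ of uniformity simultaneously entail the hypotheses of both lax and colax uniformity (that is, $g \leq hg^\prime$, $fh \leq hf^\prime$ on the one hand and $hg^\prime \leq g$, $hf^\prime \leq fh$ on the other), so lax uniformity gives $f \wand g \leq h(f^\prime \wand g^\prime)$, colax uniformity gives the reverse inequality, and antisymmetry forces equality, which is exactly the uniformity conclusion. The cross-implications (A) and (B) are the delicate part and constitute the main obstacle: here one must extract a one-sided inequality between the wands from the equality case (uniformity) combined with the opposite one-sided inequality. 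I would handle these by restricting the mediating map $h$ along the restriction idempotent $\overline{hf^\prime}$ (and correspondingly $\overline{hg^\prime}$) so that the given one-sided inequality becomes an honest equality on the relevant domain, apply uniformity in that restricted situation, and then recombine using the available lax (respectively colax) bound; the alternative formulations of uniformity in Lemma \ref{lem:alt-uni} are the natural tool for bookkeeping the extra pre- and post-composition that this restriction introduces.
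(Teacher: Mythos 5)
Your skeleton is exactly the paper's: the same cycle $(i) \Rightarrow (ii) \Rightarrow (iii) \Rightarrow (i)$, and your implication (C) is proved exactly as in the paper (the equality hypotheses of uniformity entail both the lax and the colax hypotheses, so the two one-sided bounds plus antisymmetry give uniformity). The gap is in your plan for (A) and (B). You propose to restrict the \emph{mediating map} $h$ along $\overline{hf^\prime}$ (and $\overline{hg^\prime}$), i.e.\ to replace $h$ by $h^\prime = e h$ for a restriction idempotent $e$, and apply uniformity in that restricted situation. But whatever uniformity then yields concerns $h^\prime(f^\prime \wand g^\prime) = e\, h(f^\prime \wand g^\prime)$, which is $\leq h(f^\prime \wand g^\prime)$: bounding this \emph{restricted} composite by $f \wand g$ does not bound $h(f^\prime \wand g^\prime)$ itself, which is what colax uniformity demands. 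To close that gap you would need the extra fact that the restriction loses nothing, i.e.\ $\overline{h(f^\prime \wand g^\prime)} \leq e$; this is provable for $e = \overline{hf^\prime} \sqcup \overline{hg^\prime}$ using {\bf [$\boldsymbol{\wand}$.1]} and {\bf [$\boldsymbol{\sqcup}$.3]} (since $h(f^\prime \wand g^\prime) = hg^\prime \sqcup hf^\prime(f^\prime \wand g^\prime)$), but it is an additional wand computation your sketch does not include, and it makes the argument more involved than necessary.

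The paper restricts the other side of the square instead: keep $h$ intact and replace $f, g$ by $\overline{hf^\prime}f$ and $\overline{hg^\prime}g$. The colax hypotheses $hg^\prime \leq g$ and $hf^\prime \leq fh$ say precisely that $hg^\prime = \overline{hg^\prime}g$ and $hf^\prime = (\overline{hf^\prime}f)h$, and $\overline{hf^\prime}f \perp \overline{hg^\prime}g$ holds by downward closure {\bf [{$\boldsymbol{\perp}$}.3]}, so \emph{plain} uniformity applies on the nose (no bookkeeping via Lemma \ref{lem:alt-uni} is needed) and gives $h(f^\prime \wand g^\prime) = (\overline{hf^\prime}f) \wand (\overline{hg^\prime}g)$ --- with the unrestricted $h$ on the left. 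Monotonicity (Cor \ref{cor:wand.order}, itself a consequence of the assumed lax uniformity) then gives $(\overline{hf^\prime}f) \wand (\overline{hg^\prime}g) \leq f \wand g$, and transitivity finishes. So your strategy is right in spirit --- turn the inequalities into equalities by restriction, apply uniformity, recombine with a monotonicity bound --- but the objects to restrict are $f$ and $g$, not $h$; as written, your step for (A)/(B) does not go through without the extra domain argument above.
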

\begin{proof} We show that $(i) \Rightarrow (ii) \Rightarrow (iii) \Rightarrow (i)$. Starting with $(i) \Rightarrow (ii)$, suppose that $\wand$ is uniform and lax uniform. We need to show that $\wand$ is also colax uniform. So suppose that $f \perp g$ and $f^\prime \perp g^\prime$, and also that $hg^\prime \leq g$ and $hf^\prime \leq  fh$. Then this says that $\overline{hg^\prime} g = hg^\prime$ and $\overline{h f^\prime}fh = hf^\prime$. Now note that $\overline{h f^\prime}f \leq f$ and $\overline{hg^\prime} g \leq g$, so by \textbf{[{$\boldsymbol{\perp}$}.2]}, $\overline{h f^\prime}f \perp \overline{hg^\prime} g$. Then by uniformity, we have that $h(f^\prime \wand g^\prime) = \overline{h f^\prime}f \wand \overline{hg^\prime} g$, while by Cor \ref{cor:wand.order}, we also have that $\overline{h f^\prime}f \wand \overline{hg^\prime} g \leq f \wand g$. So by transitivity we have that $h(f^\prime \wand g^\prime) \leq f \wand g$, and so we conclude that $\wand$ is colax. 

By a similar argument, we can also that $(ii) \Rightarrow (iii)$, since we would need to show that being uniform and colax uniform implies being lax uniform. So it remains to show that $(iii) \Rightarrow (i)$. So suppose that $\wand$ is both lax uniform and colax uniform. Now also suppose that $f \perp g$ and $f^\prime \perp g^\prime$, and also that $hg^\prime = g$ and $hf^\prime= fh$. Of course, this says that $hg^\prime \leq g$ and $hf^\prime \leq fh$, as well as $hg^\prime \geq g$ and $hf^\prime \geq fh$. So by applying lax uniformity we get that $f \wand g \leq h \left( f^\prime \wand g^\prime \right)$, while applying colax uniformity gives us that $h \left( f^\prime \wand g^\prime \right) \leq f \wand g$. So by antisymmetry, we get that $h \left( f^\prime \wand g^\prime \right)=f \wand g$, and conclude that $\wand$ is uniform. Thus $\wand$ is strongly uniform as desired. 
\end{proof}

\begin{definition} In a $\perp$-itegory $(\mathbb{X}, \wand)$, the Kleene wand $\wand$ is said to be \textbf{strongly uniform} if it is both uniform and (co)lax uniform (or equivalently lax uniform and colax uniform). 
\end{definition}

We can now define what it means for a Kleene wand to be (strongly) inductive. 

\begin{definition}\label{def:ind-wand} In a $\perp$-itegory $(\mathbb{X}, \wand)$, the Kleene wand $\wand$ is said to be: 
\begin{enumerate}[{\em (i)}]
\item \textbf{Inductive} if when $f \perp g$, $f h \leq h$, and $g \leq h$ then $f \wand g \leq h$
\item \textbf{Strongly inductive} if it is both strongly uniform and inductive. 
\end{enumerate}
A \textbf{(strongly) inductive $\perp$-itegory} is an itegory whose Kleene wand is (strongly) inductive. 
\end{definition}

We first note that an inductive Kleene wand, if one exists, is the minimal one amongst Kleene wands and, moreover, there is at most one inductive Kleene wand. 

\begin{lemma}\label{lemma:induc-less} Let $\wand$ be an inductive Kleene wand for a $\perp$-restriction category $\mathbb{X}$ with all binary $\perp$-joins, then:
\begin{enumerate}[{\em (i)}]
\item \label{lemma:induc-less.i} For any other Kleene wand $\wand^\prime$, we have for all $f \perp g$ that $f \wand g \leq f \wand^\prime g$.
\item \label{lemma:induc-less.ii} $\wand$ is the unique inductive Kleene wand on $\mathbb{X}$. 
\end{enumerate}
\end{lemma}
\begin{proof} For (\ref{lemma:induc-less.i}), suppose that $\wand$ is an inductive Kleene wand and $\wand^\prime$ is another Kleene wand. By \textbf{[$\boldsymbol{\wand}$.1]} for $\wand^\prime$, we have that $f(f \wand^\prime g) \leq f \wand^\prime g$ and $g \leq f \wand^\prime g$. So setting $h= f \wand^\prime g$, since $\wand$ is inductive, we get that $f \wand g \leq f \wand^\prime g$ as desired. Now for (\ref{lemma:induc-less.ii}), suppose that $\wand$ and $\wand^\prime$ are both inductive Kleene wands. By (\ref{lemma:induc-less.i}) this means that for all $f \perp g$, $f \wand g \leq f \wand^\prime g$ but also $f \wand^\prime g \leq f \wand g$. So by antisymmetry of $\leq$, we then get that $f \wand g = f \wand^\prime g$. So we conclude that an inductive Kleene wand, if one exists, is unique. 
\end{proof}

We now show that for a strongly inductive Kleene wand, there is an equivalent axiomatization which is much simpler.  

\begin{proposition}\label{prop:alt-wand} For a $\perp$-restriction category $\mathbb{X}$ with all binary $\perp$-joins, $\wand$ is a strongly inductive Kleene wand if and only if $\wand$ satisfies the following axioms: 
\begin{enumerate}[{\bf [{Alt.$\boldsymbol{\wand}$}.1]}]
\item If $f \perp g$, then $g \sqcup f(f \wand g) \leq f \wand g$; 
\item If $f \perp g$, $f^\prime \perp g^\prime$, and $a g^\prime = g b$ and $af^\prime= fa$, then $a \left( f^\prime \wand g^\prime \right)=\left( f \wand g \right) b$; 
\item If $f \perp g$, $f h \leq h$, and $g \leq h$ then $f \wand g \leq h$. 
\end{enumerate}
\end{proposition}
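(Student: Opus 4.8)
The plan is to prove the two implications separately; the forward direction is essentially immediate, while the reverse direction carries all the work. For the $(\Rightarrow)$ direction, assume $\wand$ is a strongly inductive Kleene wand. Then {\bf [{Alt.$\boldsymbol{\wand}$}.1]} is just the $\leq$ half of the iteration axiom {\bf [$\boldsymbol{\wand}$.1]}; {\bf [{Alt.$\boldsymbol{\wand}$}.3]} is precisely inductivity; and {\bf [{Alt.$\boldsymbol{\wand}$}.2]} is the reformulation of uniformity recorded in Lemma~\ref{lem:alt-uni}.(\ref{lem:alt-uni.i}), which holds since strong uniformity entails uniformity. So nothing is needed here beyond unwinding the definitions.

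For the $(\Leftarrow)$ direction, assume {\bf [{Alt.$\boldsymbol{\wand}$}.1]}--{\bf [{Alt.$\boldsymbol{\wand}$}.3]}. The organizing observation is that {\bf [{Alt.$\boldsymbol{\wand}$}.1]} together with inductivity {\bf [{Alt.$\boldsymbol{\wand}$}.3]} exhibit $f \wand g$ as the least fixpoint of the map $x \mapsto g \sqcup fx$, and from this universal property everything else follows. First I would upgrade {\bf [{Alt.$\boldsymbol{\wand}$}.1]} to the full iteration axiom {\bf [$\boldsymbol{\wand}$.1]}: setting $h = g \sqcup f(f \wand g)$, one has $g \leq h$ and, using {\bf [$\boldsymbol{\sqcup}$.3]} and the inequality $f(f\wand g) \le f\wand g$ coming from {\bf [{Alt.$\boldsymbol{\wand}$}.1]}, also $fh \le h$; inductivity then gives $f \wand g \le h$, while the reverse inequality is {\bf [{Alt.$\boldsymbol{\wand}$}.1]} itself, so $g \sqcup f(f\wand g) = f\wand g$. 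Next, {\bf [{Alt.$\boldsymbol{\wand}$}.2]} is exactly uniformity (Lemma~\ref{lem:alt-uni}.(\ref{lem:alt-uni.i})), and I would derive lax uniformity straight from inductivity: given $g \le hg'$ and $fh \le hf'$, the map $h(f' \wand g')$ satisfies $g \le h(f' \wand g')$ and $f\,h(f' \wand g') \le h(f' \wand g')$, so inductivity yields $f \wand g \le h(f' \wand g')$. Being both uniform and lax uniform, $\wand$ is strongly uniform, and hence monotone by Corollary~\ref{cor:wand.order}. Naturality {\bf [$\boldsymbol{\wand}$.2]} is then the instance $a = 1_X$, $b = h$, $f' = f$, $g' = gh$ of {\bf [{Alt.$\boldsymbol{\wand}$}.2]}.

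It remains to obtain dinaturality {\bf [$\boldsymbol{\wand}$.3]} and the diagonal {\bf [$\boldsymbol{\wand}$.4]}, and here I would use the least-fixpoint characterization in both directions. For dinaturality, one inequality follows by checking that $k(fk \wand g)$ is a fixpoint of $x \mapsto kg \sqcup (kf)x$ and invoking minimality of $kf \wand kg$; the reverse follows by applying inductivity to $fk \wand g$ with upper bound $g \sqcup f(kf \wand kg)$ and then post-composing by $k$. For the diagonal, by Lemma~\ref{lem:strong-wand-alt} it suffices to prove {\bf [$\boldsymbol{\wand}$.4.a]}, namely $(f \sqcup f') \wand g = (f \wand f') \wand (f \wand g)$. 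Writing $L$ and $R$ for the two sides, I would show $L \le R$ by verifying that $R$ is a fixpoint of $x \mapsto g \sqcup fx \sqcup f'x$, which is a direct expansion using {\bf [$\boldsymbol{\wand}$.1]} applied to $f \wand f'$ and to $f \wand g$, and then show $R \le L$ by checking, via naturality and inductivity, that $(f \wand f')L = f \wand (f'L) \le L$ and $f \wand g \le L$ (the latter by monotonicity), so that $L$ is a pre-fixpoint of $y \mapsto (f \wand g) \sqcup (f \wand f')y$ and minimality of $R$ applies.

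Throughout, the real content beyond these fixpoint manipulations is the bookkeeping of the $\perp$-disjointness side conditions needed for each join and each application of {\bf [{$\boldsymbol{\perp}$}.4]} and {\bf [$\boldsymbol{\sqcup}$.4]}, all of which are available because $\mathbb{X}$ is finitely disjoint. I expect the diagonal identity {\bf [$\boldsymbol{\wand}$.4.a]} to be the main obstacle: it is the only axiom that does not reduce to a single use of the least-fixpoint property, instead requiring both the fixpoint computation for $R$ and the pre-fixpoint computation for $L$, together with naturality and monotonicity, and it is where the disjointness bookkeeping is heaviest.
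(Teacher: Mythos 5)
Your proof is correct, and its overall architecture is the same as the paper's: the forward direction by unwinding definitions, and the backward direction by exhibiting $f \wand g$ as the least pre-fixpoint of $x \mapsto g \sqcup fx$ (\textbf{[Alt.$\boldsymbol{\wand}$.1]} plus \textbf{[Alt.$\boldsymbol{\wand}$.3]} with antisymmetry gives \textbf{[$\boldsymbol{\wand}$.1]}), reducing the diagonal to \textbf{[$\boldsymbol{\wand}$.4.a]} via Lemma~\ref{lem:strong-wand-alt}, proving it by the two inequalities you describe, and deriving lax uniformity from inductivity exactly as the paper does. The one place you genuinely diverge is dinaturality \textbf{[$\boldsymbol{\wand}$.3]}: the paper obtains it in one line as an instance of \textbf{[Alt.$\boldsymbol{\wand}$.2]} with $a = h$, $b = 1$ (taking $f_1 = hf$, $g_1 = hg$, $f_1' = fh$, $g_1' = g$, so that $a f_1' = f_1 a$ and $a g_1' = g_1 b$ hold on the nose), whereas you re-derive it from the least-fixpoint property with two separate inductivity arguments; your route is valid (both pre-fixpoint checks go through using \textbf{[$\boldsymbol{\sqcup}$.3]} and \textbf{[{$\boldsymbol{\perp}$}.4]}) but longer, and it hides the fact that \textbf{[Alt.$\boldsymbol{\wand}$.2]} already contains dinaturality for free. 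A second, minor deviation: in the diagonal you get $f \wand g \leq L$ by monotonicity (Corollary~\ref{cor:wand.order}, available since you establish lax uniformity early), where the paper gets it by one more application of \textbf{[Alt.$\boldsymbol{\wand}$.3]}; both are fine, and your early derivation of monotonicity is not circular since monotonicity is just the $h=1$ instance of lax uniformity.
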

\begin{proof} The $\Rightarrow$ direction is automatic since: {\bf [{Alt.$\boldsymbol{\wand}$}.1]} is immediate from {\bf [$\boldsymbol{\wand}$.1]}, by Lemma \ref{lem:alt-uni}.(\ref{lem:alt-uni.i}) we get that {\bf [{Alt.$\boldsymbol{\wand}$}.2]} is an equivalent way of saying $\wand$ is uniform, and {\bf [{Alt.$\boldsymbol{\wand}$}.3]} is precisely the statement that $\wand$ is inductive. 

For the $\Leftarrow$ direction, we need to show that $\wand$ satisfies {\bf [$\boldsymbol{\wand}$.1]}, {\bf [$\boldsymbol{\wand}$.2]}, {\bf [$\boldsymbol{\wand}$.3]}, and {\bf [$\boldsymbol{\wand}$.4]}: \\ 

\noindent {\bf [$\boldsymbol{\wand}$.1]}: It suffices to show that $(f \wand g) \leq g \sqcup f(f \wand g)$. By {\bf [{Alt.$\boldsymbol{\wand}$}.1]} and since composition is monotone, we first get that $f\left(g \sqcup f(f \wand g) \right) \leq f(f\wand g) \leq g \sqcup f(f \wand g)$. So we have $f\left(g \sqcup f(f \wand g) \right) \leq g \sqcup f(f \wand g)$ and $g \leq g \sqcup f(f \wand g)$. Thus applying {\bf [{Alt.$\boldsymbol{\wand}$}.3]}, we get that $f \wand g \leq g \sqcup f(f \wand g)$. So by antisymmetry, we obtain that $f \wand g =  g \sqcup f(f \wand g)$. \\

\noindent {\bf [$\boldsymbol{\wand}$.2]}: If $f \perp g$, then starting with $f \perp g$ and $f \perp gh$, setting $a=1$ and $b=h$, by applying {\bf [{Alt.$\boldsymbol{\wand}$}.3]} we get $f \wand gh = (f \wand g)h$ as desired. \\

\noindent {\bf [$\boldsymbol{\wand}$.3]}: If $f \perp g$, then starting with $hf \perp hg$ and $fh \perp g$, setting $a=h$ and $b=1$, by applying {\bf [{Alt.$\boldsymbol{\wand}$}.3]} we get $h\left( fh \wand g \right)=hf \wand hg$ as desired.  \\

\noindent {\bf [$\boldsymbol{\wand}$.4]}: Suppose that $f \perp g$ and $f \sqcup g \perp h$, which by Lemma \ref{lemma:disjoint-join}.(\ref{lemma:join-inter}) also implies that $f \perp h$ and $g \perp h$. We first show that $(f \sqcup g) \wand h \leq (f \wand g) \wand (f \wand h)$. We first observe that using {\bf [$\boldsymbol{\wand}$.1]} we can compute that: 
\begin{align*}
    (f \wand g) \wand (f \wand h) &=~ (f \wand h) \sqcup (f \wand g)\left(  (f \wand g) \wand (f \wand h) \right) \tag{{\bf [$\boldsymbol{\wand}$.1]}} \\
    &=~ h \sqcup f(f\wand h) \sqcup \left( g \sqcup f(f \wand g) \right) \left(  (f \wand g) \wand (f \wand h) \right) \tag{{\bf [$\boldsymbol{\wand}$.1]}} \\
    &=~ h \sqcup f(f\wand h) \sqcup  g  \left(  (f \wand g) \wand (f \wand h) \right) \sqcup f(f \wand g) \left(  (f \wand g) \wand (f \wand h) \right) \tag{Lemma \ref{lemma:disjoint-join}.(\ref{lemma:join-comp})} \\
    &=~ h \sqcup f \left( (f\wand h) \sqcup (f \wand g) \left(  (f \wand g) \wand (f \wand h) \right) \right) \sqcup  g  \left(  (f \wand g) \wand (f \wand h) \right) \tag{Lemma \ref{lemma:disjoint-join}.(\ref{lemma:join-comp})}  \\
    &=~ h \sqcup f\left(  (f \wand g) \wand (f \wand h) \right) \sqcup g  \left(  (f \wand g) \wand (f \wand h) \right)  \tag{{\bf [$\boldsymbol{\wand}$.1]}} \\
    &=~ h \sqcup (f \sqcup g)\left(  (f \wand g) \wand (f \wand h) \right) \tag{Lemma \ref{lemma:disjoint-join}.(\ref{lemma:join-comp})}  
\end{align*}
So we have that $(f \wand g) \wand (f \wand h) = h \sqcup (f \sqcup g)\left(  (f \wand g) \wand (f \wand h) \right)$. This implies both that ${h \leq (f \wand g) \wand (f \wand h)}$ and $(f \sqcup g)\left(  (f \wand g) \wand (f \wand h) \right) \leq (f \wand g) \wand (f \wand h)$. Then applying \textbf{[Alt.$\boldsymbol{\wand}$.3]}, we get that $(f \sqcup g) \wand h \leq (f \wand g) \wand (f \wand h)$. 

Next we show that $(f \wand g) \wand (f \wand h) \leq (f \sqcup g) \wand h$ as well. So first observe that using  {\bf [$\boldsymbol{\wand}$.1]} again, we can compute that $(f \sqcup g) \wand h = h \sqcup f \left(  (f \sqcup g) \wand h \right) \sqcup g \left(  (f \sqcup g) \wand h \right)$. This implies both that $h \leq  (f \sqcup g) \wand h$ and $f \left(  (f \sqcup g) \wand h \right) \leq  (f \sqcup g) \wand h$. Thus applying \textbf{[Alt.$\boldsymbol{\wand}$.3]}, we get that $f \wand h \leq (f \sqcup g) \wand h$. On the other hand, we also have that $g \left(  (f \sqcup g) \wand h \right) \leq  (f \sqcup g) \wand h$ and $f \left(  (f \sqcup g) \wand h \right) \leq  (f \sqcup g) \wand h$. So applying \textbf{[Alt.$\boldsymbol{\wand}$.3]} here gives us that $f \wand g \left(  (f \sqcup g) \wand h \right) \leq (f \sqcup g) \wand h$. However, by  {\bf [$\boldsymbol{\wand}$.2]}, we have that $f \wand g \left(  (f \sqcup g) \wand h \right) = (f \wand g) \left(  (f \sqcup g) \wand h \right)$. So we then get that $(f \wand g) \left(  (f \sqcup g) \wand h \right) \leq  (f \sqcup g) \wand h$. Therefore, bringing this all together, we have that $(f \wand g) \left(  (f \sqcup g) \wand h \right) \leq  (f \sqcup g) \wand h $ and $f \wand h \leq (f \sqcup g) \wand h$. Then applying \textbf{[Alt.$\boldsymbol{\wand}$.3]} again, we get $(f \wand g) \wand (f \wand h) \leq (f \sqcup g) \wand h$. So by antisymmetry, we conclude that $(f \sqcup g) \wand h = (f \wand g) \wand (f \wand h)$ as desired. \\

So $\wand$ is a Kleene wand. We must also show that $\wand$ is inductive and strongly uniform. However, \textbf{[Alt.$\boldsymbol{\wand}$.3]} is precisely the statement that $\wand$ is inductive, while Lemma \ref{lem:alt-uni}.(\ref{lem:alt-uni.i}) tells us that \textbf{[Alt.$\boldsymbol{\wand}$.3]} is an equivalent way of saying $\wand$ is uniform. So it remains to show that $\wand$ is lax uniform as well. So suppose that $f \perp g$ and $f^\prime \perp g^\prime$, and also that $g \leq hg^\prime$ and $fh\leq hf^\prime$. By \textbf{[Alt.$\boldsymbol{\wand}$.1]}, recall that we have that $g^\prime \leq f^\prime \wand g^\prime$ and $f^\prime\left( f^\prime \wand g^\prime \right) \leq f^\prime \wand g^\prime$. Pre-composing by $h$, we then get $hg^\prime \leq h\left( f^\prime \wand g^\prime \right)$ and $hf^\prime\left( f^\prime \wand g^\prime \right) \leq h\left( f^\prime \wand g^\prime \right)$. By transitivity, we have that $g \leq h\left( f^\prime \wand g^\prime \right)$ and $fh \left( f^\prime \wand g^\prime \right) \leq h\left( f^\prime \wand g^\prime \right)$. Therefore, applying \textbf{[Alt.$\boldsymbol{\wand}$.3]}, we get that $f \wand g \leq h \left( f^\prime \wand g^\prime \right)$. So we have that $\wand$ is also lax uniform. Therefore, we conclude that $\wand$ is a strongly inductive Kleene wand. 
\end{proof}

By the above proposition, we can now easily show that in a setting with all disjoint joins, there is a canonical strongly inductive Kleene wand. So every (countable) disjoint interference restriction category is canonically a strongly inductive itegory. 

\begin{theorem} \label{thm:countable-wand} Let $\mathbb{X}$ be a disjoint $\perp$-restriction category. Then $\mathbb{X}$ has a strongly inductive Kleene wand $\wand$ defined as follows for $\perp$-disjoint maps $f: X \to X$ and $g: X \to A$:
\begin{align}\label{countable-wand}
    f \wand g \colon = \bigsqcup \limits_{n \in \mathbb{N}} f^n g
\end{align}
where by convention $f^0 = 1_X$. Thus , $(\mathbb{X}, \wand)$ is a strongly inductive $\perp$-itegory. 
\end{theorem}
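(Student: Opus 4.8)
The plan is to invoke Proposition \ref{prop:alt-wand}: a disjoint $\perp$-restriction category is in particular finitely disjoint, so it suffices to show that the formula $f \wand g := \bigsqcup_{n} f^n g$ is well-defined and satisfies {\bf [{Alt.$\boldsymbol{\wand}$}.1]}, {\bf [{Alt.$\boldsymbol{\wand}$}.2]}, and {\bf [{Alt.$\boldsymbol{\wand}$}.3]}. Well-definedness is immediate from the first part of Lemma \ref{lem:iteration}: whenever $f \perp g$, the family $\lbrace f^n g \rbrace_{n \in \mathbb{N}}$ is pairwise $\perp$-disjoint, and since $\mathbb{X}$ has all $\perp$-joins the join $\bigsqcup_{n} f^n g$ exists. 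Throughout, the only point requiring genuine care is that every join written down really is the join of a pairwise $\perp$-disjoint family, which is why I would lean on Lemma \ref{lem:iteration} and Lemma \ref{lemma:disjoint-join}.(\ref{lemma:join-comp}) rather than manipulating joins naively.

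First I would verify {\bf [{Alt.$\boldsymbol{\wand}$}.1]}, in fact as an equality. Splitting off the $n=0$ term and reindexing gives $\bigsqcup_{n} f^n g = g \sqcup \bigsqcup_{n} f^{n+1} g$, and pulling $f$ out of the second join using preservation by pre-composition (Lemma \ref{lemma:disjoint-join}.(\ref{lemma:join-comp})) yields $g \sqcup f\left(\bigsqcup_{n} f^n g\right) = g \sqcup f(f \wand g)$, so $g \sqcup f(f \wand g) = f \wand g$. Next I would check inductivity {\bf [{Alt.$\boldsymbol{\wand}$}.3]}: assuming $f \perp g$, $fh \leq h$, and $g \leq h$, by {\bf [$\boldsymbol{\sqcup}$.2]} it is enough to show $f^n g \leq h$ for all $n$, which follows by a one-line induction, namely $f^0 g = g \leq h$ and $f^{n+1} g = f(f^n g) \leq fh \leq h$ using monotonicity of composition.

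The main work is uniformity {\bf [{Alt.$\boldsymbol{\wand}$}.2]}. Given $f \perp g$, $f^\prime \perp g^\prime$ with $ag^\prime = gb$ and $af^\prime = fa$, I would first establish the commutation $a (f^\prime)^n = f^n a$ by induction on $n$ (base $a \cdot 1_{X} = 1_{X} \cdot a$; step $a(f^\prime)^{n+1} = a(f^\prime)^n f^\prime = f^n a f^\prime = f^n f a = f^{n+1} a$). Post-multiplying by $g^\prime$ and using $a g^\prime = g b$ then gives termwise $a (f^\prime)^n g^\prime = f^n a g^\prime = f^n g b$. Finally, moving $a$ and $b$ through the joins via Lemma \ref{lemma:disjoint-join}.(\ref{lemma:join-comp}) gives $a(f^\prime \wand g^\prime) = \bigsqcup_{n} a (f^\prime)^n g^\prime = \bigsqcup_{n} f^n g b = \left(\bigsqcup_{n} f^n g\right) b = (f \wand g) b$. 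With all three axioms verified, Proposition \ref{prop:alt-wand} delivers that $\wand$ is a strongly inductive Kleene wand, so $(\mathbb{X}, \wand)$ is a strongly inductive $\perp$-itegory.

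Minimality then comes essentially for free from Lemma \ref{lem:iteration}: for any Kleene wand $\wand^\prime$ and any $f \perp g$, that lemma gives $f^n g \leq f \wand^\prime g$ for all $n$, so $f \wand^\prime g$ is an upper bound of the family $\lbrace f^n g \rbrace$; since $f \wand g = \bigsqcup_{n} f^n g$ is by construction the least upper bound, {\bf [$\boldsymbol{\sqcup}$.2]} gives $f \wand g \leq f \wand^\prime g$. I expect the inductive commutation $a(f^\prime)^n = f^n a$ feeding into {\bf [{Alt.$\boldsymbol{\wand}$}.2]} to be the only step with any real content; everything else is bookkeeping once Proposition \ref{prop:alt-wand} is in hand.
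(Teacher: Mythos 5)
Your proposal is correct and follows essentially the same route as the paper: well-definedness via Lemma \ref{lem:iteration}, verification of \textbf{[Alt.$\boldsymbol{\wand}$.1]}--\textbf{[Alt.$\boldsymbol{\wand}$.3]} by the same join computations, and conclusion via Proposition \ref{prop:alt-wand}. The only immaterial deviations are that you spell out the induction $a(f^\prime)^n = f^n a$ explicitly and derive minimality from Lemma \ref{lem:iteration} instead of applying \textbf{[Alt.$\boldsymbol{\wand}$.3]} with $h = f \wand^\prime g$ as the paper does, which amounts to the same argument.
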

\begin{proof} First note that $\wand$ is well-defined by Lemma \ref{lem:iteration}. By Prop \ref{prop:alt-wand}, to show that $\wand$ is a strongly inductive Kleene wand, it suffices to show that $\wand$ satisfies the three axioms \textbf{[Alt.$\boldsymbol{\wand}$.1]} to \textbf{[Alt.$\boldsymbol{\wand}$.3]}. \\

\noindent {\bf [{Alt.$\boldsymbol{\wand}$}.1]}: Using \textbf{[$\boldsymbol{\sqcup}$.4]}, we easily compute that: 
\[ f \wand g = \bigsqcup \limits_{n \in \mathbb{N}} f^n g = g \sqcup \bigsqcup \limits_{n \in \mathbb{N}} f^{n+1} g =  g \sqcup f \left( \bigsqcup \limits_{n \in \mathbb{N}} f^n g \right) = g \sqcup f(f\wand g) \]
So we have that $f \wand g = g \sqcup f(f\wand g)$, which of course means that $g \sqcup  f(f\wand g) \leq f\wand g$. \\

\noindent {\bf [{Alt.$\boldsymbol{\wand}$}.2]}: Suppose that $a g^\prime = g b$ and $af^\prime= fa$. Then by Lemma \ref{lemma:disjoint-join}.(\ref{lemma:join-rest}), we easily compute that: 
\begin{align*}
 a \left( \bigsqcup \limits_{n \in \mathbb{N}} {f^\prime}^n g^\prime \right) = \bigsqcup \limits_{n \in \mathbb{N}} a {f^\prime}^n g^\prime = \bigsqcup \limits_{n \in \mathbb{N}} f^n a g^\prime = \bigsqcup \limits_{n \in \mathbb{N}} f^n g b = \left( \bigsqcup \limits_{n \in \mathbb{N}} f^n g \right) b  
\end{align*}
So $a \left( f^\prime \wand g^\prime \right) = (f \wand g)b$. \\

\noindent {\bf [{Alt.$\boldsymbol{\wand}$}.3]}: Suppose that $f h \leq h$ and $g \leq h$. Note that $fh \leq h$ implies that $f^n h \leq h$  for all $n \in \mathbb{N}$ as well. Then we see that $f^n g \leq f^n h \leq h$. So $f^n g \leq h$ for all $n \in \mathbb{N}$. So by \textbf{[$\boldsymbol{\sqcup}$.2]}, $\bigsqcup \limits_{n \in \mathbb{N}} f^n g \leq h$, which we rewrite as $f \wand g \leq h$. \\

So we conclude that $\wand$ is a strongly inductive Kleene wand as desired. 
\end{proof}

\begin{example} \normalfont The Kleene wands given in Ex.\ref{ex:PAR-wand} and \ref{ex:REC-wand} are strongly inductive Kleene wands, and so $(\mathsf{PAR}, \wand)$ and $(\mathsf{REC}, \wand)$ are both strongly inductive itegories. 
\end{example}

Moreover, using the construction from Prop \ref{prop:construction1}, we observe that every interference restriction category embeds into a strongly inductive itegory. 

\begin{corollary} Let $\mathbb{X}$ be a $\perp$-restriction category. Then $\left( \mathsf{DJ}\left[ (\mathbb{X},\perp) \right], \wand \right)$ is a strongly inductive $\perp$-itegory, where $\wand$ is defined as in Thm \ref{thm:countable-wand}. 
\end{corollary}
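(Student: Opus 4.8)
The plan is simply to chain together the two results already established, since the corollary is an immediate specialization of Theorem \ref{thm:countable-wand}. By Proposition \ref{prop:construction1}, the category $\mathsf{DJ}\left[ (\mathbb{X},\perp) \right]$ is a disjoint $\perp$-restriction category; in particular it has \emph{all} $\perp$-joins and every such $\perp$-join is strong. This is exactly the hypothesis demanded by Theorem \ref{thm:countable-wand}.

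First I would invoke Proposition \ref{prop:construction1} to record that $\mathsf{DJ}\left[ (\mathbb{X},\perp) \right]$ is a disjoint $\perp$-restriction category. Then I would apply Theorem \ref{thm:countable-wand} verbatim, taking the ambient category there to be $\mathsf{DJ}\left[ (\mathbb{X},\perp) \right]$ itself. The theorem then delivers the strongly inductive Kleene wand $\wand$ defined by $S \wand G = \bigsqcup_{n \in \mathbb{N}} S^n G$, and asserts that $\left( \mathsf{DJ}\left[ (\mathbb{X},\perp) \right], \wand \right)$ is a strongly inductive $\perp$-itegory. Since composition in $\mathsf{DJ}\left[ (\mathbb{X},\perp) \right]$ is the pointwise composition of down-closed families and $\perp$-joins are unions, this wand admits the concrete description $S \wand G = \bigcup_{n \in \mathbb{N}} S^n G$, which is worth recording for the reader.

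There is essentially no obstacle here: the content of the corollary is carried entirely by the two cited results, and the only thing to verify is that the output of Proposition \ref{prop:construction1} meets the ``all $\perp$-joins, all strong'' hypothesis of Theorem \ref{thm:countable-wand}, which it does by construction. The one subtlety worth flagging is the distinction between a \emph{finitely} disjoint and a (fully) disjoint $\perp$-restriction category: Theorem \ref{thm:countable-wand} requires the expression $\bigsqcup_{n \in \mathbb{N}} f^n g$ to be well defined, hence needs countably infinite $\perp$-joins, and Proposition \ref{prop:construction1} supplies all $\perp$-joins (indexed by arbitrary sets). Thus this requirement is comfortably satisfied, and the corollary follows at once.
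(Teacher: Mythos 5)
Your proposal is correct and is exactly the paper's intended argument: the corollary is stated without proof precisely because it follows by feeding the output of Proposition \ref{prop:construction1} (that $\mathsf{DJ}\left[ (\mathbb{X},\perp) \right]$ is a disjoint $\perp$-restriction category, i.e.\ has all strong $\perp$-joins of arbitrary arity) directly into Theorem \ref{thm:countable-wand}. Your additional observations --- that the countable joins needed for $\bigsqcup_{n \in \mathbb{N}} S^n G$ are covered by the arbitrary joins supplied by the construction, and that in $\mathsf{DJ}\left[ (\mathbb{X},\perp) \right]$ this wand is concretely the union $\bigcup_{n \in \mathbb{N}} S^n G$ --- are both accurate.
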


In particular, we can in fact embed any restriction with restriction zeroes into a strongly inductive itegory by considering the maximal interference restriction $\perp_0$. It is not difficult to see that the induced interference restriction on $\mathsf{DJ}\left[ (\mathbb{X},\perp_0) \right]$ will also be the maximal one. 

\begin{corollary} Let $\mathbb{X}$ be a restriction category with restriciton zeroes. Then $\left( \mathsf{DJ}\left[ (\mathbb{X},\perp_0) \right], \wand \right)$ is a strongly inductive $\perp_0$-itegory, where $\wand$ is defined as in Thm \ref{thm:countable-wand}. 
\end{corollary}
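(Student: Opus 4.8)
The plan is to recognize this as the instance of the preceding corollary in which the chosen interference relation on $\mathbb{X}$ is the maximal one, and then to check that maximality is preserved by the $\mathsf{DJ}$ construction. First, since $\mathbb{X}$ is a restriction category with restriction zeroes, Lemma \ref{lemma:two-interference} guarantees that $\perp_0$ is an interference relation on $\mathbb{X}$, so that $(\mathbb{X}, \perp_0)$ is a genuine $\perp_0$-restriction category. Applying the preceding corollary to $(\mathbb{X}, \perp_0)$ immediately yields that $\mathsf{DJ}[(\mathbb{X}, \perp_0)]$ is a disjoint interference restriction category equipped with the strongly inductive Kleene wand $\wand$ of Thm \ref{thm:countable-wand}, for the interference relation transported by the construction, which we temporarily denote $\perp$, namely $S \perp T$ iff $f \perp_0 g$ for all $f \in S$ and $g \in T$ (item (vii) of Prop \ref{prop:construction1}).

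It therefore remains only to justify the notation, i.e. to check that this transported relation $\perp$ on $\mathsf{DJ}[(\mathbb{X}, \perp_0)]$ is in fact the maximal interference relation $\perp_0$ on $\mathsf{DJ}[(\mathbb{X}, \perp_0)]$. By Lemma \ref{lemma:two-interference}.(i), the maximal relation on the new category is characterized on restriction idempotents by $\overline{S}~\overline{T} = Z$, where $Z = \lbrace 0 \rbrace$ is the restriction zero and composition and restriction are as in Prop \ref{prop:construction1}. I would unwind this as follows: writing $S = \downarrow \!\! F$ and $T = \downarrow \!\! G$, we have $\overline{S} = \downarrow \!\! \overline{F}$, $\overline{T} = \downarrow \!\! \overline{G}$, and $\overline{S}~\overline{T} = \downarrow \!\! \lbrace \overline{f}~\overline{g} \mid f \in F,~ g \in G \rbrace$. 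Since the only map below $0$ is $0$, this set equals $Z$ precisely when $\overline{f}~\overline{g} = 0$ for every $f \in F$ and $g \in G$; and because composition of restriction idempotents is monotone for $\leq$, checking this on the generating families $F, G$ is equivalent to checking it for all $f \in S$ and $g \in T$. But $\overline{f}~\overline{g} = 0$ is exactly the condition $f \perp_0 g$ in $\mathbb{X}$. Hence $S \perp_0 T$ in $\mathsf{DJ}[(\mathbb{X}, \perp_0)]$ holds iff $f \perp_0 g$ for all $f \in S$ and $g \in T$, which is precisely the transported relation $\perp$. The two relations coincide, and so $(\mathsf{DJ}[(\mathbb{X}, \perp_0)], \wand)$ is a strongly inductive $\perp_0$-itegory.

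The only genuinely nontrivial step is this last identification of the transported interference relation with the maximal one on the new category; everything else is an immediate appeal to the preceding corollary together with Lemma \ref{lemma:two-interference}. The identification itself is routine once the composition and restriction of the $\mathsf{DJ}$ construction are unwound, the only subtlety being the reduction from arbitrary members of the downward-closed families to their generators, which is supplied by monotonicity of composition.
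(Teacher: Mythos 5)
Your proposal is correct and takes essentially the same route as the paper: the paper obtains this corollary by applying the preceding corollary to $(\mathbb{X},\perp_0)$ together with the remark that ``it is not difficult to see that the induced interference relation on $\mathsf{DJ}\left[ (\mathbb{X},\perp_0) \right]$ will also be the maximal one.'' Your unwinding of composition and restriction in the $\mathsf{DJ}$ construction (reducing to generators by monotonicity and using $e \perp_0 e^\prime \Leftrightarrow ee^\prime = 0$) is precisely the detail the paper leaves unproven.
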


We conclude this section by observing that the minimal interference relation has a unique but trivial Kleene wand, which is also strongly inductive. 

\begin{lemma} Let $\mathbb{X}$ be a restriction category with restriction zeroes. Then $(\mathbb{X}, \wand_\delta)$ is a strongly inductive $\perp_\delta$-itegory where $\wand_\delta$ is defined as follows: 
\begin{align}
f \wand_\delta g = g
\end{align}
Moreover $\wand_\delta$ is the unique Kleene wand for $(\mathbb{X}, \perp_\delta)$. 
\end{lemma}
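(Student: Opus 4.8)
The plan is to obtain both claims cheaply from results already established, rather than re-verifying the Kleene wand axioms by hand. First I would recall that by Lemma \ref{lemma:minimal-strong-comp}, $\mathbb{X}$ is a disjoint $\perp_\delta$-restriction category, so Theorem \ref{thm:countable-wand} applies and endows $\mathbb{X}$ with a canonical strongly inductive Kleene wand given by $f \wand g = \bigsqcup_{n \in \mathbb{N}} f^n g$. The whole content of the existence part then reduces to computing this join for $\perp_\delta$-disjoint pairs and checking that it equals $g$.

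So the key computation is the following. Since $f \perp_\delta g$ means $f = 0$ or $g = 0$, I would split into two cases. If $g = 0$, then by the annihilation property $f^n g = f^n 0 = 0$ for every $n$, so the join is $0 = g$. If $f = 0$ (with $g$ arbitrary), then $f^0 g = 1_X g = g$ while $f^n g = 0$ for all $n \geq 1$ (as $0^n = 0$ and $0 g = 0$); since $0$ is the bottom element of each homset, the $\perp_\delta$-join of $\{g, 0, 0, \dots\}$ is again $g$. In both cases $\bigsqcup_{n \in \mathbb{N}} f^n g = g$, so the canonical wand of Theorem \ref{thm:countable-wand} is exactly $\wand_\delta$, and hence $(\mathbb{X}, \wand_\delta)$ is a strongly inductive $\perp_\delta$-itegory.

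For uniqueness, I would not appeal to the minimality clause of Theorem \ref{thm:countable-wand} (which only yields one inequality) but instead argue directly that the value of any Kleene wand is forced on $\perp_\delta$-disjoint pairs. Given an arbitrary Kleene wand $\wand^\prime$ on $(\mathbb{X}, \perp_\delta)$ and a pair with $f \perp_\delta g$, the two cases $f = 0$ and $g = 0$ are handled respectively by Lemma \ref{lem:wand}.(\ref{lem:wand.0g}), which gives $0 \wand^\prime g = g$, and Lemma \ref{lem:wand}.(\ref{lem:wand.f0}), which gives $f \wand^\prime 0 = 0 = g$. Either way $f \wand^\prime g = g = f \wand_\delta g$, so $\wand^\prime = \wand_\delta$.

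There is no real obstacle here: the only point to watch is that the two defining cases of $\perp_\delta$ overlap (when $f = g = 0$) and that both Lemma \ref{lem:wand}.(\ref{lem:wand.0g}) and Lemma \ref{lem:wand}.(\ref{lem:wand.f0}) assign the consistent value $0$ there, so the case split is harmless. The computation of the canonical join is the only substantive step, and it is routine once one observes that every tail term $f^n g$ collapses to the zero map.
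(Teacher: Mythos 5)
Your proof is correct, and its uniqueness half is exactly the paper's argument: split on $f=0$ versus $g=0$ and apply Lemma \ref{lem:wand}.(\ref{lem:wand.0g}) and Lemma \ref{lem:wand}.(\ref{lem:wand.f0}) respectively. The existence half, however, takes a genuinely different route. The paper, after invoking Lemma \ref{lemma:minimal-strong-comp}, verifies the three axioms of Prop \ref{prop:alt-wand} directly for $\wand_\delta$: the uniformity and inductivity axioms hold trivially because $f \wand_\delta g = g$, and the iteration axiom reduces to the same two-case computation $g \sqcup fg = g$ that you carry out. You instead invoke Theorem \ref{thm:countable-wand} (legitimately, thanks to Lemma \ref{lemma:minimal-strong-comp}) to obtain the canonical strongly inductive wand $\bigsqcup_{n \in \mathbb{N}} f^n g$, and then show this join collapses to $g$ on every $\perp_\delta$-disjoint pair: all terms vanish when $g=0$, and the family is $\lbrace g, 0, 0, \hdots \rbrace$ with join $g$ when $f=0$. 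Identifying $\wand_\delta$ with the canonical wand then yields the itegory structure with no axiom-checking at all, and it makes explicit a fact the paper leaves implicit, namely that $\wand_\delta$ is precisely the minimal wand of Theorem \ref{thm:countable-wand} specialized to $\perp_\delta$. What the paper's route buys in exchange is that it never needs the infinite join, only the binary join $g \sqcup fg$ together with Prop \ref{prop:alt-wand}; in substance, though, both arguments rest on the same two ingredients, since the proof of Theorem \ref{thm:countable-wand} itself goes through Prop \ref{prop:alt-wand}. Your closing observation is also well taken: the minimality clause of Theorem \ref{thm:countable-wand} alone would only give $\wand_\delta \leq \wand^\prime$, so the direct forcing argument via Lemma \ref{lem:wand} is genuinely needed for uniqueness.
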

\begin{proof} Since by Lemma \ref{lemma:minimal-strong-comp}, $\mathbb{X}$ is a disjoint $\perp_d$-restriction category, we show that $\wand_\delta$ is a strongly inductive Kleene wand by checking \textbf{[Alt.$\boldsymbol{\wand}$.1]} to \textbf{[Alt.$\boldsymbol{\wand}$.3]}. However note that \textbf{[Alt.$\boldsymbol{\wand}$.2]} and \textbf{[Alt.$\boldsymbol{\wand}$.3]} are immediate by definition of $\wand_\delta$. For \textbf{[Alt.$\boldsymbol{\wand}$.1]}, suppose that $f \perp_\delta g$ which implies that $f=0$ or $g=0$. We need to show that $g \sqcup fg \leq g$. So if $f=0$, then we have that $g \sqcup fg = g \sqcup 0g = g \sqcup 0 = g$. If $g=0$, then $g \sqcup fg = 0 \sqcup f0 = 0 \sqcup f0 =0 = g$. So in either case we have that $g \sqcup fg = g \leq g$, so \textbf{[Alt.$\boldsymbol{\wand}$.1]} holds. Thus $\wand_\delta$ is a strongly inductive Kleene wand. Now suppose we have another Kleene wand $\wand$ for $(\mathbb{X}, \perp_\delta)$, and suppose that $f \perp_\delta g$, so either $f=0$ or $g=0$. If $f=0$, by Lemma \ref{lem:wand}.(\ref{lem:wand.0g}), $f\wand g = 0 \wand g = g = f \wand_\delta g$. If $g=0$, by Lemma \ref{lem:wand}.(\ref{lem:wand.f0}), $f \wand g = f \wand 0 = 0 = g = f \wand_\delta g$. So we conclude that $\wand = \wand_\delta$, and therefore $\wand_\delta$ is the unique Kleene wand for $\perp_\delta$.  
\end{proof}

\section{Kleene Upper-Stars}\label{sec:kleene-star}

In this section, we take a detour to discuss Kleene wands in the \emph{classical} restriction category setting, that is, in the presence of \emph{relative complements}. In such a setting, we will show that Kleene wands are in fact completely determined by how they act on their endomorphism input. Thus, in the classical setting, Kleene wands are equivalent to an operator on endomorphisms, which we call a \textbf{Kleene upper-star}. 

We begin by discussing relative complements in a disjoint interference restriction category. Relative complements for join restriction categories were introduced in \cite[Sec 13]{cockett2009boolean}. Here, we slightly adjust the definition instead for disjoint $\perp$-restriction categories, where in particular, the relative complement is not simply disjoint in the sense of \cite[Prop 6.2]{cockett2009boolean} but $\perp$-disjoint. 

\begin{definition}\label{def:classical} In a $\perp$-restriction category $\mathbb{X}$ with all binary $\perp$-joins, for parallel maps ${f: A \to B}$ and $g: A \to B$ such that $f \leq g$, the $\perp$\textbf{-relative complement} of $f$ in $g$ (if it exists) is a (necessarily unique) map $g \backslash f: A \to B$ such that: 
\begin{enumerate}[{\bf [$\boldsymbol{\backslash}$.1]}]
\item $g \backslash f \perp f$
\item $(g \backslash f) \sqcup f =g$
\end{enumerate}
A \textbf{classical $\perp$-restriction category} is a $\perp$-restriction category $\mathbb{X}$ with all binary $\perp$-joins such that all $\perp$-relative complements exists. 
\end{definition}

Intuitively, $g \backslash f$ is undefined when $f$ is defined and is equal to $g$ whenever $f$ is undefined. Moreover, a $\perp$-relative complement is a relative complement in the sense of \cite[Sec 13]{cockett2009boolean} (which we can now call a $\perp_0$-relative complement), and so satisfies the same properties, some of which can be found in \cite[Lemma 13.14]{cockett2009boolean} and \cite[Lemma 4.9]{cockett2023classical}. 

Recall that $\mathcal{O}(A)$ is a bounded meet-semilattice, so every restriction idempotent $e: A \to A$ is always less than or equal to the identity $1_A: A \to A$, so $e \leq 1_A$. As such, we can consider the relative complement of a restriction idempotent in the identity, which we denote as $e^c := 1_A \backslash e$. It turns out that having complements of restriction idempotents is sufficient to have all relative complements. 

\begin{lemma} A $\perp$-restriction category $\mathbb{X}$ with all binary $\perp$-joins is classical if and only if for every restriction idempotent $e: A \to A$, there exists a (necessarily unique) restriction idempotent $e^c: A \to A$ such that $e \perp e^c$ and $e \sqcup e^c = 1_A$.  
\end{lemma}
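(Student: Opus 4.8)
The plan is to prove the two implications separately, with the $\Rightarrow$ direction being essentially immediate and all the real content living in the $\Leftarrow$ direction, which I will handle by writing down an explicit formula for the general relative complement in terms of the complement of a restriction idempotent.

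For the $\Rightarrow$ direction, suppose $\mathbb{X}$ is classical. Given a restriction idempotent $e$, recall that $e \leq 1_A$ always holds, so the $\perp$-relative complement $1_A \backslash e$ exists; I set $e^c := 1_A \backslash e$. Axiom {\bf [$\boldsymbol{\backslash}$.1]} gives $e^c \perp e$, hence $e \perp e^c$ by symmetry {\bf [{$\boldsymbol{\perp}$}.1]}, and {\bf [$\boldsymbol{\backslash}$.2]} gives $e \sqcup e^c = 1_A$. It then only remains to check that $e^c$ is itself a restriction idempotent: from $e \sqcup e^c = 1_A$ and {\bf [$\boldsymbol{\sqcup}$.1]} we get $e^c \leq 1_A$, and any map $h \leq 1_A$ satisfies $\overline{h} = h$ (this is just unwinding the definition of $\leq$), so $e^c$ is a restriction idempotent. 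Uniqueness of $e^c$ is inherited directly from the uniqueness of relative complements recorded in Def \ref{def:classical}.

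For the $\Leftarrow$ direction, the key idea is that complements of restriction idempotents already determine all relative complements. Given $f \leq g$, I claim $g \backslash f := \overline{f}^c\, g$ is the required $\perp$-relative complement, where $\overline{f}^c$ is the hypothesized complement of the restriction idempotent $\overline{f}$. To verify {\bf [$\boldsymbol{\backslash}$.1]}, by Lemma \ref{lem:inter-rest} it suffices to show $\overline{\overline{f}^c g} \perp \overline{f}$; since $\overline{\overline{f}^c g} = \overline{f}^c\,\overline{g} \leq \overline{f}^c$ (using \textbf{[R.3]}) and $\overline{f}^c \perp \overline{f}$ by hypothesis, downward closure {\bf [{$\boldsymbol{\perp}$}.3]} gives the claim. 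To verify {\bf [$\boldsymbol{\backslash}$.2]}, I write $f = \overline{f}\,g$ (from $f \leq g$), note that $\overline{f}^c g \perp \overline{f}\, g$ by applying composition {\bf [{$\boldsymbol{\perp}$}.4]} to $\overline{f}^c \perp \overline{f}$, and then compute $\overline{f}^c g \sqcup \overline{f}\, g = (\overline{f}^c \sqcup \overline{f})\,g = 1_A\, g = g$, using that post-composition preserves $\perp$-joins (Lemma \ref{lemma:disjoint-join}.(\ref{lemma:join-comp})) together with the defining identity $\overline{f}^c \sqcup \overline{f} = 1_A$. Uniqueness again follows from the general uniqueness of relative complements.

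The main obstacle is entirely in the $\Leftarrow$ direction, and it is twofold: first, recognizing the correct formula $g \backslash f = \overline{f}^c g$ (rather than, say, the more symmetric-looking $\overline{g}\,\overline{f}^c g$, which reduces to the same map since restriction idempotents commute and $\overline{g}\,g = g$); and second, carefully checking the disjointness side-conditions, since every join I write down must have $\perp$-disjoint arguments before the join-manipulation lemmas may be invoked. Once the formula is in hand, the two axiom checks are short bookkeeping with {\bf [{$\boldsymbol{\perp}$}.3]}, {\bf [{$\boldsymbol{\perp}$}.4]}, Lemma \ref{lem:inter-rest}, and Lemma \ref{lemma:disjoint-join}.(\ref{lemma:join-comp}).
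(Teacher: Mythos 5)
Your proof is correct and follows essentially the same route as the paper's: both directions use the identical constructions, $e^c := 1_A \backslash e$ for $\Rightarrow$ and $g \backslash f := \overline{f}^c g$ for $\Leftarrow$, with {\bf [$\boldsymbol{\backslash}$.2]} verified by the same computation $\overline{f}^c g \sqcup \overline{f} g = (\overline{f}^c \sqcup \overline{f})g = g$ via Lemma \ref{lemma:disjoint-join}.(\ref{lemma:join-comp}). The only differences are cosmetic: you check {\bf [$\boldsymbol{\backslash}$.1]} through Lemma \ref{lem:inter-rest} and downward closure {\bf [{$\boldsymbol{\perp}$}.3]} where the paper uses Lemma \ref{lem:interference1}.(\ref{lem:interference1.iv}) and {\bf [{$\boldsymbol{\perp}$}.4]}, and you add the short (and worthwhile) explicit verification that $1_A \backslash e$ is itself a restriction idempotent, which the paper leaves implicit.
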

\begin{proof} For $\Rightarrow$ direction, define $e^c := 1_A \backslash e$. Then $e \perp e^c$ and $e \sqcup e^c = 1_A$ are precisely {\bf [$\boldsymbol{\backslash}$.1]} and {\bf [$\boldsymbol{\backslash}$.2]} respectively. For the $\Leftarrow$ direction, define $g \backslash f := \overline{f}^c g$. Now since $\overline{f}^c \perp \overline{f}$, by Lemma \ref{lem:interference1}.(\ref{lem:interference1.iv}), we get that $\overline{f}^c \perp f$. Then by {\bf [$\boldsymbol{\perp}$.4]}, it follows that $g \backslash f = \overline{f}^c g \perp f$. Next, using that $f \leq g$ and Lemma \ref{lemma:disjoint-join}.(\ref{lemma:join-comp}), we compute that $(g \backslash f) \sqcup f  = \overline{f}^c g \sqcup f = \overline{f}^c g \sqcup \overline{f} g = \left( \overline{f}^c \sqcup \overline{f} \right) g = g$. So $ (g \backslash f) \sqcup f  = g$. Thus we conclude that $g \backslash f$ is indeed the $\perp$-relative complement of $f$ in $g$. 
\end{proof}

Here is now our main example, and also a non-example. 

\begin{example} \label{ex:PAR-classical} \normalfont $\mathsf{PAR}$ is a classical $\perp_0$-restriction category where for partial functions $f: X \to Y$ and ${g: X \to Y}$ such that $f \leq g$, the $\perp_0$-relative complement $g \backslash f: X \to Y$ is the partial function defined as follows: 
\[ (g \backslash f) (x) = \begin{cases} g(x) & \text{if } f(x) \uparrow \text{ and } g(x) \downarrow \\
\uparrow & \text{if } f(x) \downarrow \text{ or } g(x) \uparrow  \end{cases} \]
In particular, for a subset $U \subseteq X$, the complement of its associated restriction idempotent $e^c_U: X \to X$ is defined as the restriction idempotent associated to the complement of $U$, $e^c_U = e_{U^c}$. Explicitly:
 \begin{align*}
  e^c_U(x) \colon = \begin{cases} 
   x & \text{ if } x \notin U \\
   \uparrow & \text{ if } x \in U 
  \end{cases}
   \end{align*}
\end{example}

\begin{example} \normalfont $\mathsf{REC}$ is not classical since the complement of a recursively enumerable set is not necessarily recursively enumerable. As such, not every restriction idempotent in $\mathsf{REC}$ has a $\perp_r$-complement. 
\end{example}

We now consider Kleene wands in a classical interference category. 

\begin{definition} A \textbf{classical $\perp$-itegory} is a $\perp$-itegory $(\mathbb{X},\wand)$ where $\mathbb{X}$ is a classical $\perp$-restriction category.
\end{definition}

Observe that in a classical $\perp$-restriction category, $f \perp g$ implies that $\overline{g} \leq \overline{f}^c$, and so $\overline{f}^c g = g$. Then in a classical itegory, by \textbf{[$\boldsymbol{\wand}$.2]}, we get that $f \wand g = (f \wand \overline{f}^c)g$. As such, $\wand$ is completely characterized by its endomorphism component. Therefore for a classical interference restriction category, to give a Kleene wand amounts to giving an operation on endomorphisms. We call such an operation a Kleene upper-star. 

\begin{definition}\label{def:star} For a classical $\perp$-restriction category $\mathbb{X}$, a \textbf{Kleene upper-star} $\star$ is a family of operators (indexed by objects $X \in \mathbb{X}$):
\begin{align*}
\star: \mathbb{X}(X, X) \to  \mathbb{X}(X, X) && \begin{matrix}[c] \infer{f^\star : X \to X}{f: X \to X} \end{matrix}
\end{align*}
such that the following axioms hold: 
\begin{enumerate}[{\bf [{$\boldsymbol{\star}$}.1]}]
\item $\overline{f}^c \sqcup ff^\star = f^\star$
\item $(hf)^\star h = h (fh)^\star \overline{f}^c$
\item If $f \perp g$, then $(f \sqcup g)^\star = (f^\star g)^\star f^\star$
\end{enumerate}
\end{definition}

It is worth noting that the axioms of a Kleene upper-star have a similar flavour to the axioms of a \emph{repetition operator} \cite[Prop 6.11]{selinger2010survey}, which can be defined on categories enriched over commutative monoids. We now show that every Kleene wand induces a Kleene upper-star. 

\begin{proposition}\label{prop:wand-to-star} Let $(\mathbb{X}, \wand)$ be a classical $\perp$-itegory. Then $\mathbb{X}$ has a Kleene upper-star $\star$ defined as follows: 
\begin{align}
    f^\star := f \wand \overline{f}^c
\end{align}
\end{proposition}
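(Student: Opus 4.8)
The plan is to verify the three Kleene-upper-star axioms \textbf{[$\boldsymbol{\star}$.1]}--\textbf{[$\boldsymbol{\star}$.3]} for $f^\star := f\wand\overline{f}^c$ one at a time, after first noting that $f^\star$ is well defined: since $\overline{f}\perp\overline{f}^c$ by \textbf{[$\boldsymbol{\backslash}$.1]} and $\overline{f}^c$ is a restriction idempotent, Lemma \ref{lem:inter-rest} gives $f\perp\overline{f}^c$. Axiom \textbf{[$\boldsymbol{\star}$.1]} is then immediate: it is exactly \textbf{[$\boldsymbol{\wand}$.1]} applied to the pair $(f,\overline{f}^c)$, which reads $\overline{f}^c\sqcup f(f\wand\overline{f}^c)=f\wand\overline{f}^c$, i.e. $\overline{f}^c\sqcup ff^\star=f^\star$.

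For \textbf{[$\boldsymbol{\star}$.2]} the strategy is to rewrite each side as a single wand using naturality \textbf{[$\boldsymbol{\wand}$.2]} and then match the two through dinaturality \textbf{[$\boldsymbol{\wand}$.3]}. Concretely, $(hf)^\star h=(hf\wand\overline{hf}^c)h=hf\wand(\overline{hf}^c h)$ and $h(fh)^\star\overline{f}^c=h\big(fh\wand(\overline{fh}^c\,\overline{f}^c)\big)$, while dinaturality (with $f$ playing the inner map and $h$ the outer) gives $hf\wand(h\overline{f}^c)=h(fh\wand\overline{f}^c)$. Thus the whole axiom reduces to two purely restriction-theoretic identities between restriction idempotents: $\overline{hf}^c h=h\,\overline{f}^c$ and $\overline{fh}^c\,\overline{f}^c=\overline{f}^c$. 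The second is immediate from $\overline{fh}\leq\overline{f}$, which gives $\overline{f}^c\leq\overline{fh}^c$; the first I would obtain by splitting $\overline h=\overline{hf}\sqcup\overline{h\overline{f}^c}$ (read off from $h=h\overline f\sqcup h\overline f^c$ via \textbf{[R.1]} and \textbf{[R.3]}) and using the relative-complement formula $g\backslash f=\overline f^c g$ to get $\overline{h\overline{f}^c}=\overline{hf}^c\,\overline h$, whence $h\overline f^c=\overline{h\overline f^c}h=\overline{hf}^c\,\overline h\,h=\overline{hf}^c h$.

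Axiom \textbf{[$\boldsymbol{\star}$.3]} is where the real work lies, and two simplifications first clear the ground. Since $f\perp g$ we have $\overline f^c g=g$, so by naturality $f^\star g=(f\wand\overline f^c)g=f\wand(\overline f^c g)=f\wand g$; and by Lemma \ref{lemma:disjoint-join}.(\ref{lemma:join-rest}) together with De Morgan for complemented restriction idempotents, $\overline{f\sqcup g}^c=\overline f^c\,\overline g^c$. Writing $m:=f\wand g$, the diagonal axiom \textbf{[$\boldsymbol{\wand}$.4]} applied to the split $(f,g)$ with guard $\overline{f\sqcup g}^c$ gives $(f\sqcup g)^\star=m\wand(f\wand\overline{f\sqcup g}^c)$, while the target side rearranges, via $f^\star g=m$ and naturality, to $(f^\star g)^\star f^\star=m^\star f^\star=m\wand(\overline m^c f^\star)$. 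Hence \textbf{[$\boldsymbol{\star}$.3]} comes down to the single guard identity $f\wand\overline{f\sqcup g}^c=\overline m^c f^\star$.

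I expect this guard identity to be the main obstacle, since it cannot simply be read off the axioms: the Kleene wand is not assumed inductive, so the domains of wands are not controlled a priori. My plan to establish it is first to show that the restriction idempotent $\overline m^c$ is $f$-invariant, i.e. $f\,\overline m^c=\overline m^c f$: this should follow from the fixed-point relation $\overline m=\overline g\sqcup\overline{f\overline m}$ (the restriction of \textbf{[$\boldsymbol{\wand}$.1]}), the identity $\overline f\,\overline g^c=\overline f$ coming from $f\perp g$ via Lemma \ref{lem:interference1}, and complement bookkeeping, which also yields $\overline m^c\,\overline f^c=\overline{f\sqcup g}^c$. Granting invariance, the identity becomes an instance of a localization lemma $f\wand(eh)=e\,(f\wand h)$ valid for any $f$-invariant restriction idempotent $e$ and any $h$ with $f\perp h$; I would prove this by decomposing $f\wand h=(f\wand eh)\sqcup(f\wand e^c h)$ through Lemma \ref{lem:wand}.(\ref{lem:wand.perp}) and then pinning down the domain-localizations $\overline{f\wand(eh)}\leq e$ and $\overline{f\wand(e^c h)}\leq e^c$, using dinaturality \textbf{[$\boldsymbol{\wand}$.3]} (which produces $ef\wand eh=e(ef\wand h)$, forcing the domain of $ef\wand eh$ below $e$) together with the commutation $fe=ef$. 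This domain-localization is the delicate step, and the one I would budget the most care for, since it is exactly where the interplay between a possibly non-minimal wand and the complement structure of the classical setting must be controlled.
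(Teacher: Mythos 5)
Your verification of \textbf{[$\boldsymbol{\star}$.1]} is exactly the paper's, and your treatment of \textbf{[$\boldsymbol{\star}$.2]} also matches the paper's chain of equalities (reduce both sides via naturality \textbf{[$\boldsymbol{\wand}$.2]} and dinaturality \textbf{[$\boldsymbol{\wand}$.3]} to the two idempotent identities $\overline{hf}^c h = h\overline{f}^c$ and $\overline{fh}^c\,\overline{f}^c = \overline{f}^c$); the only difference is that you prove the complement analogue of \textbf{[R.4]} from the relative-complement axioms, where the paper simply cites it from \cite[Lemma 4.9.(ix)]{cockett2023classical}, and your derivation of it via uniqueness of relative complements is sound.

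The problem is \textbf{[$\boldsymbol{\star}$.3]}. Your reduction to the guard identity $f \wand \overline{f\sqcup g}^c = \overline{m}^c f^\star$ (with $m = f\wand g = f^\star g$) is precisely the paper's route, but your assertion that this identity ``cannot simply be read off the axioms'' is false, and the machinery you erect in its place has a genuine hole. It can be read off: by de Morgan and naturality \textbf{[$\boldsymbol{\wand}$.2]},
\[ f \wand \overline{f\sqcup g}^c \;=\; f \wand \overline{f}^c\,\overline{g}^c \;=\; \left(f\wand\overline{f}^c\right)\overline{g}^c \;=\; f^\star\,\overline{g}^c, \]
and then the very identity you established for \textbf{[$\boldsymbol{\star}$.2]}, instantiated at $h:=f^\star$ and $f:=g$, gives $f^\star\,\overline{g}^c = \overline{f^\star g}^c\, f^\star = \overline{m}^c f^\star$. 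That two-line argument is what the paper does, and it uses only tools you had already assembled. By contrast, your invariance-plus-localization plan breaks exactly at the step you flag as delicate: dinaturality \textbf{[$\boldsymbol{\wand}$.3]} yields $ef \wand eh = e\left(fe \wand h\right)$, which bounds the domain of $ef \wand eh$ below $e$, but your localization lemma needs $\overline{f \wand eh} \leq e$ --- with $f$, not $ef$. The commutation $fe = ef$ does not bridge this: a Kleene wand in a classical $\perp$-itegory is not assumed inductive or (lax) uniform, so nothing in \textbf{[$\boldsymbol{\wand}$.1]}--\textbf{[$\boldsymbol{\wand}$.4]} forces $f \wand eh$ to coincide with $ef \wand eh$; the trajectory argument you have in mind is valid only for the minimal (inductive) wand of Thm \ref{thm:countable-wand}. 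So, as written, your proposal does not prove \textbf{[$\boldsymbol{\star}$.3]}; the repair is to discard the localization detour and finish with the display above.
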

\begin{proof} We show that $\star$ satisfies the three Kleene upper-star axioms.  
     \begin{enumerate}[{\bf [{$\boldsymbol{\star}$}.1]}]
\item Using {\bf [{$\boldsymbol{\wand}$}.1]} we get that $\overline{f}^c \sqcup ff^\star = \overline{f}^c \sqcup f \left( f \wand \overline{f}^c \right) = f \wand \overline{f}^c = f^\star$. 
\item Note that since $\overline{fh} \leq \overline{f}$, taking their complements swaps the order, so we have that $\overline{f}^c \leq \overline{fh}^c$, and so $\overline{f}^c \overline{fh}^c = \overline{f}^c$. Now the analogue of \textbf{[R.4]} for complements is that $f \overline{g}^c = \overline{fg}^c f$ \cite[Lemma 4.9.(ix)]{cockett2023classical}. Therefore, using this as well as {\bf [{$\boldsymbol{\wand}$}.2]} and {\bf [{$\boldsymbol{\wand}$}.3]}, we compute that: 
\begin{align*}
    (hf)^\star h = \left( hf \wand \overline{hf}^c \right) h = hf \wand \overline{hf}^c h = hf \wand h \overline{f}^c = h \left( fh \wand \overline{f}^c \right) =  h \left( fh \wand \overline{fh}^c \overline{f}^c \right) = h \left( fh \wand \overline{fh}^c  \right)\overline{f}^c = h (fh)^\star \overline{f}^c 
\end{align*}
\item Suppose that $f \perp g$. We have the de Morgan law for complements $\overline{f \sqcup g}^c = \overline{f}^c \overline{g}^c$ \cite[Lemma 4.9.(viii)]{cockett2023classical}. As such, we have that $f \sqcup g \perp \overline{f}^c \overline{g}^c$, and thus by {\bf [{$\boldsymbol{\wand}$}.4]}, we have that $f \wand g \perp f \wand \overline{f}^c\overline{g}^c$. Then using {\bf [{$\boldsymbol{\wand}$}.2]} and {\bf [{$\boldsymbol{\wand}$}.4]}, we compute that: 
\begin{align*}
  (f \sqcup g)^\star &=~ (f \sqcup g) \wand \overline{f \sqcup g}^c \tag{Def.} \\
  &=~  (f \sqcup g) \wand \overline{f}^c \overline{g}^c \tag{de Morgan Law} \\
  &=~(f \wand g) \wand (f \wand \overline{f}^c \overline{g}^c)  \tag{{\bf [$\boldsymbol{\wand}$.4]}} \\
  &=~ (f \wand \overline{f}^c g) \wand (f \wand \overline{f}^c \overline{g}^c) \\ 
   &=~(f \wand \overline{f}^c)g \wand (f \wand \overline{f}^c)\overline{g}^c  \tag{{\bf [$\boldsymbol{\wand}$.2]}} \\
   &=~ f^\star g \wand f^\star \overline{g}^c \tag{Def.} \\
&=~ f^\star g \wand \overline{f^\star g}^c f^\star  \tag{\textbf{[R.4]} for complements} \\
&=~ \left( f^\star g \wand \overline{f^\star g}^c \right) f^\star \tag{{\bf [$\boldsymbol{\wand}$.2]}} \\ 
  &=~  (f^\star g)^\star f^\star \tag{Def.}
\end{align*}
\end{enumerate}
So $\star$ is a Kleene upper-star. 
\end{proof}

Conversely, we now show that every Kleene upper-star induces a Kleene wand.

\begin{proposition}\label{prop:star-to-wand} Let $\mathbb{X}$ be a classical $\perp$-restriction category with a Kleene upper-star $\star$. Then $\mathbb{X}$ has a Kleene wand $\wand$ defined as follows: 
    \begin{align}
    f \wand g := f^\star g
\end{align}
\end{proposition}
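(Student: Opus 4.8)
The plan is to verify the four defining axioms {\bf [$\boldsymbol{\wand}$.1]}--{\bf [$\boldsymbol{\wand}$.4]} of a Kleene wand for the candidate $f \wand g := f^\star g$ one at a time, deriving each directly from the corresponding Kleene upper-star axiom. The single recurring ingredient will be the identity noted just before Def \ref{def:star}: whenever $f \perp g$ we have $\overline{g} \leq \overline{f}^c$, and hence $\overline{f}^c g = g$. I will also repeatedly use that post-composition preserves $\perp$-joins (Lemma \ref{lemma:disjoint-join}.(\ref{lemma:join-comp})) and that {\bf [{$\boldsymbol{\perp}$}.4]} allows me to pre- and post-compose $\perp$-disjoint maps, both to manipulate equations and to check that every wand expression appearing is defined on $\perp$-disjoint inputs.

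First, {\bf [$\boldsymbol{\wand}$.1]} and {\bf [$\boldsymbol{\wand}$.2]} are essentially immediate. For {\bf [$\boldsymbol{\wand}$.1]}, I post-compose {\bf [$\boldsymbol{\star}$.1]}, namely $\overline{f}^c \sqcup f f^\star = f^\star$, by $g$; by Lemma \ref{lemma:disjoint-join}.(\ref{lemma:join-comp}) the left side becomes $\overline{f}^c g \sqcup f f^\star g$, and since $f \perp g$ gives $\overline{f}^c g = g$, this reads $g \sqcup f(f \wand g) = f \wand g$, as required. For {\bf [$\boldsymbol{\wand}$.2]} there is nothing beyond associativity of composition: $(f \wand g)h = f^\star g h = f^\star(gh) = f \wand gh$, where {\bf [{$\boldsymbol{\perp}$}.4]} guarantees $f \perp gh$ so that the right side is defined.

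The slightly more delicate case is {\bf [$\boldsymbol{\wand}$.3]}, since the endomorphism arguments of the wand are now $kf$ and $fk$, and since {\bf [$\boldsymbol{\star}$.2]} introduces a stray complement. I apply {\bf [$\boldsymbol{\star}$.2]} with its outer map taken to be $k$, obtaining $(kf)^\star k = k (fk)^\star \overline{f}^c$; post-composing by $g$ gives $(kf)^\star(kg) = k (fk)^\star \overline{f}^c g$. Since $f \perp g$ implies $fk \perp g$ by {\bf [{$\boldsymbol{\perp}$}.4]} (which simultaneously makes $fk \wand g$ well-defined), we get $\overline{f}^c g = g$, so the right side collapses to $k (fk)^\star g = k(fk \wand g)$, which is exactly the claimed $kf \wand kg = k(fk \wand g)$.

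Finally, {\bf [$\boldsymbol{\wand}$.4]} splits into a disjointness claim and an equation. The equation $(f \sqcup f') \wand g = (f \wand f') \wand (f \wand g)$ is obtained by post-composing {\bf [$\boldsymbol{\star}$.3]}, namely $(f \sqcup f')^\star = (f^\star f')^\star f^\star$, by $g$ and reading off $(f \sqcup f')^\star g = (f^\star f')^\star (f^\star g)$, then unwinding the definition of $\wand$. For the disjointness $(f \wand f') \perp (f \wand g)$, that is $f^\star f' \perp f^\star g$, I first extract $f' \perp g$ from $f \sqcup f' \perp g$ using Lemma \ref{lemma:disjoint-join}.(\ref{lemma:join-inter}), and then pre-compose by $f^\star$ via {\bf [{$\boldsymbol{\perp}$}.4]}. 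I expect the main (and only minor) obstacle to be bookkeeping rather than mathematics: keeping the domains and codomains straight in {\bf [$\boldsymbol{\wand}$.3]} and verifying at each step that the wand expressions in play are genuinely defined on $\perp$-disjoint inputs; the underlying algebra is in every case a one-line reduction of a wand axiom to its upper-star counterpart together with the identity $\overline{f}^c g = g$.
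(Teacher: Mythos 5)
Your proposal is correct and follows essentially the same route as the paper's own proof: verify the four wand axioms one at a time, each reducing to the corresponding upper-star axiom via the key identity $\overline{f}^c g = g$ (from $f \perp g$), with post-composition preserving $\perp$-joins handling {\bf [$\boldsymbol{\wand}$.1]}, and the disjointness in {\bf [$\boldsymbol{\wand}$.4]} extracted via Lemma \ref{lemma:disjoint-join}.(\ref{lemma:join-inter}) and {\bf [{$\boldsymbol{\perp}$}.4]} exactly as in the paper. No gaps.
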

\begin{proof} We show that $\wand$ satisfies the four Kleene wand axioms.  
  \begin{enumerate}[{\bf [$\boldsymbol{\wand}$.1]}]
\item First observe that if $f \perp g$, then $\overline{g} \leq \overline{f}^c$ which implies that $\overline{f}^c g = g$. So using Lemma \ref{lemma:disjoint-join}.(\ref{lemma:join-comp}) and {\bf [{$\boldsymbol{\star}$}.1]}, we get that: 
\begin{align*}
    g \sqcup f(f \wand g) = g \sqcup f f^\star g = \overline{f}^c g \sqcup f f^\star g = \left( \overline{f}^c \sqcup ff^\star \right) g = f^\star g  = f \wand g 
\end{align*}
\item This is automatic since $\left( f \wand g \right)h = f^\star g h = f \wand gh$. 
\item Again since $f \perp g$, we have that $\overline{f}^c g = g$. So using {\bf [{$\boldsymbol{\star}$}.2]} we compute that: 
\begin{align*}
    hf \wand hg = (hf)^\star hg =  h (fh)^\star   \overline{f}^c g = h (fh)^\star g =  h\left( fh \wand g \right)
\end{align*}
\item Suppose that $f \perp f^\prime$ and $f \sqcup f^\prime \perp g$. Now by Lemma \ref{lemma:disjoint-join}.(\ref{lemma:join-inter}), we have that $f^\prime \perp g$, and so by {\bf [{$\boldsymbol{\perp}$}.4]}, we have that $f^\star f^\prime \perp f^\star g$, or in other words $(f \wand f^\prime) \perp (f \wand g)$. Next, using {\bf [{$\boldsymbol{\star}$}.3]} we compute that: 
\begin{align*}
(f \sqcup f^\prime) \wand g = (f \sqcup f^\prime)^\star g = (f^\star f^\prime)^\star f^\star g = (f \wand f^\prime)^\star  (f \wand g) = (f \wand f^\prime) \wand (f \wand g)
\end{align*}
\end{enumerate}
So $\wand$ is a Kleene wand as desired. 
\end{proof}

Next we show that these constructions are inverses of each other. 

\begin{theorem}\label{thm:star=wand} For a classical $\perp$-restriction category $\mathbb{X}$, there is a bijective correspondence between Kleene wands and Kleene upper-stars. Therefore, a classical itegory is precisely a classical interference restriction category with a Kleene upper-star. 
\end{theorem}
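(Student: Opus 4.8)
The plan is to show that the two constructions from Prop~\ref{prop:wand-to-star} and Prop~\ref{prop:star-to-wand} are mutually inverse, which then immediately yields the claimed bijection and the final restatement. Recall that Prop~\ref{prop:wand-to-star} sends a Kleene wand $\wand$ to the Kleene upper-star $f \mapsto f \wand \overline{f}^c$, while Prop~\ref{prop:star-to-wand} sends a Kleene upper-star $\star$ to the Kleene wand $(f,g) \mapsto f^\star g$. Since both propositions already verify that these assignments land in the right class of operators, all that remains is to check the two round-trips.

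For the round-trip starting from a Kleene wand $\wand$, the induced upper-star is $f^\star = f \wand \overline{f}^c$, and this upper-star in turn induces the wand $(f,g) \mapsto f^\star g = (f \wand \overline{f}^c)g$. I would show this agrees with the original wand on every $\perp$-disjoint pair. By naturality {\bf [$\boldsymbol{\wand}$.2]}, $(f \wand \overline{f}^c)g = f \wand (\overline{f}^c g)$, and since $f \perp g$ forces $\overline{g} \le \overline{f}^c$ (the observation recorded just before Def~\ref{def:star}), we get $\overline{f}^c g = \overline{f}^c\,\overline{g}\, g = \overline{g}\, g = g$ using \textbf{[R.1]}. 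Hence $(f \wand \overline{f}^c)g = f \wand g$, so the original wand is recovered.

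For the round-trip starting from a Kleene upper-star $\star$, the induced wand is $(f,g) \mapsto f^\star g$, and this wand induces the upper-star $f \mapsto f \wand \overline{f}^c = f^\star \overline{f}^c$. I would show this equals $f^\star$. This is the single identity that is not purely formal, but it drops out in one line: specialising {\bf [{$\boldsymbol{\star}$}.2]} to $h = 1_X$ gives $(1_X f)^\star 1_X = 1_X (f 1_X)^\star \overline{f}^c$, that is $f^\star = f^\star \overline{f}^c$. Thus the original upper-star is recovered, and combined with the previous paragraph the two assignments are inverse to one another, establishing the bijection. The concluding sentence of the theorem is then immediate. The only genuinely nontrivial point is recognising that the absorption identity $f^\star = f^\star \overline{f}^c$ is exactly the $h = 1_X$ instance of {\bf [{$\boldsymbol{\star}$}.2]}; once that is spotted, everything else is bookkeeping already carried out in the two cited propositions.
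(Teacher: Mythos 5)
Your proof is correct and follows essentially the same route as the paper: both reduce the theorem to the two identities $f \wand g = (f \wand \overline{f}^c)g$ and $f^\star = f^\star \overline{f}^c$, using $\overline{f}^c g = g$ (from $f \perp g$) together with {\bf [$\boldsymbol{\wand}$.2]} for the first, and the $h = 1_X$ instance of {\bf [{$\boldsymbol{\star}$}.2]} for the second. No gaps; this matches the paper's argument step for step.
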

\begin{proof} We need to show that the constructions of Prop \ref{prop:wand-to-star} and Prop \ref{prop:star-to-wand} are inverses of each other, that is, we need to show that $f \wand g = (f \wand \overline{f}^c)g$ and $f^\star = f^\star \overline{f}^c$. For the former, as explained above, since $f \perp g$, we have that $\overline{f}^c g = g$. Therefore using {\bf [$\boldsymbol{\wand}$.2]} we get that $f \wand g = f \wand \overline{f}^cg =  (f \wand \overline{f}^c)g$. So $f \wand g = (f \wand \overline{f}^c)g$. For the other identity, applying {\bf [{$\boldsymbol{\star}$}.2]} with $h=1_X$ gives us precisely that $f^\star = f^\star \overline{f}^c$. So the constructions from a Kleene wand to a Kleene upper-star and vice-versa are indeed inverses of each others. 
\end{proof}

In the setting where we have all disjoint joins, we can apply the above theorem to the canonical Kleene wand from Thm \ref{thm:countable-wand} to obtain a Kleene upper-star. This Kleene upper-star is similar to the repetition operator in a setting with infinite sums. 

\begin{corollary} A classical $\perp$-restriction category $\mathbb{X}$ which is a countable disjoint $\perp$-restriction category, has a Kleene upper-star $\star$ defined as follows on an endomorphism $f: X \to X$: 
\begin{align}
    f^\star \colon = \bigsqcup \limits_{n \in \mathbb{N}} f^n \overline{f}^c
\end{align}
\end{corollary}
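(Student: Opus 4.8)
The plan is to recognize this as an immediate combination of the two preceding main results: the canonical Kleene wand of Theorem \ref{thm:countable-wand} together with the passage from Kleene wands to Kleene upper-stars of Proposition \ref{prop:wand-to-star}. Since $\mathbb{X}$ is a disjoint $\perp$-restriction category, Theorem \ref{thm:countable-wand} equips it with the canonical Kleene wand $f \wand g = \bigsqcup_{n \in \mathbb{N}} f^n g$, so that $(\mathbb{X}, \wand)$ is a $\perp$-itegory. Because $\mathbb{X}$ is moreover classical, $(\mathbb{X}, \wand)$ is in fact a \emph{classical} $\perp$-itegory, which is exactly the hypothesis needed to invoke Proposition \ref{prop:wand-to-star}.

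First I would confirm that the expression $f \wand \overline{f}^c$ is legitimate, i.e. that $f \perp \overline{f}^c$, so that the wand is actually defined on this pair. This is the only point requiring an argument: by {\bf [$\boldsymbol{\backslash}$.1]} the complement satisfies $\overline{f}^c \perp \overline{f}$, and since $\overline{f}^c$ is a restriction idempotent, Lemma \ref{lem:interference1}.(\ref{lem:interference1.iv}) upgrades this to $\overline{f}^c \perp f$; symmetry {\bf [{$\boldsymbol{\perp}$}.1]} then gives $f \perp \overline{f}^c$ as required.

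With this in hand, Proposition \ref{prop:wand-to-star} produces a Kleene upper-star via $f^\star := f \wand \overline{f}^c$. I would then simply substitute the explicit formula for the canonical wand, instantiating $g = \overline{f}^c$ in $f \wand g = \bigsqcup_{n \in \mathbb{N}} f^n g$, to obtain $f^\star = \bigsqcup_{n \in \mathbb{N}} f^n \overline{f}^c$, which is precisely the claimed expression.

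I do not expect any genuine obstacle here: the content is entirely carried by Theorem \ref{thm:countable-wand} and Proposition \ref{prop:wand-to-star}, and the proof is a two-step composition of these facts. The only item demanding care is the well-definedness check $f \perp \overline{f}^c$ noted above; everything else is formal substitution.
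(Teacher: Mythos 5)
Your proposal is correct and takes essentially the same route as the paper, whose entire proof is the one-line observation that this is the Kleene upper-star associated, via Prop \ref{prop:wand-to-star}, to the canonical Kleene wand of Thm \ref{thm:countable-wand}. Your explicit verification that $f \perp \overline{f}^c$ (via {\bf [$\boldsymbol{\backslash}$.1]}, Lemma \ref{lem:interference1}.(\ref{lem:interference1.iv}), and symmetry) is a sound detail that the paper leaves implicit.
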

\begin{proof} This is precisely the associated Kleene upper-star for the canonical Kleene wand defined in Thm \ref{thm:countable-wand}.    
\end{proof}

\begin{example} \normalfont $(\mathsf{PAR}, \wand)$ is a classical itegory whose induced Kleene upper-star is defined on a partial function ${f: X \to X}$ as:
\begin{align*}
    f^\star(x) := \begin{cases} x & \text{if $f(x) \uparrow$} \\
    f^{n+1}(x) & \text{if there exists a (necessarily unique) $n \in \mathbb{N}$ such that $f^{n+1}(x) \downarrow$ and $f^{n+2}(x) \uparrow$} \\
    \uparrow & \text{otherwise}. 
    \end{cases}
\end{align*} 
\end{example}

\section{Extensive Restriction Categories}\label{sec:extensive}

We now turn our attention to extensive restriction categories. The objective of this section is to show that every extensive restriction category has a canonical interference relation induced by decisions and, moreover, that extensive restriction categories have binary disjoint joins. This means we can certainly consider Kleene wands for extensive restriction categories. 

 The underlying structure of an extensive restriction category is that of a \emph{coCartesian restriction category}, that is, a restriction category with finite \emph{restriction coproducts}. Restriction coproducts are coproducts in the usual sense, with the added assumption that the coproduct injection maps are total. So for a category $\mathbb{X}$ with finite coproducts, we denote the initial object as $\mathsf{0}$, the coproduct as $+$, the injection maps as $\iota_j: A_j \to A_1 + \hdots + A_n$, and write the copairing operation as $\left[ \begin{smallmatrix} - \\ \vdots \\ - \end{smallmatrix}\right]$, that is, for a family of maps $f_1: A_1 \to B$, ..., $f_n: A_n \to B$, their copairing $\left[ \begin{smallmatrix} f_1 \\ \vdots \\ f_n \end{smallmatrix}\right]: A_1 + \hdots + A_n \to B$ is the unique map such that $\iota_j\left[ \begin{smallmatrix} f_1 \\ \vdots \\ f_n \end{smallmatrix}\right] = f_j$. Also, for a family of maps $g_1: A_1 \to B_1$, ..., $g_n: A_n \to B_n$, define their coproduct $g_1 + \hdots + g_n: A_1 + \hdots + A_n \to B_1 + \hdots + B_n$ as $g_1 + \hdots + g_n \colon = \left[ \begin{smallmatrix}  g_1 \iota_1 \\ \vdots \\  g_n \iota_n \end{smallmatrix}\right]$. This choice of writing the copairing operation as a vertical column is justified by the matrix calculus of an extensive restriction category, which we review in the next section. We also denote the canonical codiagonal maps as $\nabla := \left[ \begin{smallmatrix} 1_A \\ \vdots \\ 1_A \end{smallmatrix}\right]: A + \hdots + A \to A$. 

 \begin{definition}\label{def:restcoprod} A \textbf{coCartesian restriction category} is a restriction category $\mathbb{X}$ with finite \textbf{restriction coproducts} \cite[Sec 2.1]{cockett2007restriction}, that is, $\mathbb{X}$ has finite coproducts where all the injection maps ${\iota_j: A_j \to A_1 + \hdots + A_n}$ are total. 
\end{definition}

In a coCartesian restriction category with restriction zeroes, we can define ``quasi-projections'' for the restriction coproduct which make the injections into \emph{restriction isomorphisms} (also known as \emph{partial isomorphisms}). Recall that in a restriction category, a \textbf{restriction isomorphism} \cite[Sec 2.3.1]{cockett2002restriction} is a map $f: A \to B$ such that there is a map $f^\circ: B \to A$, called its \textbf{restriction inverse}, such that $f f^\circ = \overline{f}$ and $f^\circ f = \overline{f^\circ}$. Restriction inverses are unique, and if $f$ is a restriction isomorphism then its restriction inverse $f^\circ$ is also a restriction isomorphism, whose restriction inverse is $f$ \cite[Lem 2.18.(vii)]{cockett2002restriction}. 

\begin{definition} In a coCartesian restriction categories with restriction zeroes, for a finite family of objects $A_0, \hdots, A_n$, the \textbf{quasi-projections} are the maps $\iota^\circ_j: A_0 + \hdots + A_n \to A_j$ defined as the copairing of zeroes and the identity in the $j$th argument, $\iota^\circ_j = \left[ \begin{smallmatrix} 0 \\ \vdots \\ 0 \\ 1_{A_j} \\ 0 \\ \vdots\\ 0 \end{smallmatrix}\right]$. In other words, $\iota^\circ_j$ is the unique map such that $\iota_j \iota^\circ_j =1_{A_j}$ and $\iota_i \iota^\circ_j =0$ for $i \neq j$. 
\end{definition}

Other useful identities in coCartesian restriction categories with restriction zeroes can be found in \cite[Lemma 2.10]{cockett2007restriction} and \cite[Lemma 3.6]{cockett2023classical}. In particular, as mentioned, the injection maps are restriction isomorphisms and also the initial object $\mathsf{0}$ becomes a zero object, where to the unique map to and from $\mathsf{0}$ are precisely the restriction zero maps. 

As the name suggests, extensive restriction categories are the appropriate restriction category analogue of extensive categories \cite{carboni1993introduction}. More explicitly, an extensive restriction category is a restriction category with restriction coproducts and restriction zeroes, and such that every map whose codomain is a coproduct admits a \emph{decision}. 

\begin{definition}\label{def:ext-rest-cat} An \textbf{extensive restriction category} \cite[Sec 3]{cockett2007restriction} is a coCartesian restriction category with restriction zeroes such that for every map $f: A \to B_1 + \hdots + B_n$ there exists a map ${\langle f \rangle: A \to \underbrace{A + \hdots + A}_{n\text{-times}}}$ such that the following equalities hold: 
\begin{enumerate}[{\bf [D.1]}]
\item $\overline{f} = \langle f \rangle \nabla$ 
\item $\langle f \rangle \left( f + \hdots + f \right) = f \left( \iota_1 + \hdots + \iota_n \right)$
\end{enumerate}
or equivalently if $\langle f \rangle$ satisfies the following: 
\begin{enumerate}[{\bf [D.1]}]
\setcounter{enumi}{2}
\item $\langle f \rangle$ is a restriction inverse of ${\left[ \begin{smallmatrix} \iota^\circ_1 f \\ \vdots \\ \iota^\circ_n f \end{smallmatrix} \right]}: A + \hdots + A \to A$
\item $\overline{\langle f \rangle} = \overline{f}$. 
\end{enumerate}
The map $\langle f \rangle$ is called the \textbf{decision} \cite[Prop 2.11]{cockett2007restriction} of $f$. 
\end{definition}

To help keep track of types, it may be useful to write down \textbf{[D.1]} and \textbf{[D.2]} as commutative diagrams (for clarity we write brackets in the bottom corner):
 \begin{equation*}\begin{gathered}\label{diag:D}
\xymatrixcolsep{5pc}\xymatrix{ A \ar[r]^-{ \langle f \rangle } \ar[dr]_-{ \overline{f} } & A + \hdots + A \ar[d]^-{\nabla} &  A \ar[r]^-{ \langle f \rangle } \ar[d]_-{ f }  & A + \hdots + A \ar[d]^-{f + \hdots + f} \\ 
& A & B_1 + \hdots + B_n \ar[r]_-{ \iota_1 + \hdots + \iota_n } & (B_1 + \hdots + B_n) + \hdots + (B_1 + \hdots + B_n) } \end{gathered}\end{equation*}
where for the bottom arrow on the right, it is the coproduct of each injections $\iota_j: B_j \to B_1 + \hdots + B_n$. Moreover, note that by \textbf{[D.3]}, since restriction inverses are unique, the decision $\langle f \rangle$ is the unique map which satisfies \textbf{[D.1]} and \textbf{[D.2]}. 

Intuitively, in an extensive restriction category, the restriction coproduct should be thought as a disjoint union. So then given a map $f: A \to B_0 + \hdots + B_n$, when $f(x)$ is defined, it lands in only one of the $B_j$. As such, the decision of $f$ should be thought of as separating out where $f$ is defined in the appropriate place. So intuitively, if $f(x) \uparrow$ then $\langle f \rangle(x) \uparrow$, and when $f(x) \downarrow$ and $f(x) \in B_j$, then $\langle f \rangle$ sends $x$ to $x$ in the $j$th copy of $A$ in the codomain. So when $f$ is total this gives a decision procedure. This intuition is captured by our main example of an extensive restriction category in Ex \ref{ex:PAR-ext} below. We can also consider maps that are their own decision.

\begin{definition}\label{def:decision} In an extensive restriction category, a \textbf{decision} \cite[Prop 2.15]{cockett2007restriction} is map $d: A \to \underbrace{A + \hdots + A}_{n\text{-times}}$ such that $\langle d \rangle = d$, in other words, $d$ satisfies:
\begin{enumerate}[{\bf [d.1]}]
\item $\overline{d} = d \nabla$ 
\item $d  \left( d + \hdots + d \right) = d \left( \iota_1 + \hdots + \iota_n \right)$
\end{enumerate}
\end{definition}

Again, to help keep track of types, it may be useful to write down \textbf{[d.1]} and \textbf{[d.2]} as commutative diagrams (for clarity we will write brackets in the bottom corner):
 \begin{equation*}\begin{gathered}\label{diag:d}
\xymatrixcolsep{5pc}\xymatrix{ A \ar[r]^-{ d } \ar[dr]_-{ \overline{d} } & A + \hdots + A \ar[d]^-{\nabla} &  A \ar[r]^-{ d } \ar[d]_-{ d }  & A + \hdots + A \ar[d]^-{d + \hdots + d} \\ 
& A & A + \hdots + A \ar[r]_-{ \iota_1 + \hdots + \iota_n } & (A + \hdots + A) + \hdots + (A + \hdots + A) } \end{gathered}\end{equation*}
where for the bottom arrow on the right, it is the coproduct of each injections $\iota_j: A \to A + \hdots + A$. 

Of course, for every suitable $f$, its decision $\langle f \rangle$ is indeed a decision, so $\langle \langle f \rangle \rangle = \langle f \rangle$ \cite[Prop 2.15]{cockett2007restriction}. Equivalently, a decision can also be characterize as a restriction isomorphism whose restriction inverse is the copairing of restriction idempotents.

\begin{lemma}\label{lem:decision=iso} \cite[Prop 2.15]{cockett2007restriction} In an extensive restriction category, 
\begin{enumerate}[{\em (i)}]
\item \label{lem:decision=iso.1} A map $d: A \to A + \hdots +A$ is a decision if and only if $d$ is a restriction isomorphism such that for its restriction inverse $d^\circ: A + \hdots + A \to A$, $\iota_j d^\circ$ are restriction idempotents.
\item \label{lem:decision=iso.2} If $d$ is a decision, then it is a restriction isomorphism with restriction inverse $d^\circ = \left[ \begin{smallmatrix} d \iota^\circ_1 \\ \vdots \\  d \iota^\circ_n \end{smallmatrix}\right]$. 
\end{enumerate}
\end{lemma}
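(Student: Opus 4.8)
The plan is to prove (ii) by a direct computation and then read off (i) as a consequence. Throughout write $A^{(n)} = A + \cdots + A$ and set $e_j := d\iota^\circ_j : A \to A$, so that the candidate inverse is $d^\circ = \left[\begin{smallmatrix} d\iota^\circ_1 \\ \vdots \\ d\iota^\circ_n\end{smallmatrix}\right]$ with $\iota_j d^\circ = e_j$. The two things to verify are the restriction-isomorphism equations $d d^\circ = \overline{d}$ and $d^\circ d = \overline{d^\circ}$, together with the fact that each $e_j$ is a restriction idempotent.

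For $d d^\circ = \overline{d}$, first observe, by comparing $j$-th components and using $\iota_j(d+\cdots+d) = d\iota_j$ together with $\iota_j\left[\begin{smallmatrix}\iota^\circ_1\\\vdots\\\iota^\circ_n\end{smallmatrix}\right] = \iota^\circ_j$, that $d^\circ = (d + \cdots + d)\left[\begin{smallmatrix}\iota^\circ_1\\\vdots\\\iota^\circ_n\end{smallmatrix}\right]$. Now {\bf [d.2]} rewrites $d(d+\cdots+d)$ as $d(\iota_1 + \cdots + \iota_n)$, and a second component comparison (using the quasi-projection identities $\iota_j\iota^\circ_j = 1_A$ and $\iota_i\iota^\circ_j = 0$ for $i\neq j$) shows $(\iota_1+\cdots+\iota_n)\left[\begin{smallmatrix}\iota^\circ_1\\\vdots\\\iota^\circ_n\end{smallmatrix}\right] = \nabla$. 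Hence $dd^\circ = d\nabla$, which is $\overline d$ by {\bf [d.1]}.

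The structural facts about the $e_j$ come from projecting {\bf [d.2]} by the outer quasi-projection $\iota^\circ_j$: using $(f_1+\cdots+f_n)\iota^\circ_j = \iota^\circ_j f_j$ one obtains $e_j d = e_j \iota_j$. Post-composing this with $\iota^\circ_j$ (resp. $\iota^\circ_i$, $i\neq j$) and using $\iota_j\iota^\circ_j = 1_A$ (resp. $\iota_j\iota^\circ_i = 0$) yields the idempotency $e_j e_j = e_j$ and the disjointness $e_i e_j = 0$ for $i \neq j$. For $d^\circ d = \overline{d^\circ}$ it then suffices to match $j$-th components: $\iota_j d^\circ d = e_j d = e_j\iota_j$, whereas $\iota_j\overline{d^\circ} = \overline{e_j}\iota_j$ (restriction of a copairing), and since $\iota_j$ is a total split monic these agree precisely when $e_j = \overline{e_j}$.

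So the technical heart — and the step I expect to be the main obstacle — is showing that the idempotents $e_j = d\iota^\circ_j$ are genuine restriction idempotents, i.e. $\overline{e_j} = e_j$, rather than merely idempotents (all the plain algebraic manipulations above stubbornly reduce to $\overline{e_j}=\overline{e_j}$). I would establish it from the canonical disjoint decomposition of the identity on the coproduct, $1_{A^{(n)}} = \bigsqcup_j \iota^\circ_j\iota_j$, which forces $\overline d = d\nabla = \bigsqcup_j e_j$; applying $\overline{\bigsqcup_j e_j} = \bigsqcup_j \overline{e_j}$ (Lemma \ref{lemma:disjoint-join}.(\ref{lemma:join-rest})) gives $\bigsqcup_j e_j = \bigsqcup_j\overline{e_j}$, and then comparing the two sides via $\overline{e_k}\bigsqcup_j e_j = e_k$ (Lemma \ref{lemma:disjoint-join}.(\ref{lemma:join-<})) yields $e_k = \overline{e_k}$. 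These are only finite joins of pairwise disjoint summand idempotents, which exist in any coCartesian restriction category with restriction zeroes, so the argument is not circular with the later joins results. With $e_j = \overline{e_j}$ in hand, $d^\circ d = \overline{d^\circ}$ follows and (ii) is complete. Finally, (i) is essentially a repackaging of (ii): the ($\Rightarrow$) direction is exactly what we have proved, since $d$ is a restriction isomorphism with $\iota_j d^\circ = e_j$ restriction idempotents; for ($\Leftarrow$), given a restriction isomorphism $d$ whose $\iota_j d^\circ$ are restriction idempotents one has $d^\circ = \left[\begin{smallmatrix} \iota_1 d^\circ \\ \vdots \\ \iota_n d^\circ\end{smallmatrix}\right]$ and verifies {\bf [d.1]} and {\bf [d.2]} by the same componentwise bookkeeping, again reducing both sides to $\overline d$ via $1_{A^{(n)}} = \bigsqcup_j\iota^\circ_j\iota_j$.
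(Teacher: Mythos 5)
Most of your computation is correct and rigorous: the identification $d^\circ = (d + \cdots + d)\left[\begin{smallmatrix}\iota^\circ_1\\ \vdots \\ \iota^\circ_n\end{smallmatrix}\right]$, the deduction $dd^\circ = d\nabla = \overline{d}$ from \textbf{[d.1]} and \textbf{[d.2]}, the identity $e_jd = e_j\iota_j$ with its consequences $e_je_j = e_j$ and $e_je_i = 0$, and the reduction of $d^\circ d = \overline{d^\circ}$ to the single claim $e_j = \overline{e_j}$ are all fine. (Note the paper offers no proof of this lemma -- it cites Cockett and Lack -- so your argument must stand on its own.) The genuine gap sits exactly at the step you call the technical heart. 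Passing from $1_{A^{(n)}} = \bigsqcup_j \iota^\circ_j\iota_j$ to ``$\overline{d} = d\nabla = \bigsqcup_j e_j$'' silently assumes that composition preserves this join; that is {\bf [$\boldsymbol{\sqcup}$.3]}, i.e.\ Lemma \ref{lemma:disjoint-join}.(\ref{lemma:join-comp}), which in this paper is an \emph{axiom} of the join framework and not a property of bare least upper bounds in a poset-enriched restriction category. Similarly, Lemma \ref{lemma:disjoint-join}.(\ref{lemma:join-rest}) is stated and proved only for $\perp$-restriction categories with all finite $\perp$-joins -- a structure an extensive restriction category is shown to carry only in Thm \ref{thm:ext-int}, whose proof uses the present lemma -- so invoking it here is circular. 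Your non-circularity disclaimer rests on the claim that pairwise disjoint idempotents always have joins in a coCartesian restriction category with restriction zeroes; that claim is unproven (and is exactly the kind of statement this paper is careful about: pairwise $\perp_0$-disjointness does not in general produce separating decisions or joins), and even granting existence, mere existence of a least upper bound yields neither stability under composition nor $\overline{\bigsqcup_j e_j} = \bigsqcup_j\overline{e_j}$.

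Fortunately the gap closes with a short computation that needs no joins at all. By \textbf{[R.4]} and $\overline{\iota^\circ_j} = \iota^\circ_j\iota_j$ one has $\overline{e_j}\,d = \overline{d\iota^\circ_j}\,d = d\,\overline{\iota^\circ_j} = d\iota^\circ_j\iota_j = e_j\iota_j$; post-composing with $\nabla$ and using \textbf{[d.1]} gives $\overline{e_j}\,\overline{d} = \overline{e_j}\,d\nabla = e_j\iota_j\nabla = e_j$, while $\overline{e_j} = \overline{d\iota^\circ_j}\leq\overline{d}$ gives $\overline{e_j}\,\overline{d} = \overline{e_j}$; hence $e_j = \overline{e_j}$. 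The same warning applies to the converse half of (i), which you dispatch by appealing again to $1_{A^{(n)}} = \bigsqcup_j\iota^\circ_j\iota_j$: it should, and can, be done joinlessly. From the hypothesis that the $e_j = \iota_jd^\circ$ are restriction idempotents, $d^\circ d = \overline{d^\circ} = e_1 + \cdots + e_n$, whence $e_jd = e_j\iota_j$; then $d = dd^\circ d = d(e_1 + \cdots + e_n)$ gives $d(d+\cdots+d) = d(e_1d + \cdots + e_nd) = d(e_1\iota_1 + \cdots + e_n\iota_n) = d(\iota_1 + \cdots + \iota_n)$, which is \textbf{[d.2]}; and since $(e_1 + \cdots + e_n)\nabla = d^\circ$, i.e.\ $d^\circ \leq \nabla$, monotonicity of composition gives $\overline{d} = dd^\circ \leq d\nabla$, so $\overline{d} = \overline{d}\,d\nabla = d\nabla$ by \textbf{[R.1]}, which is \textbf{[d.1]}.
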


As such, intuitively, an $n$-ary decision of type $A$ should be interpreted as a collection of $n$ disjoint subsets of $A$, as is best seen in our main example below. 

\begin{example}\normalfont \label{ex:PAR-ext} $\mathsf{PAR}$ is an extensive restriction category where the coproduct is given by disjoint union, which we denote as $X_1 + \hdots + X_n = X_1 \sqcup \hdots \sqcup X_n = \lbrace (x,k) \vert~ 1 \leq k \leq n, x \in X_k \rbrace$, the initial object is the empty set  $\mathsf{0} = \emptyset$, and for a partial function $f: X \to Y_1 \sqcup \hdots \sqcup Y_n$, its decision is the partial function ${\langle f \rangle: X \to X \sqcup \hdots \sqcup X}$ defined as follows: 
\[ \langle f \rangle (x) = \begin{cases} (x, 1) & \text{if } f(x) \downarrow \text{ and } f(x) \in Y_1 \\
\vdots \\
(x,n) & \text{if } f(x) \downarrow \text{ and } f(x) \in Y_n \\
\uparrow & \text{if } f(x) \uparrow  \end{cases} \]
Decisions in $\mathsf{PAR}$ correspond to a finite family of disjoint subsets. Indeed, given a set $X$, an $n$-ary decision ${d: X \to X \sqcup \hdots \sqcup X}$ induces $n$ subsets of $X$ defined as $d_k = \lbrace x \in X \vert d(x) = (x,k) \rbrace$ for all $1 \leq k \leq n$ (which is well-defined since $d = \langle d \rangle$). Moreover, clearly for any $1 \leq i,j \leq n$ with $i \neq j$ we have that $d_i \cap d_j = \emptyset$. On the other hand, given a finite family of subsets $U_1, \hdots, U_n \in X$ which are pairwise disjoint, that is, $U_i \cap U_j = \emptyset$ when $ i\neq j$, we obtain a decision $d_{U_1,\hdots, U_n}: X \to X \sqcup \hdots \sqcup X$ defined as: 
\[ d_{U_1,\hdots, U_n} (x) = \begin{cases} (x, 1) & \text{if } x \in U_1 \\
\vdots \\
(x,n) & \text{if } x \in U_n \\
\uparrow & \text{if } x \notin U_i \text{ for all $1 \leq i \leq n$}  \end{cases} \]
\end{example}

\begin{example}\normalfont Another source of examples of extensive restriction categories worth mentioning are \emph{distributive} restriction categories with restriction zeroes. Briefly, a \textbf{distributive restriction category} \cite[Sec 5.3]{cockett2007restriction} is a coCartesian restriction category which also has \textbf{restriction products} $\times$ and a \textbf{restriction terminal object} $\mathsf{1}$ \cite[Sec 4.1]{cockett2007restriction}\footnote{It is important to recall that while restriction coproducts are the indeed usual coproducts, restriction products are not products in the usual sense. For more details on restriction products, we invite the reader to see \cite{cockett2007restriction,cockett2023classical,cockett2012differential}.} which distributes over the restriction coproducts, in the sense that $A \times (B + C) \cong (A \times B) + (A \times C)$. Every distributive restriction category with restriction zeroes is an extensive restriction category \cite[Thm 5.8]{cockett2007restriction}, where for a map ${f: A \to B_0 +  \hdots +  B_n}$, its decision ${\langle f \rangle: A \to A +  \hdots +  A}$ is defined as the following composite: 
 \begin{equation}\begin{gathered}\label{def:dec}
\begin{array}[c]{c} \langle f \rangle \end{array}
:= \begin{array}[c]{c} \xymatrixcolsep{4.75pc}\xymatrixrowsep{1pc}\xymatrix{ A \ar[r]^-{ \left\langle 1_A, f \right\rangle }  & A \times \left( B_0 +  \hdots +  B_n \right) \ar[r]^-{ 1_A \times \left(t_{B_0} + \hdots + t_{B_n}\right)} & A \times (\mathsf{1} + \hdots + \mathsf{1}) \cong A + \hdots + A } \end{array}  \end{gathered}\end{equation}
where $\langle -, - \rangle$ is the pairing operation for the restriction product, and $t_{B_i}: B_i \to \mathsf{1}$ is the unique total map to the restriction terminal object. As such, every classical distributive restriction category \cite[Sec 6]{cockett2023classical} (or equivalently any Kleisli category of the exception monad of a distributive category \cite[Sec 7]{cockett2023classical}) is an extensive restriction category as well. For more details on (classical) distributive restriction categories, we invite the curious reader to see \cite{cockett2007restriction,cockett2023classical}. 
\end{example}

We now turn our attention to showing that every extensive restriction category has a canonical interference relation induced by decisions. The natural notion of maps being disjoint in an extensive restriction category is if they can separated by a binary decision. 

\begin{definition}\label{def:perp-extensive} In an extensive restriction category, we say that maps $f: A \to B$ and ${g: A \to C}$ are \textbf{decision disjoint}, written as $f \perp_d g$, if there exists a decision $\langle f \vert g \rangle: A \to A + A$ such that $\langle f \vert g \rangle \iota^\circ_1 = \overline{f}$ and $\langle f \vert g \rangle \iota^\circ_2 = \overline{g}$. We call $\langle f \vert g \rangle$ the \textbf{separating decision} of $f$ and $g$. 
\end{definition}

Recall that a binary decision should intuitively be thought of as a pair of disjoint subsets. Thus two maps are decision disjoint if their domains are disjoint and if there is a decision which captures these two disjoint subsets. Being decision disjoint is also equivalent to asking that the copairing of the restrictions is a restriction isomorphism, whose restriction inverse will be the separating decision. From this, since restriction inverses are unique, we also get that separating decision are unique (which justifies saying \emph{the} separating decision instead of \emph{a} separating decision). 

\begin{lemma}\label{lem:decision-sep-iso} In an extensive restriction category, 
\begin{enumerate}[{\em (i)}]
\item \label{lem:decision-sep-iso.1} $f \perp_d g$ if and only if $\left[ \begin{smallmatrix} \overline{f} \\ \overline{g} \end{smallmatrix}\right]$ is a restriction isomorphism.
\item \label{lem:decision-sep-iso.2} If $f \perp_d g$, then their separating decision $\langle f \vert g \rangle$ is the restriction inverse of $\left[ \begin{smallmatrix} \overline{f} \\ \overline{g} \end{smallmatrix}\right]$.
\item \label{lem:decision-sep-iso.3} A separating decision, if it exists, is unique.
\end{enumerate} 
\end{lemma}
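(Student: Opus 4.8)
The plan is to reduce all three statements to Lemma~\ref{lem:decision=iso}, together with the standard facts that in any restriction category a restriction isomorphism has a \emph{unique} restriction inverse and that ``being a restriction inverse'' is a symmetric relation (if $d^\circ$ is the restriction inverse of $d$, then $d$ is a restriction isomorphism that is the restriction inverse of $d^\circ$). It is cleanest to establish (\ref{lem:decision-sep-iso.2}) first, since (\ref{lem:decision-sep-iso.1}) and (\ref{lem:decision-sep-iso.3}) then fall out almost immediately.

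For (\ref{lem:decision-sep-iso.2}), suppose $f \perp_d g$ with separating decision $d = \langle f \vert g \rangle$. Since $d$ is a decision, Lemma~\ref{lem:decision=iso}.(\ref{lem:decision=iso.2}) gives that $d$ is a restriction isomorphism with restriction inverse $d^\circ = \left[ \begin{smallmatrix} d\iota^\circ_1 \\ d\iota^\circ_2 \end{smallmatrix}\right]$. By the defining equations of the separating decision, $d\iota^\circ_1 = \overline{f}$ and $d\iota^\circ_2 = \overline{g}$, so $d^\circ = \left[ \begin{smallmatrix} \overline{f} \\ \overline{g} \end{smallmatrix}\right]$. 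Hence $\left[ \begin{smallmatrix} \overline{f} \\ \overline{g} \end{smallmatrix}\right]$ is the restriction inverse of $d$, and by symmetry $d$ is the restriction inverse of $\left[ \begin{smallmatrix} \overline{f} \\ \overline{g} \end{smallmatrix}\right]$, which is precisely (\ref{lem:decision-sep-iso.2}).

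For (\ref{lem:decision-sep-iso.1}), the forward implication is then immediate: if $f \perp_d g$, by (\ref{lem:decision-sep-iso.2}) the map $\left[ \begin{smallmatrix} \overline{f} \\ \overline{g} \end{smallmatrix}\right]$ admits a restriction inverse and so is a restriction isomorphism. For the converse, assume $\left[ \begin{smallmatrix} \overline{f} \\ \overline{g} \end{smallmatrix}\right]$ is a restriction isomorphism and set $d := \left[ \begin{smallmatrix} \overline{f} \\ \overline{g} \end{smallmatrix}\right]^\circ$. Then $d$ is again a restriction isomorphism with $d^\circ = \left[ \begin{smallmatrix} \overline{f} \\ \overline{g} \end{smallmatrix}\right]$, so $\iota_1 d^\circ = \overline{f}$ and $\iota_2 d^\circ = \overline{g}$ are restriction idempotents; Lemma~\ref{lem:decision=iso}.(\ref{lem:decision=iso.1}) then shows $d$ is a decision. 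Applying Lemma~\ref{lem:decision=iso}.(\ref{lem:decision=iso.2}) to this $d$ gives $\left[ \begin{smallmatrix} d\iota^\circ_1 \\ d\iota^\circ_2 \end{smallmatrix}\right] = d^\circ = \left[ \begin{smallmatrix} \overline{f} \\ \overline{g} \end{smallmatrix}\right]$, and precomposing with the injections $\iota_1,\iota_2$ (which separate the components of a copairing) yields $d\iota^\circ_1 = \overline{f}$ and $d\iota^\circ_2 = \overline{g}$. Thus $d$ is a separating decision for $f$ and $g$, so $f \perp_d g$.

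For (\ref{lem:decision-sep-iso.3}), uniqueness is now automatic: by (\ref{lem:decision-sep-iso.2}) every separating decision of $f$ and $g$ is the restriction inverse of the single fixed map $\left[ \begin{smallmatrix} \overline{f} \\ \overline{g} \end{smallmatrix}\right]$, and restriction inverses are unique. The only point requiring genuine care — and the main place where the extensive structure is used rather than pure formal manipulation — is the converse of (\ref{lem:decision-sep-iso.1}): one must confirm that the candidate $d$, built merely as a restriction inverse, really satisfies the decision axioms. This is exactly where Lemma~\ref{lem:decision=iso}.(\ref{lem:decision=iso.1}) does the work, by reducing ``being a decision'' to the restriction-idempotency of the maps $\iota_j d^\circ$. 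Everything else is bookkeeping with copairings and the symmetry and uniqueness of restriction inverses.
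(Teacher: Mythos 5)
Your proof is correct and follows essentially the same route as the paper's: both arguments hinge entirely on Lemma~\ref{lem:decision=iso} together with the uniqueness (and symmetry) of restriction inverses, with the forward direction extracting $\left[ \begin{smallmatrix} \overline{f} \\ \overline{g} \end{smallmatrix}\right]$ as the restriction inverse of the separating decision and the converse using Lemma~\ref{lem:decision=iso}.(\ref{lem:decision=iso.1}) to certify that the restriction inverse of $\left[ \begin{smallmatrix} \overline{f} \\ \overline{g} \end{smallmatrix}\right]$ is a decision. The only difference is cosmetic ordering — you establish (\ref{lem:decision-sep-iso.2}) first where the paper derives it as a byproduct of (\ref{lem:decision-sep-iso.1}) — and the content is identical.
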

\begin{proof} Starting with (\ref{lem:decision-sep-iso.1}). For the $\Rightarrow$ direction, suppose that $f \perp_d g$, so we have a decision $\langle f \vert g \rangle$ such that $\langle f \vert g \rangle \iota^\circ_1 = \overline{f}$ and $\langle f \vert g \rangle \iota^\circ_2 = \overline{g}$. By Lemma \ref{lem:decision=iso}.(\ref{lem:decision=iso.2}), we know that $\langle f \vert g \rangle$ is a restriction isomorphism and its restriction inverse is $\langle f \vert g \rangle^\circ = \left[ \begin{smallmatrix} \langle f \vert g \rangle \iota^\circ_1 \\  \langle f \vert g \rangle \iota^\circ_2 \end{smallmatrix}\right] = \left[ \begin{smallmatrix} \overline{f} \\ \overline{g} \end{smallmatrix}\right]$. Since restriction inverses are themselves restriction isomorphisms, we have that $\left[ \begin{smallmatrix} \overline{f} \\ \overline{g} \end{smallmatrix}\right]$ is a restriction isomorphism. Conversely, for the $\Leftarrow$ direction, suppose that $\left[ \begin{smallmatrix} \overline{f} \\ \overline{g} \end{smallmatrix}\right]$ is a restriction isomorphism and call its restriction inverse $\langle f \vert g \rangle$. By Lemma \ref{lem:decision=iso}.(\ref{lem:decision=iso.1}), since $ \iota_1\left[ \begin{smallmatrix} \overline{f} \\ \overline{g} \end{smallmatrix}\right]  = \overline{f}$ and $\iota_2\left[ \begin{smallmatrix} \overline{f} \\ \overline{g} \end{smallmatrix}\right]  = \overline{g}$ are restriction idempotents, it follows that $\langle f \vert g \rangle$ is a decision. Thus by Lemma \ref{lem:decision=iso}.(\ref{lem:decision=iso.2}), we get that $\langle f \vert g \rangle$ is a restriction isomorphism whose restriction inverse is $\left[ \begin{smallmatrix} \langle f \vert g \rangle \iota^\circ_1 \\ \langle f \vert g \rangle \iota^\circ_2 \end{smallmatrix}\right]$. However by assumption, we know that since $\langle f \vert g \rangle$ is the restriction inverse of $\left[ \begin{smallmatrix} \overline{f} \\ \overline{g} \end{smallmatrix}\right]$, that $\left[ \begin{smallmatrix} \overline{f} \\ \overline{g} \end{smallmatrix}\right]$ is the restriction inverse of $\langle f \vert g \rangle$. Then since restriction inverses are unique, we get that $\left[ \begin{smallmatrix} \langle f \vert g \rangle \iota^\circ_1 \\ \langle f \vert g \rangle \iota^\circ_2 \end{smallmatrix}\right]=\left[ \begin{smallmatrix} \overline{f} \\ \overline{g} \end{smallmatrix}\right]$. Thus it follows that $\langle f \vert g \rangle \iota^\circ_1 = \overline{f}$ and $\langle f \vert g \rangle \iota^\circ_2 = \overline{g}$. So $f \perp_d g$ as desired. 

Then (\ref{lem:decision-sep-iso.2}) follows from what we've just shown. Lastly for (\ref{lem:decision-sep-iso.3}), since restriction inverse are unique, separating decision are unique as well. 
\hfill \end{proof}

We now show that decision separation is an interference relation and that extensive restriction categories have all binary disjoint joins.

\begin{theorem}\label{thm:ext-int} Let $\mathbb{X}$ be an extensive restriction category, then $\perp_d$ is an interference relation, making $\mathbb{X}$ into a $\perp_d$-restriction category which has all binary $\perp_d$-joins. 
\end{theorem}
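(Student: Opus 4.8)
The plan is to work throughout with the restriction-isomorphism reformulation of decision disjointness from Lemma \ref{lem:decision-sep-iso}: $f \perp_d g$ iff the copairing $\left[\begin{smallmatrix}\overline f\\\overline g\end{smallmatrix}\right]$ is a restriction isomorphism, with the separating decision $\langle f \vert g \rangle$ its restriction inverse. Since this condition depends only on $\overline f,\overline g$, the relation visibly factors through restrictions, so $f\perp_d g \iff \overline f\perp_d\overline g$; hence it suffices to verify that $\perp_d$ is a \emph{restrictional} interference relation on the $\mathcal O(A)$ and then invoke the established bijective correspondence between interference relations and restrictional interference relations to conclude it is an interference relation on maps. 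This is the convenient route since it replaces the full Composition axiom by the single Pre-Composition clause.

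For the restrictional axioms: Zero is witnessed by the decision $\iota_1$; Symmetry by pre-composing $\left[\begin{smallmatrix}\overline f\\\overline g\end{smallmatrix}\right]$ with the twist isomorphism (a composite of restriction isomorphisms is again one); and Downward Closed by the factorization $\left[\begin{smallmatrix}e_1\\e_2\end{smallmatrix}\right] = (e_1 + e_2)\left[\begin{smallmatrix}e_1'\\e_2'\end{smallmatrix}\right]$, which exhibits the left-hand side as a restriction idempotent followed by a restriction isomorphism. Anti-reflexivity and Pre-Composition rest on two preparatory identities about decisions that I would extract from \textbf{[D.1]}--\textbf{[D.2]} by post-composing with quasi-projections and taking restrictions: first, the two components of any binary decision are disjoint, $(d\iota_1^\circ)(d\iota_2^\circ)=0$ (so $f\perp_d f$ forces $\overline f\,\overline f = \overline f = 0$, giving $f=0$); and second, for any $\phi\colon C \to A + A$ the decision records definedness componentwise, $\langle\phi\rangle\iota_j^\circ = \overline{\phi\iota_j^\circ}$. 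Granting the latter, Pre-Composition is immediate: given $e\perp_d e'$ with separating decision $d$ and any $h$ with codomain $A$, the decision $\langle hd\rangle$ of the composite $hd$ satisfies $\langle hd\rangle\iota_1^\circ = \overline{hd\iota_1^\circ} = \overline{he}$ and $\langle hd\rangle\iota_2^\circ = \overline{he'}$, so it is exactly the separating decision witnessing $\overline{he}\perp_d\overline{he'}$; by uniqueness of separating decisions (Lemma \ref{lem:decision-sep-iso}) this $\langle hd\rangle$ is forced.

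For binary joins, I would define $f\sqcup g := \langle f \vert g \rangle\left[\begin{smallmatrix}f\\g\end{smallmatrix}\right]$ for $f\perp_d g$. Writing $d=\langle f \vert g \rangle$ with $d^\circ = \left[\begin{smallmatrix}\overline f\\\overline g\end{smallmatrix}\right]$, the key computation is $\overline f\,d = \overline f\,\iota_1$ (and symmetrically $\overline g\,d = \overline g\,\iota_2$), obtained from $\iota_1 d^\circ = \overline f$ and $d^\circ d = \overline f + \overline g$; this yields $\overline f(f\sqcup g) = \overline f\,\iota_1\left[\begin{smallmatrix}f\\g\end{smallmatrix}\right] = \overline f f = f$, so $f,g \leq f\sqcup g$, which is \textbf{[$\sqcup$.1]}. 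For \textbf{[$\sqcup$.2]}, if $f,g\leq h$ then $\left[\begin{smallmatrix}f\\g\end{smallmatrix}\right] = \left[\begin{smallmatrix}\overline f\\\overline g\end{smallmatrix}\right]h = d^\circ h$, whence $f\sqcup g = d d^\circ h = \overline d\,h$; together with $\overline{f\sqcup g} = \overline{d(\overline f+\overline g)} = \overline{d}$ (since $d(\overline f+\overline g)=dd^\circ d=d$) this gives $f\sqcup g = \overline{f\sqcup g}\,h$, i.e. $f\sqcup g\leq h$. Uniqueness of the join is automatic, being a least upper bound.

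The stability axiom \textbf{[$\sqcup$.3]}, $k(f\sqcup g) = kf\sqcup kg$, is where the real work lies, and I expect it to be the main obstacle. One inequality is formal from monotonicity of composition, but the reverse requires that decisions be \emph{stable under pre-composition}, in the form $\langle kd\rangle(k+k) = kd$ (equivalently $\langle kf \vert kg\rangle = \langle kd\rangle$, which already underlies Pre-Composition). Granting this, and using $\left[\begin{smallmatrix}kf\\kg\end{smallmatrix}\right] = (k+k)\left[\begin{smallmatrix}f\\g\end{smallmatrix}\right]$, one computes $k(f\sqcup g) = kd\left[\begin{smallmatrix}f\\g\end{smallmatrix}\right] = \langle kd\rangle(k+k)\left[\begin{smallmatrix}f\\g\end{smallmatrix}\right] = \langle kf \vert kg\rangle\left[\begin{smallmatrix}kf\\kg\end{smallmatrix}\right] = kf\sqcup kg$. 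The genuine difficulty throughout is the abstract, nested-coproduct verification of these decision identities (disjoint components; componentwise definedness; pre-composition stability) from \textbf{[D.1]}--\textbf{[D.2]} and Lemma \ref{lem:decision=iso}, since the bookkeeping with injections and quasi-projections is delicate; all are transparent in $\mathsf{PAR}$ (Ex. \ref{ex:PAR-ext}), and I would isolate and prove them once as lemmas before assembling the interference-relation half and the binary-join half of the statement.
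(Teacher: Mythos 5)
Your proposal is correct and takes essentially the same route as the paper's own proof: you reduce to verifying a restrictional interference relation via the copairing-is-a-restriction-isomorphism characterization, you use the same witnesses for the five axioms (the quasi-projection $\iota_1$ for Zero, the twist for Symmetry, the factorization $\left[\begin{smallmatrix} e_1 \\ e_2 \end{smallmatrix}\right] = (e_1+e_2)\left[\begin{smallmatrix} e^\prime_1 \\ e^\prime_2 \end{smallmatrix}\right]$ for Downward Closure, the \textbf{[d.2]}-based disjointness of decision components for Anti-reflexivity, and $\langle h\langle e_1 \vert e_2\rangle\rangle$ together with $\langle \phi\rangle\iota^\circ_j = \overline{\phi\iota^\circ_j}$ for Pre-Composition), and your join $\langle f \vert g\rangle\left[\begin{smallmatrix} f \\ g \end{smallmatrix}\right]$ coincides with the paper's $\langle f \vert g \rangle(f+g)\nabla$. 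The auxiliary decision identities you defer to lemmas (component disjointness, componentwise definedness, and pre-composition stability, which you phrase as $\langle kd\rangle(k+k)=kd$ and the paper phrases as $\langle kf \vert kg\rangle(kf+kg)=k\langle f \vert g\rangle(f+g)$) are exactly the ones the paper establishes or cites from \textbf{[D.1]}--\textbf{[D.2]} and Lemma \ref{lem:decision=iso}, and each is derivable by the same short manipulation with injections and quasi-projections, so there is no gap.
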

\begin{proof} To prove that $\perp_d$ is an interference relation, we will prove that $\perp_d$ is a restrictional interference relation. Explicitly, the restriction idempotent version of $\perp_d$ is given as follows: for restriction idempotents $e_1$ and $e_2$, $e_1 \perp_d e_2$ if there exists a decision $\langle e_1 \vert e_2 \rangle: A \to A + A$ such that $\langle e_1 \vert e_2 \rangle \iota^\circ_1 = e_1$ and $\langle e_1 \vert e_2 \rangle \iota^\circ_2 = e_2$. Equivalently by Lemma \ref{lem:decision-sep-iso}.(\ref{lem:decision-sep-iso.1}), $e_1 \perp_d e_2$ if and only if $\left[ \begin{smallmatrix} e_1 \\ e_2 \end{smallmatrix}\right]$ is a restriction isomorphism. We will make use of both descriptions. 

\begin{enumerate}[{\bf [{$\mathcal{O}\!\boldsymbol{\perp}$}.1]}]
\setcounter{enumi}{-1}
\item Recall that $\left[ \begin{smallmatrix} 1_A \\ 0 \end{smallmatrix}\right] = \iota^\circ_1$ is the quasi-projection, which we know is a restriction isomorphism. So $1_A \perp 0$, and moreover $\langle 1_A \vert 0 \rangle = \iota_0$. 

\item Suppose that $e_1 \perp_d e_2$, which implies that $\left[ \begin{smallmatrix} e_1 \\ e_2 \end{smallmatrix}\right]$ is a restriction isomorphism. First consider the canonical symmetry isomorphism of the coproduct, $\sigma := \left[ \begin{smallmatrix} \iota_2 \\ \iota_1 \end{smallmatrix}\right]$, which is a restriction isomorphism (since every isomorphism is a restriction isomorphism). Now observe that $\left[ \begin{smallmatrix} e_2 \\ e_1 \end{smallmatrix}\right] = \sigma \left[ \begin{smallmatrix} e_1 \\ e_2\end{smallmatrix}\right]$. Then $\left[ \begin{smallmatrix} e_2 \\ e_1 \end{smallmatrix}\right]$ is the composite of restriction isomorphisms, and so is itself a restriction isomorphism. So $e_2 \perp_d e_1$, and moreover $\langle e_2 \vert e_1 \rangle =\langle e_1 \vert e_2 \rangle \sigma$. 

\item Suppose that $e \perp_{d} e$. Then we have a decision $\langle e \vert e \rangle$ such that $\langle e \vert e \rangle\iota^\circ_1=e$ and $\langle e \vert e \rangle\iota^\circ_2=e$. Now recall that by using \textbf{[R.4]}, it follows that for any map $h$ that $he= \overline{he}h$. As such, using that restriction idempotents are idempotent, the naturality of the quasi-projections, and \textbf{[d.2]}, we can compute that:
\[ e = ee = \langle e \vert e \rangle \iota^\circ_1 \langle e \vert e \rangle \iota^\circ_2 =  \langle e \vert e \rangle \left( \langle e \vert e \rangle + \langle e \vert e \rangle \right) \iota^\circ_1 \iota^\circ_2 = \langle e \vert e \rangle (\iota_1 + \iota_2) \iota^\circ_1 \iota^\circ_2 = \langle e \vert e \rangle \iota^\circ_1 \iota_1 \iota^\circ_2 =  \langle e \vert e \rangle \iota^\circ_1 0 = 0  \]
So $e=0$.

\item Suppose that $e^\prime_1 \perp_d e^\prime_2$, so $\left[ \begin{smallmatrix} e^\prime_1 \\ e^\prime_2 \end{smallmatrix}\right]$ is a restriction isomorphism, and also suppose that $e_1 \leq e_1^\prime$ and $e_2 \leq e^\prime_2$. First observe that from these inequalities, we get that $\left[ \begin{smallmatrix} e_1 \\ e_2 \end{smallmatrix}\right] = \left[ \begin{smallmatrix} e_1 e^\prime_1 \\ e_2 e_2^\prime \end{smallmatrix}\right] = (e_1 + e_2) \left[ \begin{smallmatrix} e^\prime_1 \\ e^\prime_2 \end{smallmatrix}\right]$. So $\left[ \begin{smallmatrix} e_1 \\ e_2 \end{smallmatrix}\right]= (e_1 + e_2) \left[ \begin{smallmatrix} e^\prime_1 \\ e^\prime_2 \end{smallmatrix}\right]$. However note that $e_1 + e_2$ is a restriction idempotent, and therefore also a restriction isomorphism. As such $\left[ \begin{smallmatrix} e_1 \\ e_2 \end{smallmatrix}\right]$ is the composite of restriction isomorphisms, and so is itself a restriction isomorphism. So $e_0 \perp_d e_1$, and moreover $\langle e_1 \vert e_2 \rangle = \langle e^\prime_1 \vert e^\prime_2 \rangle (e_1 + e_2)$.

\item Suppose that $e_1 \perp_d e_2$, which implies we have a decision $\langle e_1 \vert e_2 \rangle: A \to A + A$ such that $\langle e_1 \vert e_2 \rangle \iota^\circ_1 = e_1$ and $\langle e_1 \vert e_2 \rangle \iota^\circ_2 = e_2$. Given a map $h: A^\prime \to A$, consider the map $h\langle e_1 \vert e_2 \rangle: A^\prime \to A + A$. Then define $\langle \overline{he_0} \vert  \overline{he_1}\rangle := \left\langle h \langle e_1 \vert e_2 \rangle \right \rangle: A^\prime \to A^\prime + A^\prime$. By construction $\langle \overline{he_0} \vert  \overline{he_1}\rangle$ is decision. So it remains to show that it is the desired separating decision. To do so, we note that $\langle f \rangle \iota^\circ_j = \overline{f \iota^\circ_j}$ \cite[Proof of Prop 2.15]{cockett2007restriction}. Using this, we can compute that: 
\[ \langle \overline{he_1} \vert  \overline{he_2}\rangle \iota^\circ_1 = \left \langle h \langle e_1 \vert e_2 \rangle \right \rangle \iota^\circ_1 = \overline{h \langle e_2 \vert e_2 \rangle \iota^\circ_1} = \overline{ h e_1} \]
So $\langle \overline{he_1} \vert  \overline{he_2}\rangle \iota^\circ_1 =  \overline{h e_1}$, and similarly $\langle \overline{he_1} \vert  \overline{he_2}\rangle \iota^\circ_2 =  \overline{h e_2}$. So we conclude that $ \overline{he_1} \perp_d  \overline{he_2}$. 
\end{enumerate}

So we have that $\perp_d$ is a restrictional interference relation. Now this induces an interference relation $\perp_d$ where $f \perp_d g$ if and only if $\overline{f} \perp_d \overline{g}$, meaning that there exists a decision $\langle \overline{f} \vert \overline{g} \rangle$ such that $\langle \overline{f} \vert \overline{g} \rangle \iota^\circ_0 = \overline{f}$ and $\langle \overline{f} \vert \overline{g} \rangle \iota^\circ_1 = \overline{g}$. Then setting $\langle f \vert g \rangle := \langle \overline{f} \vert \overline{g} \rangle$, we see that the induced $\perp_d$ is defined in exactly the same way as in Def \ref{def:perp-extensive}. So $\perp_d$ is indeed an interference relation. 

We now show that we also have binary $\perp_d$-joins. So given a pair of parallel maps $f: A \to B$ and $g: A \to B$ which are $\perp_d$-disjoint, so $f \perp_d g$, define the map $f \sqcup g: A \to B$ as the following composite: 
 \begin{equation}\begin{gathered}\label{eq:sqcup-dec}
\begin{array}[c]{c} f \sqcup g \end{array}
:= \begin{array}[c]{c} \xymatrixcolsep{4.75pc}\xymatrixrowsep{1pc}\xymatrix{ A \ar[r]^-{ \left\langle f \vert g\right\rangle }  & A + A \ar[r]^-{ f + g} & B + B \ar[r]^-{ \nabla } & B } \end{array}  \end{gathered}\end{equation}
\begin{enumerate}[{\bf [$\boldsymbol{\sqcup}$.1]}]
\item Using Lemma \ref{lem:decision=iso}.(\ref{lem:decision=iso.2}), it is easy to check that for a decision $d$ that $\overline{d \iota^\circ_j} d = \overline{d \iota^\circ_j} \iota_j$. In particular, this implies that $\overline{f} \left\langle f \vert g\right\rangle = f$ and $\overline{g} \left\langle f \vert g\right\rangle = g$. So using this fact, it follows that we can compute: 
\[ \overline{f}(f \sqcup g) = \overline{f} \left\langle f \vert g\right\rangle (f+g) \nabla = \overline{f} \iota_1  (f+g) \nabla = \overline{f} f \iota_1 \nabla = f \iota_1 \nabla = f \]
So $f \leq f \sqcup g$, and similarly we can also show that $g \leq f \sqcup g$. 

\item Suppose that we have another map $h$ such that $f \leq h$ and $g \leq h$.  This also implies that $f+g \leq h+h$. As such, using \textbf{[d.1]} and the fact that if $e$ is a restriction idempotent then $eh \leq h$, we can compute that: 
\[ f \sqcup g = \left\langle f \vert g\right\rangle (f+g) \nabla \leq \left\langle f \vert g\right\rangle (h+h) \nabla =  \left\langle f \vert g\right\rangle \nabla h = \overline{\left\langle f \vert g\right\rangle} h \leq h \]
So $f \sqcup g \leq h$. 

\item Consider a map $k: A^\prime \to A$. By the proof of {\bf [{$\mathcal{O}\!\boldsymbol{\perp}$}.4]} above, it follows that $\langle kf \vert kg \rangle = \langle k \langle f \vert g \rangle \rangle$. Next we compute that: 
\begin{align*}
&\langle k \langle f \vert g \rangle \rangle (k+k) (f+g) =~ \langle k \langle f \vert g \rangle \rangle (k+k) (\overline{f} + \overline{g}) (f+g) \tag{\textbf{[R.1]}} \\
&=~ \langle k \langle f \vert g \rangle \rangle (k+k) (\langle f \vert g \rangle + \langle f \vert g \rangle)(\iota^\circ_1 + \iota^\circ_2) (f+g) \tag{Sep. Dec.} \\
&=~ k \langle f \vert g \rangle (\iota_1 + \iota_2) (\iota^\circ_1 + \iota^\circ_2) (f+g) \tag{\textbf{[D.2]}} \\
&=~ k \langle f \vert g \rangle (f+g) \tag{Def. of $\iota^\circ_j$}
\end{align*}
In other words, we have that $\langle kf \vert kg \rangle(kf+kg) =  k \langle f \vert g \rangle (f+g)$. From this we can easily compute that: 
\[ kf \sqcup kg = \left\langle kf \vert kg\right\rangle (kf+kg) \nabla =  k \langle f \vert g \rangle (f+g) \nabla = k \left( f \sqcup g\right) \]
So $kf \sqcup kg = k \left( f \sqcup g\right)$ as desired. 
\end{enumerate}
So we conclude that an extensive restriction category is a $\perp_d$-restriction category with all binary $\perp_d$-joins. 
\end{proof}

In general, however, extensive restriction categories may not have arbitrary finite $\perp_d$-joins, since $\perp_d$-joins are not necessarily strong. The issue is that for a finite family of pairwise $\perp_d$-disjoint maps, even if pairwise we have binary separating decision for them, to build their join one really requires an $n$-ary separating decision. 

\begin{definition}\label{def:dec-sep} In an extensive restriction category $\mathbb{X}$, a finite family of maps $\lbrace f_1, \hdots, f_n \rbrace$ that have the same domain $A$ are said to be \textbf{decision disjoint} if there exists an $n$-ary decision $\langle f_1\vert\hdots\vert f_n \rangle: A \to \underbrace{A + \hdots + A}_{n\text{-times}}$ such that $\langle f_1\vert\hdots\vert f_n \rangle \iota^\circ_j = \overline{f_j}$. We call $\langle f_1\vert\hdots\vert f_n \rangle$ the \textbf{separating decision} of the $f_j$s. 
\end{definition}

Of course, similarly to the binary case, we can also say being decision disjoint is equivalent to asking that the copairing of the restrictions is a restriction isomorphism. Moreover, being $n$-ary decision disjoint implies that they are pairwise binary decision disjoint. 

\begin{lemma}\label{lem:n-decision-sep-iso} In an extensive restriction category, 
\begin{enumerate}[{\em (i)}]
\item \label{lem:n-decision-sep-iso.1} $\lbrace f_1, \hdots, f_n \rbrace$ are decision disjoint if and only if $\left[ \begin{smallmatrix} \overline{f_1} \\ \vdots \\ \overline{f_n} \end{smallmatrix}\right]$ is a restriction isomorphism.
\item \label{lem:n-decision-sep-iso.2} If $\lbrace f_1, \hdots, f_n \rbrace$ are decision disjoint, then their separating decision $\langle f_1\vert\hdots\vert f_n \rangle$ is the restriction inverse of $\left[ \begin{smallmatrix} \overline{f_1} \\ \vdots \\ \overline{f_n} \end{smallmatrix}\right]$.
\item \label{lem:n-decision-sep-iso.3} A separating decision, if it exists, is unique.
\item \label{lem:n-decision-sep-iso.4} If $\lbrace f_1, \hdots, f_n \rbrace$ are decision disjoint, then $\lbrace f_1, \hdots, f_n \rbrace$ is also a family of pairwise $\perp_d$-disjoint maps, that is, $f_i \perp_d f_j$ for all $i \neq j$. 
\item \label{lem:n-decision-sep-iso.5}The quasi-projections ${\iota^\circ_j: A_1 + \hdots + A_n \to A_j}$ are decision disjoint and therefore pairwise $\perp$-disjoint.
\end{enumerate} 
\end{lemma}
\begin{proof} Parts (\ref{lem:n-decision-sep-iso.1}), (\ref{lem:n-decision-sep-iso.2}), and (\ref{lem:n-decision-sep-iso.3}) are $n$-ary versions of Lemma \ref{lem:decision-sep-iso}.(\ref{lem:decision-sep-iso.1}), (\ref{lem:decision-sep-iso.2}), and (\ref{lem:decision-sep-iso.3}) respectively, so they are proven in similar fashion. For (\ref{lem:n-decision-sep-iso.4}), suppose $\lbrace f_1, \hdots, f_n \rbrace$ are decision disjoint, so we have an $n$-ary separating decision $\langle f_1\vert\hdots\vert f_n \rangle$. Now let $\iota^\circ_{i,j}: A_1 + \hdots + A_n \to A_i + A_j$ be the obvious quasi-projection which picks out the $A_i$ and $A_j$ components. Then for $f_i$ and $f_j$, with $i \neq j$, define $\langle f_i \vert f_j \rangle = \lbrace f_1, \hdots, f_n \rbrace \iota^\circ_{i,j}$. It is straightforward to check that $\langle f_i \vert f_j \rangle$ is indeed a binary separating decision for $f_i$ and $f_j$, and therefore we have that $f_i \perp_d f_j$ as desired. Lastly for (\ref{lem:n-decision-sep-iso.5}), it is straightforward to see that the quasi-projections $\lbrace \iota^\circ_1, \hdots, \iota^\circ_n \rbrace$ are  decision disjoint, with their separating decision being $\iota_1 + \hdots + \iota_n$. As such by (\ref{lem:n-decision-sep-iso.4}), the quasi-projection are indeed also a family of $\perp_d$-disjoint maps.
\end{proof}

It is again worth stressing that the converse of Lemma \ref{lem:n-decision-sep-iso}.(\ref{lem:n-decision-sep-iso.4}) is not necessarily true. Indeed, even if $\lbrace f_1, \hdots, f_n \rbrace$ are pairwise $\perp_d$-disjoint, they are not necessarily decision disjoint since we cannot in general build the necessary $n$-ary separating decision from the binary ones. However, as mentioned above, we do get disjoint joins of decision disjoint maps by the using the $n$-ary decision separation. So while we may not have all finite $\perp_d$-joins, we do get disjoint joins of a particular class of $\perp_d$-disjoint maps, the ones that are also decision disjoint. 

\begin{lemma}In an extensive restriction category, for a finite family of parallel maps $\lbrace f_1, \hdots, f_n \vert~ f_i: A \to B \rbrace$ which are decision disjoint, their $\perp_d$-join exists and is defined as follows: 
 \begin{equation}\begin{gathered}\label{eq:sqcup-n-dec}
\begin{array}[c]{c} f_1 \sqcup \hdots \sqcup f_n \end{array}
:= \begin{array}[c]{c} \xymatrixcolsep{4.75pc}\xymatrixrowsep{1pc}\xymatrix{ A \ar[r]^-{ \left\langle f_1 \vert \hdots \vert f_n\right\rangle }  & A + \hdots + A \ar[r]^-{f_1 + \hdots + f_n} & B + \hdots + B \ar[r]^-{ \nabla } & B } \end{array}  \end{gathered}\end{equation}
Moreover, composition preserves these joins in the sense that Lemma \ref{lemma:disjoint-join}.(\ref{lemma:join-comp}) holds. 
\end{lemma}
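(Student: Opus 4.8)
The plan is to treat \eqref{eq:sqcup-n-dec} as the $n$-ary analogue of the binary $\perp_d$-join constructed in the proof of Thm~\ref{thm:ext-int}, systematically replacing the binary separating decision by the $n$-ary one supplied by Lemma~\ref{lem:n-decision-sep-iso}. Concretely I would first verify that the composite \eqref{eq:sqcup-n-dec} satisfies the two defining axioms {\bf [$\boldsymbol{\sqcup}$.1]} and {\bf [$\boldsymbol{\sqcup}$.2]} of a $\perp_d$-join, so that it is indeed the (necessarily unique) join of the family, and then establish the composition identity of Lemma~\ref{lemma:disjoint-join}.(\ref{lemma:join-comp}).

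For {\bf [$\boldsymbol{\sqcup}$.1]}, writing $d = \langle f_1\vert\hdots\vert f_n\rangle$, the key input is that Lemma~\ref{lem:decision=iso}.(\ref{lem:decision=iso.2}) yields $\overline{d\iota^\circ_j}\,d = \overline{d\iota^\circ_j}\,\iota_j$, which since $d\iota^\circ_j = \overline{f_j}$ reads $\overline{f_j}\,d = \overline{f_j}\,\iota_j$. Substituting this into $\overline{f_j}(f_1 \sqcup \hdots \sqcup f_n) = \overline{f_j}\,d\,(f_1 + \hdots + f_n)\nabla$ and then using $\iota_j(f_1 + \hdots + f_n) = f_j\iota_j$, \textbf{[R.1]}, and $\iota_j\nabla = 1$ collapses the expression to $f_j$, giving $f_j \le f_1\sqcup\hdots\sqcup f_n$ for every $j$, exactly as in the binary argument. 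For {\bf [$\boldsymbol{\sqcup}$.2]}, given an upper bound $h$ of all the $f_j$ I would use $f_1 + \hdots + f_n \le h + \hdots + h$, the naturality of the codiagonal $(h + \hdots + h)\nabla = \nabla h$, the decision axiom {\bf [d.1]} in the form $\overline{d} = d\nabla$, and monotonicity of composition with $eh \le h$ for restriction idempotents $e$; this is the verbatim $n$-ary transcription of the binary computation and presents no new difficulty.

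The composition identity splits into a pre-composition and a post-composition half. For pre-composition I would first record the $n$-ary naturality of separating decisions, $\langle kf_1\vert\hdots\vert kf_n\rangle = \langle k\langle f_1\vert\hdots\vert f_n\rangle\rangle$, which follows (as in the proof of {\bf [{$\mathcal{O}\!\boldsymbol{\perp}$}.4]} in Thm~\ref{thm:ext-int}) from $\langle f\rangle\iota^\circ_j = \overline{f\iota^\circ_j}$ and $\overline{k\overline{f_j}} = \overline{kf_j}$; in particular this shows $\{kf_1,\hdots,kf_n\}$ is again decision separated. Then the identity $\langle kf_1\vert\hdots\vert kf_n\rangle(kf_1 + \hdots + kf_n) = k\,\langle f_1\vert\hdots\vert f_n\rangle(f_1 + \hdots + f_n)$, obtained from {\bf [D.2]}, \textbf{[R.1]} and the defining equations of the quasi-projections, yields $kf_1 \sqcup \hdots \sqcup kf_n = k(f_1 \sqcup \hdots \sqcup f_n)$. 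For post-composition with $k'$ I would use $\nabla k' = (k' + \hdots + k')\nabla$ to rewrite $(f_1\sqcup\hdots\sqcup f_n)k' = \langle f_1\vert\hdots\vert f_n\rangle(f_1 k' + \hdots + f_n k')\nabla$; the crucial point is that the separating decision for $\{f_1 k',\hdots,f_n k'\}$ is not $\langle f_1\vert\hdots\vert f_n\rangle$ but $\langle f_1\vert\hdots\vert f_n\rangle(\overline{f_1 k'} + \hdots + \overline{f_n k'})$, which I obtain from the $n$-ary downward-closure argument (the analogue of {\bf [{$\mathcal{O}\!\boldsymbol{\perp}$}.3]} in Thm~\ref{thm:ext-int}) since $\overline{f_j k'} \le \overline{f_j}$. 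Reinserting the idempotents $\overline{f_j k'}$ and using \textbf{[R.1]} identifies the expression with $f_1 k' \sqcup \hdots \sqcup f_n k'$, and composing the two halves gives the full two-sided identity $k(f_1\sqcup\hdots\sqcup f_n)k' = kf_1 k' \sqcup \hdots \sqcup kf_n k'$.

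I expect the post-composition step to be the main obstacle. Unlike pre-composition it was not treated in the binary proof of Thm~\ref{thm:ext-int}, and its subtlety is that post-composition shrinks each restriction $\overline{f_j}$ to $\overline{f_j k'}$, so the separating decision genuinely changes and must be recomputed via downward closure before the join formula can be applied; everything else is a routine rewriting of the binary arguments with $n$ summands in place of two.
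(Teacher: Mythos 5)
Your proposal is correct and takes essentially the same route as the paper: the paper's own proof of this lemma is literally the remark that it is ``shown in a similar way to the binary case in the proof of Thm~\ref{thm:ext-int}'' and is left as an exercise, and your argument is precisely that $n$-ary transcription, verifying {\bf [$\boldsymbol{\sqcup}$.1]} and {\bf [$\boldsymbol{\sqcup}$.2]} and the pre-composition identity by replacing the binary separating decision with the $n$-ary one from Lemma~\ref{lem:n-decision-sep-iso}. Your handling of post-composition is also correct and is indeed the one part of Lemma~\ref{lemma:disjoint-join}.(\ref{lemma:join-comp}) not literally present in the binary computation (where {\bf [$\boldsymbol{\sqcup}$.3]} concerns pre-composition only): recomputing the separating decision of $\lbrace f_1k^\prime, \hdots, f_nk^\prime\rbrace$ as $\langle f_1\vert\hdots\vert f_n\rangle\left(\overline{f_1k^\prime} + \hdots + \overline{f_nk^\prime}\right)$ via downward closure and then absorbing these idempotents with \textbf{[R.1]} is exactly the right fix.
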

\begin{proof} This is shown in similar way to how the binary case was shown in the proof of Thm \ref{thm:ext-int}. So we leave this as an excercise for the reader.
\end{proof}

With all that said, in many examples of extensive restriction categories, $\perp_d$-joins are in fact strong. When this is the case, the converse of Lemma \ref{lem:n-decision-sep-iso}.(\ref{lem:n-decision-sep-iso.4}) is true. 

\begin{lemma}\label{lemma:strong-perpd} In an extensive restriction category such that finite $\perp_d$-join are strong, then $\lbrace f_1, \hdots, f_n \rbrace$ are decision disjoint if and only if $\lbrace f_1, \hdots, f_n \rbrace$ are pairwise $\perp_d$-disjoint.
\end{lemma}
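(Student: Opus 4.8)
The plan is to note that one implication is already available and to obtain the other by induction. The forward implication, that a decision separated family is pairwise $\perp_d$-disjoint, is exactly Lemma \ref{lem:n-decision-sep-iso}.(\ref{lem:n-decision-sep-iso.4}) and uses no strongness, so only the converse requires the hypothesis. Since both conditions depend only on the restrictions $\overline{f_i}$ --- decision separatedness by the restriction-isomorphism criterion of Lemma \ref{lem:n-decision-sep-iso}.(\ref{lem:n-decision-sep-iso.1}), and pairwise $\perp_d$-disjointness by Lemma \ref{lem:inter-rest} --- I would first replace each $f_i$ by $e_i := \overline{f_i}$. It then suffices to show that if $e_1, \hdots, e_n$ are pairwise $\perp_d$-disjoint restriction idempotents, then $\left[ \begin{smallmatrix} e_1 \\ \vdots \\ e_n \end{smallmatrix}\right]$ is a restriction isomorphism. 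I would also invoke Lemma \ref{lemma:binary-finite} to record that, under the strongness hypothesis, $\mathbb{X}$ is a finitely disjoint $\perp_d$-restriction category, so all the finite $\perp_d$-joins appearing below exist and are strong.

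The induction is on $n$ (the case $n=1$ being trivial), with base case $n = 2$, where decision separatedness and $\perp_d$-disjointness coincide by definition. For the inductive step, assume the claim for size $n$ and let $\lbrace e_1, \hdots, e_{n+1} \rbrace$ be pairwise $\perp_d$-disjoint. The subfamily $\lbrace e_1, \hdots, e_n \rbrace$ is pairwise $\perp_d$-disjoint, so by the induction hypothesis $P := \left[ \begin{smallmatrix} e_1 \\ \vdots \\ e_n \end{smallmatrix}\right]$ is a restriction isomorphism and the join $e' := e_1 \sqcup \hdots \sqcup e_n$ exists, with $e'$ again a restriction idempotent by Lemma \ref{lemma:disjoint-join}.(\ref{lemma:join-rest}). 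The one essential use of strongness is next: since $e_i \perp_d e_{n+1}$ for every $i \leq n$ and $e'$ is a strong $\perp_d$-join, {\bf [$\boldsymbol{\sqcup}$.4]} gives $e' \perp_d e_{n+1}$. Hence $e'$ and $e_{n+1}$ are binary decision separated, so $\left[ \begin{smallmatrix} e' \\ e_{n+1} \end{smallmatrix}\right]$ is a restriction isomorphism by Lemma \ref{lem:decision-sep-iso}.(\ref{lem:decision-sep-iso.1}).

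It remains to assemble the $(n+1)$-ary copairing. I would form the composite $\left( P + 1_A \right) \left[ \begin{smallmatrix} e' \\ e_{n+1} \end{smallmatrix}\right]$, which is a restriction isomorphism as a composite of restriction isomorphisms (the coproduct $P + 1_A$ of restriction isomorphisms being one). The standard identity $(P + 1_A)\left[ \begin{smallmatrix} e' \\ e_{n+1} \end{smallmatrix}\right] = \left[ \begin{smallmatrix} P e' \\ e_{n+1} \end{smallmatrix}\right]$ holds, and computing components with Lemma \ref{lemma:disjoint-join}.(\ref{lemma:join-<}) gives $\iota_j P e' = e_j e' = e_j = \iota_j P$ for each $j \leq n$, whence $P e' = P$ by uniqueness of copairings. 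Therefore the composite equals $\left[ \begin{smallmatrix} e_1 \\ \vdots \\ e_n \\ e_{n+1} \end{smallmatrix}\right]$, which is thus a restriction isomorphism, and Lemma \ref{lem:n-decision-sep-iso}.(\ref{lem:n-decision-sep-iso.1}) shows $\lbrace e_1, \hdots, e_{n+1} \rbrace$ is decision separated, closing the induction.

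The main obstacle is precisely the step invoking {\bf [$\boldsymbol{\sqcup}$.4]}: strongness is exactly what lets one upgrade the pairwise relations $e_i \perp_d e_{n+1}$ into the single relation $e' \perp_d e_{n+1}$, which furnishes the separating decision one cannot in general build directly from the binary ones --- the obstruction discussed just before the lemma. Once that relation is in hand, the recombination of the $n$-ary and binary isomorphisms into the $(n+1)$-ary one is purely formal, resting only on uniqueness of copairings and the absorption identity $e_j e' = e_j$.
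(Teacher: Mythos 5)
Your proof is correct, and it takes a genuinely different route from the paper's. The paper proves the converse direction non-inductively: it writes down a closed-form candidate for the $n$-ary separating decision, namely the join $\overline{f_1}\iota_1 \sqcup \hdots \sqcup \overline{f_n}\iota_n : A \to A + \hdots + A$ (which is a pairwise $\perp_d$-disjoint family by {\bf [$\boldsymbol{\perp}$.4]} and {\bf [$\boldsymbol{\perp}$.5]}), and then verifies by direct computation with the join identities of Lemma \ref{lemma:disjoint-join} that this map is the restriction inverse of $\left[ \begin{smallmatrix} \overline{f_1} \\ \vdots \\ \overline{f_n} \end{smallmatrix}\right]$, invoking Lemma \ref{lem:n-decision-sep-iso}.(\ref{lem:n-decision-sep-iso.1}). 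Notably, the paper's argument uses the strongness hypothesis only through the existence of the $n$-ary join, never through {\bf [$\boldsymbol{\sqcup}$.4]} itself, whereas your induction makes {\bf [$\boldsymbol{\sqcup}$.4]} the explicit engine: it is what upgrades the pairwise relations $e_i \perp_d e_{n+1}$ to $e' \perp_d e_{n+1}$, after which you assemble the $(n+1)$-ary restriction isomorphism as the composite $(P + 1_A)\left[ \begin{smallmatrix} e' \\ e_{n+1} \end{smallmatrix}\right]$ of restriction isomorphisms. Your assembly step is essentially the same composite-decision trick the paper uses elsewhere (in the proof of Prop \ref{prop:iter-to-wand}, axiom {\bf [$\boldsymbol{\wand}$.4]}, where a ternary separating decision is built as $\langle f \sqcup f^\prime \vert g \rangle (\langle f \vert f^\prime \rangle + 1_X)$), and your induction in effect inlines the proof of Lemma \ref{lemma:binary-finite}. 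What each approach buys: the paper's gives an explicit formula for the separating decision and shows the result holds under the weaker assumption that finite $\perp_d$-joins merely exist; yours isolates exactly where strongness enters and reduces everything to the binary case plus formal closure properties of restriction isomorphisms, avoiding the two componentwise computations.
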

\begin{proof} The $\Rightarrow$ direction is Lemma \ref{lem:n-decision-sep-iso}.(\ref{lem:n-decision-sep-iso.4}). For the $\Leftarrow$ direction, suppose that $\lbrace f_1, \hdots, f_n \rbrace$ are pairwise $\perp_d$-disjoint. Then by {\bf [$\boldsymbol{\perp}$.4]} and {\bf [$\boldsymbol{\perp}$.5]}, we also get that $\lbrace \overline{f_1} \iota_1, \hdots, \overline{f_n} \iota_n \rbrace$ are pairwise $\perp_d$-disjoint. Then we can take their join, and so define $\langle f_1 \vert \hdots \vert f_n \rangle: A \to A + \hdots + A$ as follows:
\begin{align} 
\langle f_1 \vert \hdots \vert f_n \rangle := \overline{f}\iota_1 \sqcup \hdots \sqcup \overline{f} \iota_n
\end{align}
We show that this is a separating decision by showing that it is the restriction inverse of $\left[ \begin{smallmatrix} \overline{f_1} \\ \vdots \\ \overline{f_n} \end{smallmatrix}\right]$. So we first compute that (recall that in a restriction category, if $y$ is total then $\overline{xy} = \overline{x}$): 
\begin{align*}
\langle f_1 \vert \hdots \vert f_n \rangle \begin{bmatrix} \overline{f_1} \\ \vdots \\ \overline{f_n} \end{bmatrix} &=~ \left( \overline{f_1}\iota_1 \sqcup \hdots \sqcup \overline{f_n} \iota_n \right) \begin{bmatrix} \overline{f} \\ \overline{g} \end{bmatrix} \tag{Def.} \\
&=~ \overline{f_1}\iota_1\begin{bmatrix} \overline{f_1} \\ \vdots \\ \overline{f_n} \end{bmatrix} \sqcup \hdots  \overline{f_n} \iota_n \begin{bmatrix} \overline{f_1} \\ \vdots \\ \overline{f_n} \end{bmatrix} \tag{Lemma \ref{lemma:disjoint-join}.(\ref{lemma:join-comp})} \\
&=~ \overline{f_1}~\overline{f_1} \sqcup \hdots \sqcup \overline{f_n}~\overline{f_n} \\
&=~ \overline{f_1} \sqcup \hdots \sqcup \overline{f_n} \tag{Rest. Idem.} \\
&=~ \overline{\overline{f_1}} \sqcup \hdots \sqcup \overline{\overline{f_n}} \tag{Rest. Idem.} \\
&=~ \overline{\overline{f_1}\iota_1} \sqcup \hdots \sqcup \overline{\overline{f_n}\iota_n} \tag{$\iota_j$ total} \\
&=~ \overline{\overline{f_1}\iota_1 \sqcup \hdots \sqcup \overline{f_n}\iota_n} \tag{Lemma \ref{lemma:disjoint-join}.(\ref{lemma:join-rest})} \\
&=~ \overline{\langle f_1 \vert \hdots \vert f_n \rangle} \tag{Def.}
\end{align*}
Next we compute that:
\begin{align*}
 \iota_j \begin{bmatrix} \overline{f_1} \\ \vdots \\ \overline{f_n} \end{bmatrix} \langle f_1 \vert \hdots \vert f_n \rangle = \overline{f_j} \langle f_1 \vert \hdots \vert f_n \rangle = \overline{f_j}  \left( \overline{f_1}\iota_1 \sqcup \hdots \sqcup \overline{f_n} \iota_n \right) =  \overline{f_j}~\overline{f_1}\iota_1 \sqcup \hdots \sqcup \overline{f_j}~\overline{f_n} \iota_n  = 0 \sqcup \hdots \sqcup 0 \sqcup \overline{f_j} \iota_j \sqcup 0 \sqcup \hdots \sqcup 0  = \overline{f_j}\iota_j
\end{align*}
Then by the universal property of the coproduct, we get that $\left[ \begin{smallmatrix} \overline{f_1} \\ \vdots \\ \overline{f_n} \end{smallmatrix}\right] \langle f_1 \vert \hdots \vert f_n \rangle = \overline{f_1} + \hdots + \overline{f_n}$. However, recall that the restriction of the copairing is the coproduct of the restrictions \cite[Lemma 2.1]{cockett2007restriction}, so $\left[ \begin{smallmatrix} \overline{f_1} \\ \vdots \\ \overline{f_n} \end{smallmatrix}\right] \langle f_1 \vert \hdots \vert f_n \rangle = \overline{f_1} + \hdots + \overline{f_n} = \overline{\left[ \begin{smallmatrix} \overline{f_1} \\ \vdots \\ \overline{f_n} \end{smallmatrix}\right]}$. So we conclude that $\left[ \begin{smallmatrix} \overline{f_1} \\ \vdots \\ \overline{f_n} \end{smallmatrix}\right]$ is the restriction inverse of $\left[ \begin{smallmatrix} \overline{f_1} \\ \vdots \\ \overline{f_n} \end{smallmatrix}\right]$. Therefore by Lemma \ref{lem:n-decision-sep-iso}.(\ref{lem:n-decision-sep-iso.1}), we have that $\lbrace f_1, \hdots, f_n \rbrace$ are decision disjoint. 
\end{proof}

A natural question to ask is if an extensive restriction category can have more than one (non-trivial) interference relation. It turns out that under the mild natural assumption that the quasi-projection are pairwise disjoint and binary disjoint joins exist, the only interference relation which satisfies these assumptions is precisely the decision separation interference. As such, this justifies that $\perp_d$ can indeed be considered as the canonical interference relation for an extensive restriction category. 

\begin{lemma} Let $\mathbb{X}$ be an extensive restriction category. Suppose that $\mathbb{X}$ admits an interference relation $\perp$ such that all binary $\perp$-joins exist and such that for any finite family of object $A_1, \hdots, A_n$, the quasi-projections $\iota^\circ_i: A_1 + \hdots + A_n \to A_i$ are pairwise $\perp$-disjoint, that is, $\iota_i^\circ \perp \iota_j^\circ$ for all $1 \leq i,j \leq n$. Then $\perp = \perp_d$. 
\end{lemma}
\begin{proof} Suppose that $f \perp g$. We need to show that $f$ and $g$ are also decision disjoint. To do so, note that since $f \perp g$, by {\bf [$\boldsymbol{\perp}$.4]} we also have that $f \iota_1 \perp g \iota_2$. Since we have binary $\perp$-joins, we define the map $\langle f \vert g \rangle: A \to A + A$ as $\langle f \vert g \rangle := \overline{f}\iota_1 \sqcup \overline{g} \iota_1$. We can show that $\langle f \vert g \rangle$ and $\left[ \begin{smallmatrix} \overline{f} \\ \overline{g} \end{smallmatrix}\right]$ are restriction inverses of each other by using similar calculations as in the proof of Lemma \ref{lemma:strong-perpd}. So by Lemma \ref{lem:decision-sep-iso}.(\ref{lem:decision-sep-iso.1}) and (\ref{lem:decision-sep-iso.2}), we get that $\langle f \vert g \rangle$ is a separating decision for $f$ and $g$, and thus $f \perp_d g$. So we get that $\perp \subseteq \perp_d$. Conversely, suppose that instead $f \perp_d g$. By assumption we have that $\iota^\circ_1 \perp \iota^\circ_2$. By {\bf [$\boldsymbol{\perp}$.4]}, we get that $\overline{f} = \langle f \vert g \rangle \iota^\circ_1 \perp \langle f \vert g \rangle \iota^\circ_2 = \overline{g}$, and so $\overline{f} \perp \overline{g}$. Then by Lemma \ref{lem:inter-rest}, we get that $f \perp g$, and thus $\perp_d \subseteq \perp$. So we conclude that $\perp = \perp_d$. 
\end{proof}

In particular, we can consider the case when there are disjoint joins for the maximal interference relation. 

\begin{corollary}\label{cor:perp0-ext} Let $\mathbb{X}$ be an extensive restriction category such that $\mathbb{X}$ is a finitely disjoint $\perp_0$-restriction category. Then $\perp_0=\perp_d$, and therefore being decision disjoint is equivalent to being pairwise $\perp_0$-disjoint. 
\end{corollary}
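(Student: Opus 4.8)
The plan is to invoke the preceding uniqueness lemma, which characterizes $\perp_d$ as the \emph{unique} interference relation on $\mathbb{X}$ for which all binary disjoint joins exist and for which the quasi-projections $\iota^\circ_j$ are pairwise disjoint. Since $\perp_0$ is itself an interference relation (Lemma \ref{lemma:two-interference}.(i)), it is a legitimate candidate, so it suffices to verify that $\perp_0$ satisfies these two hypotheses; uniqueness then forces $\perp_0 = \perp_d$.

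The first hypothesis is immediate: by assumption $\mathbb{X}$ is a finitely disjoint $\perp_0$-restriction category, so in particular all binary $\perp_0$-joins exist. For the second, I would compute the restrictions of the quasi-projections directly. Writing $\iota^\circ_j = \left[ \begin{smallmatrix} 0 \\ \vdots \\ 1_{A_j} \\ \vdots \\ 0 \end{smallmatrix}\right]$ and using that the restriction of a copairing is the coproduct of the restrictions \cite[Lemma 2.1]{cockett2007restriction}, one gets $\overline{\iota^\circ_j} = 0 + \hdots + 1_{A_j} + \hdots + 0$, the restriction idempotent selecting the $j$th summand. For $i \neq j$ the composite $\overline{\iota^\circ_i}\,\overline{\iota^\circ_j}$ is then the coproduct of the componentwise composites, each of which is $0$ (in the $i$th summand because $\overline{\iota^\circ_j}$ is $0$ there, in the $j$th summand because $\overline{\iota^\circ_i}$ is $0$ there, and trivially elsewhere), whence $\overline{\iota^\circ_i}\,\overline{\iota^\circ_j} = 0$. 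Recalling from Lemma \ref{lemma:two-interference}.(i) that $e \perp_0 e^\prime$ precisely when $ee^\prime = 0$, this says exactly that $\iota^\circ_i \perp_0 \iota^\circ_j$, so the quasi-projections are pairwise $\perp_0$-disjoint.

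With both hypotheses verified, the uniqueness lemma yields $\perp_0 = \perp_d$. For the concluding claim, note that every $\perp_0$-join is automatically strong (by the earlier lemma to that effect), so under the identification $\perp_0 = \perp_d$ the finite $\perp_d$-joins are strong as well; hence Lemma \ref{lemma:strong-perpd} applies and states that a family $\lbrace f_1, \hdots, f_n \rbrace$ is decision separated if and only if it is pairwise $\perp_d$-disjoint, which is the same as being pairwise $\perp_0$-disjoint. The only step calling for any care is the componentwise vanishing of $\overline{\iota^\circ_i}\,\overline{\iota^\circ_j}$, but this is routine given the standard identities $\iota_k\iota^\circ_j = 1_{A_j}$ for $k = j$ and $\iota_k\iota^\circ_j = 0$ for $k \neq j$; everything else is bookkeeping around the two lemmas already in place.
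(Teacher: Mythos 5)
Your proposal is correct and follows essentially the same route as the paper: the paper's proof simply notes that the quasi-projections are pairwise $\perp_0$-disjoint (the computation you spell out via $\overline{\iota^\circ_i}\,\overline{\iota^\circ_j}=0$) and then invokes the uniqueness lemma, with the existence of binary $\perp_0$-joins coming from the finitely-disjoint hypothesis exactly as you say. Your closing step for the second claim, routing through strongness of $\perp_0$-joins and Lemma \ref{lemma:strong-perpd}, is also the intended (implicit) justification in the paper.
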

\begin{proof} It is straightforward to check that the quasi-projections are pairwise $\perp_0$-disjoint. Then by applying the above lemma, we get that $\perp_0=\perp_d$. 
\end{proof}

As such, for many important examples of extensive restriction categories (such as the ones considered in \cite{cockett2012timed}), $\perp_d = \perp_0$ and so $\perp_d$-joins are in fact strong, which implies that we can build $n$-ary separating decisions from binary ones. This is also case for an extensive restriction category with all joins. 

\begin{example}\normalfont Since $\mathsf{PAR}$ has all $\perp_0$-joins, it follows that a family of partial functions $\lbrace f_1: X \to Y_1, \hdots, f_n: X \to Y_n \rbrace$ is decision disjoint precisely if they are all pairwise $\perp_0$-disjoint, which recall means that their domains of definitions are all pairwise disjoint, $\mathsf{dom}(f_i) \cap \mathsf{dom}(f_j) = \emptyset$ for $i \neq j$. As such, their separating decision ${\langle f_1 \vert \hdots \vert f_n \rangle: X \to X \sqcup \hdots \sqcup X}$ is defined as follows:  
\[ \langle f_1 \vert \hdots \vert f_n \rangle (x) = \begin{cases} (x, 1) & \text{if } f_1(x) \downarrow \\
\vdots \\
(x,n) & \text{if } f_n(x) \downarrow \\
\uparrow & \text{if } f(x) \uparrow  \end{cases} \]
\end{example}

\begin{example}\normalfont A classical distributive restriction category has all finite joins of compatible maps by definition. So in particular, a classical distributive restriction category has all finite $\perp_0$-joins, and so we get that being decision disjoint is equivalent to being pairwise $\perp_0$-disjoint. 
\end{example}

\section{Matrix Representation for Extensive Restriction Categories}\label{sec:matrix}

In this section, we review the matrix calculus for an extensive restriction category as introduced in \cite{cockett2007restriction}. We will then provide a matrix construction on a finitely disjoint interference restriction category which produces an extensive restriction category. 

In an extensive restriction category, every map of type $A_1 + \hdots + A_n \to B_1 + \hdots + B_m$ can be uniquely represented as an $n\times m$ matrix where the $(i,j)$ coordinate is a map of type $A_i \to B_j$, and such that the rows are decision disjoint \cite[Thm 2.12]{cockett2007restriction}. Explicitly, starting with a map of type $F: A_1 + \hdots + A_n \to B_1 + \hdots + B_m$, the map $F_{i,j}: A_i \to B_j$ is defined by pre-composing $F$ with the injection and post-composing with the quasi-projection:
\begin{align*}
F_{i,j} = \iota_i F \iota^\circ_j
\end{align*}
Moreover, for all $1 \leq i \leq n$, the family $\lbrace F_{i,1}, \hdots, F_{i,m} \rbrace$ is decision disjoint whose separating decision is defined as the decision of $\iota_i F: A_i \to B_1 + \hdots + B_m$, that is:
\begin{align*}
\langle F_{i,1} \vert \hdots \vert F_{i,m} \rangle := \langle \iota_i F \rangle: A_i \to A_i + \hdots + A_i
\end{align*}
Conversely, given a family of maps $\lbrace f_{i,j}: A_i \to B_j\vert~ 1 \leq i \leq n, 1 \leq j \leq m\rbrace$ such that for each $i$, $\lbrace f_{i,1}, \hdots, f_{i,m} \rbrace$ is decision disjoint, define the map $f: A_1 + \hdots + A_n \to B_1 + \hdots + B_m$ as the copairing: 
\begin{align*}
f := \begin{bmatrix} \langle f_{1,1}\vert \hdots \vert f_{1,m} \rangle (f_{1,1} + \hdots + f_{1,m}) \\
\langle f_{2,1}\vert \hdots \vert f_{2,m} \rangle (f_{2,1} + \hdots + f_{2,m}) \\
\vdots \\
\langle f_{n,1}\vert \hdots \vert f_{n,m} \rangle (f_{n,1} + \hdots + f_{n,m})
\end{bmatrix}
\end{align*}
These constructions are inverses of each other, see the proof of \cite[Thm 2.12]{cockett2007restriction} for details. 

Thus, when we have a map of type $f: A_1 + \hdots + A_n \to B_1 + \hdots + B_m$, we can write it as a matrix, 
\[ f = \begin{bmatrix} f_{1,1} & f_{1,2} & \hdots & f_{1,m} \\
f_{2,1} & f_{2,2} & \hdots & f_{2,m} \\
\vdots & \ddots & \hdots & \vdots \\
f_{n,1} & f_{n,2} & \hdots & f_{n,m} 
\end{bmatrix},\]
where the components are maps of type $f_{i,j}: A_i \to B_j$, and the rows $\lbrace f_{i,1}, \hdots, f_{i,m} \rbrace$ (for a fixed $i$) are decision disjoint. By Lemma \ref{lem:n-decision-sep-iso}.(\ref{lem:n-decision-sep-iso.4}), this also means that for each row, the component of a row are pairwise decision disjoint, that is, $f_{i,j} \perp_d f_{i,j^\prime}$ for all $1 \leq i \leq n$ and $1 \leq j,j^\prime \leq m$. Sometimes as a shorthand, we may simply write our matrices as $f = [ f_{i,j} ]_{\substack{1 \leq i \leq n \\ 1 \leq j \leq m}}$, or simply $f = [f_{i,j}]$ when there is no confusion over the bounds of $i$ and $j$. 

Matrices of size $n \times 1$ are column vectors: they correspond precisely to copairing, while matrices of size $1 \times m$ correspond precisely to a family of maps which are decision disjoint. On the other hand, composition in an extensive restriction category corresponds to matrix multiplication, where the disjoint join plays the role of the sum \cite[Prop 2.13]{cockett2007restriction}. So given maps $f: A_1 + \hdots + A_n \to B_1 + \hdots + B_m$ and $g: B_1 + \hdots + B_m \to C_1 + \hdots + C_p$, the matrix representation of their composite $fg: A_1 + \hdots + A_n \to C_1 + \hdots + C_p$ is given by:
\begin{align}
(fg)_{i,k} = \bigsqcup\limits^m_{j=1} f_{i,j}g_{j,k} && fg = \left[ \bigsqcup\limits^m_{j=1} f_{i,j}g_{j,k} \right]_{\substack{1 \leq i \leq n \\ 1 \leq k \leq p}}
\end{align}
Note that this is well-defined since for each $i$ and $k$, $\lbrace f_{i,j}g_{j,k} \vert 1 \leq j \leq m \rbrace$ are decision disjoint with separating decision:
\begin{align}
\langle f_{i,1}g_{1,k} \vert \hdots \vert f_{i,m}g_{m,k} \rangle := \left \langle \langle f_{i,1} \vert \hdots \vert f_{i,m} \rangle \left(f_{i,1}g_{1,k} + \hdots + f_{i,m}g_{m,k}\right) \right\rangle
\end{align}
Keen eyed readers may note that this formula is not exactly the same one given in \cite[Prop 2.13]{cockett2007restriction} where $(fg)_{i,k}$ is instead given by:
\begin{align}
\langle f_{i,1} \vert \hdots \vert f_{i,m} \rangle \left(f_{i,1}g_{1,k} + \hdots + f_{i,m}g_{m,k}\right) \nabla 
\end{align}
However, by \textbf{[D.2]} it follows that:
\begin{align*}  \langle f_{i,1} \vert \hdots \vert f_{i,m} \rangle \left(f_{i,1}g_{1,k} + \hdots + f_{i,m}g_{m,k}\right) = \langle f_{i,1}g_{1,k} \vert \hdots \vert f_{i,m}g_{m,k} \rangle \left(f_{i,1}g_{1,k} + \hdots + f_{i,m}g_{m,k}\right) 
\end{align*}
and thus the two formulas are the same. 

With this matrix representation, decisions correspond precisely to row matrices of restriction idempotents. So if $d: A \to A + \hdots + A$ is a decision, then it is a row matrix of the form: 
\[d = \begin{bmatrix} e_1 &  \hdots & e_n \end{bmatrix}\]
where each $e_i: A \to A$ is a restriction idempotent. 

We conclude this section by showing that every finitely disjoint interference restriction category embeds into an extensive restriction category via a matrix construction. This construction is essentially the same as one given in \cite[Sec 9]{cockett2009boolean}, which was given for the maximal interference relation $\perp_0$: here we slightly generalize the construction to be for an arbitrary interference relation. 

\begin{definition}\label{def:matrix-const} Let $\mathbb{X}$ be a finitely disjoint $\perp$-restriction category. Define the extensive restriction category $\mathsf{MAT}\left[ (\mathbb{X},\perp) \right]$ as follows: 
\begin{enumerate}[{\em (i)}]
\item The objects of $\mathsf{MAT}\left[ (\mathbb{X},\perp) \right]$ are finite lists $(A_1, \hdots, A_n)$ of objects $A_i$ of $\mathbb{X}$, and including the empty list $()$; 
\item A map $F: (A_1, \hdots, A_n) \to (B_1, \hdots, B_n)$ in $\mathsf{MAT}\left[ (\mathbb{X},\perp) \right]$ is a $n\times m$ matrix:
\[ [ F_{i,j} ]_{\substack{1 \leq i \leq n \\ 1 \leq j \leq m}} \]
whose components are maps of type $F_{i,j}: A_i \to B_j$ and such that for each $1 \leq i \leq n$, each row $\lbrace F_{i,1}, \hdots, F_{i,m} \rbrace$ is a $\perp$-disjoint family, that is, $F_{i,j} \perp F_{i,j^\prime}$ for all $1 \leq i \leq n$ and $1 \leq j,j^\prime \leq m$.
\item The identity is the diagonal matrix
\[1_{(A_1, \hdots, A_n)} = Diag[1_{A_1}, \hdots, 1_{A_n}]\] 
that is, ${1_{(A_1, \hdots, A_n)}}_{i,j} = 0$ if $i \neq j$ and ${1_{(A_1, \hdots, A_n)}}_{i,i} = 1_{A_i}$;
\item Composition of $F: (A_1, \hdots, A_n) \to (B_1, \hdots, B_n)$ and $G: (B_1, \hdots, B_n) \to (C_1, \hdots, C_p)$ is defined as:
\[FG = \left[ \bigsqcup\limits^m_{j=1} F_{i,j}G_{j,k} \right]_{\substack{1 \leq i \leq n \\ 1 \leq k \leq p}}: (A_1, \hdots, A_n) \to (C_1, \hdots, C_p)\]
\item The restriction of $F: (A_1, \hdots, A_n) \to (B_1, \hdots, B_m)$ is defined as the diagonal matrix:
\[\overline{F} = Diag\left[\bigsqcup\limits^m_{j=1}\overline{F_{1,j}}, \hdots, \bigsqcup\limits^m_{j=1}\overline{F_{n,j}}\right]: (A_1, \hdots, A_n) \to (A_1, \hdots, A_n)\] 
that is, $\overline{F}_{i,j} = 0$ if $i\neq j$ and $\overline{F}_{i,i} = \bigsqcup\limits^m_{j=1}\overline{F_{i,j}}$;
\item The (binary) restriction coproduct is given by the concatenation of lists, 
\[(A_{_(1,1)}, \hdots, A_{(1,n_1)}) + \hdots + (A_{(m,1)}, \hdots, A_{(m,n_m)}) = (A_{_(1,1)}, \hdots, A_{(1,n_1)}, \hdots, A_{(m,1)}, \hdots, A_{(m,n_m)})\] 
and where the injections:
\[I_j: (A_{(j,1)}, \hdots, A_{(j,n_j)}) \to (A_{_(1,1)}, \hdots, A_{(1,n_1)}) + \hdots + (A_{(m,1)}, \hdots, A_{(m,n_m)})\] 
is defined as ${I_j}_{i,k} = 0$ if $k \neq i+j$ and ${I_j}_{i, i+j}= 1_{A_{(j,i)}}$;
\item The restriction zero maps $0: (A_1, \hdots, A_n) \to (B_1, \hdots, B_m)$ are matrices with all zero components, $0_{i,j} =0$, and where the zero object is the empty list $()$. 
\end{enumerate}
\end{definition}

Since we want to give an embedding, we also need to explain what we mean by a functor between (finitely) disjoint interference restriction categories. If $\mathbb{X}$ and $\mathbb{Y}$ are (finitely) disjoint $\perp$-interference restriction categories, then a \textbf{(finitely) disjoint $\perp$-interference restriction functor} is a $\perp$-restriction functor $\mathcal{F}: (\mathbb{X},\perp) \to (\mathbb{Y},\perp)$ which preserves the (finite) $\perp$-joins, $\mathcal{F}(\bigsqcup\limits_{i \in I} f_i) = \bigsqcup\limits_{i \in I} \mathcal{F}(f_i)$. 

\begin{proposition}\label{prop:construction2} For a finitely disjoint $\perp$-restriction category $\mathbb{X}$, $\mathsf{MAT}\left[ (\mathbb{X},\perp) \right]$ is an extensive restriction category, and furthermore: 
\begin{enumerate}[{\em (i)}]
\item All finite $\perp_d$-joins are strong, so $\mathsf{MAT}\left[ (\mathbb{X},\perp) \right]$ is also a finitely disjoint $\perp_d$-restriction category.
\item\label{mat.1} $F \perp_d G$ if and only if $F_{i,j} \perp G_{i,k}$ for all $i,j$ and $k$. 
\item If $\lbrace F_1, \hdots, F_n \rbrace$ are pairwise $\perp_d$-disjoint parallel $n \times m$ matrices, then $\bigsqcup\limits^n_{k=1} F_k = \left[\bigsqcup\limits^n_{k=1} {F_k}_{i,j} \right]_{\substack{1 \leq i \leq n \\ 1 \leq j \leq m}}$. 
\end{enumerate}
 Furthermore, $\mathcal{I}: \mathbb{X} \to \mathsf{MAT}\left[ (\mathbb{X},\perp) \right]$, defined on objects as $\mathcal{I}(A) = (A)$ and on maps $\mathcal{I}(f) = [\mathcal{I}(f)_{1,1}] = [f]$ (that is, $\mathcal{I}(f)$ is a $1 \times 1$ whose only coefficient is $\mathcal{I}(f)_{1,1} = f$), is a finitely disjoint $\perp$-restriction functor. 
\end{proposition}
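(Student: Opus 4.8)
The plan is to treat $\mathsf{MAT}[(\mathbb{X},\perp)]$ as the arbitrary-interference analogue of the matrix construction of \cite[Sec 9]{cockett2009boolean}, so that most of the restriction-category and extensive bookkeeping can be carried out exactly as there, with the role of $\perp_0$ replaced by $\perp$ and with the finite $\perp$-joins (which exist since $\mathbb{X}$ is finitely disjoint) playing the role of the matrix sums. The only steps that are not purely formal are those where one must check that an entrywise construction again yields a \emph{legal} matrix, i.e.\ one whose rows form a $\perp$-disjoint family; these are exactly the places where strongness {\bf [$\boldsymbol{\sqcup}$.4]} must be invoked in place of the automatic closure enjoyed by $\perp_0$.

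First I would verify that composition is well defined. Given $F$ and $G$ with $\perp$-disjoint rows, the $(i,k)$-entry of $FG$ is the join $\bigsqcup_j F(i,j)G(j,k)$, which exists because $F(i,j) \perp F(i,j')$ for $j \neq j'$ yields $F(i,j)G(j,k) \perp F(i,j')G(j',k)$ by {\bf [$\boldsymbol{\perp}$.4]}. To see that the rows of $FG$ are again $\perp$-disjoint I must show $(FG)(i,k) \perp (FG)(i,k')$ for $k \neq k'$; here the case $j=j'$ uses the row-disjointness of $G$ and the case $j \neq j'$ uses that of $F$, both routed through {\bf [$\boldsymbol{\perp}$.4]}, to give $F(i,j)G(j,k) \perp F(i,j')G(j',k')$ for all $j,j'$, and then strongness {\bf [$\boldsymbol{\sqcup}$.4]} together with symmetry {\bf [$\boldsymbol{\perp}$.1]} promotes this to disjointness of the two joins. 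Associativity and the unit laws follow by standard matrix bookkeeping, with Lemma \ref{lemma:disjoint-join}.(\ref{lemma:join-comp}) ensuring the sums distribute over composition, and the restriction axioms {\bf [R.1]}--{\bf [R.4]} are checked entrywise from the diagonal form of $\overline{F}$ and the identities of Lemma \ref{lemma:disjoint-join}; the zero matrices are visibly restriction zeroes.

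Next I would establish extensivity. Restriction coproducts are concatenation of lists, whose injections are total and whose universal property holds entrywise, and the empty list is the zero object. For the decision of a map $F$, each row $\{F(i,1),\dots,F(i,m)\}$ is $\perp$-disjoint, so by {\bf [$\boldsymbol{\perp}$.5]} the restrictions $\{\overline{F(i,j)}\}_j$ are $\perp$-disjoint and (using the finite $\perp$-joins of $\mathbb{X}$) assemble into a row of idempotents routing row $i$ of $F$ into the appropriate copies of the domain; one then verifies {\bf [D.1]} and {\bf [D.2]} row by row. This makes $\mathsf{MAT}[(\mathbb{X},\perp)]$ extensive, so by Thm \ref{thm:ext-int} it carries the canonical relation $\perp_d$ with all binary $\perp_d$-joins. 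For (i), by Lemma \ref{lem:decision-sep-iso}.(\ref{lem:decision-sep-iso.1}) the condition $F \perp_d G$ is equivalent to $\left[ \begin{smallmatrix} \overline{F} \\ \overline{G} \end{smallmatrix}\right]$ being a restriction isomorphism; since $\overline{F}$ and $\overline{G}$ are diagonal this reduces, block by block, to constructing an explicit restriction inverse out of the entrywise joins exactly as in the proof of Lemma \ref{lemma:strong-perpd}, which succeeds precisely when $F(i,j) \perp \overline{G(i,k)}$ for all $i,j,k$. For (ii) I would check directly that the entrywise-join matrix $[\bigsqcup_k F_k(i,j)]$ is legal (rows $\perp$-disjoint, again by {\bf [$\boldsymbol{\perp}$.4]} and strongness) and satisfies {\bf [$\boldsymbol{\sqcup}$.1]}, {\bf [$\boldsymbol{\sqcup}$.2]}; strongness of these joins then shows $\mathsf{MAT}[(\mathbb{X},\perp)]$ is finitely disjoint.

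Finally, faithfulness and structure-preservation of $\mathcal{I}$ are immediate, since $\mathcal{I}(f)=[f]$ is a $1\times 1$ matrix on which restriction, zeroes, $\perp$, and finite $\perp$-joins are all computed on the nose. The main obstacle throughout is the recurring demand that entrywise operations preserve row $\perp$-disjointness: this is the single point at which the passage from $\perp_0$ to a general interference relation $\perp$ genuinely requires the hypothesis that $\mathbb{X}$ be finitely disjoint, so that strongness {\bf [$\boldsymbol{\sqcup}$.4]} is available to close the relevant joins under $\perp$-disjointness.
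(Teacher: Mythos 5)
Your proposal is correct and follows essentially the same route as the paper: model the construction on the matrix category of \cite[Sec 9]{cockett2009boolean}, use {\bf [$\boldsymbol{\perp}$.4]} together with strongness {\bf [$\boldsymbol{\sqcup}$.4]} exactly where row $\perp$-disjointness of composites, decisions, and entrywise joins must be re-established, build decisions from joins of entry restrictions, and prove (i) via the restriction-isomorphism/separating-decision characterization as in Lemma \ref{lemma:strong-perpd}. If anything, your treatment of row-disjointness for $FG$ (splitting into the $j=j'$ case using $G$'s rows and the $j\neq j'$ case using $F$'s rows) is spelled out more completely than the paper's, which only invokes the row-disjointness of $F$.
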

\begin{proof} Let us first explain why this construction is well defined. Clearly, a row in either an identity matrix or an injection matrix is indeed $\perp$-disjoint by Lemma \ref{lem:interference1}.(\ref{lem:interference1.0}), which gives us $1 \perp 0$ and $0 \perp 0$. Now given two composable matrices (omitting bounds for readability) $F = [F_{i,j}]$ and $G = [G_{j,k}]$, since $F_{i,j} \perp F_{i,j^\prime}$, by {\bf [{$\boldsymbol{\perp}$}.4]} we get that $F_{i,j}G_{j,k} \perp F_{i,j^\prime}G_{j^\prime,k}$. Then applying {\bf [$\boldsymbol{\sqcup}$.4]}, we get that $\bigsqcup\limits^m_{j=1} F_{i,j}G_{j,k} \perp \bigsqcup\limits^m_{j=1} F_{i,j}G_{j,k^\prime}$, and so a row of $FG$ is $\perp$-disjoint. Then by same arguments as in proofs of \cite[Prop 9.6 and Prop 9.9]{cockett2009boolean}, we get that $\mathsf{MAT}\left[ (\mathbb{X},\perp) \right]$ is a coCartesian restriction category with restriction zeroes. 

Next we need to explain why $\mathsf{MAT}\left[ (\mathbb{X},\perp) \right]$ is also extensive. However by \cite[Prop 2.18]{cockett2007restriction}, it suffices to show that we have binary decisions. So for a matrix $F: (A_1, \hdots, A_n) \to (B_1, \hdots, B_m) + (C_1, \hdots, C_p)$, define $\langle F \rangle: (A_1, \hdots, A_n) \to (A_1, \hdots, A_n) + (A_1, \hdots, A_n)$ as the matrix $\langle F \rangle_{i,j} = 0$ for $j\neq i$ or $i+n$, $\langle F \rangle_{i,i} = \bigsqcup\limits^m_{k=1} \overline{F_{i,k}}$, and $\langle F \rangle_{i,i+n} = \bigsqcup\limits^p_{k^\prime=1} \overline{F_{i,m+k^\prime}}$. This is well-defined since each row of $F$ is a family $\perp$-disjoint maps, and so by {\bf [{$\boldsymbol{\perp}$}.5]}, we get that $\lbrace \overline{F_{i,1}}, \hdots, \overline{F_{i,m}} \rbrace$ and $\lbrace \overline{F_{i,m+1}}, \hdots, \overline{F_{i,m+p}} \rbrace$ are also families of parallel $\perp$-disjoint maps, so we may their $\perp$-joins. Moreover, by {\bf [$\boldsymbol{\sqcup}$.4]} we also get that $\bigsqcup\limits^m_{k=1} \overline{F_{i,k}} \perp \bigsqcup\limits^p_{k^\prime=1} \overline{F_{i,m+k^\prime}}$, and therefore it follows that each row of $\langle F \rangle$ is also $\perp$-disjoint. Thus $\langle F \rangle$ is indeed a map in $\mathsf{MAT}\left[ (\mathbb{X},\perp) \right]$. Next we show that this is a decision for $F$. 

\begin{enumerate}[{\bf [D.1]}]
\item First note that $\nabla: (A_1, \hdots, A_n) + (A_1, \hdots, A_n) \to (A_1, \hdots, A_n)$ is given by $\nabla_{i,j} = 0$ if $j\neq i$ or $n+i$, and $\nabla_{i,i} = 1_{A_i}$ and $\nabla_{i,n+i} = 1_{A_i}$. Then it straightforward to compute that $\left( \langle F \rangle \nabla \right)_{i,j} = 0$ if $i \neq j$ and $\left( \langle F \rangle \nabla \right)_{i,i} = \bigsqcup\limits^m_{k=1} \overline{F_{i,k}} \sqcup \bigsqcup\limits^p_{k^\prime=1} \overline{F_{i,m+k^\prime}} = \bigsqcup\limits^{m+p}_{k=1} \overline{F_{i,k}} = \overline{F}_{i,i}$. So we get that $\langle F \rangle \nabla = \overline{F}$. 

\item Abusing notation slightly, since $F$ is matrix, we may already write $F$ as a block matrix $F = \left[ \begin{smallmatrix} F_1 & F_2 \end{smallmatrix}\right]$ where $F_1: (A_1, \hdots, A_n) \to (B_1, \hdots, B_m)$ is an $n \times m$ matrix given by ${F_1}_{i,j} = F_{i,j}$, and $F_2: (A_1, \hdots, A_n) \to (C_1, \hdots, C_p)$ is an $n \times p$ matrix given by ${F_2}_{i,j} = F_{i, m+j}$. Then it follows that $\langle F \rangle_{i,i} = \overline{F_1}_{i,i}$ and $\langle F \rangle_{i,n+i} = \overline{F_2}_{i,i}$. As such, $\langle F \rangle$ can be given as the block matrix $\langle F \rangle = \begin{bmatrix} \overline{F_1} & \overline{F_2} \end{bmatrix}$. Moreover, we also get that $\overline{F_1} F = \begin{bmatrix} F_1 & 0 \end{bmatrix}$ and $\overline{F_2} F = \begin{bmatrix} 0 & F_2 \end{bmatrix}$. Similarly, we may also write $F + F$ as the block matrix $F + F = \left[ \begin{smallmatrix} F & 0 \\
0 & F\end{smallmatrix}\right]$. Then by matrix multiplication, we get that: 
\[ \langle F \rangle (F + F) = \begin{bmatrix} \overline{F_1} & \overline{F_2} \end{bmatrix} \begin{bmatrix} F & 0 \\
0 & F\end{bmatrix} = \begin{bmatrix} \begin{bmatrix} F_1 & 0 \end{bmatrix} & \begin{bmatrix} 0 & F_2 \end{bmatrix} \end{bmatrix} \]
On the other hand, similarly we may write $\iota_1 + \iota_2 = \left[ \begin{smallmatrix} \iota_1 & 0 \\
0 & \iota_2 \end{smallmatrix}\right]$, and also that $F_1 \iota_1 = \begin{bmatrix} F_1 & 0 \end{bmatrix}$ and $F_2 \iota_2 = \begin{bmatrix} 0 & F_2 \end{bmatrix}$. As such, we get that: 
\[ F (\iota_1 + \iota_2) = \begin{bmatrix} F_1 & F_2 \end{bmatrix} \begin{bmatrix} \iota_1 & 0 \\
0 & \iota_2 \end{bmatrix} = \begin{bmatrix} \begin{bmatrix} F_1 & 0 \end{bmatrix} & \begin{bmatrix} 0 & F_2 \end{bmatrix} \end{bmatrix} \]
So get that $\langle F \rangle (F + F) = F (\iota_1 + \iota_2)$.
\end{enumerate}
Thus $\langle F \rangle$ is indeed the decision of $F$, which allows us to conclude that $\mathsf{MAT}\left[ (\mathbb{X},\perp) \right]$ is an extensive restriction category. 

Since we have an extensive restriction category, by Thm \ref{thm:ext-int} we get that $ \mathsf{MAT}\left[ (\mathbb{X},\perp) \right]$ is a $\perp_d$-restriction category with all binary $\perp_d$-joins. We now show that these binary $\perp_d$-joins are in fact strong. To do so, let us first work out the binary $\perp_d$-joins, for which we need to consider the separating decision. So let us now prove (\ref{mat.1}). 

So suppose that $F \perp_d G$, then there is a separating decision $\langle F \vert G \rangle$. Since $\langle F \vert G \rangle$ is a separating decision, we get that $\langle F \vert G \rangle_{i,i}= \overline{F}_{i,i}$ and $\langle F \vert G \rangle_{i,n+i}= \overline{G}_{i,i}$. However since each row of $\langle F \vert G \rangle$ is $\perp$-disjoint, we get that $\overline{F}_{i,i} \perp \overline{G}_{i,i}$. Then by definition of $\overline{F}_{i,i}$ and $\overline{G}_{i,i}$, from Lemma \ref{lemma:disjoint-join}.(\ref{lemma:join-inter}), it follows that $\overline{F_{i,j}} \perp \overline{G_{i,k}}$, and then by Lemma \ref{lem:inter-rest} we get $F_{i,j} \perp G_{i,k}$. Conversely, if $F_{i,j} \perp G_{i,k}$ for all $i,j$ and $k$, then we can build the separating decision $\langle F \vert G \rangle$ as $\langle F \vert G \rangle_{i,j}$ if $j\neq i$ or $n+i$ and $\langle F \vert G \rangle_{i,i}= \overline{F}_{i,i}$ and $\langle F \vert G \rangle_{i,n+i}= \overline{G}_{i,i}$. By similarly arguments as above, it follows that $\langle F \vert G \rangle$ is a separating decision, so $F \perp_d G$. Moreover, it may be worth noting that as a block matrix, we get $\langle F \vert G \rangle = \begin{bmatrix} \overline{F} & \overline{G} \end{bmatrix}$. Now suppose that $F$ and $G$ are parallel $\perp_d$-disjoint maps. Then since $F_{i,j} \perp G_{i,j}$ for all $i$ and $j$, we can take their $\perp$-join $F_{i,j} \sqcup G_{i,j}$. Then using (\ref{eq:sqcup-dec}), $F \sqcup G = \langle F \vert G \rangle(F+G)\nabla$, or by the using the same arguments as in the proof of \cite[Prop 9.9]{cockett2009boolean}, we get that $(F \sqcup G)_{i,j} = F_{i,j} \sqcup G_{i,j}$. Now also suppose that $H \perp_d F$ and $G \perp_d F$, which implies that $H_{i,j} \perp F_{i,j}$ and $H_{i,j} \perp G_{i,j}$. Then by {\bf [{$\boldsymbol{\sqcup}$}.4]}, we get that $H_{i,j} \perp F_{i,j} \sqcup G_{i,j}$, so therefore by (\ref{mat.1}) we have that $H \perp_d F \sqcup G$. Thus binary $\perp_d$-joins strong in $\mathsf{MAT}\left[ (\mathbb{X},\perp) \right]$, and so by Lemma \ref{lemma:binary-finite}, $\mathsf{MAT}\left[ (\mathbb{X},\perp) \right]$ is a finitely disjoint $\perp_d$-restriction category. Moreover, it then follows that $\bigsqcup\limits^n_{k=1} F_k = \left[\bigsqcup\limits^n_{k=1} {F_k}_{i,j} \right]_{\substack{1 \leq i \leq n \\ 1 \leq j \leq m}}$ as desired. 

Lastly, by the same arguments as in the proof of \cite[Prop 9.10]{cockett2009boolean}, we also get that $\mathcal{I}$ preserve the disjoint interference restriction structure. 
\end{proof}

If we apply this to the maximal interference relation $\perp_0$, we get precisely the matrix construction from \cite[Sec 9]{cockett2009boolean}. Moreover, by Cor \ref{cor:perp0-ext}, we get that the induced interference relation on the matrix construction is precisely the maximal one as well. 

\begin{corollary} Let $\mathbb{X}$ be a restriction category with restriction zeroes. Then $\mathsf{MAT}\left[ (\mathbb{X},\perp_0) \right]$ is an extensive restriction category and moreover its separating decision $\perp_d$ coincides with $\perp_0$. 
\end{corollary}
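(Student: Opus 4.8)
The plan is to obtain this as the $\perp_0$-instance of Proposition \ref{prop:construction2} together with the uniqueness of the canonical interference relation. First I would apply Proposition \ref{prop:construction2} with $\perp = \perp_0$: this immediately gives that $\mathsf{MAT}\left[ (\mathbb{X},\perp_0) \right]$ is an extensive restriction category which is moreover a finitely disjoint $\perp_d$-restriction category, and it reproduces verbatim the matrix construction of \cite[Sec 9]{cockett2009boolean}. So the only thing left to establish is the identity $\perp_0 = \perp_d$ on $\mathsf{MAT}\left[ (\mathbb{X},\perp_0) \right]$.

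The heart of the argument is to compare the two disjointness conditions entry-wise on a pair of parallel matrices $F$ and $G$. On the one hand, Proposition \ref{prop:construction2}.(\ref{mat.1}), read with $\perp = \perp_0$, characterizes $F \perp_d G$ as holding precisely when $F(i,j) \perp_0 G(i,k)$ for all indices $i,j,k$. On the other hand, I would unwind $F \perp_0 G$, that is $\overline{F}\,G = 0$: since $\overline{F}$ is the diagonal matrix whose $(i,i)$-entry is $\bigsqcup_j \overline{F(i,j)}$, distributivity of $\perp_0$-joins over composition (Lemma \ref{lemma:disjoint-join}.(\ref{lemma:join-comp})) makes the $(i,k)$-entry of $\overline{F}\,G$ equal to $\bigsqcup_j \overline{F(i,j)}\,G(i,k)$. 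A join of this shape vanishes exactly when each of its components does, and $\overline{F(i,j)}\,G(i,k) = 0$ is by definition precisely $F(i,j) \perp_0 G(i,k)$. Thus both $\perp_0$ and $\perp_d$ reduce to the same entry-wise condition, and hence $\perp_0 = \perp_d$.

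Equivalently, and more in the spirit of the preceding results, I could instead invoke Corollary \ref{cor:perp0-ext} directly: the quasi-projections are pairwise $\perp_0$-disjoint (their supports are disjoint, so this reduces to $1 \perp_0 0$ and $0 \perp_0 0$ via Lemma \ref{lem:interference1}.(\ref{lem:interference1.0})), whence the uniqueness lemma forces $\perp_0 = \perp_d$. The one genuinely load-bearing point — and the main obstacle — is precisely that the joins produced by Proposition \ref{prop:construction2} are \emph{a priori} $\perp_d$-joins, whereas Corollary \ref{cor:perp0-ext} requires that $\mathsf{MAT}\left[ (\mathbb{X},\perp_0) \right]$ already carry all binary $\perp_0$-joins; the entry-wise computation of the previous paragraph is exactly what closes this gap, by showing the two disjointness relations (and therefore the two families of joins) coincide. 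After that step the conclusion is formal.
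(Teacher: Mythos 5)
Your proof is correct, and it is both more explicit and more complete than the paper's own argument, which consists essentially of two citations: apply Proposition \ref{prop:construction2} to $(\mathbb{X},\perp_0)$ (observing that this recovers the matrix construction of \cite[Sec 9]{cockett2009boolean}), and then invoke Corollary \ref{cor:perp0-ext} to conclude $\perp_d = \perp_0$. Where the paper cites Corollary \ref{cor:perp0-ext}, you instead prove the identification directly: by Proposition \ref{prop:construction2}.(\ref{mat.1}) the relation $\perp_d$ on matrices is the entry-wise condition $F(i,j) \perp_0 G(i,k)$, while diagonality of $\overline{F}$, the computation $(\overline{F}G)(i,k) = \bigsqcup_j \overline{F(i,j)}\,G(i,k)$ (via Lemma \ref{lemma:disjoint-join}.(\ref{lemma:join-comp}), available in $\mathbb{X}$ since $\mathbb{X}$ must in any case be a finitely disjoint $\perp_0$-restriction category for the construction to apply), and the fact that a $\perp$-join is $0$ exactly when every component is $0$, show that $\perp_0$ on matrices is the same entry-wise condition. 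Moreover, your remark about the ``load-bearing point'' is exactly right and pinpoints what the paper's proof glosses over: Corollary \ref{cor:perp0-ext} applies only to an extensive restriction category that is finitely disjoint for $\perp_0$, whereas Proposition \ref{prop:construction2} only supplies $\perp_d$-joins; since a priori one only knows $\perp_d \subseteq \perp_0$, these are not automatically $\perp_0$-joins, and one needs precisely the inclusion $\perp_0 \subseteq \perp_d$ --- i.e., your entry-wise computation --- before the corollary (or the uniqueness lemma behind it) becomes applicable. So the paper's route buys brevity at the cost of an unverified hypothesis, while yours buys a self-contained verification that closes that gap. The one caveat, common to both proofs, is that Proposition \ref{prop:construction2} itself requires $\mathbb{X}$ to have binary $\perp_0$-joins, a hypothesis omitted from the corollary's statement as written; that slip belongs to the statement, not to your argument.
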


Lastly, we also note that if the base category has all disjoint joins, then so does the matrix category. 

\begin{lemma}\label{lemma:mat-inf} For a disjoint $\perp$-restriction category $\mathbb{X}$, $ \mathsf{MAT}\left[ (\mathbb{X},\perp) \right]$ is a disjoint $\perp_d$-category, where if $\lbrace F_k \vert~ k \in K \rbrace$ is a family of pairwise $\perp_d$-disjoint parallel $n \times m$ matrices, then $\bigsqcup\limits_{k \in I} F_k = \left[\bigsqcup\limits_{k \in K} {F_k}_{i,j} \right]_{\substack{1 \leq i \leq n \\ 1 \leq j \leq m}}$. 
\end{lemma}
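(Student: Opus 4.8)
The plan is to reduce every clause in the definition of a disjoint $\perp_d$-restriction category to a componentwise statement about $\mathbb{X}$, exactly as in the proof of Prop~\ref{prop:construction2}; the single new ingredient is that a disjoint $\mathbb{X}$ has \emph{all} $\perp$-joins (and they are strong), so that the infinite componentwise joins $\bigsqcup_{k\in K} F_k(i,j)$ actually exist in $\mathbb{X}$. The enabling observation, which I would record first, is that in $\mathsf{MAT}\left[ (\mathbb{X},\perp) \right]$ the restriction, the order $\leq$, and the relation $\perp_d$ are all computed componentwise: restriction is componentwise by definition, the order satisfies $F\leq G$ iff $F(i,j)\leq G(i,j)$ for all $i,j$ (a short direct verification using that off-diagonal products vanish by Lemma~\ref{lem:interference1}.(\ref{lem:interference1.i})), and $F\perp_d G$ is equivalent, via Prop~\ref{prop:construction2}.(\ref{mat.1}) together with Lemma~\ref{lemma:disjoint-join}.(\ref{lemma:join-inter}), Lemma~\ref{lem:interference1}.(\ref{lem:interference1.v}) and Lemma~\ref{lem:inter-rest}, to $F(i,j)\perp G(i,l)$ for every row $i$ and all column indices $j,l$.

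First I would fix a family $\{F_k\}_{k\in K}$ of pairwise $\perp_d$-disjoint parallel matrices $F_k\colon (A_1,\ldots,A_n)\to(B_1,\ldots,B_m)$ and show that, for each fixed $i$, the doubly-indexed family $\{F_k(i,j)\mid k\in K,\ 1\leq j\leq m\}$ is pairwise $\perp$-disjoint in $\mathbb{X}$. Indeed, two entries coming from distinct matrices are $\perp$-disjoint because $F_k\perp_d F_{k'}$ gives $F_k(i,j)\perp F_{k'}(i,l)$ (componentwise $\perp_d$ above), while two entries $F_k(i,j),F_k(i,l)$ with $j\neq l$ from the \emph{same} matrix are $\perp$-disjoint by the defining row condition on $F_k$. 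Since $\mathbb{X}$ is disjoint, I may then set $H(i,j):=\bigsqcup_{k\in K} F_k(i,j)$, and strongness of $\perp$-joins via {\bf [$\boldsymbol{\sqcup}$.4]} forces $H(i,j)\perp H(i,l)$ for $j\neq l$, so $H=[H(i,j)]$ is a legitimate matrix of $\mathsf{MAT}\left[ (\mathbb{X},\perp) \right]$.

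Next I would verify that $H$ is the $\perp_d$-join of $\{F_k\}$, all four axioms reducing to componentwise identities. For {\bf [$\boldsymbol{\sqcup}$.1]}, a direct computation of $\overline{F_k}\,H$ shows every cross-term $\overline{F_k(i,l)}F_{k'}(i,j)$ with $k'\neq k$ vanishes by Lemma~\ref{lem:interference1}.(\ref{lem:interference1.i}) and every same-matrix off-row term vanishes likewise, leaving $F_k(i,j)$, so $F_k\leq H$. For {\bf [$\boldsymbol{\sqcup}$.2]}, $F_k\leq M$ yields $F_k(i,j)\leq M(i,j)$ componentwise, whence $H(i,j)=\bigsqcup_k F_k(i,j)\leq M(i,j)$ by {\bf [$\boldsymbol{\sqcup}$.2]} in $\mathbb{X}$, and this returns $H\leq M$ (again using that $M$ has $\perp$-disjoint rows to kill off-row terms). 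Preservation by precomposition {\bf [$\boldsymbol{\sqcup}$.3]} is the matrix-multiplication formula combined with Lemma~\ref{lemma:disjoint-join}.(\ref{lemma:join-comp}) and a reindexing of the join. Finally, strongness {\bf [$\boldsymbol{\sqcup}$.4]} follows since $G\perp_d F_k$ for all $k$ gives $G(i,l)\perp F_k(i,j)$ for all $k$, whence $G(i,l)\perp H(i,j)$ by {\bf [$\boldsymbol{\sqcup}$.4]} in $\mathbb{X}$, i.e.\ $G\perp_d H$ by Prop~\ref{prop:construction2}.(\ref{mat.1}). This produces all strong $\perp_d$-joins with the asserted componentwise formula.

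The main obstacle is really only the bookkeeping in the first step: $\perp_d$-disjointness is a \emph{binary} relation between matrices, and Prop~\ref{prop:construction2}.(\ref{mat.1}) controls directly only cross-matrix entries, so one must carefully combine this with the intrinsic row condition of each individual matrix to conclude that the entire family $\{F_k(i,j)\}_{k,j}$ is pairwise $\perp$-disjoint, which is precisely what legitimizes forming a single infinite $\perp$-join in $\mathbb{X}$. It is exactly here that the hypothesis ``$\mathbb{X}$ disjoint'' (all $\perp$-joins, all strong) is indispensable, upgrading the finite statement of Prop~\ref{prop:construction2}; once the combined family is seen to be disjoint, all four join axioms are purely componentwise and inherit from the disjointness of $\mathbb{X}$ and the identities of Lemma~\ref{lemma:disjoint-join}, so I would not expect any further difficulty, and the conclusion that $\mathsf{MAT}\left[ (\mathbb{X},\perp) \right]$ is a disjoint $\perp_d$-restriction category follows.
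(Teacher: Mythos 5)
Your proposal is correct and follows essentially the same route as the paper: the paper's own proof of this lemma is a one-line deferral to ``similar arguments as for the finite disjoint joins case'' in Prop~\ref{prop:construction2}, and what you have written is precisely that argument carried out — componentwise joins $H(i,j)=\bigsqcup_{k}F_k(i,j)$, legitimized by the componentwise characterizations of $\leq$ and $\perp_d$ and by the fact that a disjoint $\mathbb{X}$ supplies all (strong) $\perp$-joins, with the four axioms {\bf [$\boldsymbol{\sqcup}$.1]}--{\bf [$\boldsymbol{\sqcup}$.4]} checked entrywise. The only nitpick is your passing remark that restriction in $\mathsf{MAT}\left[ (\mathbb{X},\perp) \right]$ is ``componentwise by definition'' (it is the diagonal matrix of row-joins of restrictions), but nothing in your argument actually relies on that misstatement.
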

\begin{proof} This follows by a similar arguments to the finite disjoint join case above. 
\end{proof}

\section{Iteration and Trace}\label{sec:trace}

In this section we show that for an extensive restriction category, to give a trace operator is equivalent to giving a Kleene wand, which is one of the main results of this paper. We begin by first reviewing trace operators on coproducts, and how to give a trace operator is equivalent to giving a \emph{parametrized iteration operator}. This perspective will be very useful since, as we will see below, for an extensive restriction category, a parametrized iteration operator is essentially a Kleene wand.

Briefly recall that a \textbf{traced symmetric monoidal category} is a symmetric\footnote{It is worth noting that in \cite{joyal_street_verity_1996}, Joyal, Street, and Verity first introduced traced monoidal categories as \emph{braided} monoidal categories equipped with a trace operator. However since we are dealing with coproducts, we are working in the symmetric setting.} monoidal category with monoidal product with a trace operator. Since every category with finite coproducts is a symmetric monoidal category \cite[Sec 6.2]{selinger2010survey}, where the monoidal product is the coproduct and the monoidal unit is the initial object, one can indeed consider a trace operator for coproducts. As such, the following definition below of a trace operator is the same as for a symmetric monoidal category, but where we have already specified our monoidal structure to be that given by coproducts. There are many equivalent lists of axioms for a trace operator on symmetric monoidal category. Here, we will use the (strict version) of the one presented in \cite[Def 2.2]{Hasegawa2023tracedmonadshopf}. For other equivalent axiomatizations of the trace operator, see for example \cite[Def 4.1]{haghverdi2010geometry} or \cite[Def 2.1]{Hasegawa97recursionfrom} or \cite[Sec 3]{hasegawa2009traced}. For a more in-depth introduction to traced symmetric monoidal categories, including their graphical calculus, we invite the reader to see \cite{abramsky2002geometry,haghverdi2000categorical, haghverdi2010geometry, hasegawa2009traced, Hasegawa97recursionfrom, hasegawa2004uniformity, Hasegawa2023tracedmonadshopf, selinger2010survey, joyal_street_verity_1996}. 

\begin{definition} A \textbf{traced coCartesian monoidal category} \cite[Sec 6.4]{selinger2010survey} is a category $\mathbb{X}$ with finite coproducts which comes equipped with a \textbf{trace operator} $\mathsf{Tr}$ \cite[Def 2.2]{Hasegawa2023tracedmonadshopf} which is a family of operators (indexed by triples of objects $X, A, B \in \mathbb{X}$):
\begin{align*}
\mathsf{Tr}^X_{A,B}: \mathbb{X}(X + A, X + B) \to \mathbb{X}(A,B) && \infer{\mathsf{Tr}^X_{A,B}(f) : A \to B}{f: X + A \to X + B}
\end{align*}
such that the following axioms hold: 
\begin{description}
\item[\textbf{[Tightening]:}] For every map $f: X + A \to X + B$, $g: A^\prime \to A$, and $h: B \to B^\prime$ the following equality holds: 
 \begin{equation}\label{tightening1}\begin{gathered}\mathsf{Tr}^X_{A^\prime,B^\prime}\left( (1_X + g) f (1_X + h)  \right) = g\mathsf{Tr}^X_{A,B}(f)h 
   \end{gathered}\end  {equation}
\item[\textbf{[Sliding]:}] For every map $f: X + A \to X^\prime + B$ and $k: X^\prime \to X$, the following equality holds: 
 \begin{equation}\label{sliding}\begin{gathered} \mathsf{Tr}^X_{A,B}\left(f  (k+1_B) \right) = \mathsf{Tr}^{X^\prime}_{A,B}\left((k + 1_A)   f \right)
 \end{gathered}\end  {equation}
\item[\textbf{[Vanishing]:}] For every map $f: X + Y + A \to X + Y + B$, the following equality holds: 
 \begin{equation}\label{vanishing1}\begin{gathered}  \mathsf{Tr}^{X + Y}_{A,B}(f) =     \mathsf{Tr}^Y_{A,B} \left( \mathsf{Tr}^X_{Y + A,Y + B}\left( f \right) \right) 
  \end{gathered}\end  {equation}
\item[\textbf{[Superposing]:}] For every map $f: X + A \to X + B$ and every map $g: C \to D$ the following equality holds:
 \begin{equation}\label{superposing}\begin{gathered} \mathsf{Tr}^X_{A + C,B + D}\left(  f + g \right) =  \mathsf{Tr}^X_{A,B}(f) + g
  \end{gathered}\end{equation}
\item[\textbf{[Yanking]:}] For every object $X$, the following equality holds: 
\begin{equation}\label{yanking}\begin{gathered} \mathsf{Tr}^X_{X,X}( \sigma ) = 1_X
  \end{gathered}\end  {equation}
  where recall that $\sigma: X + X \to X + X$ is the canonical symmetry isomorphism for the coproduct. 
  \end{description}
For a map $f: X + A \to X + B$, the map $\mathsf{Tr}^X_{A,B}(f): A \to B$ is called the \textbf{trace} of $f$. 
\end{definition}

In \cite{abramsky1996retracing}, Abramsky referred to trace operators on coproducts as ``particle style traces'', as the trace can intuitively be interpreted by streams of particles or tokens flowing around a network -- see also \cite[Sec 1.1]{abramsky2002geometry} and \cite[Sec 4.1]{haghverdi2010geometry} for more explination about this intuition. Another desirable property one might ask of the trace operator is that it be \emph{uniform}. 

\begin{definition} For a traced coCartesian monoidal category $\mathbb{X}$, its trace operator $\mathsf{Tr}$ is said to be \textbf{uniform} \cite[Def 2.2]{hasegawa2004uniformity} if the following holds:
\begin{description}
\item[\textbf{[Uniform]:}]  For all $f: X + A \to X + B$, $f^\prime: Y + A \to Y + B$, and $h: X \to Y$ such that $f(h + 1_B) = (h + 1_A)f^\prime$, then $\mathsf{Tr}^X_{A,B}(f) = \mathsf{Tr}^Y_{A,B}(f^\prime)$. 
  \end{description}
\end{definition}

As explained by Selinger in \cite[Sec 3.2]{selinger1999categorical}, uniformity is a proof principle for showing that two traces are equal. We note that often times uniformity is only with respect to a certain subclass of maps, called \emph{strict} maps \cite[Def 2.1]{hasegawa2004uniformity}. However in this paper, we consider uniformity to hold for all maps, or in other words, that all maps are strict. 

As mentioned above, trace operators on coproducts admit an alternative characterization via another kind of operator. Indeed, consider a map of type $f: X + A \to X + B$. By the couniversal property of the coproduct, this is actually a copairing of maps $f_1: X \to X+B$ and $f_2: A \to X+B$, so $f = \left[ \begin{smallmatrix} f_1 \\ f_2
 \end{smallmatrix}\right]$. Using \textbf{[Tightening]} and \textbf{[Sliding]}, it turns out that the following equality holds: 
 \[ \mathsf{Tr}^X_{A,B}(f) = f_2 \begin{bmatrix} \mathsf{Tr}^X_{X,B}(\nabla f_1) \\ 1_B  \end{bmatrix}  \]
where note that $\mathsf{Tr}^X_{X,B}(\nabla f_1): X \to B$. Therefore, we see that the traced out part of a map of type $X + A \to X + B$ is essentially determined by its $X \to X + B$ component. As such, this gives rise to an operator which takes in a map of type $X \to X + B$ and produces a map of type $X \to B$, called a \emph{parametrized iteration operation}.  

\begin{definition} For a category $\mathbb{X}$ with finite coproducts, a \textbf{parametrized iteration operator} $\mathsf{Iter}$ \cite[Prop 6.8]{selinger2010survey} is a family of operators (indexed by pairs of objects $X, A \in \mathbb{X}$):
\begin{align*}
\mathsf{Iter}^X_{A}: \mathbb{X}(X, X + A) \to \mathbb{X}(X,A) && \infer{\mathsf{Iter}^X_{A}(f) : X \to A}{f: X \to X + A}
\end{align*}
such that the following axioms hold: 
\begin{description} 
\item[\textbf{[Parametrized Iteration]}] For every map $f: X \to X + A$ the following equality holds: 
 \begin{equation}\label{iteration}\begin{gathered}
 f  \begin{bmatrix} \mathsf{Iter}^X_A(f) \\ 1_A  
 \end{bmatrix}  = \mathsf{Iter}^X_A(f) 
   \end{gathered}\end  {equation}
   \item[\textbf{[Naturality]}] For every map $f: X \to X + A^\prime$ and every map $h: A^\prime \to A$ the following equality holds: 
 \begin{equation}\label{iternat1}\begin{gathered}
 \mathsf{Iter}^X_A\left( f  (1_X + h) \right) =  \mathsf{Iter}^X_{A^\prime}\left( f \right) h
   \end{gathered}\end  {equation}
  \item[\textbf{[Dinaturality]}] For every map $f: X^\prime \to X + A$ and every map $k: X \to X^\prime$ the following equality holds:
    \begin{equation}\label{iternat2}\begin{gathered}
 \mathsf{Iter}^X_A\left( k f \right) =  k \mathsf{Iter}^{X^\prime}_{A}\left( f  (k + 1_A) \right) 
   \end{gathered}\end  {equation}
   \item[\textbf{[Diagonal Property]}] For every map $f: X \to X + X + A$ the following equality holds: 
    \begin{equation}\label{iterdiag}\begin{gathered}
 \mathsf{Iter}^X_A\left( \mathsf{Iter}^X_{X+A}(f) \right) =   \mathsf{Iter}^X_A\left(f (\nabla_X + 1_A)  \right)
   \end{gathered}\end  {equation}   
\end{description}
For a map $f:X \to X + A$, the map $\mathsf{Iter}^X_{A}(f): X \to A$ is called the \textbf{iteration} of $f$. 
\end{definition}

As the name suggests, a parametrized iteration operator can intuitively be interpreted as iterating the map ${f: X \to X + A}$ if it continues to land in $X$ and only stopping when it lands in $A$. This intuition is best captured by our main example of partial functions given below. As with trace operators, a desirable property for a parametrized iteration operator is to be \emph{uniform}. 

\begin{definition} For a category $\mathbb{X}$ with finite coproducts, a parametrized iteration operator $\mathsf{Iter}$ is said to be \textbf{uniform} \cite[Sec 4.3]{hasegawa2004uniformity} if the following holds: 
\begin{description}
\item[\textbf{[Uniform]:}] For all $f: X \to X + A$, $f^\prime: Y \to Y + A$, and $h: X \to Y$ such that $f(h + 1_A) = hf^\prime$, then $\mathsf{Iter}^X_{A}(f) = h \mathsf{Iter}^Y_{A}(f^\prime)$. 
  \end{description}
  \end{definition}

We now review how having a trace operator is equivalent to having a parametrized iteration operator and, in particular, how to build one from the other. 

\begin{proposition}\cite[Prop 6.8]{selinger2010survey} For a category with finite coproducts, there is a bijective correspondence between (uniform) trace operators and (uniform) parametrized iteration operators. Explicitly: 
\begin{enumerate}[{\em (i)}]
\item Let $\mathsf{Iter}$ be a (uniform) parametrized iteration operator, then, for a map $f: X + A \to X + B$, define its trace ${\mathsf{Tr}^X_{A,B}(f): A \to B}$ as 
\begin{align} \mathsf{Tr}^X_{A,B}(f) := \iota_2 f \begin{bmatrix} \mathsf{Iter}^X_B(\iota_1  f) \\ 1_B
 \end{bmatrix} 
\end{align}
then $\mathsf{Tr}$ is a (uniform) trace operator. 
\item Let $\mathsf{Tr}$ be a (uniform) trace operator, then, for a map $f: X \to X + B$, define its iteration $\mathsf{Iter}^X_{B}(f): X \to B$ as
\begin{align}
\mathsf{Iter}^X_{B}(f) := \mathsf{Tr}^X_{X,B}\left(\begin{bmatrix} f \\ f
 \end{bmatrix} \right) = \mathsf{Tr}^X_{X,B}\left( \nabla_X  f \right)  
 \end{align}
then $\mathsf{Iter}$ is a (uniform) parametrized iteration operator. 
\end{enumerate}
Furthermore, these constructions are inverses of each other. In other words, a coCartesian traced monoidal category is precisely a category finite coproducts equipped with a parametrized iteration operator. 
\end{proposition}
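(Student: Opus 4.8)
The plan is to treat the correspondence as a dictionary between the two axiom systems: verify that each construction produces an operator satisfying the axioms of the other kind, and then check that the two passages are mutually inverse. The hinge of the whole argument is the reduction identity indicated just before the statement. Writing a map $f\colon X+A \to X+B$ as a copairing $f = \left[\begin{smallmatrix} f_1 \\ f_2 \end{smallmatrix}\right]$ with $f_1\colon X \to X+B$ and $f_2\colon A \to X+B$, one proves, using only \textbf{[Tightening]} and \textbf{[Sliding]}, that
\[ \mathsf{Tr}^X_{A,B}(f) = f_2 \begin{bmatrix} \mathsf{Tr}^X_{X,B}(\nabla f_1) \\ 1_B \end{bmatrix}. \]
This exhibits the traced-out part of $f$ as depending only on its $X \to X+B$ component, through the expression $\mathsf{Tr}^X_{X,B}(\nabla f_1)$, which is precisely the shape of an iteration. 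First I would establish this identity, since well-definedness of both constructions and the inverse computation all rest on it.

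For the direction from $\mathsf{Iter}$ to $\mathsf{Tr}$, I would take the formula of part (1) and verify the five trace axioms one at a time, rewriting each as a statement about $\mathsf{Iter}$ and invoking the corresponding iteration axiom: \textbf{[Tightening]} reduces to \textbf{[Naturality]} (together with the couniversal property of the coproduct), \textbf{[Sliding]} to \textbf{[Dinaturality]}, \textbf{[Superposing]} to \textbf{[Naturality]} applied along a coproduct injection, \textbf{[Yanking]} to a direct evaluation using \textbf{[Parametrized Iteration]}, and \textbf{[Vanishing]} to the \textbf{[Diagonal Property]}. The opposite direction, with $\mathsf{Iter}$ defined from $\mathsf{Tr}$ as in part (2), runs the same dictionary in reverse: \textbf{[Parametrized Iteration]} follows from the fixed-point behaviour of the trace (itself a consequence of \textbf{[Tightening]}, \textbf{[Sliding]} and \textbf{[Yanking]}), \textbf{[Naturality]} and \textbf{[Dinaturality]} from \textbf{[Tightening]} and \textbf{[Sliding]}, and the \textbf{[Diagonal Property]} from \textbf{[Vanishing]}. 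In both passages the uniformity clause is checked in parallel, matching the hypothesis $f(h+1_B) = (h+1_A)f^\prime$ of trace uniformity against $f(h+1_A) = hf^\prime$ of iteration uniformity.

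To close the argument I would verify that the two constructions are mutually inverse. Starting from a trace operator, the reduction identity above rewrites $\mathsf{Tr}^X_{A,B}(f)$ directly in terms of $\mathsf{Tr}^X_{X,B}(\nabla f_1) = \mathsf{Iter}^X_B(f_1)$, so reassembling the trace from the extracted iteration via the formula of part (1) returns the original operator. Conversely, starting from an iteration operator, forming $\mathsf{Tr}$ and then extracting $\mathsf{Iter}$ again amounts to evaluating the constructed trace at $\nabla f$ and simplifying; here the \textbf{[Parametrized Iteration]} axiom is exactly what collapses the resulting expression back to $\mathsf{Iter}^X_A(f)$.

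I expect the main obstacle to be the pair \textbf{[Vanishing]} $\leftrightarrow$ \textbf{[Diagonal Property]}. These axioms involve a doubled parameter object and nested applications of the operators, so matching the two sides requires careful bookkeeping of the associativity and symmetry coherence isomorphisms of the coproduct against the copairing matrices — considerably more delicate than the one-dimensional axioms. The reduction identity of the first step is the tool I would rely on most heavily to keep this computation under control.
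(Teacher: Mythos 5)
The paper never proves this proposition: it is imported verbatim from Selinger's survey \cite[Prop 6.8]{selinger2010survey}, and even the reduction identity displayed just before it is asserted without proof. So there is no internal argument to compare you against; what can be judged is whether your reconstruction of the standard argument is sound, and in outline it is. Your architecture --- reduction identity as pivot, axiom-by-axiom dictionary, then the mutual-inverse checks --- is exactly the classical proof, and your inverse computations are correct: the reduction identity is literally the statement that rebuilding $\mathsf{Tr}$ from the extracted $\mathsf{Iter}$ returns the original $\mathsf{Tr}$, while in the other direction $\mathsf{Tr}\left( \nabla_X f \right) = f \left[ \begin{smallmatrix} \mathsf{Iter}^X_A(f) \\ 1_A \end{smallmatrix}\right]$, which \textbf{[Parametrized Iteration]} collapses to $\mathsf{Iter}^X_A(f)$.

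Two calibration warnings, neither fatal but both places where executing your plan as written would stall. First, you repeat (from the paper's own remark) that the reduction identity follows ``using only \textbf{[Tightening]} and \textbf{[Sliding]}''. Try it: Tightening reduces the identity to $\mathsf{Tr}^X_{X+B,B}\left(\left[ \begin{smallmatrix} f_1 \\ 1_{X+B} \end{smallmatrix}\right]\right) = \left[ \begin{smallmatrix} \mathsf{Tr}^X_{X,B}(\nabla f_1) \\ 1_B \end{smallmatrix}\right]$, and already the second component, $\mathsf{Tr}^X_{B,B}\left(\left[ \begin{smallmatrix} f_1 \\ \iota_2 \end{smallmatrix}\right]\right) = 1_B$, is not reachable from those two axioms: Sliding only trades the trace over $X$ for a trace over $X+B$, and collapsing that requires \textbf{[Vanishing]} and \textbf{[Yanking]}. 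The same objection applies to your parenthetical claim that the fixed-point behaviour of the trace is a consequence of Tightening, Sliding and Yanking alone (it is essentially the reduction identity applied to $\left[ \begin{smallmatrix} f \\ 1_{X+A} \end{smallmatrix}\right]$, so Vanishing enters there too). Since all five axioms are available, this costs nothing, but the attributions should be corrected. Second, you rightly single out \textbf{[Vanishing]} $\leftrightarrow$ \textbf{[Diagonal Property]} as the hard leg, but you misdiagnose the difficulty as coproduct-coherence bookkeeping. The real content is a Beki\'{c}-style pairing identity: Vanishing concerns a trace over $X+Y$ with $X$ and $Y$ distinct, whereas the Diagonal Property only doubles a single object, so one must derive simultaneous iteration over $X+Y$ from nested iteration by combining \textbf{[Diagonal Property]} with \textbf{[Dinaturality]} and \textbf{[Naturality]}. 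That derivation, not the associativity isomorphisms, is the heart of the proposition.
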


Our main example of a coCartesian traced monoidal category is the category of sets and partial functions. Other examples of coCartesian traced monoidal categories can be found in \cite[Ex 4.6]{haghverdi2010geometry}. In particular, every partially additive category is a coCartesian traced monoidal category \cite[Prop 3.1.4]{haghverdi2000categorical}. 

\begin{example}\label{ex:PAR-trace} \normalfont $\mathsf{PAR}$ is a traced coCartesian monoidal category where the trace operator and parametrized iteration operator are induced by the natural feedback operator. So for a partial function $f: X \to X \sqcup Y$, its iteration ${\mathsf{Iter}^X_{Y}(f): X \to Y}$ is the partial function defined as follows:   
\begin{align*}
\mathsf{Iter}^X_{Y}(f)(x) = \begin{cases} y & \text{ if } f(x) = (y,2) \\
 y & \text{ if } \exists~ n \in \mathbb{N}, n \geq 1 ~ \exists~ x_0, x_1, \hdots, x_n \in X \text{ s.t. } f(x) = (x_0, 1) \\
 &\text{ and } f(x_0) = (x_1, 1) \text{ and } \hdots \text{ and } f(x_n) = (y,2)  \\
\uparrow  
 & \text{otherwise}\end{cases}    
\end{align*}
For a partial function $g: X \sqcup Y \to X \sqcup Z$, its trace $ \mathsf{Tr}^X_{Y,Z}(g): Y \to Z$ is the partial function defined as follows: 
\begin{align*}
 \mathsf{Tr}^X_{Y,Z}(g)(y) = \begin{cases} z & \text{ if } g(y,2) = (z,2) \\
 z & \text{ if } \exists~ n \in \mathbb{N}, n \geq 1 ~ \exists~x_0, x_1, \hdots, x_n \in X \text{ s.t. } g(y,2) = (x_0,1) \\
 & \text{ and } g(x_0,1) = (x_1,1) \text{ and } \hdots \text{ and } g(x_n,1) = (z,2)  \\
\uparrow  
 & \text{otherwise}\end{cases}    
\end{align*}
Furthermore, $\mathsf{Iter}$ and $\mathsf{Tr}$ are uniform. 
\end{example}

Since restriction coproducts are actual coproducts, one can consider trace operators and parametrized iteration operators for restriction coproducts. In particular, we are interested in these operators for extensive restriction categories. 

\begin{definition}\label{def:traced-ext} A \textbf{traced extensive restriction category} is an extensive restriction category equipped with a trace operator or equivalently a parametrized iteration operator. 
\end{definition}

Our objective is to show that for an extensive restriction category, to give a parametrized iteration operator is equivalent to giving a Kleene wand. To do so, let us first revisit the axioms of a parametrized iteration operator using the matrix calculus for extensive restriction categories. So suppose we are in a traced extensive restriction category with parametrized iteration operator $\mathsf{Iter}$. First observe that in an extensive restriction category, to give a map of type $f: X \to X + A$ is equivalent to giving maps $f_1: X \to X$ and $f_2: X \to A$ such that $f_1 \perp_d f_2$, since $f$ is in fact a $1 \times 2$ matrix $f = \begin{bmatrix} f_1 & f_2
\end{bmatrix}$. So the parametrized iteration operator is really acting on an endomorphism and a map with the same domain that are $\perp_d$-disjoint maps. So we may write: 
\[ \infer{ \mathsf{Iter}^X_A\left( \begin{bmatrix} f_1 & f_2 \end{bmatrix}  \right): X \to A}{f: X \to X ~~~ g: X \to A ~~~ f \perp_d g} \]
So we see that a parametrized iteration operator and a Kleene wand are operators with the same inputs and outputs. This is our first hint that in this setting, a parametrized iteration operator and a Kleene wand are in fact the same thing. It is important to note that in \cite[Sec 4]{cockett2012timed}, they were working in a distributive extensive restriction category with joins, which by Cor \ref{cor:perp0-ext} implies that $\perp_0 = \perp_d$. The objective here however is to show that we still get the desired correspondence for arbitrary extensive restriction categories when taking Kleene wands on $\perp_d$ instead (since in an arbitrary extensive restriction category, $\perp_d$ may not equal $\perp_0$). 

Now let us consider re-expressing the axioms of a parametrized iteration operator using the matrix calculus. To avoid heavy notation, let us introduce the following notation for our parametrized iteration operator:
\begin{align} 
\mathsf{Iter}^X_A\left( \begin{bmatrix} f_1 & f_2 \end{bmatrix} \right) = \left[\!\!\! \begin{array}{c|c} f_1 & f_2  \end{array} \!\!\! \right]
\end{align}

Now starting with \textbf{[Parametrized Iteration]}, first note that by using matrix multiplication we have that: 
\[ \begin{bmatrix} f_1 & f_2 \end{bmatrix} \begin{bmatrix} \left[\!\!\! \begin{array}{c|c} f_1 & f_2  \end{array} \!\!\! \right] \\ 1_A  
 \end{bmatrix} = f_1 \left[\!\!\! \begin{array}{c|c} f_1 & f_2  \end{array} \!\!\! \right] \sqcup f_2  \]
Thus we may re-express the iteration axiom as follows (where we slightly modify the above equality using commutativity of $\sqcup$): 
\begin{description} 
\item[\textbf{[Parametrized Iteration]}] For every map $f_1: X \to X$ and $f_2: X \to A$ such that $f_1 \perp f_2$, the following equality holds: 
 \begin{equation}\label{ext-iteration}\begin{gathered}
f_2 \sqcup f_1 \left[\!\!\! \begin{array}{c|c} f_1 & f_2  \end{array} \!\!\! \right] = \left[\!\!\! \begin{array}{c|c} f_1 & f_2  \end{array} \!\!\! \right]
   \end{gathered}\end  {equation}
\end{description}
Next, for \textbf{[Naturality]} and \textbf{[Dinaturality]}, first note that $1_X + h: X + A^\prime \to X + A$ and $k + 1_A: X + A \to X^\prime + A$ are respectively the following diagonal square matrices:
\begin{align*}
1_X + h = \begin{bmatrix} 1_X & 0 \\ 
0 & h
\end{bmatrix} && k + 1_A = \begin{bmatrix} k & 0 \\ 
0 & 1_A
\end{bmatrix}
\end{align*}
Moreover, by matrix multiplication, it is easy to compute that we have the following equalities: 
\begin{align*}
\begin{bmatrix} f_1 & f_2 \end{bmatrix} \begin{bmatrix} 1_X & 0 \\ 
0 & h
\end{bmatrix} =  \begin{bmatrix} f_1 & f_2h \end{bmatrix} && k \begin{bmatrix} f_1 & f_2 \end{bmatrix} = \begin{bmatrix} kf_1 & kf_2 \end{bmatrix} &&  \begin{bmatrix} f_1 & f_2 \end{bmatrix} \begin{bmatrix} k & 0 \\ 
0 & 1_A
\end{bmatrix} = \begin{bmatrix} f_1k & f_2 \end{bmatrix}
\end{align*}
Thus we may re-express the naturality and dinaturality axioms as follows: 
\begin{description} 
  \item[\textbf{[Naturality]}] For every map $f_1: X \to X$ and $f_2: X \to A^\prime$ such that $f_1 \perp f_2$, and every map $h: A^\prime \to A$, the following equality holds: 
 \begin{equation}\label{ext-iternat1}\begin{gathered}
\left[\!\!\! \begin{array}{c|c} f_1 & f_2h  \end{array} \!\!\! \right] = \left[\!\!\! \begin{array}{c|c} f_1 & f_2  \end{array} \!\!\! \right]h 
   \end{gathered}\end  {equation}
   \item[\textbf{[Dinaturality]}] For every map $f_1: X^\prime \to X$ and $f_2: X^\prime \to A$ such that $f_1 \perp f_2$, and every map $k: X \to X^\prime$, the following equality holds:
    \begin{equation}\label{ext-iternat2}\begin{gathered}
\left[\!\!\! \begin{array}{c|c} kf_1 & kf_2  \end{array} \!\!\! \right] = k \left[\!\!\! \begin{array}{c|c} f_1k & f_2  \end{array} \!\!\! \right]
   \end{gathered}\end  {equation}
\end{description}

Then for the last axiom \textbf{[Diagonal Property]}, first note that a map $f: X \to X + X + A$  is equivalently given by three maps $\lbrace f_1: X \to X, f_2: X \to X, f_3: X \to A\rbrace$ which are decision disjoint. So $f$ is in fact a $1 \times 3$ row matrix, $f = \begin{bmatrix} f_1 & f_2 & f_3 \end{bmatrix}$. Next observe that $\nabla_X + 1_A$ is the $3 \times 2$ matrix:
\[ \nabla_X + 1_A = \begin{bmatrix} 1_X & 0  \\ 
1_X & 0  \\
0 & 1_A\end{bmatrix} \]
and so we get that: 
\[ \begin{bmatrix} f_1 & f_2 & f_3 \end{bmatrix}\begin{bmatrix} 1_X & 0  \\ 
1_X & 0  \\
0 & 1_A\end{bmatrix} = \begin{bmatrix} f_1 \sqcup f_2 & f_3 \end{bmatrix}  \]
Next note that $\begin{bmatrix} f_1 & f_2 & f_3 \end{bmatrix} = \begin{bmatrix} f_1 & \begin{bmatrix} f_2 & f_3 \end{bmatrix} \end{bmatrix}$. So we may take its iteration, which we denote by $\left[\!\!\! \begin{array}{c|cc} f_1 & f_2 & f_3 \end{array} \!\!\! \right] := \left[\!\!\! \begin{array}{c|c} f_1 & \begin{bmatrix} f_2 & f_3 \end{bmatrix}  \end{array} \!\!\! \right]: X \to X+A$. Note that $\left[\!\!\! \begin{array}{c|cc} f_1 & f_2 & f_3 \end{array} \!\!\! \right]$ is a $1 \times 2$ row matrix, and thus fully determined by post-composing it with the quasi-projections $\iota^\circ_1$ and $\iota^\circ_2$. Then by \textbf{[Naturality]}, it follows that $\left[\!\!\! \begin{array}{c|c} f_1 & \begin{bmatrix} f_2 & f_3 \end{bmatrix}  \end{array} \!\!\! \right] \iota^\circ_1 = \left[\!\!\! \begin{array}{c|c} f_1 & f_2  \end{array} \!\!\! \right]$, and similarly $\left[\!\!\! \begin{array}{c|c} f_1 & \begin{bmatrix} f_2 & f_3 \end{bmatrix}  \end{array} \!\!\! \right] \iota^\circ_2 = \left[\!\!\! \begin{array}{c|c} f_1 & f_3  \end{array} \!\!\! \right]$. Therefore, we have that:
\begin{align}\label{ext-iterdiag-3}
\left[\!\!\! \begin{array}{c|c} f_1 & \begin{bmatrix} f_2 & f_3 \end{bmatrix}  \end{array} \!\!\! \right] = \begin{bmatrix} \left[\!\!\! \begin{array}{c|c} f_1 & f_2  \end{array} \!\!\! \right] & \left[\!\!\! \begin{array}{c|c} f_1 & f_3  \end{array} \!\!\! \right] 
\end{bmatrix}
\end{align}
So we may re-express the diagonal axiom as follows: 

\begin{description} 
   \item[\textbf{[Diagonal Property]}] For every $\lbrace f_1: X \to X, f_2: X \to X, f_3: X \to A\rbrace$ which are decision disjoint, the following equality holds: 
  \begin{equation}\label{ext-iterdiag-2}\begin{gathered}
\left[\!\!\! \begin{array}{c|c} \left[\!\!\! \begin{array}{c|c} f_1 & f_2  \end{array} \!\!\! \right] & \left[\!\!\! \begin{array}{c|c} f_1 & f_3  \end{array} \!\!\! \right]   \end{array} \!\!\! \right] = \left[\!\!\! \begin{array}{c|c} f_1 \sqcup f_2 & f_3  \end{array} \!\!\! \right] 
   \end{gathered}\end  {equation}   

We can also re-express uniformity as follows: 
\end{description}
\begin{description}
\item[\textbf{[Uniform]:}] For every map $f_1: X \to X$ and $f_2: X \to A$ such that $f_1 \perp_d f_2$, and $f^\prime_1: Y \to Y$ and $f^\prime_2: Y \to A$ such that $f^\prime_1 \perp f^\prime_2$, and $h: X \to Y$ such that $hf^\prime_1 = f_1h$ and $f_2 = hf^\prime_2$, then the following equality holds: 
\begin{align}\label{ext-uni}
\left[\!\!\! \begin{array}{c|c} f_1  & f_2  \end{array} \!\!\! \right] = h \left[\!\!\! \begin{array}{c|c} f^\prime_1 & f^\prime_2  \end{array} \!\!\! \right] 
\end{align}
  \end{description}

To summarize, we have that:

\begin{lemma}\label{lem:ext-iter} For an extensive restriction category, $\mathsf{Iter}$ is a (uniform) parametrized iteration operator if and only if $\mathsf{Iter}$ satisfies (\ref{ext-iteration}), (\ref{ext-iternat1}), (\ref{ext-iternat2}), and (\ref{ext-iterdiag-2}) (and (\ref{ext-uni})). 
\end{lemma}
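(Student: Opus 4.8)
The plan is to treat the lemma as the bookkeeping conclusion of the discussion preceding it: each of the matrix-form equations is exactly the transliteration of one of the original axioms under the matrix calculus of the previous section, so the two axiom lists carry the same content and the biconditional is immediate. The structural fact I would invoke is that in an extensive restriction category a map $f : X \to X + A$ corresponds bijectively to a row matrix $\begin{bmatrix} f_1 & f_2 \end{bmatrix}$ with $f_1 : X \to X$, $f_2 : X \to A$, and $f_1 \perp_d f_2$, that composition is matrix multiplication with $\perp_d$-joins playing the role of the sum, and that the coproduct data $1_X + h$, $k + 1_A$, $\nabla_X + 1_A$, and $\sigma$ all have explicit matrix forms. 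Because $\mathsf{Iter}^X_A$ and its matrix shorthand record the same operator on the same data, it suffices to rewrite each defining equation using these facts; the ``$\Rightarrow$'' and ``$\Leftarrow$'' directions then amount to reading each translated equality in either direction, so no separate argument per direction is required.

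I would then go axiom by axiom, performing in each case the single matrix computation already indicated in the text. For \textbf{[Parametrized Iteration]} (\ref{iteration}), expanding $\begin{bmatrix} f_1 & f_2 \end{bmatrix}\begin{bmatrix} \mathsf{Iter}^X_A(f) \\ 1_A \end{bmatrix}$ gives $f_2 \sqcup f_1\,\mathsf{Iter}^X_A(f)$, which is (\ref{ext-iteration}). For \textbf{[Naturality]} (\ref{iternat1}) and \textbf{[Dinaturality]} (\ref{iternat2}) I would write $1_X + h$ and $k + 1_A$ as the diagonal matrices $\left[\begin{smallmatrix} 1_X & 0 \\ 0 & h \end{smallmatrix}\right]$ and $\left[\begin{smallmatrix} k & 0 \\ 0 & 1_A \end{smallmatrix}\right]$, so that pre- and post-composition send $\begin{bmatrix} f_1 & f_2 \end{bmatrix}$ to $\begin{bmatrix} f_1 & f_2 h \end{bmatrix}$, $\begin{bmatrix} kf_1 & kf_2 \end{bmatrix}$, and $\begin{bmatrix} f_1 k & f_2 \end{bmatrix}$; substituting into the axioms yields (\ref{ext-iternat1}) and (\ref{ext-iternat2}). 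The uniformity hypothesis $f(h+1_A) = hf'$ expands to the pair $f_1 h = h f_1'$ and $f_2 = h f_2'$, and the conclusion reads $\mathsf{Iter}^X_A(f) = h\,\mathsf{Iter}^Y_A(f')$, which is precisely (\ref{ext-uni}).

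The step needing genuine care, and the one I expect to be the main obstacle, is \textbf{[Diagonal Property]} (\ref{iterdiag}), since it passes through the auxiliary identity (\ref{ext-iterdiag-3}). Here $f : X \to X + X + A$ is a $1 \times 3$ row matrix $\begin{bmatrix} f_1 & f_2 & f_3 \end{bmatrix}$, which I would regroup as the nested matrix $\begin{bmatrix} f_1 & \begin{bmatrix} f_2 & f_3 \end{bmatrix} \end{bmatrix} : X \to X + (X+A)$. To establish (\ref{ext-iterdiag-3}) I would post-compose the inner iteration with the quasi-projections $\iota^\circ_1, \iota^\circ_2$ and apply \textbf{[Naturality]}: since $\begin{bmatrix} f_2 & f_3 \end{bmatrix}\iota^\circ_1 = f_2$ and $\begin{bmatrix} f_2 & f_3 \end{bmatrix}\iota^\circ_2 = f_3$, and a $1 \times 2$ matrix into a coproduct is determined by its two quasi-projection components (which are automatically $\perp_d$-disjoint, being such components of a single map), the inner iteration splits as $\begin{bmatrix} \mathsf{Iter}^X_A(\begin{bmatrix} f_1 & f_2 \end{bmatrix}) & \mathsf{Iter}^X_A(\begin{bmatrix} f_1 & f_3 \end{bmatrix}) \end{bmatrix}$. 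Computing the right-hand side separately, $f(\nabla_X + 1_A) = \begin{bmatrix} f_1 \sqcup f_2 & f_3 \end{bmatrix}$ via the $3 \times 2$ matrix form of $\nabla_X + 1_A$ together with componentwise $\perp_d$-joins, and then equating the two iterations gives (\ref{ext-iterdiag-2}). Assembling these five reformulations, each an ``iff'' restatement of the corresponding axiom under the lossless matrix-calculus dictionary, completes the proof; the only subtlety beyond routine matrix arithmetic is the regrouping and quasi-projection argument underlying (\ref{ext-iterdiag-3}), so that is where I would spend the most care.
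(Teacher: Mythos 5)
Your proposal is correct and follows essentially the same route as the paper: the lemma there is stated as a summary of the preceding discussion, which consists of exactly the axiom-by-axiom matrix-calculus translations you give, including the key step of deriving (\ref{ext-iterdiag-3}) by post-composing the iteration of the regrouped $1\times 3$ row matrix with the quasi-projections and invoking \textbf{[Naturality]}, and then using it to recast the diagonal axiom as (\ref{ext-iterdiag-2}). Your observation that the two iterations on the right of (\ref{ext-iterdiag-3}) are automatically $\perp_d$-disjoint, being the components of a single map $X \to X + A$, is precisely the well-formedness point the paper also relies on, so there is no gap.
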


Now let us consider the induced trace operator. A map $f: X + A \to X + B$ is in fact a $2 \times 2$ square matrix $f = \left[ \begin{smallmatrix} f_1 & f_2 \\
f_3 & f_4 \end{smallmatrix}\right]$, where $f_1: X \to X$, $f_2: X \to B$, $f_3: A \to X$, and $f_4: A \to B$ such that $f_1 \perp_d f_2$ and $f_3 \perp_d f_4$. As such, our induced trace operator $\mathsf{Tr}$ acts on $2 \times 2$ square matrices. Inspired by our notation for our parametrized iteration operator, we use the following notation to denote our traces:
\begin{align}
\mathsf{Tr}^X_{A,B}\left( \begin{bmatrix} f_1 & f_2 \\
f_3 & f_4 \end{bmatrix}  \right) = \left[\begin{array}{c|c} f_1 & f_2 \\ \hline f_3 & f_4 \end{array}\right] 
\end{align}
Now note that we have that $\iota_1 f = \begin{bmatrix} f_1 & f_2 \end{bmatrix}$ and $\iota_2 f = \begin{bmatrix} f_3 & f_4 \end{bmatrix}$. Therefore, the trace is given as follows: 
  \begin{align}\label{ext-trace}
\left[\begin{array}{c|c} f_1 & f_2 \\ \hline f_3 & f_4 \end{array}\right] = f_4 \sqcup f_3 \left[\!\!\! \begin{array}{c|c} f_1  & f_2  \end{array} \!\!\! \right]
\end{align}
and the trace axioms can be re-expressed in terms of square matrices as follows:
\begin{description}
\item[\textbf{[Tightening]:}] 
 \begin{equation}\label{matrix-tightening1}\begin{gathered}
 \left[\begin{array}{c|c} f_1 & f_2h \\ \hline gf_3 & gf_4 h\end{array}\right] = g \left[\begin{array}{c|c} f_1 & f_2 \\ \hline f_3 & f_4 \end{array}\right] h
   \end{gathered}\end  {equation}
\item[\textbf{[Sliding]:}] 
 \begin{equation}\label{matrix-sliding}\begin{gathered} 
\left[\begin{array}{c|c} f_1k & f_2 \\ \hline f_3k & f_4 \end{array}\right] = \left[\begin{array}{c|c} kf_1 & kf_2 \\ \hline f_3 & f_4 \end{array}\right] 
 \end{gathered}\end  {equation}
\item[\textbf{[Vanishing]:}] 
 \begin{equation}\label{matrix-vanishing1}\begin{gathered} 
\left[\begin{array}{cc|c} f_1 & f_2 & f_3 \\ f_4 & f_5 & f_6 \\ \hline f_7 & f_8 & f_9 \end{array}\right] = \left[\begin{array}{c|c} \left[\begin{array}{c|c} f_1 & f_2 \\ \hline f_4 & f_5 \end{array}\right]  & \left[\begin{array}{c|c} f_1 & f_3 \\ \hline f_4 & f_6 \end{array}\right] \\   \\ \hline \\ \left[\begin{array}{c|c} f_1 & f_2 \\ \hline f_7 & f_8 \end{array}\right]  & \left[\begin{array}{c|c} f_1 & f_3 \\ \hline f_7 & f_9 \end{array}\right]  \end{array}\right] 
  \end{gathered}\end{equation}
\item[\textbf{[Superposing]:}] 
 \begin{equation}\label{matrix-superposing}\begin{gathered} 
\left[\begin{array}{cc|c} f_1 & f_2 & 0 \\ f_3 & f_4 & 0 \\ \hline 0 & 0 & g \end{array}\right] = \begin{bmatrix} \left[\begin{array}{c|c} f_1 & f_2 \\ \hline f_3 & f_4 \end{array}\right] & 0 \\
0 & g \end{bmatrix}
  \end{gathered}\end{equation}
\item[\textbf{[Yanking]:}] 
\begin{equation}\label{matrix-yanking}\begin{gathered} 
\left[\begin{array}{c|c} 0 & 1_X \\ \hline 1_X & 0 \end{array}\right] = 1_X
  \end{gathered}\end  {equation}

\item[\textbf{[Uniform]:}] 
\begin{align}
\begin{bmatrix} f_1h & f_2 \\
f_3h & f_4 \end{bmatrix} = \begin{bmatrix} hf^\prime_1 & hf^\prime_2 \\
f^\prime_3 & f^\prime_4 \end{bmatrix} && \Longrightarrow && \left[\begin{array}{c|c} f_1 & f_2 \\ \hline f_3 & f_4 \end{array}\right] = \left[\begin{array}{c|c} f^\prime_1 & f^\prime_2 \\ \hline f^\prime_3 & f^\prime _4 \end{array}\right]
\end{align}
  \end{description}
  
On the other hand, we can recapture the parametrized iteration operator from the trace as follows: 
\begin{align}
\left[\!\!\! \begin{array}{c|c} f_1  & f_2  \end{array} \!\!\! \right] = \left[\begin{array}{c|c} f_1 & f_2 \\ \hline f_1 & f_2 \end{array}\right] 
\end{align}

We are now finally ready to show the equivalence between parametrized iteration operators and Kleene wands. We begin by showing how a Kleene wand gives a parametrized iteration operator, and thus also a trace operator. 

\begin{proposition}\label{prop:wand-to-iter} Let $\mathbb{X}$ be an extensive restriction category which comes equipped with a Kleene wand $\wand$ for $\perp_d$ -- that is, $(\mathbb{X}, \wand)$ is a $\perp_d$-itegory -- then $\mathbb{X}$ has a parametrized iteration operator $\mathsf{Iter}$ defined as follows on a map ${f = \begin{bmatrix} f_1 & f_2 \end{bmatrix}: X \to X + A}$: 
\begin{align}\label{def:wand-Iter}
\left[\!\!\! \begin{array}{c|c} f_1  & f_2  \end{array} \!\!\! \right] = f_1 \wand f_2
\end{align}
and the induced trace operator $\mathsf{Tr}$ is defined as follows on a map $g = \begin{bmatrix} g_1 & g_2 \\
g_3 & g_4 \end{bmatrix}: X + A \to X + B$: 
\begin{align}\label{ext-trace-wand}
\left[\begin{array}{c|c} g_1 & g_2 \\ \hline g_3 & g_4 \end{array}\right]  = g_4 \sqcup g_3(g_1 \wand g_2)  
\end{align}
So $\mathbb{X}$ is a traced extensive restriction category. Moreover, if $\wand$ is uniform, then $\mathsf{Iter}$ and $\mathsf{Tr}$ are uniform. 
\end{proposition}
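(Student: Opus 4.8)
The plan is to invoke Lemma \ref{lem:ext-iter}, which reduces the assertion that $\mathsf{Iter}$ is a parametrized iteration operator to verifying the four matrix-form axioms (\ref{ext-iteration}), (\ref{ext-iternat1}), (\ref{ext-iternat2}), and (\ref{ext-iterdiag-2}) for the operator defined by (\ref{def:wand-Iter}). Under the identification $\left[\!\!\! \begin{array}{c|c} f_1 & f_2 \end{array}\!\!\!\right] = f_1 \wand f_2$, three of these are verbatim restatements of Kleene wand axioms: (\ref{ext-iteration}) is exactly {\bf [$\boldsymbol{\wand}$.1]}, (\ref{ext-iternat1}) is exactly {\bf [$\boldsymbol{\wand}$.2]}, and (\ref{ext-iternat2}) is exactly {\bf [$\boldsymbol{\wand}$.3]} (reading the types as $f_1 \colon X^\prime \to X$, $k \colon X \to X^\prime$). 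In each case the standing hypothesis $f_1 \perp_d f_2$ is precisely the disjointness hypothesis of the corresponding wand axiom, and the auxiliary disjointness needed for the left-hand sides to be well defined (for instance $f_1 \perp_d f_2 h$ in naturality) is furnished by {\bf [$\boldsymbol{\perp}$.4]}, which is already built into those wand axioms.

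The one axiom demanding real work is the diagonal property (\ref{ext-iterdiag-2}), which I would derive from {\bf [$\boldsymbol{\wand}$.4]} applied with $f = f_1$, $f^\prime = f_2$, $g = f_3$. The difficulty is matching hypotheses: (\ref{ext-iterdiag-2}) assumes only that $\lbrace f_1, f_2, f_3 \rbrace$ is decision separated, whereas {\bf [$\boldsymbol{\wand}$.4]} requires $f_1 \perp_d f_2$ together with $f_1 \sqcup f_2 \perp_d f_3$. The first follows at once from Lemma \ref{lem:n-decision-sep-iso}.(\ref{lem:n-decision-sep-iso.4}). The second is the main obstacle, precisely because binary $\perp_d$-joins are not assumed strong, so pairwise disjointness of $f_1,f_2,f_3$ does not by itself give $f_1 \sqcup f_2 \perp_d f_3$. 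I would clear this via the matrix calculus: the composite $\begin{bmatrix} f_1 & f_2 & f_3 \end{bmatrix}(\nabla_X + 1_A) = \begin{bmatrix} f_1 \sqcup f_2 & f_3 \end{bmatrix}$ is a well-defined map $X \to X + A$, hence a legitimate $1 \times 2$ row matrix, so its row $\lbrace f_1 \sqcup f_2, f_3 \rbrace$ is decision separated; a second application of Lemma \ref{lem:n-decision-sep-iso}.(\ref{lem:n-decision-sep-iso.4}) then yields $f_1 \sqcup f_2 \perp_d f_3$. With both hypotheses secured, {\bf [$\boldsymbol{\wand}$.4]} simultaneously provides the disjointness $(f_1 \wand f_2) \perp_d (f_1 \wand f_3)$, which guarantees the outer wand on the left of (\ref{ext-iterdiag-2}) is defined, and the required equation $(f_1 \sqcup f_2) \wand f_3 = (f_1 \wand f_2) \wand (f_1 \wand f_3)$.

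Once $\mathsf{Iter}$ is established as a parametrized iteration operator, the trace operator comes for free from the Selinger correspondence recalled earlier (the proposition preceding Def \ref{def:traced-ext}); substituting (\ref{def:wand-Iter}) into the matrix formula (\ref{ext-trace}) for the induced trace gives exactly (\ref{ext-trace-wand}), i.e.\ $g_4 \sqcup g_3(g_1 \wand g_2)$. For the uniformity claim I would once more use Lemma \ref{lem:ext-iter}, reducing to (\ref{ext-uni}): the hypotheses $hf_1^\prime = f_1 h$ and $f_2 = h f_2^\prime$ are precisely the equations $hf^\prime = fh$ and $hg^\prime = g$ in the definition of a uniform Kleene wand, so uniformity of $\wand$ yields $f_1 \wand f_2 = h(f_1^\prime \wand f_2^\prime)$, which is (\ref{ext-uni}); uniformity of the induced $\mathsf{Tr}$ then follows from the uniform case of the same correspondence.
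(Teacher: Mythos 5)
Your proposal is correct and follows essentially the same route as the paper's proof: reduce via Lemma \ref{lem:ext-iter} to the matrix-form axioms, match {\bf [$\boldsymbol{\wand}$.1]}--{\bf [$\boldsymbol{\wand}$.4]} to them one-for-one, read off the trace formula from (\ref{ext-trace}), and derive uniformity of $\mathsf{Iter}$ directly from uniformity of $\wand$. Your explicit verification that $f_1 \sqcup f_2 \perp_d f_3$ (via the composite $\begin{bmatrix} f_1 & f_2 & f_3 \end{bmatrix}(\nabla_X + 1_A) = \begin{bmatrix} f_1 \sqcup f_2 & f_3 \end{bmatrix}$ being a legitimate row matrix) is a welcome elaboration of a step the paper leaves implicit in the matrix-calculus discussion preceding Lemma \ref{lem:ext-iter}.
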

\begin{proof} First note that our parametrized iteration operator is indeed well-defined since given a map ${f: X \to X + A}$, there are unique maps $f_1: X \to X$ and $f_2: X \to A$ with $f_1 \perp_d f_2$ such that $f=\begin{bmatrix} f_1 & f_2 \end{bmatrix}$. So $f_1 \wand f_2$ is indeed well-defined. We now show that $\mathsf{Iter}$ satisfies the four axioms of a parametrized iteration operator. By Lemma \ref{lem:ext-iter}, we can prove their equivalent versions in an extensive restriction category. 
\begin{description} 
\item[\textbf{[Parametrized Iteration]}] Using {\bf [$\boldsymbol{\wand}$.1]}, we compute that:
\[ f_2 \sqcup f_1 \left[\!\!\! \begin{array}{c|c} f_1  & f_2  \end{array} \!\!\! \right]  = f_2 \sqcup f_1 \left(f_1 \wand f_2 \right) = f_1 \wand f_2 = \left[\!\!\! \begin{array}{c|c} f_1  & f_2  \end{array} \!\!\! \right] \]
   \item[\textbf{[Naturality]}] Using {\bf [$\boldsymbol{\wand}$.2]}, we compute that:
\[  \left[\!\!\! \begin{array}{c|c} f_1  & f_2 h  \end{array} \!\!\! \right]  = f_1 \wand f_2 h = (f_1 \wand f_2) h = \left[\!\!\! \begin{array}{c|c} f_1  & f_2  \end{array} \!\!\! \right]h \]
  \item[\textbf{[Dinaturality]}] Using {\bf [$\boldsymbol{\wand}$.3]}, we compute that: 
\[ \left[\!\!\! \begin{array}{c|c} kf_1  & kf_2  \end{array} \!\!\! \right] = kf_1 \wand k f_2 = k(f_1k \wand f_2) = k \left[\!\!\! \begin{array}{c|c} f_1k  & f_2  \end{array} \!\!\! \right] \]
   \item[\textbf{[Diagonal Property]}] Using {\bf [$\boldsymbol{\wand}$.4]}, we compute that: 
\begin{gather*}
\left[\!\!\! \begin{array}{c|c} f_1 \sqcup f_2  & f_3  \end{array} \!\!\! \right] = (f_1 \sqcup f_2) \wand f_3 = (f_1 \wand f_2) \wand (f_1 \wand f_3) = \left[\!\!\! \begin{array}{c|c} f_1 \wand f_2  & f_1 \wand f_3  \end{array} \!\!\! \right] = \left[\!\!\! \begin{array}{c|c} \left[\!\!\! \begin{array}{c|c} f_1  & f_2 \end{array} \!\!\! \right]  & \left[\!\!\! \begin{array}{c|c} f_1  & f_3 \end{array} \!\!\! \right]   \end{array} \!\!\! \right]
\end{gather*} 
\end{description}
Thus $\mathsf{Iter}$ is a parametrized iteration operator as desired. Moreover, from (\ref{ext-trace}), we get that (\ref{ext-trace-wand}) is indeed the formula for the induced trace operator $\mathsf{Tr}$. Lastly, suppose that $\wand$ is also uniform. Then we show that $\mathsf{Iter}$ is also uniform:
\begin{description}
\item[\textbf{[Uniform]}] Suppose that $f_1 \perp_d f_2$ and $f_1^\prime \perp_d f^\prime_2$, and that we have a map $h$ such that $hf^\prime_1 = f_1h$ and $f_2 = hf^\prime_2$. Since $\wand$ is uniform, we get that $h (f^\prime_1 \wand f^\prime_2) = f_1 \wand f_2$, or in other words, $ h  \left[\!\!\! \begin{array}{c|c} f^\prime_1  & f^\prime_2  \end{array} \!\!\! \right] = \left[\!\!\! \begin{array}{c|c} f_1  & f_2  \end{array} \!\!\! \right]$. 
  \end{description}
  Since $\mathsf{Iter}$ is uniform, we also get that $\mathsf{Tr}$ is uniform. 
\end{proof}

Conversely, we now show how a parametrized iteration operator gives a Kleene wand. 

\begin{proposition}\label{prop:iter-to-wand} Let $\mathbb{X}$ be a traced extensive restriction category with parametrized iteration operator, $\mathsf{Iter}$, and an induced trace operator $\mathsf{Tr}$, then $(\mathbb{X}, \wand)$ is a $\perp_d$-itegory, where the Kleene wand $\wand$ is defined for $\perp_d$-disjoint maps $f: X \to X$ and $g: X \to A$ as follows: 
\begin{align}\label{eq:iter-to-wand}
f \wand g :=  \left[\!\!\! \begin{array}{c|c} f  & g  \end{array} \!\!\! \right] = \left[\begin{array}{c|c} f & g \\ \hline f & g \end{array}\right]
\end{align}
Thus, $(\mathbb{X}, \wand)$ is a $\perp_d$-itegory. Moreover, if $\mathsf{Iter}$ is uniform, then $\wand$ is uniform. 
\end{proposition}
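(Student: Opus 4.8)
The plan is to exploit the matrix reformulation of the parametrized iteration axioms in Lemma \ref{lem:ext-iter}, together with the definition $f \wand g := \mathsf{Iter}^X_A\!\left(\begin{bmatrix} f & g\end{bmatrix}\right)$. First I would note that $\wand$ is well defined: since $f \perp_d g$, the family $\{f,g\}$ is decision separated, so $\begin{bmatrix} f & g\end{bmatrix}$ is a genuine map $X \to X + A$ and $\mathsf{Iter}^X_A$ applies. Then three of the four Kleene wand axioms are immediate transcriptions: {\bf [$\boldsymbol{\wand}$.1]} is exactly the matrix iteration identity (\ref{ext-iteration}), {\bf [$\boldsymbol{\wand}$.2]} is (\ref{ext-iternat1}), {\bf [$\boldsymbol{\wand}$.3]} is (\ref{ext-iternat2}), and, when $\mathsf{Iter}$ is uniform, uniformity of $\wand$ is (\ref{ext-uni}). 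Each is obtained by reading the notation $\left[\!\!\!\begin{array}{c|c} f & g\end{array}\!\!\!\right]$ as $f \wand g$; this mirrors, in the opposite direction, the computations in the proof of Prop \ref{prop:wand-to-iter}.

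The only axiom requiring real work is {\bf [$\boldsymbol{\wand}$.4]}, which asserts both a disjointness statement $(f \wand f') \perp_d (f \wand g)$ and the diagonal equation $(f \sqcup f') \wand g = (f \wand f') \wand (f \wand g)$. The subtlety is that the matrix diagonal property (\ref{ext-iterdiag-2}) is only available for families $\{f_1,f_2,f_3\}$ that are decision separated, whereas the hypotheses $f \perp_d f'$ and $f \sqcup f' \perp_d g$ give, via Lemma \ref{lemma:disjoint-join}.(\ref{lemma:join-inter}), only pairwise $\perp_d$-disjointness of $\{f,f',g\}$, and pairwise disjointness does not in general imply decision separation. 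So the crux of the proof is the lemma: if $f \perp_d f'$ and $f \sqcup f' \perp_d g$, then $\{f,f',g\}$ is decision separated.

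To prove this key lemma I would use the restriction-isomorphism criterion of Lemma \ref{lem:n-decision-sep-iso}.(\ref{lem:n-decision-sep-iso.1}) and show that $\left[\begin{smallmatrix}\overline f\\\overline{f'}\\\overline g\end{smallmatrix}\right] : X+X+X \to X$ is a restriction isomorphism. Writing $X+X+X=(X+X)+X$, I would factor it as $\left[\begin{smallmatrix}\overline f\\\overline{f'}\\\overline g\end{smallmatrix}\right] = \left(\left[\begin{smallmatrix}\overline f\\\overline{f'}\end{smallmatrix}\right]+1_X\right)\left[\begin{smallmatrix}\overline f\sqcup\overline{f'}\\\overline g\end{smallmatrix}\right]$, where the identity $\overline f\big(\overline f\sqcup\overline{f'}\big)=\overline f$ and its mate (Lemma \ref{lemma:disjoint-join}.(\ref{lemma:join-<})) verify the first block and $1_X\,\overline g = \overline g$ the second. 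Here $\left[\begin{smallmatrix}\overline f\\\overline{f'}\end{smallmatrix}\right]$ is a restriction isomorphism by $f \perp_d f'$ and Lemma \ref{lem:decision-sep-iso}.(\ref{lem:decision-sep-iso.1}), while $\left[\begin{smallmatrix}\overline f\sqcup\overline{f'}\\\overline g\end{smallmatrix}\right] = \left[\begin{smallmatrix}\overline{f\sqcup f'}\\\overline g\end{smallmatrix}\right]$ (by Lemma \ref{lemma:disjoint-join}.(\ref{lemma:join-rest})) is a restriction isomorphism by $f\sqcup f'\perp_d g$. Since restriction isomorphisms are closed under composition and under the coproduct $+$, the factorization is a restriction isomorphism, as required.

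With decision separation of $\{f,f',g\}$ established, {\bf [$\boldsymbol{\wand}$.4]} follows at once. The matrix $\begin{bmatrix} f & f' & g\end{bmatrix}$ is then a genuine map $X \to X+X+A$, so by (\ref{ext-iterdiag-3}) the intermediate iteration $\mathsf{Iter}^X_{X+A}\!\left(\begin{bmatrix} f & f' & g\end{bmatrix}\right)$ equals $\begin{bmatrix} f\wand f' & f\wand g\end{bmatrix}$; being an output of $\mathsf{Iter}$ it is an honest map into $X+A$, hence its two columns are decision separated, which yields $(f\wand f')\perp_d(f\wand g)$. The diagonal equation is then precisely (\ref{ext-iterdiag-2}) rewritten through the definition of $\wand$. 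Finally, since $\mathsf{Iter}$ being uniform transcribes to (\ref{ext-uni}), uniformity of $\mathsf{Iter}$ gives uniformity of $\wand$. The main obstacle throughout is the decision-separation lemma of the third paragraph; everything else is a direct translation under Lemma \ref{lem:ext-iter}.
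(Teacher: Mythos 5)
Your proposal is correct and takes essentially the same route as the paper: the easy axioms and uniformity are direct transcriptions via Lemma \ref{lem:ext-iter}, and the crux of {\bf [$\boldsymbol{\wand}$.4]} — that $f \perp_d f^\prime$ and $f \sqcup f^\prime \perp_d g$ force $\lbrace f, f^\prime, g\rbrace$ to be decision separated — is established in the paper by exactly your factorization $\left(\left[\begin{smallmatrix}\overline{f}\\ \overline{f^\prime}\end{smallmatrix}\right]+1_X\right)\left[\begin{smallmatrix}\overline{f\sqcup f^\prime}\\ \overline{g}\end{smallmatrix}\right]=\left[\begin{smallmatrix}\overline{f}\\ \overline{f^\prime}\\ \overline{g}\end{smallmatrix}\right]$, phrased there dually as constructing the separating decision $\langle f\sqcup f^\prime\vert g\rangle(\langle f\vert f^\prime\rangle+1_X)$ and identifying its restriction inverse. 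The subsequent derivation of $(f\wand f^\prime)\perp_d(f\wand g)$ from (\ref{ext-iterdiag-3}) and of the diagonal equation from (\ref{ext-iterdiag-2}) matches the paper's proof as well.
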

\begin{proof} Since $f \perp_d g$, we can indeed build the $1 \times 2$ matrix $\begin{bmatrix} f & g \end{bmatrix}: X \to X + A$. So we can take its iteration, and therefore our proposed Kleene wand is well-defined. We now need to prove the four axioms of a Kleene wand. 
    \begin{enumerate}[{\bf [$\boldsymbol{\wand}$.1]}]
\item Here we use \textbf{[Parametrized Iteration]} to compute that: 
\[  g \sqcup f(f \wand g) = g \sqcup f \left[\!\!\! \begin{array}{c|c} f  & g  \end{array} \!\!\! \right] = \left[\!\!\! \begin{array}{c|c} f  & g  \end{array} \!\!\! \right] = f \wand g \]
\item Here we use \textbf{[Naturality]} to compute that: 
\[  \left( f \wand g \right)h = \left[\!\!\! \begin{array}{c|c} f  & g  \end{array} \!\!\! \right] h = \left[\!\!\! \begin{array}{c|c} f  & g h \end{array} \!\!\! \right]= f \wand gh   \]
\item Here we use \textbf{[Dinaturality]} to compute that: 
\[ hf \wand hg =  \left[\!\!\! \begin{array}{c|c} hf  & hg  \end{array} \!\!\! \right] = h \left[\!\!\! \begin{array}{c|c} hf  & g  \end{array} \!\!\! \right] = h \left( fh \wand g \right) \]
\item If $f \sqcup f^\prime \perp_d g$, we will first show that $\lbrace f, f^\prime, g \rbrace$ are decision disjoint. So define $\langle f \vert f^\prime \vert g \rangle = \langle f \sqcup f^\prime \vert g \rangle (\langle f \vert f^\prime \rangle + 1_X)$. To show that this is the separating decision of $\lbrace f, f^\prime, g \rbrace$, by Lemma \ref{lem:n-decision-sep-iso}.(\ref{lem:n-decision-sep-iso.1}), we need to show that is the restriction inverse of $\left[ \begin{smallmatrix} \overline{f} \\ \overline{f^\prime} \\ \overline{g}  \end{smallmatrix}\right]$. However recall that by Lemma \ref{lem:decision-sep-iso}.(\ref{lem:decision-sep-iso.1}), $\langle f \sqcup f^\prime \vert g \rangle$ is the restriction inverse of $\left[ \begin{smallmatrix} \overline{f \sqcup f^\prime} \\ \overline{g} \end{smallmatrix}\right]$ and $\langle f \vert f^\prime \rangle$ is the restriction inverse of $\left[ \begin{smallmatrix} \overline{f} \\ \overline{f^\prime}  \end{smallmatrix}\right]$. So by construction, $\langle f \vert f^\prime \vert g \rangle$ is the restriction inverse of $\left(\begin{bmatrix} \overline{f} \\ \overline{f^\prime}  \end{bmatrix} + 1_X \right)\left[ \begin{smallmatrix}\overline{f \sqcup f^\prime} \\ \overline{g} \end{smallmatrix}\right]$, since it is the composition and coproduct of restriction isomorphisms. However observe that by \textbf{[R.3]} and Lemma \ref{lemma:disjoint-join}.(\ref{lemma:join-<}), we get that $\left[ \begin{smallmatrix} \overline{f} \\ \overline{f^\prime}  \end{smallmatrix}\right]\overline{f \sqcup f^\prime} = \left[ \begin{smallmatrix} \overline{f} \\ \overline{f^\prime}  \end{smallmatrix}\right]$. Then we have that $\left(\begin{bmatrix} \overline{f} \\ \overline{f^\prime}  \end{bmatrix} + 1_X \right) \begin{bmatrix} \overline{f \sqcup f^\prime} \\ \overline{g} \end{bmatrix} = \left[ \begin{smallmatrix}\overline{f} \\ \overline{f^\prime} \\ \overline{g}  \end{smallmatrix}\right]$. So $\langle f \vert f^\prime \vert g \rangle$ is the restriction inverse of $\left[ \begin{smallmatrix} \overline{f} \\ \overline{f^\prime} \\ \overline{g}  \end{smallmatrix}\right]$, and therefore is also the separating decision which make $\lbrace f, f^\prime, g \rbrace$ decision disjoint. Since $\lbrace f, f^\prime, g \rbrace$ are decision disjoint, we can build the row matrix $\begin{bmatrix} f & f^\prime & g \end{bmatrix}$. We first need to explain why $f \wand g$ and $f \wand f^\prime$ are $\perp_d$-disjoint. However note that from (\ref{ext-iterdiag-3}), we get that $\left[\!\!\! \begin{array}{c|c} f  & g  \end{array} \!\!\! \right] \perp_d \left[\!\!\! \begin{array}{c|c} f  & f^\prime  \end{array} \!\!\! \right]$, or in other words, $f \wand g \perp_d f \wand f^\prime$ as desired. Finally, using \textbf{[Diagonal Property]}, we compute that: 
\begin{gather*} (f \sqcup f^\prime) \wand g =  \left[\!\!\! \begin{array}{c|c} f \sqcup f^\prime  & g  \end{array} \!\!\! \right] = \left[\!\!\! \begin{array}{c|c} \left[\!\!\! \begin{array}{c|c} f  & f^\prime  \end{array} \!\!\! \right]  & \left[\!\!\! \begin{array}{c|c} f  & g  \end{array} \!\!\! \right]  \end{array} \!\!\! \right] = \left[\!\!\! \begin{array}{c|c} f \wand f^\prime  & f \wand g  \end{array} \!\!\! \right] = (f \wand f^\prime) \wand (f \wand g) 
\end{gather*}
\end{enumerate}
So we conclude that $\wand$ is a Kleene wand. Now suppose that $\mathsf{Iter}$ is also uniform. Given $f \perp_d g$ and $f^\prime \perp_d g^\prime$, and a map $h$ such that $hg^\prime = g$ and $hf^\prime= fh$, then $h \left( f^\prime \wand g^\prime \right)=f \wand g$. Then by \textbf{[Uniform]} we get that $f \wand g = \left[\!\!\! \begin{array}{c|c} f  & g  \end{array} \!\!\! \right] = h  \left[\!\!\! \begin{array}{c|c} f^\prime  & g^\prime  \end{array} \!\!\! \right]=h \left( f^\prime \wand g^\prime \right)$. So $\wand$ is also uniform, as desired. 
\end{proof}

It is immediate that the constructions in Prop \ref{prop:wand-to-iter} and Prop \ref{prop:iter-to-wand} are inverses of each other. Thus we obtain the main result of this paper: 

\begin{theorem}\label{thm:ext-wand-trace} For an extensive restriction category, there is a bijective correspondence between parametrized iteration operators (equivalently trace operators) and Kleene wands for $\perp_d$. Therefore, a traced extensive restriction category is equivalently an extensive restriction category which is an itegory with respect to $\perp_d$. 
\end{theorem}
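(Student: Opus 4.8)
The plan is to observe that the theorem now follows formally from the two constructions already established, since the sentence preceding the statement asserts they are mutually inverse; all that remains is to make this precise and to transport the result across the standard trace/iteration correspondence. The conceptual input is the matrix calculus: in an extensive restriction category a map $f\colon X \to X + A$ is \emph{uniquely} a $1 \times 2$ row matrix $\begin{bmatrix} f_1 & f_2 \end{bmatrix}$ with $f_1 \colon X \to X$, $f_2 \colon X \to A$, and $f_1 \perp_d f_2$. Consequently a parametrized iteration operator and a Kleene wand for $\perp_d$ take literally the same input data (a $\perp_d$-disjoint pair consisting of an endomorphism and a map out of $X$) and return output of the same type $X \to A$. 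This is precisely what makes the round-trips collapse.

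First I would check the two composites. Starting from a Kleene wand $\wand$, Prop \ref{prop:wand-to-iter} produces the iteration operator determined by $\left[\!\!\! \begin{array}{c|c} f_1 & f_2 \end{array} \!\!\! \right] = f_1 \wand f_2$; feeding this into Prop \ref{prop:iter-to-wand} yields a Kleene wand $\wand^\prime$ with $f \wand^\prime g = \left[\!\!\! \begin{array}{c|c} f & g \end{array} \!\!\! \right] = f \wand g$, so $\wand^\prime = \wand$. Conversely, starting from a parametrized iteration operator $\mathsf{Iter}$, Prop \ref{prop:iter-to-wand} gives $f \wand g = \left[\!\!\! \begin{array}{c|c} f & g \end{array} \!\!\! \right]$, and Prop \ref{prop:wand-to-iter} recovers $\mathsf{Iter}^\prime$ with $\left[\!\!\! \begin{array}{c|c} f_1 & f_2 \end{array} \!\!\! \right]_{\mathsf{Iter}^\prime} = f_1 \wand f_2 = \left[\!\!\! \begin{array}{c|c} f_1 & f_2 \end{array} \!\!\! \right]_{\mathsf{Iter}}$, so $\mathsf{Iter}^\prime = \mathsf{Iter}$. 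Both composites are the identity, giving a bijection between Kleene wands for $\perp_d$ and parametrized iteration operators.

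Finally I would invoke the bijection between parametrized iteration operators and trace operators on a category with finite coproducts from \cite[Prop 6.8]{selinger2010survey}, which applies directly since restriction coproducts are genuine coproducts. Composing the two bijections delivers the correspondence with trace operators, and the closing sentence of the theorem then follows immediately by unwinding Def \ref{def:traced-ext} and the definition of a $\perp_d$-itegory.

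I do not expect any genuine obstacle to remain: the substantive work—verifying each Kleene wand axiom against the matrix-form iteration axioms of Lemma \ref{lem:ext-iter}, and conversely—was already carried out in Prop \ref{prop:wand-to-iter} and Prop \ref{prop:iter-to-wand}. The only point meriting a word of care is to confirm that the two constructions are inverse \emph{definitionally}, not merely up to the axioms; the common formula $f \wand g = \left[\!\!\! \begin{array}{c|c} f & g \end{array} \!\!\! \right]$ makes this transparent, so the proof is essentially a one-line observation.
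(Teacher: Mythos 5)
Your proposal is correct and follows essentially the same route as the paper: the paper likewise observes that the constructions of Prop \ref{prop:wand-to-iter} and Prop \ref{prop:iter-to-wand} are definitionally inverse (both being given by the common formula $f \wand g = \left[\!\!\! \begin{array}{c|c} f & g \end{array} \!\!\! \right]$, with the matrix calculus guaranteeing the two operators have identical input data), and then appeals to the already-established iteration/trace correspondence. Your explicit spelling-out of the two round trips is just a more detailed rendering of the paper's ``it is immediate'' step.
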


In the case that we have infinite disjoint joins, we can use the canonical inductive Kleene wand to get a parametrized iteration operator and trace operator. 

\begin{proposition}\label{prop:ext-inf} Let $\mathbb{X}$ be an extensive restriction category which is also a disjoint $\perp_d$-restriction category, then $\mathbb{X}$ is a traced extensive restriction category with parametrized iteration operator $\mathsf{Iter}$ and trace operator $\mathsf{Tr}$ defined respectively as follows: 
\begin{align}\label{eq:inf-trace}
\left[\!\!\! \begin{array}{c|c} f_1  & f_2  \end{array} \!\!\! \right] = \bigsqcup\limits^\infty_{n=0} f_1^n f_2 && \left[\begin{array}{c|c} g_1 & g_2 \\ \hline g_3 & g_4 \end{array}\right] = g_4 \sqcup \bigsqcup\limits^\infty_{n=0} g_3 g_1^n g_2
\end{align}
Moreover, $\mathsf{Iter}$ and $\mathsf{Tr}$ are uniform. 
\end{proposition}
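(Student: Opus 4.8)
The plan is to recognize this proposition as a direct corollary obtained by composing three results already established in the excerpt, so the proof requires essentially no new computation beyond transporting formulas along these constructions. First I would invoke Theorem \ref{thm:ext-int} to note that in any extensive restriction category $\perp_d$ is a genuine interference relation and all binary $\perp_d$-joins exist; combined with the hypothesis that $\mathbb{X}$ is a disjoint $\perp_d$-restriction category, this places us in exactly the setting of Theorem \ref{thm:countable-wand}. That theorem then supplies the canonical strongly inductive Kleene wand $\wand$ for $\perp_d$, defined by $f \wand g = \bigsqcup_{n \in \mathbb{N}} f^n g$, making $(\mathbb{X}, \wand)$ a $\perp_d$-itegory.

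Next I would feed this $\perp_d$-itegory into Proposition \ref{prop:wand-to-iter}, which produces a parametrized iteration operator $\mathsf{Iter}$ (hence, equivalently, a trace operator $\mathsf{Tr}$), thereby exhibiting $\mathbb{X}$ as a traced extensive restriction category. It remains only to check that the formulas coincide with those in the statement. For the iteration operator this is immediate, since by definition $\left[\!\!\! \begin{array}{c|c} f_1 & f_2 \end{array} \!\!\! \right] = f_1 \wand f_2 = \bigsqcup_{n=0}^\infty f_1^n f_2$. For the trace operator, equation (\ref{ext-trace-wand}) of Proposition \ref{prop:wand-to-iter} gives $\left[\begin{array}{c|c} g_1 & g_2 \\ \hline g_3 & g_4 \end{array}\right] = g_4 \sqcup g_3(g_1 \wand g_2)$, and I would then apply Lemma \ref{lemma:disjoint-join}.(\ref{lemma:join-comp}), which lets composition distribute over the $\perp_d$-join, to rewrite $g_3(g_1 \wand g_2) = g_3 \bigsqcup_{n=0}^\infty g_1^n g_2 = \bigsqcup_{n=0}^\infty g_3 g_1^n g_2$. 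This yields exactly the second formula in (\ref{eq:inf-trace}).

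Finally, for uniformity I would observe that Theorem \ref{thm:countable-wand} guarantees the canonical $\wand$ is strongly inductive, and hence in particular strongly uniform, so certainly uniform. Proposition \ref{prop:wand-to-iter} states that a uniform Kleene wand induces uniform $\mathsf{Iter}$ and $\mathsf{Tr}$, which closes the argument.

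I do not expect a genuine obstacle here, as every step is an application of a prior result; the only point requiring care is the distribution of the pre-composition by $g_3$ across the infinite $\perp_d$-join in the trace formula, which is exactly the content of Lemma \ref{lemma:disjoint-join}.(\ref{lemma:join-comp}) and is legitimate because $\mathbb{X}$ is assumed to have all (strong) $\perp_d$-joins rather than merely binary ones.
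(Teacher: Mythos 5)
Your proposal is correct and takes essentially the same route as the paper's own proof: both obtain the canonical strongly inductive Kleene wand $f \wand g = \bigsqcup_{n \in \mathbb{N}} f^n g$ from Theorem \ref{thm:countable-wand}, pass it through Proposition \ref{prop:wand-to-iter} to produce $\mathsf{Iter}$ and $\mathsf{Tr}$, and deduce uniformity from the fact that an inductive (hence strongly uniform) Kleene wand induces uniform operators. Your extra step of distributing $g_3$ across the infinite $\perp_d$-join via Lemma \ref{lemma:disjoint-join}.(\ref{lemma:join-comp}) simply makes explicit a rewriting the paper leaves implicit.
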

\begin{proof} These are the induced parametrized iteration operator and trace operator from the Kleene wand from Thm \ref{thm:countable-wand}. Since this an inductive Kleene wand, it is uniform, and therefore the parametrized iteration operator and trace operator are both uniform as well. 
\end{proof}

Moreover, we also note that it is not difficult to see that an extensive restriction category which is also a disjoint $\perp_d$-restriction category is in fact a partially additive category where the partial sum operation $\Sigma$ is given by the $\perp_d$-joins $\bigsqcup$. Moreover in this case, the canonical trace formula for a partially additive category as in \cite[Prop 3.1.4]{haghverdi2000categorical} is precisely the one given in (\ref{eq:inf-trace}) above. 

We can also consider the classical setting, where we can use the Kleene upper-star to express the parametrized iteration operator and trace operator. 

\begin{proposition}\label{prop:trace-star} Let $\mathbb{X}$ be an extensive restriction category which is also a classical $\perp_d$-restriction category equipped with a Kleene upper-star $\star$, then $\mathbb{X}$ is a traced extensive restriction category with a parametrized iteration operator $\mathsf{Iter}$ and trace operator $\mathsf{Tr}$ defined respectively by: 
 \begin{align}\label{eq:star-trace}
\left[\!\!\! \begin{array}{c|c} f_1  & f_2  \end{array} \!\!\! \right] = f_1^\star f_2 && \left[\begin{array}{c|c} g_1 & g_2 \\ \hline g_3 & g_4 \end{array}\right]  = g_4 \sqcup g_3 g_1^\star g_2 
\end{align}
\end{proposition}
\begin{proof} These are the induced parametrized iteration operator and trace operator from the induced Kleene wand from a Kleene upper-star as given in Prop \ref{prop:star-to-wand}.
\end{proof}

Either of these above formulas recapture the canonical trace and iteration of partial functions. 

\begin{example}\normalfont For $\mathsf{PAR}$, the parametrized iteration operator and trace operator given in Ex \ref{ex:PAR-trace} are precisely the ones obtain from the Kleene wand in Ex \ref{ex:PAR-wand}. Moreover, they are also given by the formulas in (\ref{eq:inf-trace}) and (\ref{eq:star-trace}). 
\end{example}

We conclude by explaining how applying the matrix construction to an itegory gives a traced extensive restriction category. Once one  has matrices, it is much easier to use the trace operator rather than the Kleene wand or the parametrized iteration operator. To understand the formula for the trace in the matrix construction, we first need to understand what the trace of an $n + m \times n +p$ matrix is in a traced extensive restriction category. This amounts to understanding the trace formulas for matrices of size $1 + m \times 1 + p$ and $n+1 \times n+1$ respectively. We will explain how in either case, it reduces to computing traces of $2 \times 2$ matrices. 

So let us start with a map $F: X + A_1 + \hdots + A_m \to X + B_1 + \hdots + B_p$, which is an $1+m \times 1+p$ matrix $F = [F_{i,j}]_{\substack{1 \leq i \leq 1+m \\ 1 \leq j \leq 1+p}}$, where $F_{1,1}: X \to X$, $F_{1,j+1}: X \to B_j$, and $F_{i+1,1}: A_k \to X$, and $F_{i+1,j+1}: A_i \to B_j$. As a short hand let us denote $\vec{A} =  A_1 + \hdots + A_m$ and $\vec{B} = B_1 + \hdots + B_p$. Then its trace $\mathsf{Tr}^X_{\vec{A}, \vec{B}}(F): A_1 + \hdots + A_m \to B_1 + \hdots + B_p$ is a $m \times p$ matrix whose components are given by $\iota_i \mathsf{Tr}^X_{\vec{A}, \vec{B}}(F) \iota^\circ_j: A_i \to B_j$. However by \textbf{[Tightening]} we get that $\iota_i \mathsf{Tr}^X_{\vec{A}, \vec{B}}(F) \iota^\circ_j = \mathsf{Tr}^X_{A_i, B_j}\left( (1_X + \iota_i) F (1_X + \iota^\circ_j) \right)$. So expanding this out, we get that: 
\[ \mathsf{Tr}^X_{\vec{A}, \vec{B}}(F)_{i,j} = \left[\begin{array}{c|c} F_{1,1} & F_{1,j+1} \\ \hline F_{i+1,1} & F_{i+1,j+1} \end{array}\right] \]
So as a matrix, the trace of $F$ is written as: 
 \begin{equation}\begin{gathered}\label{eq:matrix-trace1}
\mathsf{Tr}^X_{\vec{A}, \vec{B}}(F) \! = \! \left[\begin{array}{c|ccc} 
F_{1,1} & F_{1,2}  & \hdots & F_{1,p+1} \\
\hline F_{2,1} & F_{2,2} & \hdots & F_{2,p+1} \\
\vdots & \hdots & \ddots& \vdots \\ 
F_{m+1,1} & F_{m+1,2}& \hdots & F_{m+1,p+1} \end{array}\right]  \!= \!\begin{bmatrix} \left[\begin{array}{c|c} F_{1,1} & F_{1,2} \\ \hline F_{2,1} & F_{2,2} \end{array}\right] & \hdots & \left[\begin{array}{c|c} F_{1,1} & F_{1,p+1} \\ \hline F_{2,1} & F_{2,p+1} \end{array}\right] \\
\vdots & \ddots & \vdots \\
\left[\begin{array}{c|c} F_{1,1} & F_{1,2} \\ \hline F_{m+1,1} & F_{m+1,p+1} \end{array}\right] & \hdots & \left[\begin{array}{c|c} f_{1,1} & F_{1,p+1} \\ \hline F_{m+1,1} & F_{m+1,p+1} \end{array}\right]
\end{bmatrix} \end{gathered}\end{equation}
So the trace of an $1 + m \times 1 + p$ matrix is the $m \times p$ matrix whose components are traces of $2 \times 2$ matrices. As such, computing the trace of an $1 + m \times 1 + p$ matrix is reduced to computing traces of $2 \times 2$ matrices.  

Another way to see this is to write our map $X + A_1 + \hdots + A_m \to X + B_1 + \hdots + B_p$ as a block matrix $\left[ \begin{smallmatrix} f & P \\ Q & R \end{smallmatrix}\right]$, where $f: X \to X$ is an endomorphism, $P=\begin{bmatrix} P_1 & \hdots & P_p \end{bmatrix}: X \to B_1 + \hdots + B_p$ is a $1 \times p$ row matrix, $Q=\left[ \begin{smallmatrix} Q_1 \\ \vdots \\ Q_m \end{smallmatrix}\right]: A_1 + \hdots + A_m \to X$ a $m \times 1$ column matrix, and $R= [R_{i,j}]: A_1 +  \hdots + A_m \to B_1 + \hdots + B_p$ is an $m \times p$ matrix. Then the traced matrix $\left[\begin{array}{c|c} f & P \\ \hline Q & R \end{array}\right]$ is given by:
\begin{align}\label{eq:matrix-trace2}
\left[\begin{array}{c|c} f & P \\ \hline Q & R \end{array}\right]_{i,j} = \left[\begin{array}{c|c} f & P_i \\ \hline Q_j & R_{i,j} \end{array}\right] && \left[\begin{array}{c|c} f & P \\ \hline Q & R \end{array}\right] = \begin{bmatrix} \left[\begin{array}{c|c} f & P_1 \\ \hline Q_1 & R_{1,1} \end{array}\right] & \hdots & \left[\begin{array}{c|c} f & P_p \\ \hline Q_1 & R(1,p) \end{array}\right] \\
\vdots & \ddots & \vdots \\
\left[\begin{array}{c|c} f & P_1 \\ \hline Q_m & R(m,1) \end{array}\right] & \hdots & \left[\begin{array}{c|c} f & P_p \\ \hline Q_m & R(m,p) \end{array}\right]
\end{bmatrix} 
\end{align}

On the other hand, consider now a map of type $X_1 + \hdots + X_n + A \to X_1 + \hdots + X_n + B$, which is a $n + 1 \times n+1$ matrix. Then successive applications of \textbf{[Vanishing]} tells us that tracing out $X_1 + \hdots + X_n$ is done by first tracing out $X_1$, then tracing out $X_2$, and so on (in fact \textbf{[Yanking]} also tells us that we can do it in any order). Then we may write down the higher order \textbf{[Vanishing]} inductively on $n$ as follows. Consider a map $X_1 + \hdots + X_{n+1} + A \to X_1 + \hdots + X_{n+1} + B$, which is an $n+2 \times n+2$ matrix, which we write down as a block matrix $\left[ \begin{smallmatrix} f & T & U \\ V  & S & g \\ W & h &k \end{smallmatrix}\right]$, where $S: X_2 + \hdots + X_{n+1} \to X_2 + \hdots + X_{n+1}$ is an $n\times n$ matrix, $T: X_2 + \hdots + X_n \to X_{n+1}$ and $U: X_2 + \hdots + X_{n+1} \to B$ are $n \times 1$ column matrices, $V: X_{n+1} \to  X_1 + \hdots + X_n$ and $W: A \to  X_2 + \hdots + X_{n+1}$ are $1 \times n$ row matrices, and the rest are maps $f: X_{1} \to X_{1}$, $g: X_{1} \to B$, $h: A \to X_{1}$, and $k: A \to B$. Then by \textbf{[Vanishing]} we have that:
\begin{align}\label{eq:matrix-trace3}
\left[\begin{array}{cc|c} f & T & g \\ V & S & U \\ \hline h & W & k \end{array}\right] = \left[\begin{array}{c|c} \left[\begin{array}{c|c} f & T \\ \hline V & S \end{array}\right]  & \left[\begin{array}{c|c} f & g \\ \hline V & U \end{array}\right] \\   \\ \hline \\ \left[\begin{array}{c|c} f & T \\ \hline h & W \end{array}\right]  & \left[\begin{array}{c|c} f & g \\ \hline h & k \end{array}\right]  \end{array}\right] 
\end{align}
where on the left-hand side, the top left, top right, and bottom left inputs of the traced matrix can be reduced further using (\ref{eq:matrix-trace2}) and applying \textbf{[Vanishing]} again. Thus we see that the trace of an $n+1 \times n+1$ matrix is given by taking the trace of $2 \times 2$ matrices whose components are themselves $2 \times 2$ matrices, and so on. So again we have that computing the trace of a $n+1 \times n+1$ reduces to computing the trace of $2 \times 2$ matrices.  

Finally, combining (\ref{eq:matrix-trace2}) and (\ref{eq:matrix-trace3}) together tells us that computing the trace of an $n + m \times n + p$ matrix can be reduced to computing traces of $2 \times 2$ matrices. Therefore, for the matrix construction, it suffices to give a formula for $2 \times 2$ matrices and then extend it to $n + m \times n + p$ matrices inductively using (\ref{eq:matrix-trace2}) and (\ref{eq:matrix-trace3}).

\begin{proposition}\label{prop:matrix-ext-wand} Let $\mathbb{X}$ be a finitely disjoint $\perp$-restriction category with a Kleene wand, so $(\mathbb{X}, \wand)$ is a $\perp$-itegory, then $\mathsf{MAT}\left[ (\mathbb{X},\perp) \right]$ is a traced extensive restriction category, where the trace operator, $\mathsf{Tr}$, is defined on a matrix\footnote{Recall that in $\mathsf{MAT}\left[ (\mathbb{X},\perp) \right]$, the coproduct is given by concatenation of lists.} $F: (X,A) = (X) + (A) \to (X) + (B) = (X,B)$ as follows: 
\begin{align}
    \mathsf{Tr}^{(X)}_{(A),(B)}(F) = \left[\begin{array}{c|c} f_{1,1} & f_{1,2} \\ \hline f_{2,1} & f_{2,2} \end{array}\right] = \left[ f_{2,2} \sqcup f_{2,1} \left( f_{1,1} \wand f_{1,2} \right) \right]
\end{align}
which we then extend to matrices of type $(X_1, \hdots, X_n) + (A_1, \hdots, A_m) \to (X_1, \hdots, X_n) + (B_1, \hdots, B_m)$ using (\ref{eq:matrix-trace2}) and (\ref{eq:matrix-trace3}). Furthermore, ${\mathcal{I}}: \mathbb{X} \to \mathsf{MAT}\left[ (\mathbb{X},\perp) \right]$ preserves the Kleene wands, that is, ${\mathcal{I}(f \wand g)}= \mathcal{I}(f) \wand \mathcal{I}(g)$. 
\end{proposition}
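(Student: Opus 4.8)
The plan is to reduce the statement to the equivalence between Kleene wands and trace operators established in Theorem \ref{thm:ext-wand-trace}. By Proposition \ref{prop:construction2}, $\mathsf{MAT}\left[(\mathbb{X},\perp)\right]$ is an extensive restriction category whose canonical interference relation is $\perp_d$, and it is a finitely disjoint $\perp_d$-restriction category. Hence, by Theorem \ref{thm:ext-wand-trace}, it suffices to equip $\mathsf{MAT}\left[(\mathbb{X},\perp)\right]$ with a Kleene wand for $\perp_d$; the trace operator is then automatically induced by Proposition \ref{prop:wand-to-iter}, and on a $2\times2$ single-object matrix it is given by (\ref{ext-trace-wand}) with $g_1 = F(1,1)$, $g_2 = F(1,2)$, $g_3 = F(2,1)$, $g_4 = F(2,2)$, which is exactly the displayed base-case formula. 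Since (\ref{eq:matrix-trace1})--(\ref{eq:matrix-trace4}) are consequences of the trace axioms valid in \emph{any} traced extensive restriction category, the induced trace on $\mathsf{MAT}\left[(\mathbb{X},\perp)\right]$ satisfies them; as these formulas determine the trace of an arbitrary matrix from that of $2\times2$ single-object matrices, the extension via (\ref{eq:matrix-trace4}) then holds automatically for the induced trace, so it coincides with the operator in the statement.

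First I would construct the Kleene wand on $\mathsf{MAT}\left[(\mathbb{X},\perp)\right]$ by induction on the length $n$ of the list $\vec X = (X_1, \dots, X_n)$ carrying the endomorphism. For the base case $n=1$, an endomorphism is a single map $f = F(1,1)\colon X \to X$ and $G$ is a row $\left[\begin{smallmatrix} g_1 & \cdots & g_m \end{smallmatrix}\right]$ of decision separated entries; the condition $F \perp_d G$ unpacks, via Proposition \ref{prop:construction2}.(\ref{mat.1}) and Lemma \ref{lem:interference1}, to $f \perp g_j$ for each $j$, so I set $F \wand G := \left[\begin{smallmatrix} f \wand g_1 & \cdots & f \wand g_m \end{smallmatrix}\right]$. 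That this is a legitimate matrix, i.e.\ that its single row is $\perp$-disjoint, follows from Lemma \ref{lem:wand}.(\ref{lem:wand.perp}): from $f \perp g_j$, $f \perp g_k$ and $g_j \perp g_k$ we get $f \perp g_j \sqcup g_k$ by {\bf [$\boldsymbol{\sqcup}$.4]}, whence $f \wand g_j \perp f \wand g_k$. For the inductive step I would split $\vec X = \vec Y + (Z)$, write $F$ and $G$ as block matrices, and eliminate the coordinate $Z$ using the $n=1$ wand before recursing on $\vec Y$ — the standard Gaussian-elimination description of feedback, in which the base wand resolves the self-loop at $Z$ and the matrix structure handles the finitely many transitions between the remaining coordinates.

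The main obstacle is verifying the Kleene-wand axioms for this inductively defined operator. Because $\mathsf{MAT}\left[(\mathbb{X},\perp)\right]$ is finitely disjoint, by Lemma \ref{lem:strong-wand-alt} I need only {\bf [$\boldsymbol{\wand}$.1]}, {\bf [$\boldsymbol{\wand}$.2]}, {\bf [$\boldsymbol{\wand}$.3]} and the simplified diagonal {\bf [$\boldsymbol{\wand}$.4.a]}. Each is checked entrywise by induction on $n$: the base cases are immediate from the corresponding base-wand axiom together with Lemma \ref{lem:wand}, while the inductive steps require bookkeeping of disjoint joins, using Lemma \ref{lemma:disjoint-join} to commute composition past $\sqcup$ and {\bf [$\boldsymbol{\sqcup}$.4]} (with Lemma \ref{lem:wand}.(\ref{lem:wand.perp})) to keep the relevant families $\perp$-disjoint so that every join and wand appearing in the elimination is defined. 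This is what makes the argument long rather than deep; the genuinely delicate point is the diagonal axiom, where the two ways of resolving a doubled endomorphism must be matched, and there the base-wand diagonal axiom {\bf [$\boldsymbol{\wand}$.4]} (valid by hypothesis, and well-defined by Lemma \ref{lemma:disjoint-join}.(\ref{lemma:join-inter})) carries the essential content.

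Finally, for the preservation claim, $\mathcal{I}(f) = [f]$ and $\mathcal{I}(g) = [g]$ are $1\times1$ and $1\times m$ matrices, and $f \perp g$ gives $\mathcal{I}(f) \perp_d \mathcal{I}(g)$ by Proposition \ref{prop:construction2}.(\ref{mat.1}) and Lemma \ref{lem:interference1}.(\ref{lem:interference1.v}). Then $\mathcal{I}(f) \wand \mathcal{I}(g)$ is computed by the $n=1$ base case, or equivalently as the induced iteration $\mathsf{Iter}(\left[\begin{smallmatrix} f & g \end{smallmatrix}\right]) = \mathsf{Tr}\!\left(\left[\begin{smallmatrix} f & g \\ f & g \end{smallmatrix}\right]\right) = g \sqcup f(f \wand g) = f \wand g$ using {\bf [$\boldsymbol{\wand}$.1]}, so $\mathcal{I}(f) \wand \mathcal{I}(g) = [f \wand g] = \mathcal{I}(f \wand g)$, as required.
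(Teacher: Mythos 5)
Your proposal is correct, but it runs the construction in the opposite direction from the paper. The paper takes the displayed $2\times 2$ formula as the \emph{definition} of the trace on single-object lists (its axioms there following from the computations already done in Prop \ref{prop:wand-to-iter}, carried out entrywise in $\mathbb{X}$), \emph{defines} the trace of larger matrices by the inductive formula (\ref{eq:matrix-trace4}), defers the tedious verification that this extension satisfies the trace axioms, and only then obtains the Kleene wand on $\mathsf{MAT}\left[(\mathbb{X},\perp)\right]$ via Prop \ref{prop:iter-to-wand}, so that preservation under $\mathcal{I}$ requires a short computation with {\bf [$\boldsymbol{\wand}$.1]}. You instead build the Kleene wand on $\mathsf{MAT}\left[(\mathbb{X},\perp)\right]$ directly (entrywise base case, Gaussian elimination for longer lists), defer the verification of its axioms, and then invoke Thm \ref{thm:ext-wand-trace} to obtain the trace; your observation that (\ref{eq:matrix-trace1})--(\ref{eq:matrix-trace4}) hold in \emph{any} traced extensive restriction category, so that a trace operator is determined by its values on $2\times 2$ single-object matrices and hence your induced trace must coincide with the one in the statement, is a genuine logical cleanup --- it replaces the paper's definition-plus-verification of the extension by a uniqueness argument --- and it makes preservation of the wand by $\mathcal{I}$ true by construction rather than by computation. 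What you pay is that the deferred work lands on the wand side: verifying {\bf [$\boldsymbol{\wand}$.3]} and {\bf [$\boldsymbol{\wand}$.4]} for your inductively defined wand mixes matrices over lists of different lengths (dinaturality relates an endomorphism of one list to an endomorphism of another), so the ``entrywise induction'' is no lighter than the paper's deferred reduction of the trace axioms to the $2\times 2$ case; the two deferred verifications are essentially transposes of one another under the correspondence of Thm \ref{thm:ext-wand-trace}, and both your sketch and the paper leave that bookkeeping to the reader. The details you do supply are sound: the unpacking of $\perp_d$ via Prop \ref{prop:construction2}.(\ref{mat.1}) together with Lemma \ref{lem:inter-rest}, the well-definedness of the base case via {\bf [$\boldsymbol{\sqcup}$.4]} and Lemma \ref{lem:wand}.(\ref{lem:wand.perp}), and the identification of the induced $2\times 2$ trace with the displayed formula using entrywise composition and Prop \ref{prop:construction2}.(ii).
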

\begin{proof} Prop \ref{prop:wand-to-iter} tells us that the proposed formula for the trace operator satisfies all the necessary axioms for $2 \times 2$ matrices. It is then not difficult to see (though very tedious to write down) that when extending to $n + m \times n + p$ matrices using (\ref{eq:matrix-trace2}) and (\ref{eq:matrix-trace3}), the trace operator axioms all hold by definition and reducing it to the $2 \times 2$ case. Thus we get that $\mathsf{MAT}\left[ (\mathbb{X},\perp) \right]$ is indeed a traced extensive restriction category. Then by Prop \ref{prop:iter-to-wand}, we get a Kleene wand on $\mathsf{MAT}\left[ (\mathbb{X},\perp) \right]$ which can be expressed using the trace as in (\ref{eq:iter-to-wand}). Then using {\bf [$\boldsymbol{\wand}$.1]}, we compute that: 
\[ \mathcal{I}(f) \wand \mathcal{I}(g) = \left[\begin{array}{c|c} \mathcal{I}(f) & \mathcal{I}(g) \\ \hline \mathcal{I}(f) & \mathcal{I}(g) \end{array}\right] = \left[\begin{array}{c|c} [f] & [g] \\ \hline [f] & [g] \end{array}\right] = \left[\begin{array}{c|c} f & g \\ \hline f & g  \end{array}\right] = \left[ g \sqcup f(f \wand g) \right] = \left[ f \wand g \right] = \mathcal{I}(f \wand g)
\]
So $\mathcal{I}$ preserves the Kleene wand as desired.
\end{proof}

If the base interference restriction category has infinite disjoint joins, then we can give more explicit formulas for the Kleene wand and its induced uniform parametrized iteration operator and uniform trace operator. 

\begin{corollary} Let $\mathbb{X}$ be a disjoint $\perp$-restriction category, then $\mathsf{MAT}\left[ (\mathbb{X},\perp) \right]$ is a traced extensive restriction category with inductive Kleene wand given as in Thm \ref{thm:countable-wand}, and induced uniform parametrized iteration operator and uniform trace operator defined as in Prop \ref{prop:ext-inf}. 
\end{corollary}
\begin{proof} By Lemma \ref{lemma:mat-inf}, $\mathsf{MAT}\left[ (\mathbb{X},\perp) \right]$ is also a strongly completely disjoint interference restriction category. Thus by Thm \ref{thm:countable-wand}, we get an inductive Kleene wand, whose induced parametrized iteration operator and trace operator are given as in Prop \ref{prop:ext-inf}. 
\end{proof}

Taking a step back, we can first use the disjoint join construction, and then apply the matrix construction to get a traced extensive restriction category. 

\begin{corollary} Let $\mathbb{X}$ be a $\perp$-restriction category, then $\mathsf{MAT}\left[ \mathsf{DJ}\left[ (\mathbb{X},\perp) \right] \right]$ is a traced extensive restriction category with inductive Kleene wand given as in Thm \ref{thm:countable-wand}, and induced uniform parametrized iteration operator and uniform trace operator defined as in Prop \ref{prop:ext-inf}. 
\end{corollary}

Of course, we can simply start with any restriction category with restriction zeroes to build a traced extensive restriction category. 

\begin{corollary} Let $\mathbb{X}$ be a restriction category with restriction zeroes, then $\mathsf{MAT}\left[ \left( \mathsf{DJ}\left[ (\mathbb{X},\perp_0) \right], \perp_0 \right) \right]$ is a traced extensive restriction category with inductive Kleene wand given as in Thm \ref{thm:countable-wand}, and induced uniform parametrized iteration operator and uniform trace operator defined as in Prop \ref{prop:ext-inf}. 
\end{corollary}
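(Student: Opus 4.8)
The plan is to derive this as an immediate specialization of the preceding corollary, which already establishes the result for an arbitrary interference relation $\perp$ on a $\perp$-restriction category. The only gap to bridge is that here we begin with a bare restriction category with restriction zeroes, carrying no chosen interference relation; so the first step is to equip $\mathbb{X}$ with one. For this I would invoke Lemma \ref{lemma:two-interference}, which guarantees that every restriction category with restriction zeroes admits the maximal interference relation $\perp_0$. This promotes $\mathbb{X}$ to the interference restriction category $(\mathbb{X}, \perp_0)$, so that $\mathbb{X}$ is a $\perp_0$-restriction category in exactly the sense required by the previous corollary.

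With $\perp := \perp_0$ fixed, I would then apply the preceding corollary verbatim to $(\mathbb{X},\perp_0)$. For completeness I would trace the chain once more: Prop \ref{prop:construction1} makes $\mathsf{DJ}\left[ (\mathbb{X},\perp_0) \right]$ a disjoint $\perp_0$-restriction category, which in particular is finitely disjoint, so that Prop \ref{prop:construction2} applies and makes $\mathsf{MAT}\left[ \left( \mathsf{DJ}\left[ (\mathbb{X},\perp_0) \right], \perp_0 \right) \right]$ an extensive restriction category that is itself a finitely disjoint $\perp_d$-restriction category. Since $\mathsf{DJ}\left[ (\mathbb{X},\perp_0) \right]$ has all disjoint joins, Lemma \ref{lemma:mat-inf} upgrades this to a fully disjoint $\perp_d$-restriction category. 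Thm \ref{thm:countable-wand} then supplies the canonical inductive Kleene wand, and Prop \ref{prop:ext-inf} delivers the uniform parametrized iteration operator and uniform trace operator precisely in the asserted form.

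Since there is no genuine mathematical content beyond the specialization $\perp = \perp_0$, there is no real obstacle; the only point demanding care is notational bookkeeping, namely matching the object $(\mathbb{X},\perp_0)$ of the present statement against the generic $(\mathbb{X},\perp)$ of the earlier corollary, and confirming that the structure asserted here (inductive wand, uniform iteration and trace) is exactly what that corollary produces. One might also remark, as the paper has already noted for $\mathsf{DJ}\left[ (\mathbb{X},\perp_0) \right]$, that the induced interference relation $\perp_d$ on the matrix category coincides with $\perp_0$, so the whole construction stays within the maximal-interference setting throughout.
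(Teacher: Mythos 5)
Your proposal is correct and follows exactly the paper's (implicit) argument: the paper offers no separate proof for this corollary, treating it precisely as the specialization $\perp = \perp_0$ of the preceding corollary, with Lemma \ref{lemma:two-interference} supplying the maximal interference relation and the chain Prop \ref{prop:construction1} $\to$ Lemma \ref{lemma:mat-inf} $\to$ Thm \ref{thm:countable-wand} $\to$ Prop \ref{prop:ext-inf} doing the work, just as you trace it. Your closing remark that the induced relation on $\mathsf{DJ}\left[ (\mathbb{X},\perp_0) \right]$ and on the matrix category remains the maximal one matches the paper's own observations and correctly justifies the notation $\left( \mathsf{DJ}\left[ (\mathbb{X},\perp_0) \right], \perp_0 \right)$ in the statement.
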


\bibliographystyle{plain}      
\bibliography{itegoriesbib}   

\end{document}